\newcommand{\fb}{\mathbf{f}}
\newcommand{\gb}{\mathbf{g}}
\newcommand{\rb}{\mathbf{r}}
\newcommand{\ub}{\mathbf{u}}
\newcommand{\vb}{\mathbf{v}}
\newcommand{\yb}{\mathbf{y}}
\newcommand{\ba}{\bm{a}}
\newcommand{\bb}{\bm{b}}
\newcommand{\be}{\bm{e}}
\newcommand{\bbf}{\bm{f}}
\newcommand{\bg}{\bm{g}}
\newcommand{\bell}{\bm{\ell}}
\newcommand{\br}{\bm{r}}
\newcommand{\bu}{\bm{u}}
\newcommand{\bv}{\bm{v}}
\newcommand{\bz}{\bm{z}}
\newcommand{\Ab}{\mathbf{A}}
\newcommand{\Bb}{\mathbf{B}}
\newcommand{\Fb}{\mathbf{F}}
\newcommand{\Gb}{\mathbf{G}}
\newcommand{\Ob}{\mathbf{O}}
\newcommand{\Qb}{\mathbf{Q}}
\newcommand{\Ub}{\mathbf{U}}
\newcommand{\Vb}{\mathbf{V}}
\newcommand{\Wb}{\mathbf{W}}
\newcommand{\Xb}{\mathbf{X}}
\newcommand{\Zb}{\mathbf{Z}}
\newcommand{\bB}{\mathbf{B}}
\newcommand{\bD}{\mathbf{D}}
\newcommand{\bE}{\mathbf{E}}
\newcommand{\bF}{\mathbf{F}}
\newcommand{\bG}{\mathbf{G}}
\newcommand{\bL}{\mathbf{L}}
\newcommand{\bO}{\mathbf{O}}
\newcommand{\bQ}{\mathbf{Q}}
\newcommand{\bR}{\mathbf{R}}
\newcommand{\bS}{\mathbf{S}}
\newcommand{\bU}{\mathbf{U}}
\newcommand{\bV}{\mathbf{V}}
\newcommand{\bW}{\mathbf{W}}
\newcommand{\bX}{\mathbf{X}}
\newcommand{\bY}{\mathbf{Y}}
\newcommand{\bZ}{\mathbf{Z}}
\newcommand{\cE}{\mathcal{E}}
\newcommand{\cI}{\mathcal{I}}
\newcommand{\cN}{\mathcal{N}}
\newcommand{\cS}{{\mathcal{S}}}
\newcommand{\cT}{{\mathcal{T}}}
\newcommand{\cX}{\mathcal{X}}
\newcommand{\EE}{\mathbb{E}}
\newcommand{\RR}{\mathbb{R}}
\newcommand{\bgamma}{\bm{\gamma}}
\newcommand{\blambda}{\bm{\lambda}}
\newcommand{\bmu}{\bm{\mu}}
\newcommand{\bnu}{\bm{\nu}}
\newcommand{\brho}{\bm{\varrho}}
\newcommand{\bphi}{\bm{\phi}}
\newcommand{\bvarphi}{\bm{\varphi}}
\newcommand{\bpsi}{\bm{\psi}}
\newcommand{\bGamma}{\bm{\Gamma}}
\newcommand{\bDelta}{\bm{\Delta}}
\newcommand{\bTheta}{\bm{\Theta}}
\newcommand{\bLambda}{\bm{\Lambda}}
\newcommand{\bXi}{\bm{\Xi}}
\newcommand{\bSigma}{\bm{\Sigma}}
\newcommand{\bUpsilon}{\bm{\Upsilon}}
\newcommand{\bPhi}{\bm{\Phi}}
\newcommand{\bPsi}{\bm{\Psi}}
\newcommand{\bOmega}{\bm{\Omega}}
\DeclareMathOperator{\Var}{{\rm Var}}					   
\DeclareMathOperator{\Cov}{\rm Cov}						   
\DeclareMathOperator{\ind}{\mathds{1}}   				   
\newcommand{\diag}{{\rm diag}}
\newcommand{\F}{\mathrm{F}}
\newcommand{\rbr}[1]{\left(#1\right)}
\DeclareMathOperator*\uplim{lim\,sup}
\newtheorem{theorem}{Theorem}
\newtheorem{lemma}[theorem]{Lemma}
\newtheorem{corollary}[theorem]{Corollary}
\newtheorem{assumption}[theorem]{Assumption}
\theoremstyle{definition}
\newtheorem{remark}[theorem]{Remark}
\numberwithin{equation}{section}
\numberwithin{theorem}{section}
\DeclareRobustCommand\widecheck[1]{{\mathpalette\@widecheck{#1}}}
\def\@widecheck#1#2{%
    \setbox\z@\hbox{\m@th$#1#2$}%
    \setbox\tw@\hbox{\m@th$#1%
      \widehat{%
          \vrule\@width\z@\@height\ht\z@
          \vrule\@height\z@\@width\wd\z@}$}%
    \dp\tw@-\ht\z@
    \@tempdima\ht\z@ \advance\@tempdima2\ht\tw@ \divide\@tempdima\thr@@
    \setbox\tw@\hbox{%
      \raise\@tempdima\hbox{\scalebox{1}[-1]{\lower\@tempdima\box
\tw@}}}%
    {\ooalign{\box\tw@ \cr \box\z@}}}
\let\save@mathaccent\mathaccent
\newcommand*\if@single[3]{%
  \setbox0\hbox{${\mathaccent"0362{#1}}^H$}%
  \setbox2\hbox{${\mathaccent"0362{\kern0pt#1}}^H$}%
  \ifdim\ht0=\ht2 #3\else #2\fi
  }
\newcommand*\rel@kern[1]{\kern#1\dimexpr\macc@kerna}
\newcommand*\widebar[1]{\@ifnextchar^{{\wide@bar{#1}{0}}}{\wide@bar{#1}{1}}}
\newcommand*\wide@bar[2]{\if@single{#1}{\wide@bar@{#1}{#2}{1}}{\wide@bar@{#1}{#2}{2}}}
\newcommand*\wide@bar@[3]{%
  \begingroup
  \def\mathaccent##1##2{%
    \let\mathaccent\save@mathaccent
    \if#32 \let\macc@nucleus\first@char \fi
    \setbox\z@\hbox{$\macc@style{\macc@nucleus}_{}$}%
    \setbox\tw@\hbox{$\macc@style{\macc@nucleus}{}_{}$}%
    \dimen@\wd\tw@
    \advance\dimen@-\wd\z@
    \divide\dimen@ 3
    \@tempdima\wd\tw@
    \advance\@tempdima-\scriptspace
    \divide\@tempdima 10
    \advance\dimen@-\@tempdima
    \ifdim\dimen@>\z@ \dimen@0pt\fi
    \rel@kern{0.6}\kern-\dimen@
    \if#31
      \overline{\rel@kern{-0.6}\kern\dimen@\macc@nucleus\rel@kern{0.4}\kern\dimen@}%
      \advance\dimen@0.4\dimexpr\macc@kerna
      \let\final@kern#2%
      \ifdim\dimen@<\z@ \let\final@kern1\fi
      \if\final@kern1 \kern-\dimen@\fi
    \else
      \overline{\rel@kern{-0.6}\kern\dimen@#1}%
    \fi
  }%
  \macc@depth\@ne
  \let\math@bgroup\@empty \let\math@egroup\macc@set@skewchar
  \mathsurround\z@ \frozen@everymath{\mathgroup\macc@group\relax}%
  \macc@set@skewchar\relax
  \let\mathaccentV\macc@nested@a
  \if#31
    \macc@nested@a\relax111{#1}%
  \else
    \def\gobble@till@marker##1\endmarker{}%
    \futurelet\first@char\gobble@till@marker#1\endmarker
    \ifcat\noexpand\first@char A\else
      \def\first@char{}%
    \fi
    \macc@nested@a\relax111{\first@char}%
  \fi
  \endgroup
}
\newcommand{\bff}{\mathbf{f}}
\def\bB{\mathbf{B}}
\def\bX{\mathbf{X}}
\def\be{\mathbf{e}}
\def\bu{\mathbf{u}}
\def\bv{\mathbf{v}}
\def\br{\mathbf{r}}
\def\bg{\mathbf{g}}
\def\bff{\mathbf{f}}
\def\bW{\mathbf{W}}
\def\bmu{\pmb{\mu}}
\def\bnu{\pmb{\nu}}
\def\bvarphi{\pmb{\varphi}}
\def\bDelta{\pmb{\Delta}}
\def\bPhi{\pmb{\Phi}}
\def\bTheta{\pmb{\Theta}}
\def\bXi{\pmb{\Xi}}
\def\bF{\mathbf{F}}
\def\bO{\mathbf{O}}
\def\bU{\mathbf{U}}
\def\bSigma{\mathbf{\Sigma}}
\def\bLambda{\mathbf{\Lambda}}
\def\blambda{\pmb{\lambda}}
\newcommand{\bbE}{\mathbb{E}}
\newcommand{\bbR}{\mathbb{R}}
\def\bbE{\mathbb{E}}
\def\cN{\mathcal{N}}
\def\calN{\mathcal{N}}
\DeclareMathOperator{\supp}{supp}
\DeclareMathOperator{\Tr}{Tr}
\DeclareMathOperator{\MSE}{MSE}
\newcommand{\toWtwo}{\mathrel{\overset{W_2}{\to}}}
\newcommand{\defeq}{\mathrel{\overset{\mathrm{def}}{=}}}
\newcommand{\pca}{\mathrm{pca}}
\newcommand{\tkappa}{\tilde{\kappa}}
\newcommand{\hkappa}{\hat{\kappa}}
\newcommand{\Id}{\mathrm{Id}}
\newcommand{\eps}{\varepsilon}
\newcommand{\all}{\mathrm{all}}
\def\tkappa{\tilde{\kappa}}
\def\hkappa{\hat{\kappa}}
\newcommand{\makebtau}[1]{\mathbf{#1}^{(\tau)}}
\newcommand{\taua}{a^{(\tau)}}
\newcommand{\taubA}{\makebtau{A}}
\newcommand{\maketau}[1]{{#1}^{(\tau)}}
\newcommand{\tbpsi}{\maketau{\bpsi}}
\newcommand{\taubmu}{\maketau{\bmu}}
\newcommand{\taubnu}{\maketau{\bnu}}
\newcommand{\taubSigma}{\maketau{\bSigma}}
\newcommand{\taubOmega}{\maketau{\bOmega}}
\newcommand{\taubDelta}{\maketau{\bDelta}}
\newcommand{\taubGamma}{\maketau{\bGamma}}
\newcommand{\taubPsi}{\maketau{\bPsi}}
\newcommand{\taubPhi}{\maketau{\bPhi}}
\newcommand{\taubphi}{\maketau{\bphi}}
\newcommand{\taubpsi}{\maketau{\bpsi}}
\newcommand\independent{\protect\mathpalette{\protect\independenT}{\perp}}
\def\independenT#1#2{\mathrel{\rlap{$#1#2$}\mkern2mu{#1#2}}}
\def\Uniform{\textnormal{Uniform}}
\def\Beta{\textnormal{Beta}}
\def\toWtwo{\overset{W_2}{\longrightarrow}}
\def\limn{\lim_{n\to\infty}}
\def\limmn{\lim_{m,n\to\infty}}
\def\limtau{\lim_{\tau\to\infty}}
\def\limtaun{\lim_{\tau\to\infty}\lim_{n\to\infty}}
\def\defeq{\overset{\mathrm{def}}{=}}
\title{Approximate Message Passing for orthogonally invariant ensembles:
Multivariate non-linearities and spectral initialization}
\author{Xinyi Zhong\thanks{These authors contributed equally.\newline
Yale University, Department of Statistics and Data Science.\newline
\texttt{xinyi.zhong@yale.edu}, \texttt{tianhao.wang@yale.edu}, 
\texttt{zhou.fan@yale.edu}}
\and Tianhao Wang\footnotemark[1] \and Zhou Fan}
\date{}
\begin{document}
\maketitle

\begin{abstract}
We study a class of Approximate Message Passing (AMP) algorithms for symmetric
and rectangular spiked random matrix models with orthogonally invariant noise.
The AMP iterates have fixed dimension $K \geq 1$, a multivariate non-linearity
is applied in each AMP iteration, and
the algorithm is spectrally initialized with $K$ super-critical sample
eigenvectors. We derive the forms of the Onsager debiasing coefficients and
corresponding AMP state evolution, which depend on the free cumulants of the
noise spectral distribution. This extends previous results for such
models with $K=1$ and an independent initialization.

Applying this approach to Bayesian principal components
analysis, we introduce a Bayes-OAMP algorithm that uses as
its non-linearity the posterior mean conditional on all preceding AMP iterates. We
describe a practical implementation of this algorithm, where all debiasing
and state evolution parameters are estimated from the observed data, and we
illustrate the accuracy and stability of this approach in simulations.
\end{abstract}

\section{Introduction}

In recent years, Approximate Message Passing (AMP) algorithms have been used in
an increasingly diverse range of applications. These algorithms were originally
derived as approximations to message passing and belief propagation algorithms
for densely connected graphical models~\cite{kabashima2003cdma,donoho2009message,donoho2010messageI,donoho2010messageII}.
They have since been successfully
adapted to perform both optimization and Bayesian inference in many problems
arising in high-dimensional statistics and machine learning, and we refer to
\cite{feng2021unifying} for a recent review.

By design, AMP algorithms are closely tailored to distributional assumptions
for the data matrices to which they are applied. These algorithms exhibit
fast rates of convergence for typical realizations of such random data
\cite{maleki2010approximate}, and can achieve near-optimal estimation risk
in many contexts of Bayesian inference
\cite{krzakala2012probabilistic,deshpande2014information,dia2016mutual,deshpande2017asymptotic,barbier2019optimal,barbier2020mutual}. Furthermore, iterates of AMP
admit an exact asymptotic distributional characterization, known as its
``state evolution'', that is simpler than that of alternative first-order
procedures. Thus AMP has also served as a broadly useful theoretical tool for
analyzing the asymptotic behavior of statistical methods
\cite{bayati2011lasso,maleki2013asymptotic,donoho2013information,donoho2016high,su2017false,sur2019likelihood,bu2020algorithmic}
as well as probabilistic models
\cite{bolthausen2018morita,ding2019capacity,fan2021replica,celentano2021local}.

The most common examples of AMP algorithms are tailored to data
matrices with i.i.d.\ entries, and a line of work
\cite{bolthausen2014iterative,bayati2011dynamics,javanmard2013state,bayati2015universality,berthier2020state,chen2021universality} has rigorously
established the validity of their state evolutions in this context. This state
evolution may no longer correctly describe the iterates for non-i.i.d.\ data, where such AMP algorithms
may also exhibit divergent behavior \cite{vila2015adaptive,rangan2019convergence}. More recently, several AMP
algorithms have been developed for a broader class of random matrices that
are orthogonally or unitarily invariant in law, but that can have arbitrary
spectral distribution. These include the Orthogonal AMP
\cite{takeuchi2017rigorous,ma2017orthogonal},
Vector AMP \cite{rangan2019vector,schniter2016vector}, Convolutional AMP
\cite{takeuchi2019unified,takeuchi2020convolutional,takeuchi2020bayes}, and
Memory AMP \cite{liu2021memory} procedures for linear models and
generalized linear models with orthogonally invariant designs, as well as
a general class of AMP procedures for symmetric and rectangular
orthogonally invariant ensembles studied in
\cite{opper2016theory,ccakmak2020dynamical,fan2020approximate}. Broadly, these
algorithms adapt to the more general spectral laws that may arise in such
models, by either using divergence-free non-linearities or applying
Onsager corrections that are tailored to these spectral laws.

\subsection{Contributions}

Motivated by applications to statistical principal components analysis (PCA),
our work extends the class of AMP algorithms for orthogonally
invariant random matrix ensembles studied in
\cite{opper2016theory,ccakmak2020dynamical,fan2020approximate},
in two directions:

\begin{enumerate}
\item We extend the AMP procedures from vector-valued iterates
$\bu_t \in \RR^n$ to matrix-valued iterates $\bU_t \in \RR^{n \times K}$, for an
arbitrary fixed dimension $K \geq 1$. We consider such AMP
algorithms that apply multivariate non-linearities in
every iteration, and derive the forms of the Onsager corrections and state
evolutions for algorithms of this type.

Importantly, the non-linearities need not be separable
across the $K$ dimensions. This generalization is particularly useful for PCA,
where (empirically, in many domains of application) there
is often joint structure across coordinates of multiple PCs,
and a multivariate non-linearity should be used to regularize
estimates towards this structure \cite{zhong2020empirical}.

\item In a model of data consisting of low-rank signal plus additive noise, we
develop a method of spectral initialization for the general AMP algorithms of
\cite{fan2020approximate}, using the sample eigenvectors or singular vectors
of the data. This is analogous
to the work of \cite{montanari2021estimation} that developed this extension for
AMP algorithms when the noise has i.i.d.\ entries.

Such an extension eliminates the need for an informative initialization that is
independent of the data, which is typically unavailable in practice. Our
analysis shows that the AMP Onsager correction and state evolution must treat the
spectral initialization separately, and they take different forms from 
the descriptions of \cite{fan2020approximate}.
\end{enumerate}

The first generalization above is a more direct extension of the previous
analyses in \cite{fan2020approximate}, and we describe these results in
Section \ref{sec: AMP with indinit}.
The second generalization to a spectral initialization constitutes the larger
technical contribution of our work, and we
describe it for symmetric and rectangular matrices in
Sections \ref{sec:sym:pcaInit} and in Appendices~\ref{sec: main rec} respectively. Our proof is different from that
of \cite{montanari2021estimation}, and instead follows a strategy
introduced in \cite{mondelli2021approximate} of approximating the spectral initialization
by a sequence of {$\tau$} linear AMP steps that converge to the sample eigenvectors,
as ${\tau} \to \infty$.

Recent independent work of \cite{mondelli2021pca} has used this approach to derive also the
forms of spectrally-initialized AMP
algorithms corresponding to the ``single-iterate posterior mean'' PCA procedure
described in \cite{fan2020approximate}; this constitutes an important case of our current
results. Our results expand upon \cite{mondelli2021pca}, analyzing instead the general AMP
algorithms in \cite{fan2020approximate} whose non-linearities may be functions of
all preceding AMP iterates, and in the above context of multivariate iterates
with dimension $K \geq 1$.
At a technical level, we avoid the restrictive assumption imposed
in \cite{mondelli2021pca} that all free cumulants of the noise spectral law be positive, and
we use an alternative strategy for analyzing the convergence of the linear AMP
iterations that leads to a different and explicit assumption of
sufficiently large signal strength.

Finally, as an application of these results, we propose in Section
\ref{sec:numerical} a Bayes-OAMP\footnote{This is different from the 
algorithm called OAMP in \cite{ma2017orthogonal}. Throughout the paper,
we refer to our algorithm specifically as Bayes-OAMP to avoid potential confusion.}
algorithm for PCA with a Bayesian prior for the
PCs. This algorithm differs from the single-iterate posterior mean procedure that 
was analyzed in \cite{fan2020approximate}, computing instead the Bayes posterior
mean based on the multivariate Gaussian joint law of all preceding AMP iterates,
as described by the above state evolution. We demonstrate in 
Section \ref{sec:numerical} that this can yield a sizeable improvement
in estimation accuracy for rotationally invariant noise ensembles
in settings of weak signal strength.

The development and rigorous characterization of Bayes-optimal
estimation procedures for this PCA problem is an interesting open question.
Following the initial posting of our work, \cite{barbier2023fundamental} has
obtained the first results in this direction, deriving a conjectural form of the
Bayes-optimal error in a symmetric rank-1 spiked model using the 
replica method, for certain examples of rotationally-invariant noise matrices
defined by polynomial potentials. The authors of \cite{barbier2023fundamental}
suggested also two AMP methods that numerically attain the conjectured
Bayes-optimal error in these examples, one of which is an extension of
Bayes-OAMP (dubbed ``AMP-AP'') that alternates between posterior mean and
identity nonlinearities. Our current results establish the rigorous state
evolution of both Bayes-OAMP and AMP-AP under a spectral initialization and
sufficiently large signal strength, constituting a first step towards an
analytic characterization of the estimation errors attained by these procedures.

\paragraph{Notational conventions.}
For random variables $X$ and $Y$, $X \independent Y$ denotes that they are
independent. $\|\cdot\|$ denotes the $\ell_2$-norm for vectors and the
$\ell_2 \to \ell_2$ operator norm for matrices. $\|\cdot\|_\F$ is the
Frobenius norm for matrices.
$\diag(v) \in \RR^{K \times K}$ is the diagonal matrix with $v \in \RR^K$ on its
diagonal, and we write $\diag(v) \in \RR^{K \times K'}$ to indicate this matrix right-padded by
$K'-K$ columns of 0. We adopt the convention $M^0=\Id$ for the $0^\text{th}$
power of any square matrix $M$. For a block matrix $M \in \RR^{tK \times tK}$
and $\kappa \in \RR^{K \times K}$, $M \odot \kappa \in \RR^{tK \times tK}$ and
$\kappa \odot M \in \RR^{tK \times tK}$ denote the block-wise right- and left-
multiplication by $\kappa$.

For a matrix $\Ub \in \RR^{n\times K}$ and random vector $U \in \RR^K$,
we write $\Ub\overset{W_2}{\to} U$ for the Wasserstein-2 convergence of the
empirical distribution of rows of $\Ub$, as $n \to \infty$. Letting $u_i$ be the
$i^\text{th}$ row of $\Ub$, this means
\begin{align*}
\lim_{n\to\infty} \frac{1}{n}\sum_{i=1}^n g(u_i)=\EE[g(U)]
\end{align*}
for any continuous function $g:\RR^K \to \RR$ such that $|g(u)|
\leq C(1+\|u\|^2)$ for a constant $C>0$. We write
$\langle \Ub \rangle=n^{-1}\sum_{i=1}^n u_i \in \RR^K$
for the empirical average of rows of $\Ub$.

\section{Symmetric AMP with independent initialization}\label{sec: AMP with indinit}

In this section, we first describe extensions of the AMP algorithms and
state evolution characterizations of \cite{fan2020approximate} from vector-valued to
matrix-valued iterates, for orthogonally invariant matrices and an
independent initialization. 
We will then discuss signal-plus-noise models and
spectral initializations in Section~\ref{sec:sym:pcaInit}.
We focus on the setting of symmetric matrices in the main text for ease of presentation, 
and corresponding results for rectangular matrices are given in 
Appendices \ref{sec:indrect} and \ref{sec: main rec}.

Let $\Wb \in \RR^{n \times n}$ be a symmetric matrix, with
eigen-decomposition $\Wb = \Ob^\top \bLambda \Ob$ where
$\bLambda=\diag(\blambda)$. We
will assume that $\bO$ is a Haar-distributed orthogonal basis of eigenvectors,
so $\bW$ is orthogonally invariant in law.

For fixed dimensions $J \geq 0$ and $K \geq 1$, consider a possible additional
matrix $\bE \in \RR^{n \times J}$ of ``side information'',
and a sequence of Lipschitz functions $u_2,u_3,\ldots$ where each
$u_{t+1}:\RR^{tK+J} \to \RR^K$. (We may set $J=0$ if there is no such side
information.) We consider an AMP algorithm with initialization
$\bU_1 \in \RR^{n \times K}$ independent of $\bW$, having the iterates
\begin{align}
\Zb_t &=  \bW \bU_t - \bU_1 b_{t1}^\top-\bU_2 b_{t2}^\top-\ldots-\bU_t
b_{tt}^\top\label{eq:AMPz}\\
\bU_{t+1} &= u_{t+1}(\Zb_1, \ldots, \Zb_t,\bE).\label{eq:AMPu}
\end{align}
Here $b_{ts} \in \RR^{K \times K}$ is a matrix-valued Onsager debiasing coefficient
for each $s=1,\ldots,t$, and $u_{t+1}(\cdot)$ is applied row-wise to
$(\bZ_1,\ldots,\bZ_t,\bE) \in \RR^{n \times (tK+J)}$ to yield each next iterate
$\bU_{t+1} \in \RR^{n \times K}$.

\paragraph{Debiasing coefficients.}
Let $\partial_s u_{t+1}(Z_1,\ldots,Z_t,E) \in \RR^{K \times K}$ denote the
Jacobian of $u_{t+1}$ in its vector argument $Z_s$, which exists
Lebesgue-a.e.\ since $u_{t+1}(\cdot)$ is Lipschitz
\cite[Theorem 2.2.1]{ziemer2012weakly}. Denote
\[\langle \partial_s \bU_{t+1} \rangle
=\frac{1}{n}\sum_{i=1}^n \partial_s u_{t+1}(z_{1,i},\ldots,z_{t,i},e_i)\]
where $z_{t,i} \in \RR^K$ and $e_i \in \RR^J$ are the 
$i^\text{th}$ rows of $\bZ_t$ and $\bE$.
For each $T \geq 1$, define the $TK\times TK$ block-lower-triangular matrix
\begin{equation}\label{eq:phi}
\bphi_T={\footnotesize\begin{pmatrix} 0 & 0 & \cdots & 0 & 0 \\
\langle \partial_1 \bU_2 \rangle & 0 & \cdots & 0 & 0 \\
\langle \partial_1 \bU_3 \rangle & \langle \partial_2 \bU_3 \rangle
& \cdots & 0 & 0 \\
\vdots & \vdots & \ddots & \vdots & \vdots \\
\langle \partial_1 \bU_T \rangle & \langle \partial_2 \bU_T \rangle
& \cdots & \langle \partial_{T-1} \bU_T \rangle & 0 \end{pmatrix}}.
\end{equation}

Let $\Lambda$ be a random variable on $\RR$ with compact support, which will be
the limit eigenvalue distribution of $\bW$ as $n \to \infty$.
Let $\{\kappa_j\}_{j \geq 1}$ be the free cumulants of $\Lambda$---see
e.g.\ \cite[Section 2.3]{fan2020approximate} for definitions.
Applying the convention $\bphi_t^0=\Id$,
we take the debiasing coefficient matrices $\{b_{ts}\}$ in (\ref{eq:AMPz}) up to
iteration $T$ to be the blocks of
\begin{align*}
\bb_T =\sum_{j=0}^\infty {\kappa_{j+1}} \bphi_T^j \defeq
{\footnotesize\begin{pmatrix}
b_{11} & & & \\
b_{21} & b_{22} & & \\
\vdots & \vdots & \ddots & \\
b_{T1} & b_{T2} & \cdots & b_{TT}
\end{pmatrix}} \in \bbR^{TK\times TK}.
\end{align*}
This may be interpreted as the $R$-transform of $\Lambda$ applied to $\bphi_T$
(cf.\ Section \ref{sec:sympreliminaries}).
Note that this is a finite sum, because $\bphi_T^j=0$ for all $j \geq T$. We
have $b_{tt}=\kappa_1 \Id$
for every $t \geq 1$, which vanishes if $\Lambda$ has mean $\kappa_1=0$.

\paragraph{State Evolution.} This choice of debiasing coefficients leads to
the empirical distribution of rows of $(\bZ_1,\ldots,\bZ_T)$ having an
asymptotically mean-zero multivariate Gaussian limit, as $n \to \infty$ for
any fixed iteration $T$. The limit Gaussian law is described by its covariance
matrix $\bSigma_T \in \RR^{TK \times TK}$, which may be defined recursively via
the following state evolution:

Let $(U_1,E)$ be an initial random vector with $U_1 \in \RR^K$ and $E \in
\RR^J$, representing the limit empirical distribution of rows of
$(\bU_1,\bE)$. Inductively for $t=1,2,3,\ldots$, having defined the joint law of
$(U_1,\ldots,U_t,Z_1,\dots,Z_{t-1},E)$, define the $tK \times tK$ matrices
\begin{align}
\bDelta_t&={\footnotesize\begin{pmatrix} \bbE[  U_1 U_1^\top] & \bbE[  U_1  U_2^\top]
& \cdots & \bbE[  U_1  U_t^\top] \\
\bbE[  U_2 U_1^\top] & \bbE[  U_2 U_2^\top] & \cdots & \bbE[  U_2  U_t^\top]\\
\vdots & \vdots & \ddots & \vdots \\ \bbE[  U_t U_1^\top] & \bbE[  U_t U_2^\top]
& \cdots & \bbE[  U_t  U_t^\top] \end{pmatrix}},\label{eq:Delta}\\
\bPhi_t&={\footnotesize\begin{pmatrix} 0 & 0 & \cdots & 0 & 0 \\
\bbE[ \partial_1  u_2(Z_1,E)] & 0 & \cdots & 0 & 0 \\
\bbE[ \partial_1  u_3(Z_1,Z_2,E)] & \bbE[ \partial_2  u_3(Z_1,Z_2,E)]
& \cdots & 0 & 0 \\
\vdots & \vdots & \ddots & \vdots & \vdots \\
\bbE[ \partial_1  u_t(Z_1,\ldots,Z_{t-1},E)] & \bbE[ \partial_2
u_t(Z_1,\ldots,Z_{t-1},E)]
& \cdots & \bbE[ \partial_{t-1}  u_t(Z_1,\ldots,Z_{t-1},E)] & 0
\end{pmatrix}}.\label{eq:Phi}
\end{align}
Here, $\bPhi_t$ corresponds to the large-$n$ limit of $\bphi_t$ defined in (\ref{eq:phi}). 
Then define the covariance $\bSigma_t$ by
\begin{equation}\label{eq:Sigma}
\bSigma_t = \sum_{j=0}^\infty \bTheta^{(j)}[\bPhi_t, \kappa_{j+2}\bDelta_t]
\quad \text{ where } \quad
\bTheta^{(j)}[\bPhi, \kappa \bDelta] = \sum_{i=0}^j \bPhi^i (\kappa\bDelta)
(\bPhi^\top)^{j-i}.
\end{equation}
Define the next joint law of $(U_1,\ldots,U_{t+1},Z_1,\ldots,Z_t,E)$ by
\begin{equation}\label{eq:SEindinit}
(Z_1,\ldots,Z_t) \sim \cN(0,\bSigma_t) \independent (U_1,E), \quad
U_{s+1}=u_{s+1}(Z_1,\ldots,Z_s,E) \text{ for each } s=1,\ldots,t.
\end{equation}
Under these inductive definitions,
it may be checked that the upper-left $(t-1) \times (t-1)$ blocks of
$\bSigma_t$ coincide with $\bSigma_{t-1}$.

This state evolution characterizes the iterates of the AMP algorithm
(\ref{eq:AMPz}--\ref{eq:AMPu}), under the following assumptions.

\begin{assumption}\label{assump:symW}
The matrix $\bW=\Ob^\top \diag(\blambda)\Ob$ and random variable $\Lambda$ satisfy
\begin{enumerate}[label=(\alph*)]
\item $\Ob$ is random and Haar-distributed over the orthogonal group.
\item $\blambda$ is independent of $\Ob$, and its empirical distribution
converges weakly a.s.\ to $\Lambda$ as $n \to \infty$.
\item $\Lambda$ has compact support $\supp(\Lambda)$. Denoting
$(\lambda_-,\lambda_+)=(\min \operatorname{supp}(\Lambda),
\max \operatorname{supp}(\Lambda))$, we have
$\min(\blambda) \to \lambda_-$ and
$\max(\blambda) \to \lambda_+$ a.s.\ as $n \to \infty$.
\end{enumerate}
\end{assumption}

\begin{assumption}\label{assump:symindinit}
The AMP initialization $\bU_1$, functions $u_2,u_3,\ldots$, and random vectors
$(U_1,E)$ satisfy
\begin{enumerate}[label=(\alph*)]
\item $(\bU_1,\bE) \in \RR^{n \times (K+J)}$ is independent of $\Ob$, and
$(\bU_1,\bE) \toWtwo (U_1,E)$ a.s.\ as $n \to \infty$.
\item Each $u_{t+1}(\cdot)$ is Lipschitz in all arguments.
For each $s=1,\ldots,t$, $\partial_s u_{t+1}(Z_1,\ldots,Z_t,E)$ exists and
is continuous on a set of probability 1 under the law of $(Z_1,\ldots,Z_t,E)$
defined by (\ref{eq:SEindinit}).
\end{enumerate}
\end{assumption}

\begin{theorem}\label{thm:symindinit}
Suppose Assumptions \ref{assump:symW} and \ref{assump:symindinit} hold. For any
$T \geq 1$, consider the AMP algorithm (\ref{eq:AMPz}--\ref{eq:AMPu}) up to
iteration $T$, and define $(U_1,\ldots,U_{T+1},Z_1,\ldots,Z_T,E)$ by the state
evolution (\ref{eq:SEindinit}). Then almost surely as $n \to \infty$,
\[(\bU_1,\ldots,\bU_{T+1},\bZ_1,\ldots,\bZ_T,\bE) \toWtwo
(U_1,\ldots,U_{T+1},Z_1,\ldots,Z_T,E).\]
\end{theorem}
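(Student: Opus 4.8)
The plan is to reduce Theorem~\ref{thm:symindinit} to the scalar ($K=1$) state‑evolution result of \cite{fan2020approximate} by \emph{flattening} the $K$‑dimensional AMP into a scalar AMP with $TK$ iterates. Index the columns of each $\bU_t\in\RR^{n\times K}$ as $\bU_t^{(1)},\dots,\bU_t^{(K)}$ and regard them as $K$ consecutive scalar iterates forming ``round $t$'': for $t\ge 1$ and $1\le k\le K$, the $k$‑th scalar iterate of round $t+1$ is $u_{t+1}^{(k)}(\bZ_1,\dots,\bZ_t,\bE)$, the $k$‑th component of $u_{t+1}$, viewed as a function of the $tK$ scalar iterates of rounds $1,\dots,t$ together with $\bE$. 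Writing the Onsager update \eqref{eq:AMPz} column by column gives $\bZ_t^{(k)}=\bW\bU_t^{(k)}-\sum_{s<t}\sum_{l}(b_{ts})_{kl}\bU_s^{(l)}-\sum_{l}(b_{tt})_{kl}\bU_t^{(l)}$; since $b_{tt}=\kappa_1\Id$, the only round‑$t$ contribution to $\bZ_t^{(k)}$ is $\kappa_1\bU_t^{(k)}$, so in the flattened ordering the effective debiasing matrix is lower‑triangular including its diagonal (which equals $\kappa_1$) and the recursion is a bona fide instance of the scalar AMP of \cite{fan2020approximate}, after the routine extension of that framework to a multivariate independent initialization $\bU_1^{(1)},\dots,\bU_1^{(K)}$.

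The second step is to check that the debiasing coefficients and state‑evolution objects prescribed in this section are precisely the scalar recipes of \cite{fan2020approximate} read on $K\times K$ blocks. For the debiasing this is the identity $\bb_T=\sum_{j\ge 0}\kappa_{j+1}\bphi_T^{\,j}$, i.e.\ the matrix $R$‑transform of $\Lambda$ applied to the $TK\times TK$ matrix $\bphi_T$: one notes that $\bphi_T$ is strictly block‑lower‑triangular in the round index, so the series terminates ($\bphi_T^{\,j}=0$ for $j\ge T$) and its block diagonal is $\kappa_1\Id$, matching the flattened construction. For the state evolution one checks that $\bSigma_T=\sum_{j\ge 0}\bTheta^{(j)}[\bPhi_T,\kappa_{j+2}\bDelta_T]$ with $\bTheta^{(j)}[\bPhi,\kappa\bDelta]=\sum_{i=0}^{j}\bPhi^i(\kappa\bDelta)(\bPhi^\top)^{j-i}$ is the scalar covariance formula of \cite{fan2020approximate} applied to the $TK\times TK$ matrices $\bPhi_T,\bDelta_T$ --- the ``sandwiched'' form $\bPhi^i(\cdot)(\bPhi^\top)^{j-i}$, rather than a plain power, is exactly what is forced once the blocks no longer commute --- and that the recursive construction of $(U_1,\dots,U_{T+1},Z_1,\dots,Z_T,E)$ in \eqref{eq:SEindinit} coincides, after grouping the $K$ components of each $U_t$ and $Z_t$, with the scalar state evolution run for $TK$ steps, together with the compatibility of the upper‑left blocks of $\bSigma_t$ across $t$.

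With the reduction and the translation of the recipes in hand, Theorem~\ref{thm:symindinit} follows from the scalar result: $W_2$‑convergence of the flattened $TK$‑dimensional iterate is exactly $W_2$‑convergence of $(\bU_1,\dots,\bU_{T+1},\bZ_1,\dots,\bZ_T,\bE)$ to the stated law. The hypotheses transfer directly --- Assumption~\ref{assump:symW} is unchanged, $(\bU_1,\bE)\toWtwo(U_1,E)$ supplies the flattened initialization, and the Lipschitz and a.e.-continuity conditions on $u_2,u_3,\dots$ in Assumption~\ref{assump:symindinit} yield the same properties for the scalar components $u_{t+1}^{(k)}$ --- while the compact support of $\Lambda$ (hence a.s.\ boundedness of $\|\bW\|$ via Assumption~\ref{assump:symW}(c)) together with the linear growth of iterate norms forced by Lipschitz non‑linearities provides the uniform second‑moment control needed to upgrade weak convergence to $W_2$.

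I expect the only genuine work to lie in this translation step, and --- should one prefer a self‑contained argument over the reduction --- in redoing the underlying proof with fixed‑dimension blocks in place of scalars. The conditioning‑on‑the‑past induction, the polynomial‑approximation step that reduces Lipschitz non‑linearities to the polynomial case, and the attendant concentration estimates are all insensitive to the passage from $K=1$ to general fixed $K$; the one point that needs care is the non‑commutative bookkeeping, since the moment and free‑cumulant computations that produce the $R$‑transform for $\bb_T$ and the operator $\bTheta^{(j)}$ for $\bSigma_T$ now involve $K\times K$ matrices whose order of multiplication cannot be permuted, so left‑ and right‑multiplication by the Jacobian blocks must be tracked separately throughout.
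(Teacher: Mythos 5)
Your flattening reduction is a genuinely different route from what the paper does. The paper's Appendix~\ref{appendix:indInitproof} does not reduce to the scalar case; it re-derives the state evolution directly in the matrix-valued setting, conditioning $\bO$ on $\bR_t=\bO\bU_t$ and $\bO\bZ_t$ block-by-block (as in \cite[Lemma A.4]{fan2020approximate} but with $K\times K$ blocks), establishing the result first under a non-degeneracy assumption (Lemma~\ref{lemma:symindinitextended}) and then removing that assumption via a perturbation $\bW^\eps=\bO^\top\diag(\blambda+\eps\bgamma)\bO$, $\bU_{t+1}^\eps=u_{t+1}(\cdot)+\eps\bG_{t+1}$, together with a continuity argument. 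Your observation that each round of the $K$-dimensional iteration can be unrolled into $K$ scalar iterations, with the off-diagonal round-$t$ contributions vanishing because $b_{tt}=\kappa_1\Id$, is correct as far as the bookkeeping goes, and the identifications of $\bphi_T$, $\bb_T$, $\bDelta_T$, $\bSigma_T$ under the flattening are also correct.

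However, there is a genuine gap: the reduction only buys the matrix-valued extension, while Theorem~\ref{thm:symindinit} also strictly weakens the hypotheses of \cite[Corollary 4.4]{fan2020approximate} in two other ways, and these do \emph{not} transfer through the reduction. First, \cite{fan2020approximate} requires $(\bU_1,\bE)\to(U_1,E)$ in Wasserstein-$p$ for every $p\geq 1$, whereas Assumption~\ref{assump:symindinit}(a) requires only $W_2$. Your closing remark about compact support of $\Lambda$ and Lipschitz non-linearities addresses second-moment control of the \emph{iterates}, not the hypothesis on the \emph{initialization}: citing the scalar theorem as a black box under $W_2$ initialization is not licensed. (The paper handles this via Lemma~\ref{lemma:productW2}, which uses the compact support of $\Lambda$ and the a.s.\ spectral edge convergence in Assumption~\ref{assump:symW}(c) to recover $n^{-1}\bR^\top\bLambda^k\bR\to\EE[\Lambda^k RR^\top]$ from $W_2$ convergence alone, and this has to be threaded through the entire induction.) Second, \cite{fan2020approximate} assumes $C^1$ non-linearities; Assumption~\ref{assump:symindinit}(b) only requires the Jacobians to exist and be continuous on a set of probability one under the state-evolution law, which enters both the convergence $\bphi_t\to\bPhi_t$ and the perturbation step at the end of the proof. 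Flattening does not bypass this either. So as written, your reduction proves a weaker theorem — Theorem~\ref{thm:symindinit} under the original $W_p$ and $C^1$ hypotheses — and the relaxations, which the paper explicitly lists as part of the contribution, would still require re-opening the scalar proof. Your alternative ``self-contained'' paragraph does gesture at the right fix (redo the proof with blocks), but attributes the only new difficulty to non-commutative bookkeeping and so still misses these two analytic weakenings; the block non-commutativity is, by comparison, mostly notational since the sandwiched form $\sum_i\Phi^i\Delta(\Phi^\top)^{j-i}$ already appears at $K=1$.
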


The proof of Theorem \ref{thm:symindinit} is an extension of that of
\cite[Theorem 4.3 and Corollary 4.4]{fan2020approximate}. Compared with
\cite[Corollary 4.4]{fan2020approximate}, Theorem
\ref{thm:symindinit} considers matrix-valued iterates having
dimension $n \times K$, relaxes the needed convergence $(\bU_1,\bE) \to
(U_1,E)$ from Wasserstein-$p$ for all orders $p \geq 1$ to only Wasserstein-2,
and relaxes the continuous-differentiability requirement for each function
$u_{t+1}(\cdot)$ to the weaker condition of
Assumption \ref{assump:symindinit}(b). We describe the modifications of the
proofs of \cite{fan2020approximate} needed to establish Theorem
\ref{thm:symindinit} in Appendix \ref{appendix:indInitproof}.

\begin{remark}\label{remark:indinitestimatekappa_sym}
We have defined $b_{ts}$ in (\ref{eq:AMPz}) using the free cumulants of the 
limit spectral distributions. 
Theorem \ref{thm:symindinit} then also holds for any AMP algorithm where 
$b_{ts}$ are replaced by $b_{ts}'$ such that $\|b_{ts}-b_{ts}'\| \to 0$ a.s.\ as 
$n \to \infty$. 
In particular, they hold if $b_{ts}$ are defined with $\{\kappa_j\}$ 
replaced by consistent estimates of these limit free cumulants.
\end{remark}

\section{Spectral initialization for the symmetric spiked model}\label{sec:sym:pcaInit}

We now develop versions of the preceding AMP algorithms for ``spiked''
signal-plus-noise models, with spectral initialization. Consider a rank-$K'$
symmetric spiked model
\begin{align}\label{eq:sym:rankkModel}
\bX= \sum_{k=1}^{K'} \frac{\theta_k}{n} \bu_*^k {\bu_*^k}^\top + \bW \in
\bbR^{n\times n},
\end{align}
where $\bu_*^1,\ldots,\bu_*^{K'}$ are $K'$ orthogonal ``signal'' eigenvectors,
$\theta_1,\ldots,\theta_{K'}$ are non-zero signal eigenvalues,
and the noise matrix $\bW=\Ob^\top \bLambda \Ob$ is symmetric and
rotationally-invariant in law. We distinguish
this rank $K'$ from the dimension $K$ of the AMP iterates, in anticipation of
applications where $K'$ may be modeled as large, and the practitioner
may wish to apply AMP to estimate small subsets of $K$ signal eigenvectors at a
time. We develop a version of AMP where any $K$ super-critical sample
eigenvectors of $\bX$ may be chosen as the spectral initialization.

In the model (\ref{eq:sym:rankkModel}), we fix the normalization
\begin{equation}\label{eq:unormalization}
\|\bu_*^k\|^2=n, \qquad {\bu_*^j}^\top \bu_*^k=0
\qquad \text{ for all } j \neq k \in \{1,\ldots,K'\}.
\end{equation}
We order the signal components such that the
first $K$ will correspond to the spectral initialization, and the
remaining $K'-K$ are ordered arbitrarily. (Thus $\theta_1,\ldots,\theta_{K'}$
are not sorted, and they may have arbitrary signs.)
Supposing that $K_+$ values $\theta_1,\ldots,\theta_{K'}$ are positive and
$K_-=K'-K_+$ values are negative, we denote by
\begin{equation}\label{eq:lambdaX}
\lambda_1(\Xb),\ldots,\lambda_{K'}(\Xb)
\end{equation}
the largest $K_+$ and smallest $K_-$ sample eigenvalues of $\Xb$, sorted in the same
order as $\theta_1,\ldots,\theta_{K'}$. We denote the
associated sample eigenvectors of $\bX$ by $\bff_\pca^1,\ldots,\bff_\pca^{K'}$,
with the normalization and sign convention
\begin{equation}\label{eq:fpca}
\|\bff_\pca^k\|^2=n, \qquad {\bff_\pca^k}^\top \bu_*^k \geq 0,
\qquad {\bff_\pca^j}^\top \bff_\pca^k=0
\qquad \text{for all } j \neq k \in \{1,\ldots,K'\}.
\end{equation}

\subsection{Preliminaries on sample eigenvectors and the
$R$-transform}\label{sec:sympreliminaries}

\paragraph{Eigenvectors and spectral phase transition.}
For the signal-plus-noise model~\eqref{eq:sym:rankkModel},
the quantitative behavior of the leading (positive and negative) sample
eigenvalues/eigenvectors of $\bX$ and associated phenomena of spectral phase
transitions were studied in
\cite{benaych2011eigenvalues}, extending the work of
\cite{baik2005phase,baik2006eigenvalues,paul2007asymptotics} for models with
i.i.d.\ noise. These depend on the Cauchy-transform of the limit spectral
distribution of $\bW$, and we briefly review these results here.

Let $\Lambda$ be the limit spectral distribution of $\bW$. Denote the Cauchy-transform of $\Lambda$ by
\[G(z) = \bbE[(z - \Lambda)^{-1}] \qquad \text{ for }
z \in (\lambda_+,\infty) \cup (-\infty,\lambda_-)\]
where $\lambda_\pm$ are the endpoints of support of $\Lambda$ as defined in
Assumption \ref{assump:symW}(c).
$G(z)$ is strictly decreasing and positive on $(\lambda_+,\infty)$ and strictly
decreasing and negative on $(-\infty,\lambda_-)$, and hence admits a functional
inverse $G^{-1}(z)$ on $(0,G(\lambda_+)) \cup (G(\lambda_-),0)$ where
$G(\lambda_{\pm})=\lim_{z \to \lambda_{\pm}} G(z)$. For $k \in \{1,\ldots,K'\}$ such that
$1/\theta_k$ belongs to this domain of $G^{-1}(z)$, define
\begin{align}\label{eq:sym:pcasol}
\lambda_{\pca,k} = G^{-1}(1/\theta_k),
\qquad
\mu_{\pca,k}^2  = \frac{-1}{\theta_k^2G'(\lambda_{k,\pca})}.
\end{align}
The following theorem summarizes several results of
\cite[Theorems~2.1 and 2.2]{benaych2011eigenvalues}, which establish the
first-order behavior of the leading sample eigenvalues and eigenvectors.

\begin{theorem}[\cite{benaych2011eigenvalues}]\label{thm:symspike}
Suppose $\bW$ satisfies Assumption \ref{assump:symW}, and
$\theta_1,\ldots,\theta_{K'}$ are distinct and fixed as $n \to \infty$. Then
for each $k \in \{1,\ldots,K'\}$ where $\theta_k>1/G(\lambda_+)\geq 0$ or $\theta_k<1/G(\lambda_-)\leq 0$, almost surely
\begin{align*}
\lim_{n \to \infty}
\lambda_k(\bX)= \lambda_{\pca, k}, \quad
\lim_{n\to\infty} \left( \frac{{\bff_\pca^k}^\top \bu_*^k}{n}
\right)^2 = \mu_{\pca,k}^2,
\quad \lim_{n\to\infty} \left(\frac{{\bff_\pca^k}^\top \bu_*^j}{n}
\right)^2 = 0 \text{ for all } j \in \{1,\ldots,K'\} \setminus \{k\}.
\end{align*}
For each other $k \in \{1,\ldots,K'\}$,
$\lim_{n \to \infty} \lambda_k(\Xb) \in \{\lambda_-,\lambda_+\}$.
\end{theorem}

Thus, for a ``super-critical'' signal eigenvalue $\theta_k$ exceeding the
positive and negative
phase transition thresholds $1/G(\lambda_+)$ and $1/G(\lambda_-)$, the
corresponding sample eigenvalue $\lambda_k(\bX)$ converges to a deterministic
value $\lambda_{\pca,k}$ outside the interval $[\lambda_-,\lambda_+]$,
the sample eigenvector $\bff_\pca^k$ achieves asymptotically non-vanishing
alignment with its corresponding signal vector $\bu_*^k$, 
and it has asymptotically 0 alignment
with the other signal vectors $\bu_*^j$. For a ``sub-critical''
signal eigenvalue $\theta_k$ below these phase transition thresholds,
$\lambda_k(\bX)$ converges to the spectral edges $\lambda_\pm$ of the noise
spectral distribution. Note that
$G(\lambda_+)$ and $G(\lambda_-)$ may be infinite, in which case the phase
transition thresholds are 0, and all signal eigenvalues are super-critical.
Whether this occurs depends on the rates of decay of the distribution of
$\Lambda$ at its spectral edges $\lambda_\pm$, and
this is discussed further in \cite[Proposition 2.4]{benaych2011eigenvalues}.

\paragraph{$R$-transform.} The above first-order limits may be re-expressed via
the $R$-transform and free cumulants of $\Lambda$, which linearize free
addition of independent random matrices. Define
\[R(z) = G^{-1}(z) - \frac{1}{z}\]
on the same domain as $G^{-1}$. Differentiating on both sides yields 
\begin{align*}
R'(z) = \frac{1}{G'(G^{-1}(z))} + \frac{1}{z^2}. 
\end{align*}
For $z$ small enough, $R(z)$ and its derivative
have the convergent series expansions defined through the free cumulants $\{\kappa_j\}_{j\geq 1}$ of $\Lambda$, 
\begin{equation}\label{eq:Rtransformseries}
R(z) = \sum_{j = 0}^{\infty} \kappa_{j+1} z^j,\qquad
R'(z) = \sum_{j = 0}^{\infty} (j+1) \kappa_{j+2} z^{j},
\end{equation}
see e.g.\ \cite[Theorem 17]{mingo2017free}. Thus for sufficiently
large $|\theta_k|$, the limits $\lambda_{\pca,k}$ and $\mu_{\pca,k}^2$ in
\eqref{eq:sym:pcasol} also have the convergent series forms
\begin{align}\label{eq:sigmasq_series_expansion}
\lambda_{\pca,k}&=\theta_k + R(1/\theta_k)
= \theta_k + \sum_{j=0}^\infty \frac{\kappa_{j+1}}{\theta_k^j},
&
1 - \mu_{\pca,k}^2 &= \frac{1}{\theta_k^2}R'(1/\theta_k)=
\sum_{j=0}^\infty (j+1)\frac{\kappa_{j+2}}{\theta_k^{j+2}}.
\end{align}

\subsection{AMP algorithm}

Isolating the first $K \leq K'$ of the (unsorted) signal components for the
spectral initialization, denote
\[S=\diag(\theta_1,\ldots,\theta_K)\in\RR^{K\times K},
\qquad S'=\diag(\theta_1,\ldots,\theta_{K'})\in\RR^{K'\times K'},\]
\[\bF_\pca=(\bff_\pca^1,\ldots,\bff_\pca^K)\in\RR^{n\times K},
\qquad \bU_*'=(\bu_*^1,\ldots,\bu_*^{K'})\in\RR^{n\times K'}.\]
Here we assume that $\theta_1, \ldots, \theta_K$ are known for
simplicity. The following procedure is also applicable when $\theta_1, \ldots, 
\theta_K$ are unknown, as one can consistently estimate these values using 
Theorem \ref{thm:symspike}, and we discuss this estimation in
Section~\ref{subsec:implementation}.
We consider an AMP algorithm with iterates of dimension $n \times K$,
initialized spectrally at
\begin{equation}\label{eq:symPCAinit}
\bU_0=\bF_\pca\,S^{-1}, \qquad \bF_0=\bF_\pca.
\end{equation}
For a sequence of Lipschitz functions $u_1,u_2,\ldots$ where $u_t:\RR^{tK} \to
\RR^K$, this algorithm then iteratively computes for $t \geq 1$
\begin{align}
\bU_t&=u_t(\bF_0,\bF_1,\ldots,\bF_{t-1})\label{eq:AMPPCAu},\\
\bF_t&=\bX \bU_t-\bU_0 b_{t0}^\top-\bU_1 b_{t1}^\top-\ldots-\bU_t b_{tt}^\top.
\label{eq:AMPPCAf}
\end{align}
Thus each $u_t(\cdot)$ may depend on the preceding iterates
$\bF_0,\ldots,\bF_{t-1}$, including the spectral initialization.
An additional matrix of side information may be incorporated into each
$u_t(\cdot)$ as in (\ref{eq:AMPu}), but we omit this here for simplicity. To
ease notation in the analysis, we have shifted the initialization index from
1 to 0.

\paragraph{Debiasing coefficients.}

For each fixed $s \geq 1$, define diagonal matrices
$\tilde\kappa_s,\hat\kappa_s \in \RR^{K \times K}$ by the matrix series
\begin{align}\label{eq:sym:kappaseries}
	\tilde\kappa_s = \sum_{j=0}^{\infty} \kappa_{j+s}S^{-j},\qquad 
	\hat\kappa_s = \sum_{j=0}^{\infty} (j+1) \kappa_{j+s}S^{-j}.
\end{align}
For each $T\geq 1$, define the block-lower-triangular matrix
\begin{align*}
\bphi_T = \Big(\langle \partial_s \bU_r \rangle \Big)_{r,s \in \{0,\ldots,T\}}
\in \RR^{(T+1)K \times (T+1)K}
\end{align*}
with {row blocks} indexed by $r$ and {column blocks} by $s$, where
$\langle \partial_s \bU_r \rangle=n^{-1}\sum_{i=1}^n
\partial_s u_r(f_{0,i},\ldots,f_{r-1,i})$,
$\partial_s u_r \in \RR^{K \times K}$ denotes the Jacobian of
$u_r(f_0,\ldots,f_{r-1})$ in the argument $f_s$, and $\partial_s u_r=0$ for $s
\geq r$. Define the matrix series
\begin{align*}
	\bb_T = \sum_{j=0}^\infty 
	\kappa_{j+1}\bphi_T^j, \qquad
\widetilde\bb_T = \sum_{j=0}^\infty 
	\bphi_T^j \odot \tilde\kappa_{j+1},
\end{align*}
where we recall our notation
$M \odot \tilde\kappa_s$ and $\tilde\kappa_s \odot M$ for the right-
and left- multiplication of each block of $M$.
Indexing blocks by $\{0,\ldots,T\}$ and writing $[t,s]$ to denote the $K \times
K$ submatrix corresponding to row block $t$ and column block $s$, we set the
debiasing coefficients of (\ref{eq:AMPPCAf}) up to iteration $T$ as
\begin{align}\label{eq: sym debias coef spec}
    b_{ts} = \begin{cases}
    \widetilde\bb_T[t,s] & \textnormal{if }s=0,\\
    \bb_T[t,s] & \textnormal{otherwise.}
    \end{cases}
\end{align}

\paragraph{State Evolution.}
The state of this algorithm up to iteration $T$ is characterized by
\[\bmu_T=\begin{pmatrix} \mu_0 \\ \vdots \\ \mu_T \end{pmatrix}
\in \RR^{(T+1)K \times K'}, \qquad
\bSigma_T=\begin{pmatrix} \sigma_{00} & \cdots & \sigma_{0T} \\
\vdots & \ddots & \vdots \\ \sigma_{T0} & \cdots & \sigma_{TT} \end{pmatrix}
\in \RR^{(T+1)K \times (T+1)K}\]
and a corresponding joint law for random vectors
$(U_*',U_0,\ldots,U_{T+1},F_0,\ldots,F_T)$, where $U_*',U_t,F_t$
represent the limit distributions of $\bU_*',\bU_t,\bF_t$, the matrix
$\bSigma_T$ is the covariance
of $(F_0,\ldots,F_T)$ conditional on $U_*'$, and $\bmu_T$ relates the conditional
mean of $(F_0,\ldots,F_T)$ to $U_*'$. These are defined recursively as follows:

Let $U_*' \in \RR^{K'}$ be a random vector satisfying $\EE[U_*'{U_*'}^\top]=\Id$,
representing the limit distribution of rows of $\bU_*'$ under the
normalization (\ref{eq:unormalization}). Set
$\mu_\pca=\diag(\mu_{\pca,1},\ldots,\mu_{\pca,K}) \in \RR^{K \times K'}$, where
the last $K'-K$ columns are 0. Then
$\Id-\mu_\pca\mu_\pca^\top=\diag(1-\mu_{\pca,1}^2,\ldots,1-\mu_{\pca,K}^2) \in
\RR^{K \times K}$. We initialize
\[\bmu_0=\mu_0=\mu_\pca,
\quad \bSigma_0=\sigma_{00}=\Id-\mu_\pca\mu_\pca^\top.\]
Inductively, having defined $(\bmu_{t-1},\bSigma_{t-1})$, 
we define a joint law for $(U_*',U_0,\ldots,U_t,F_0,\ldots,F_{t-1})$ by
\begin{align}
\begin{aligned}
(F_0,\ldots,F_{t-1}) \mid U_*' &\sim
\cN\big(\bmu_{t-1} \cdot U_*',\;\bSigma_{t-1}\big),\\
U_0=S^{-1}F_0, &\quad
U_s = u_s(F_0,\ldots,F_{s-1}) \textnormal{ for } s = 1,\ldots,t.
\end{aligned}
\label{eq:limitDistPCAAMP}
\end{align}
We then define the next mean transformation $\bmu_t$ to have the blocks
\begin{equation}\label{eq:symmu}
\mu_s=\EE[U_s{U_*'}^\top] \cdot S' \text{ for each } s=0,\ldots,t.
\end{equation}
For $s=0$, this may be checked to coincide with the above initialization
$\mu_\pca$.

We decompose the second moment matrix of $(U_0,\ldots,U_t)$ into four parts,
\begin{align}
\bDelta_t\defeq &\underbrace{{\footnotesize\begin{pmatrix}
		0 & 0 & \cdots & 0 \\
		0 & \bbE[U_1U_1^\top] & \cdots & \bbE[U_1U_t^\top] \\
		\vdots & \vdots & \ddots & \vdots\\
		0 & \bbE[U_tU_1^\top] & \cdots & \bbE[U_tU_t^\top] \\
	\end{pmatrix}}}_{\widebar{\bDelta}_t} + \underbrace{{\footnotesize\begin{pmatrix}
		0 & \bbE[U_0U_1^\top] & \cdots & \bbE[U_0U_t^\top] \\
		0 & 0 & \cdots & 0 \\
		\vdots & \vdots & \ddots & \vdots\\
		0 & 0 & \cdots & 0\\
	\end{pmatrix}}}_{\widetilde\bDelta_t}\notag\\
&\hspace{2in} + \underbrace{{\footnotesize\begin{pmatrix}
		0 & 0 & \cdots & 0 \\
		\bbE[U_1U_0^\top] & 0 & \cdots & 0 \\
		\vdots & \vdots & \ddots & \vdots\\
		\bbE[U_tU_0^\top] & 0 & \cdots & 0\\
	\end{pmatrix}}}_{\widetilde\bDelta_t^\top}
+ \underbrace{{\footnotesize\begin{pmatrix}
		\bbE[U_0U_0^\top] & 0 & \cdots & 0 \\
		0 & 0 & \cdots & 0 \\
		\vdots & \vdots & \ddots & \vdots\\
		0 & 0 & \cdots & 0 \\
	\end{pmatrix}}}_{\widehat\bDelta_t},\label{eq: sym Delta_t}
\end{align}
set
\[\bDelta_t^{(j)}=\kappa_{j+2}\widebar\bDelta_t +
\tilde\kappa_{j+2}\odot\widetilde\bDelta_t +
\widetilde\bDelta_t^\top\odot\tilde\kappa_{j+2} +
\hat\kappa_{j+2}\odot\widehat\bDelta_t,\]
and define analogously to~\eqref{eq:phi}
\begin{align*}
	\bPhi_t = \Big(\EE[\partial_s u_r(F_0,\ldots,F_{r-1})]\Big)_{r,s \in
	\{0,\ldots,t\}} \in \RR^{(t+1)K \times (t+1)K}.
\end{align*}
Then, recalling the function $\bTheta^{(j)}[\cdot,\cdot]$ from (\ref{eq:Sigma}),
we define the next covariance matrix $\bSigma_t$ by
\begin{align}\label{def: sym Sigma_T}
\bSigma_t = \sum_{j=0}^\infty \bTheta^{(j)}[\bPhi_t,\bDelta_t^{(j)}].
\end{align}
It may be checked from these definitions that the first $t$ blocks of $\bmu_t$
coincide with $\bmu_{t-1}$, and the upper-left $t \times t$ blocks of
$\bSigma_t$ coincide with $\bSigma_{t-1}$.

Our main result in the context of model (\ref{eq:sym:rankkModel})
shows that this state evolution provides a
rigorous characterization of the AMP algorithm
(\ref{eq:AMPPCAu}--\ref{eq:AMPPCAf}) with the spectral initialization
(\ref{eq:symPCAinit}), under the following assumptions.

\begin{assumption}\label{assump:symAMP}
\begin{enumerate}[label=(\alph*)]
\item $\bU_*'=(\bu_*^1,\ldots,\bu_*^{K'})$
is independent of $\bO$, satisfies (\ref{eq:unormalization}), 
and $\bU_*' \toWtwo U_*'$ a.s.\ as $n \to \infty$ where
$\EE[U_*'{U_*'}^\top]=\Id$.
\item Each $u_{t+1}(\cdot)$ is Lipschitz in all arguments. For each
$s=0,\ldots,t$ and all $(\bmu,\bSigma)$ in a sufficiently small open neighborhood of
$(\bmu_t,\bSigma_t)$ defined by (\ref{eq:symmu}) and (\ref{def: sym Sigma_T}),
$\partial_s u_{t+1}(F_0,\ldots,F_t)$ exists and is continuous on
a set of probability 1 under the marginal law of $(F_0,\ldots,F_t)$ defined by
$(F_0,\ldots,F_t) \mid U_*' \sim \cN(\bmu \cdot U_*',\bSigma)$.
\item $\theta_1,\ldots,\theta_{K'}$ are distinct.
For each $k \in \{1,\ldots,K\}$, either
$\theta_k>G(1/\lambda_+) \geq 0$ or $\theta_k<G(1/\lambda_-) \leq 0$,
and there exists some constant $\iota\in(0,1)$ such that
\begin{align}\label{eq:largetheta}
\frac{\max(|\lambda_+|,|\lambda_-|)}{|\theta_k|}
+\sum_{j=1}^\infty \frac{|\kappa_j|}{|\theta_k|^j\cdot\iota^{j-1}}<1.
\end{align}
\end{enumerate}
\end{assumption}

Assumption \ref{assump:symAMP}(c) requires $\theta_1,\ldots,\theta_K$ for the
first $K$ selected signals to be ``super-critical'', as described by
Theorem \ref{thm:symspike}. Furthermore, (\ref{eq:largetheta}) requires each
$|\theta_k|$ to exceed some constant depending only on the law of $\Lambda$. (We
believe that this additional requirement may be an artifact of the proof
technique, and we do not optimize the value of this constant.) Note that this
requirement (\ref{eq:largetheta}) is sufficient to imply that the series
(\ref{eq:Rtransformseries}) for $R(z)$ and $R'(z)$
are absolutely convergent at $z=1/\theta_k$, 
and hence the series (\ref{eq:sym:kappaseries}) defining
$\tilde{\kappa}_s,\hat{\kappa}_s$ are also absolutely convergent.

We remark that we assume for simplicity the distinctness of
$\theta_1,\ldots,\theta_{K'}$. If these signal values are not all distinct, then
the AMP state evolution parameters may not concentrate around deterministic
values, and
this has been discussed and analyzed in \cite{montanari2021estimation} when the
noise matrix is Gaussian.
The study of similar phenomena in our model with rotationally-invariant noise
may also be of interest, but we will not pursue this direction in the current paper.

\begin{theorem}\label{thm:sym}
Consider the symmetric spiked model (\ref{eq:sym:rankkModel}), where
Assumptions \ref{assump:symW} and \ref{assump:symAMP}
hold. For any $T \geq 1$, consider the spectrally initialized AMP algorithm
(\ref{eq:symPCAinit}--\ref{eq:AMPPCAf}) up to iteration $T$, and define
$(U_*',U_0,\ldots,U_{T+1},F_0,\ldots,F_T)$ by \eqref{eq:limitDistPCAAMP}.
Then almost surely as $n\to\infty$,
\begin{align*}
(\bU_*',\bU_0,\ldots,\bU_{T+1}, \bF_0,\ldots,\bF_T) \toWtwo (U_*', U_0,\ldots,
U_{T+1},F_0,\ldots,F_T).
\end{align*}
\end{theorem}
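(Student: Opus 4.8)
}
The plan is to realize the spectral initialization $\bF_\pca$ as the $\tau\to\infty$ limit of a long but finite run of \emph{linear} AMP steps, to run the non-linear iteration (\ref{eq:AMPPCAu})--(\ref{eq:AMPPCAf}) from this approximation, to analyze the resulting composite algorithm with Theorem~\ref{thm:symindinit} at each fixed $\tau$, and then to let $\tau\to\infty$; this follows the approach of \cite{mondelli2021approximate}. As a preliminary, note that any AMP run on $\bX=n^{-1}\bU_*'S'{\bU_*'}^\top+\bW$ with an initialization independent of $\bO$ can be rewritten as an AMP on the noise matrix $\bW$ of the form (\ref{eq:AMPz})--(\ref{eq:AMPu}) with $\bU_*'$ as side information, since $\bX\bU_t=\bW\bU_t+\bU_*'\big(S'\cdot n^{-1}{\bU_*'}^\top\bU_t\big)$ and the correction is a Lipschitz function of $\bU_*'$ and of the $K'\times K$ overlap $n^{-1}{\bU_*'}^\top\bU_t$, which concentrates on a deterministic limit under $W_2$-convergence of the joint empirical distribution of the rows. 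Thus Theorem~\ref{thm:symindinit} applies and gives, for every AMP on $\bX$ with an independent initialization, a Gaussian state evolution that additionally tracks the overlaps $\EE[U_s{U_*'}^\top]$ --- this is precisely the mean bookkeeping encoded by $\bmu$ in (\ref{eq:limitDistPCAAMP}). I use this spiked form of Theorem~\ref{thm:symindinit} below.

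\paragraph{A linear AMP converging to $\bF_\pca$.}
For each super-critical $k\le K$ the sample eigenvector obeys, to leading order, $\bff_\pca^k\propto(\lambda_{\pca,k}\Id-\bW)^{-1}\bu_*^k=\sum_{j\ge0}\lambda_{\pca,k}^{-j-1}\bW^j\bu_*^k$, with geometric convergence because $\lambda_+/\lambda_{\pca,k}<1$. Guided by this identity, I construct for each $\tau\ge1$ a linear AMP on $\bX$ --- affine non-linearities, Onsager coefficients assembled from $\{\kappa_j\}$ as in (\ref{eq:sym:kappaseries}) --- whose $\tau$-th iterate $\bG_\tau$ satisfies $n^{-1}\|\bG_\tau-\bF_\pca\|_\F^2\to0$, first as $n\to\infty$ and then as $\tau\to\infty$. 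On the state-evolution side, the affine non-linearities turn the recursion (\ref{eq:symmu})--(\ref{def: sym Sigma_T}) into an explicit matrix recursion whose unique fixed point corresponds to the law $\cN(\mu_\pca\cdot U_*',\,\Id-\mu_\pca\mu_\pca^\top)$ of (\ref{eq:limitDistPCAAMP}) and is reached geometrically. On the realized-iterate side I show $\bX\bG_\tau\approx\lambda_{\pca,k}\bG_\tau$ in $\ell_2$ up to an error vanishing with $\tau$; since $\lambda_{\pca,k}$ is a simple eigenvalue of $\bX$ isolated from the bulk $[\lambda_-,\lambda_+]$ by Theorem~\ref{thm:symspike}, and the overlap $n^{-1}(\bff_\pca^k)^\top\bG_\tau$ stays bounded away from $0$, a Davis--Kahan-type bound forces $\bG_\tau$ close to $\bff_\pca^k$ once the normalizations and signs are matched. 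Assumption~\ref{assump:symAMP}(c), and specifically (\ref{eq:largetheta}), is exactly what makes this step work: it forces absolute convergence of the free-cumulant series (\ref{eq:Rtransformseries}) and (\ref{eq:sym:kappaseries}) at $z=1/\theta_k$ and makes the linear state-evolution map a contraction near its fixed point, \emph{without} requiring the $\kappa_j$ to be non-negative (the restriction of \cite{mondelli2021pca}).

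\paragraph{Composite algorithm and the double limit.}
For each $\tau$, define the composite AMP on $\bX$: run the linear AMP above for $\tau$ steps to obtain $\bG_\tau$, set $\bF_0^{(\tau)}=\bG_\tau$ and $\bU_0^{(\tau)}=\bG_\tau S^{-1}$, then run (\ref{eq:AMPPCAu})--(\ref{eq:AMPPCAf}) with the prescribed $u_1,u_2,\ldots$. This is a single AMP on $\bX$ with an initialization independent of $\bO$, so the spiked form of Theorem~\ref{thm:symindinit} gives, for any fixed $T$, $W_2$-convergence of its iterates to a Gaussian limit with parameters $(\bmu_T^{(\tau)},\bSigma_T^{(\tau)})$ produced by the composite state evolution. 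It remains to (i) show $(\bmu_T^{(\tau)},\bSigma_T^{(\tau)})\to(\bmu_T,\bSigma_T)$ as $\tau\to\infty$ with $(\bmu_T,\bSigma_T)$ as in (\ref{eq:symmu}) and (\ref{def: sym Sigma_T}) --- obtained by substituting the fixed-point identity from the previous step into the recursion and using continuity of the maps involved, and which in particular shows that the ordinary Onsager coefficients of the composite AMP converge to the $\tilde\kappa_s,\hat\kappa_s$-dependent coefficients (\ref{eq: sym debias coef spec}); and (ii) show $n^{-1}\|\bF_t^{(\tau)}-\bF_t\|_\F^2\to0$ for each $t\le T$ (first $n\to\infty$, then $\tau\to\infty$), which follows from $n^{-1}\|\bF_0^{(\tau)}-\bF_\pca\|_\F^2\to0$ together with a routine Lipschitz-stability estimate propagating the discrepancy through iterations $1,\dots,T$. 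Combining the finite-$\tau$ convergence with (i) and (ii) via a triangle inequality in $W_2$ and a diagonal argument in $(\tau,n)$ yields the asserted convergence of $(\bU_*',\bU_0,\ldots,\bU_{T+1},\bF_0,\ldots,\bF_T)$.

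\paragraph{Where the difficulty lies.}
The main obstacle is the analysis of the linear AMP: establishing that it truly converges to the spectral estimator $\bF_\pca$ and controlling the rate uniformly enough in $n$ to justify exchanging the $n\to\infty$ and $\tau\to\infty$ limits, all under the weaker hypothesis that the noise free cumulants may be negative. This combines the approximate-eigenvector/spectral-gap argument with a contraction analysis of the matrix-valued linear state-evolution recursion, and (\ref{eq:largetheta}) is precisely the hypothesis that makes the relevant series converge and the recursion contract. A secondary, more mechanical point is the limit identity in (i) --- that the composite algorithm's ordinary Onsager coefficients converge to the $\tilde\kappa_s,\hat\kappa_s$-dependent coefficients (\ref{eq: sym debias coef spec}) encoding the special correlation structure of the eigenvector initialization.
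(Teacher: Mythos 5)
Your proposal follows essentially the same two-phase auxiliary-AMP strategy as the paper (and as \cite{mondelli2021approximate}): approximate the spectral initialization by $\tau$ linear AMP steps, analyze the composite algorithm via Theorem~\ref{thm:symindinit} with $\bU_*'$ as side information (this is the paper's Corollary~\ref{cor:auxAMPSE}), then take $\tau\to\infty$ and compare the iterates and state-evolution parameters. The one genuine difference is in the key lemma establishing that the linear phase converges to $\bF_\pca$: you propose to show $\bX\bG_\tau\approx\lambda_{\pca,k}\bG_\tau$ and invoke a Davis--Kahan bound, whereas the paper (Lemma~\ref{lemma: sym linear AMP}) expands the iterates directly in the sample eigenbasis of $\bX$ and proves a contraction for the coefficients $r_t^{k,i}={\bff_\pca^i}^\top\bff_t^k/n$ with $i>K'$ in a weighted-$\sup$ norm with weights $\iota^j$, then uses the $\mu_t\equiv\mu_\pca$ state-evolution identity to pin down the scalar $\xi_t^k$. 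Your route is viable, but note that both approaches ultimately need the same contraction mechanism from Assumption~\ref{assump:symAMP}(c)--- Davis--Kahan alone does not establish that the iterates stabilize or that $\bX\bG_\tau\approx\lambda_{\pca,k}\bG_\tau$ holds for the realized vectors rather than merely in $W_2$, so the reformulation is not a shortcut.

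Two details you gloss over would need care in a full write-up. First, the linear AMP's auxiliary initialization must be the synthetic blend $\bu^k_{-\tau}=\big(\mu_{\pca,k}\bu_*^k+\sqrt{1-\mu_{\pca,k}^2}\,\bz_k\big)/\theta_k$, not $\bu_*^k$ itself as your resolvent heuristic $\bff_\pca^k\propto(\lambda_{\pca,k}\Id-\bW)^{-1}\bu_*^k$ suggests: with $\bu_*^k$ as the initialization the mean parameter of the linear state evolution satisfies $\mu_1=\Id$ and stays there under the recursion $\mu_{t+1}=S^{-1}\mu_t S'$, while the realized iterates align at level $\mu_{\pca,k}$ only, so the $\xi_t^k\to 1$ step would fail without a rescaling. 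The paper's choice makes $\mu_t\equiv\mu_\pca$ a fixed point, matching the target. Second, what you call a ``routine Lipschitz-stability estimate'' propagating the discrepancy through iterations is not merely Lipschitz continuity of the $u_t$: the composite Onsager term is $\sum_{i=-\tau}^{t}\bU_i^{(\tau)}b_{t,i}^{(\tau)\top}$, and the $\tau+1$ negative-index terms must collapse, as $\tau\to\infty$, into the single $\tilde\kappa$-dependent coefficient $b_{t,0}$ of (\ref{eq: sym debias coef spec}). Establishing this (and the analogous collapse in $\bSigma_t^{(\tau)}\to\bSigma_t$, which requires decomposing $\bSigma_t^{(\tau)}$ into $\widehat\bSigma,\widetilde\bSigma,\widebar\bSigma$ pieces) needs the uniform-summability / zip-lock argument of Lemma~\ref{lemma:ziplock} precisely because the number of terms grows with $\tau$; this is where the absolute convergence of (\ref{eq:sym:kappaseries}) granted by Assumption~\ref{assump:symAMP}(c) does real work beyond the contraction estimate.
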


\begin{remark}\label{remark:PCAinitestimatekappa_sym}
As in Remark \ref{remark:indinitestimatekappa_sym},
we have defined $b_{ts}$ in (\ref{eq: sym debias coef spec})
using the free cumulants $\{\kappa_j\}$ of the
limit noise spectral distribution, as well as the true signal values
$\theta_1,\ldots,\theta_K$. Theorem~\ref{thm:sym}
then also holds when $b_{ts}$ are replaced by $b_{ts}'$ such that
$\|b_{ts}-b_{ts}'\| \to 0$ a.s., and in
particular if $\{\kappa_j\}$, $\{\tilde\kappa_j\}$, $\{\hat\kappa_j\}$, and
$\theta_1,\ldots,\theta_K$ are
replaced by consistent estimates of these quantities.
\end{remark}

\subsection{Proof idea}\label{sec: sym proof sketch}
We provide the proof of Theorem \ref{thm:sym} in Appendix
\ref{appendix:symproof}, and describe here the main idea.
We construct an auxiliary AMP algorithm consisting of two phases. For some
$\tau \geq 1$, we index the iterates in the first phase from $-\tau$ to 0. We
consider an initialization $\bU_{-\tau}^{(\tau)}$
that is independent of $\bW$, and
apply linear AMP iterations with $\bU_t^{(\tau)}= \bF_{t-1}^{(\tau)}S^{-1}$ for
$t=-\tau+1,\ldots,0$. This has the effect of implementing a version of the
power method to compute the sample eigenvectors $\bF_\pca$ of $\bX$. A
main step of the proof is to show that as $\tau \to \infty$, the final iterate
$\bF_0^{(\tau)}$ obtained from these linear updates approaches the spectral
initialization $\bF_0=\bF_\pca$.
Here, the assumption in \eqref{eq:largetheta} guarantees the desired
convergence of $\bF_0^{(\tau)}$ and the corresponding state evolution.
Our analysis directly characterizes the alignment between 
$\bF_0^{(\tau)}$ and $\bF_\pca$ (which then implies the convergence of the state
evolution) under general spectral distributions. This is different
from \cite{mondelli2021pca}, which studied directly the asymptotic
state evolution, and instead established the optimal signal-to-noise
condition for convergence of the linear AMP iterations under a more
restrictive class of spectral distributions having positive free cumulants.

In the second phase, starting from $\bF_0^{(\tau)}$, we then apply the same
functions $u_t(\cdot)$ as in the spectrally-initialized
AMP algorithm for $t=1,2,\ldots$.
The combined auxiliary AMP algorithm can thus be summarized as follows:
\begin{align*}
	\bF_t^{(\tau)} = \Xb\bU_t^{(\tau)}-\sum_{s=-\tau}^t \bU_s^{(\tau)}b_{ts}^{(\tau)\top}, \qquad \bU_{t+1}^{(\tau)} = u_{t+1}(\bF_{-\tau}^{(\tau)},\ldots,\bF_t^{(\tau)})
\end{align*}
where
\begin{equation}\label{eq:auxAMP}
	u_{t+1}(\bF_{-\tau}^{(\tau)},\ldots,\bF_t^{(\tau)}) = \begin{cases}
		\bF_t^{(\tau)}S^{-1}
		& \text{ for } -\tau+1 \leq t <0,\\
		u_{t+1}(\bF_0^{(\tau)},\ldots,\bF_t^{(\tau)}) & \text{ for } 0\leq t \leq T.
	\end{cases}
\end{equation}
This AMP algorithm may be analyzed using Theorem \ref{thm:symindinit} with side
information $\bE=\bU_*'$, to yield a state evolution for iterates $t \geq 0$
characterized by some $(\bmu_t^{(\tau)},\bSigma_t^{(\tau)})$, which we describe
explicitly in Corollary \ref{cor:auxAMPSE}.
Then we prove that in the limit $\tau \to \infty$, the iterates of this
auxiliary AMP algorithm for $t \geq 0$
will be close to those of the spectrally-initialized AMP algorithm, in the sense that
\begin{align*}
    \lim_{\tau\to\infty} \limsup_{n\to\infty} \frac{\|\bF_t^{(\tau)} -
\bF_t\|_\F}{\sqrt{n}} = 0, \qquad \lim_{\tau \to\infty} \limsup_{n \to\infty} \frac{\|\bU_t^{(\tau)} - \bU_t\|_\F}{\sqrt{n}} = 0.
\end{align*}
We also prove that as $\tau \to \infty$, the state evolution characterizing
(\ref{eq:auxAMP}) converges to the state evolution described by 
(\ref{eq:symmu}) and (\ref{def: sym Sigma_T}), i.e.\ for each fixed $t \geq 1$,
\begin{align*}
    \limtau \bmu_t^{(\tau)} = \bmu_t\quad \textnormal{and}\quad \limtau \bSigma_t^{(\tau)} = \bSigma_t.
\end{align*}
Combining the above two facts, we then can characterize the behavior of the
AMP algorithm with spectral initialization.

\section{Orthogonal AMP for Bayesian PCA}\label{sec:numerical}

Finally, we discuss in this section an application 
to estimating the signal vectors $\bu_*^k$ in the
preceding symmetric signal-plus-noise model in Eq.~\eqref{eq:sym:rankkModel}. 
Applications for the rectangular model are deferred to 
Appendix~\ref{sec:numerical_rec}.

The distributions of $U_*'\in \RR^{K'}$
for the row-wise limits of $\bU_*'$ may be interpreted as Bayesian
``priors'' for these rows. Assuming these prior distributions are known,
we describe a Bayes-OAMP method in Section \ref{subsec:BayesOAMP} that uses
Bayes posterior-mean denoisers as the non-linearities in the preceding AMP
algorithms. We suggest ways of estimating the Onsager debiasing coefficients
and state evolution parameters in Section \ref{subsec:implementation}, and
illustrate the method in simulation in Section \ref{subsec:simulations}.
In practice, the distributions of $U_*'$ are typically also unknown, but may 
be estimated from the data $\bX$ using empirical Bayes ideas. We
will not consider this additional complexity here, but refer readers
to \cite{zhong2020empirical} for an example of this approach.

Let us describe the Bayes-OAMP method in a setting where the AMP dimension
$K$ and signal rank $K'$ satisfy $K \leq K'$, reflecting applications where $K'$
may be large, and one may wish to estimate smaller
subsets of dependent PCs at a time. In this setting, we consider the following
additional assumption for the laws of $U_*'$.

\begin{assumption}\label{assump:BayesOAMP}
The last $K'-K$ coordinates of $U_*'$ have mean 0, and are independent of
the first $K$ coordinates.
\end{assumption}

In applications, the data $\bX$ is often centered to have row and column
means approximately 0 before applying PCA, leading to PCs also having mean
approximately 0.
The components $\bu_*^k$ should ideally be grouped into
small subsets of dependent signals, with the signals within each subset estimated
together to maximally leverage their joint structure. The above assumption of
exact independence of one such subset $(1,\ldots,K)$ from the remaining signals
$(K+1,\ldots,K')$ is a modeling approximation for this grouping.

Assumption \ref{assump:BayesOAMP} ensures that when AMP is initialized
with the first $K$ signal components, its state evolution will not depend on the
remaining $K'-K$ components: In the symmetric setting of
Section \ref{sec:sym:pcaInit}, let
$U_* \in \RR^K$ be the first $K$ coordinates of $U_*'$. Then it is inductively
verified using Assumption \ref{assump:BayesOAMP} that each mean transformation
$\bmu_t \in \RR^{(t+1)K \times K'}$
has last $K'-K$ columns equal to 0, and the law of each vector $F_t$ and $U_t$
depends on $U_*'$ only via $U_*$. We may then write the
conditional law of $(F_0,\ldots,F_t)$ more simply as
\[(F_0,\ldots,F_t) \mid U_* \sim \cN(\bmu_t \cdot U_*,\,\bSigma_t)\]
where, with slight abuse of notation, we write $\bmu_t \in \RR^{(t+1)K \times
K}$ for its first $K$ columns. 
\subsection{Bayes-OAMP}\label{subsec:BayesOAMP}

In the symmetric spiked model (\ref{eq:sym:rankkModel}), under Assumption
\ref{assump:BayesOAMP}, we consider an AMP algorithm which estimates the first
$K$ components $\bU_* \in \RR^{n \times K}$ of $\bU_*'$, using
as its non-linearities the posterior-mean denoising functions
\[u_{t+1}(f_0,\ldots,f_t)=\EE[U_* \mid (F_0,\ldots,F_t)=(f_0,\ldots,f_t)]\]
computed under the above conditional law
$(F_0,\ldots,F_t) \mid U_* \sim \cN(\bmu_t \cdot U_*,\,\bSigma_t)$.
Explicitly, writing as shorthand $\bbf_t=(f_0,\ldots,f_t) \in \RR^{(t+1)K}$,
\begin{equation}\label{eq:posteriormean}
u_{t+1}(\bbf_t)=\frac{\EE[U_* \exp(-\frac{1}{2}(\bbf_t-\bmu_t \cdot U_*)^\top
\bSigma_t^{-1}(\bbf_t-\bmu_t \cdot U_*))]}
{\EE[\exp(-\frac{1}{2}(\bbf_t-\bmu_t \cdot U_*)^\top
\bSigma_t^{-1}(\bbf_t-\bmu_t \cdot U_*))]} \in \RR^K.
\end{equation}
We will refer to the AMP algorithm (\ref{eq:symPCAinit}--\ref{eq:AMPPCAf}) using
this choice of non-linearity as \emph{Bayes-OAMP}. 

If this posterior mean function $u_{t+1}(\cdot)$ is Lipschitz (which holds
e.g.\ if $U_*$ has bounded support or log-concave density) and the signal
strengths $|\theta_1|,\ldots,|\theta_K|$ are sufficiently large, then
Theorem \ref{thm:sym} implies that the mean-squared-error risk of the estimate
$\bU_t$ has the asymptotic limit
\[\lim_{n \to \infty} \frac{1}{n}\|\bU_t-\bU_*\|_F^2
\defeq \MSE(\bU_t)=\EE\big[\|U_*-\EE[U_* \mid F_0,\ldots,F_t]\|^2\big]
=\Tr \Cov[U_* \mid F_0,\ldots,F_t].\]
Thus these errors satisfy
\begin{enumerate}
\item $\MSE(\bU_{t+1}) \leq \MSE(\bU_t)$, by the property
$\Cov[U_* \mid F_0,\ldots,F_{t+1}] \preceq \Cov[U_* \mid F_0,\ldots,F_t]$. This
implies also by the martingale
property of $U_1,U_2,\ldots$ that the algorithm is convergent in the sense
\[\lim_{n \to \infty}
\frac{1}{n}\|\bU_{t+1}-\bU_t\|_F^2
=\EE\big[\|U_{t+1}-U_t\|^2\big]=\MSE(\bU_t)-\MSE(\bU_{t+1}) \overset{t \to
\infty} \to 0.\]
\item $\MSE(\bU_t) \leq \MSE(\bF_\pca)$ for any iterate $t \geq 1$, because
\[\MSE(\bF_\pca)=\MSE(\bF_0)=\EE[\|U_*-F_0\|_2^2] \geq \EE[\|U_*-\EE[U_* \mid
F_0,\ldots,F_t]\|_2^2]=\MSE(\bU_t).\]
\end{enumerate}

\begin{remark}
This algorithm differs from the Bayes-AMP algorithms
of \cite{rangan2012iterative,montanari2021estimation} for Gaussian noise and
the single-iterate Bayes-OAMP algorithm that was studied in
\cite[Section 3]{fan2020approximate}, which use instead
$u_{t+1}(f_0,\ldots,f_t)=\EE[U_* \mid F_t=f_t]$ based only on the single
preceding iterate $F_t$.

When $\bW$ is symmetric Gaussian noise and $\Lambda$ is distributed as Wigner's
semicircle law, these two approaches coincide: Indeed, in this case
$\kappa_2=1$ and $\kappa_j=0$ for all $j \geq 3$, so (\ref{def: sym Sigma_T})
reduces to $\bSigma_t=\bDelta_t=\EE[UU^\top]$ where $U=(U_0,\ldots,U_t)$,
coinciding with the state evolution shown in \cite{montanari2021estimation}.
For any $t \geq 1$ and $0 \leq s \leq t$, from the martingale identity
\[\EE[U_sU_t^\top]=\EE[U_s \EE[U_*^\top \mid F_0,\ldots,F_t]]
=\EE[\EE[U_sU_*^\top \mid F_0,\ldots,F_t]]=\EE[U_sU_*^\top]\]
and the definition of $\bmu_t$ in (\ref{eq:symmu}),
we observe that $\bSigma_t(\be_t \odot S)=\bmu_t$ and $\sigma_{tt}S=\mu_t$,
where $S=\diag(\theta_1,\ldots,\theta_K) \in \RR^{K \times K}$,
$\be_t \odot S \in \RR^{(t+1)K \times K}$ has first $t$ row
blocks equal to 0 and last row block equal to $S$, and
$\sigma_{tt},\mu_t \in \RR^{K \times K}$ denote the last blocks of
$\bSigma_t$ and $\bmu_t$.
Then $\bSigma_t^{-1}\bmu_t=\be_t \odot S$ and $\sigma_{tt}^{-1}\mu_t=S$.
So (\ref{eq:posteriormean}) is equivalent to the single-iterate posterior mean,
\begin{align*}
u_{t+1}(\bbf_t)&=\frac{\EE[U_*\exp(f_t^\top SU_*-\frac{1}{2}U_*^\top \mu_t^\top
SU_*)]}{\EE[\exp(f_t^\top SU_*-\frac{1}{2}U_*^\top \mu_t^\top SU_*)]}\\
&=\frac{\EE[U_*\exp(-\frac{1}{2}(f_t-\mu_t U_*)^\top \sigma_{tt}^{-1}(f_t-\mu_t
U_*))]}{\EE[\exp(-\frac{1}{2}(f_t-\mu_t U_*)^\top \sigma_{tt}^{-1}(f_t-\mu_t
U_*))]}=\EE[U_* \mid F_t=f_t].
\end{align*}

Outside of this setting where $\Lambda$ has Wigner semicircle law, 
these two approaches are different. The state evolution of Bayes-OAMP is more
involved, and we will not pursue a theoretical characterization
of its fixed point in this work, as was done for single-iterate Bayes-OAMP in
\cite{fan2020approximate}. We observe in simulation that the above
monotonicity property of $\MSE(\bU_t)$ need not hold for single-iterate
Bayes-OAMP in settings of small signal strength, as was observed also in
\cite{mondelli2021pca}, and that Bayes-OAMP can substantially improve over the
single-iterate approach in such settings.
\end{remark}

\subsection{Estimating the debiasing corrections and state evolution}\label{subsec:implementation}
Numerical implementations of the Bayes-OAMP algorithms require estimating the
debiasing coefficients and state evolution parameters that describe the
conditional laws of $(F_0,\ldots,F_t)$. We describe here
one approach for this estimation for the symmetric model.

We estimate the law
of $\Lambda$ by the observed empirical eigenvalue distribution of $\bX$, with
largest $K_+$ positive eigenvalues and smallest $K_-$ negative eigenvalues
removed. This estimate is weakly
consistent as $n \to \infty$ by Weyl's eigenvalue interlacing inequality.
We compute the empirical moments of this law,
and estimate the free cumulants $\{\kappa_s\}$ through the non-crossing
moment-cumulant relations, see e.g.\ \cite[Section 2.3]{fan2020approximate}.
(Alternative methods for estimating the free cumulants of
$\Lambda$ based on power iteration have also been discussed in
\cite[Proposition 1]{liu2021memory} and
\cite[Section 3]{venkataramanan2022estimation}.)
For each signal value $\theta_k$, based on (\ref{eq:sym:pcasol}) and
(\ref{eq:sigmasq_series_expansion}), we then estimate
\[\theta_k \text{ by } \frac{1}{G(\lambda_{\pca,k})},
\quad \mu_{\pca,k}^2 \text{ by } \frac{-1}{\theta_k^2 G'(\lambda_{\pca,k})},
\quad R(1/\theta_k) \text{ by } \lambda_{\pca,k}-\theta_k,
\quad R'(1/\theta_k) \text{ by } \theta_k^2(1-\mu_{\pca,k}^2)\]
where $\lambda_{\pca,k}$ is the observed eigenvalue of $\bX$, and the integrals
defining $G(\cdot)$ and $G'(\cdot)$ are computed using the above empirical
estimate for the law of $\Lambda$. In particular, this provides the estimate of
$S=\diag(\theta_1,\ldots,\theta_K)$.
Comparing \eqref{eq:Rtransformseries} and \eqref{eq:sym:kappaseries},
we then estimate
\[\tilde{\kappa}_1 \text{ by }\diag(R(1/\theta_1),\ldots,R(1/\theta_K)),
\qquad \hat{\kappa}_2 \text{ by } \diag(R'(1/\theta_1),\ldots,R'(1/\theta_K)),\]
and estimate the remaining matrices
$\tilde{\kappa}_s$ and $\hat{\kappa}_s$ using the following
recursions derived from (\ref{eq:sym:kappaseries}),
\begin{align*}
\tilde{\kappa}_s&= \sum_{j = 0}^\infty \kappa_{s+j} S^{-j} = \kappa_s I +
\sum_{j=0}^\infty \kappa_{s+1+j} S^{-j}\cdot S^{-1} = \kappa_s I +
\tilde{\kappa}_{s+1} S^{-1}
\Rightarrow \tilde{\kappa}_{s+1}=(\tilde{\kappa}_s-\kappa_s I)S,\\
\hat{\kappa}_s &= \sum_{j = 0}^\infty (j+1) \kappa_{s+j} S^{-j} =
\tilde{\kappa}_s + \sum_{j = 0}^\infty (j+1)\cdot \kappa_{s+1+j} S^{-j} \cdot S^{-1} = \tilde{\kappa}_s + \hat{\kappa}_{s+1} S^{-1}
\Rightarrow \hat{\kappa}_{s+1}=(\hat{\kappa}_s-\tilde{\kappa}_s)S.
\end{align*}

Next, explicitly differentiating (\ref{eq:posteriormean}) to compute the
matrices $\langle \partial_s \bU_t \rangle$ constituting $\bphi_t$,
and combining with the above, we obtain consistent 
estimates of the debiasing coefficients $b_{ts}$.
For the state evolution, we note that numerically evaluating the expectations
defining $(\bmu_t,\bSigma_t)$ may be prohibitive. We instead approximate the
second-moment matrices $\EE[U_sU_t^\top]$ by the empirical averages $n^{-1}
\bU_s^\top \bU_t$. We approximate $\bPhi_t$ also by the empirical average
$\bphi_t$, and combine with the above estimates of
$\kappa_s,\tilde{\kappa}_s,\hat{\kappa}_s$ according to the definition
(\ref{def: sym Sigma_T}) to obtain a consistent estimate of $\bSigma_t$.
We use the martingale identity $\EE[U_tU_*^\top]=\EE[U_tU_t^\top]$ for $t \geq 1$
to approximate $\EE[U_tU_*^\top]$ by the empirical average
$n^{-1}\bU_t^\top \bU_t$, and use this to consistently estimate $\bmu_t$.

We remark that when the AMP algorithm approaches convergence, the covariance
matrix $\bSigma_t$ becomes nearly singular, leading to potential
instabilities in evaluating the posterior mean function (\ref{eq:posteriormean})
and its Jacobian. We use a simple early stopping rule of terminating the
iterations when the smallest eigenvalue of our estimate for $\bSigma_t$ falls
below a small threshold. Alternatively, a small ridge regularization may be used
when computing $\bSigma_t^{-1}$.

\subsection{Simulation studies}\label{subsec:simulations}

We compare the estimation accuracy of three AMP algorithms
under various noise spectral distributions for $\bW$:
\begin{enumerate}
\item \textbf{Bayes-OAMP}, as described and implemented in Sections
\ref{subsec:BayesOAMP} and \ref{subsec:implementation} above.
\item \textbf{Single-iterate Bayes-OAMP}, using instead the single-iterate
posterior mean non-linearities\newline
$u_{t+1}(f_0,\ldots,f_t)=\EE[U_* \mid F_t=f_t]$, with debiasing coefficients and
state evolution parameters estimated as in Section \ref{subsec:implementation}.
\item \textbf{Gaussian Bayes-AMP}, using the debiasing coefficients and state
evolution described in \cite{montanari2021estimation} for i.i.d.\ noise. We also
estimate the signal strengths $\theta_k$ and initial spectral alignments
$\mu_{\pca,k}^2$ from (\ref{eq:sym:pcasol}) using functions $G(\cdot)$ that
correspond to the semicircle law for symmetric i.i.d.\ noise.
\end{enumerate}

\paragraph{Symmetric model.} We consider the symmetric model
(\ref{eq:sym:rankkModel}) with $n=4000$, in the settings
\begin{itemize}
\item (Semicircle) $\bW \sim \operatorname{GOE}(n)$ has i.i.d.\ $\cN(0,1/n)$
entries above the diagonal, and the limit spectral law $\Lambda$
is the semicircle distribution supported on $[-2,2]$.
\item (Uniform) $\bW=\bO^\top \bLambda \bO$ where $\bLambda$ has
i.i.d.\ $\Uniform[-\sqrt{3},\sqrt{3}]$ diagonal entries, and $\bO$ is uniformly
random.
\item (Centered Beta) $\bW=\bO^\top \bLambda \bO$ where $\bLambda$ has
i.i.d.\ $\sqrt{80/3}\cdot(\Beta(3,1) - 3/4)$ diagonal entries, and $\bO$ is uniformly random.
\end{itemize}

\begin{figure}
\centering
\includegraphics[width=0.3\textwidth]{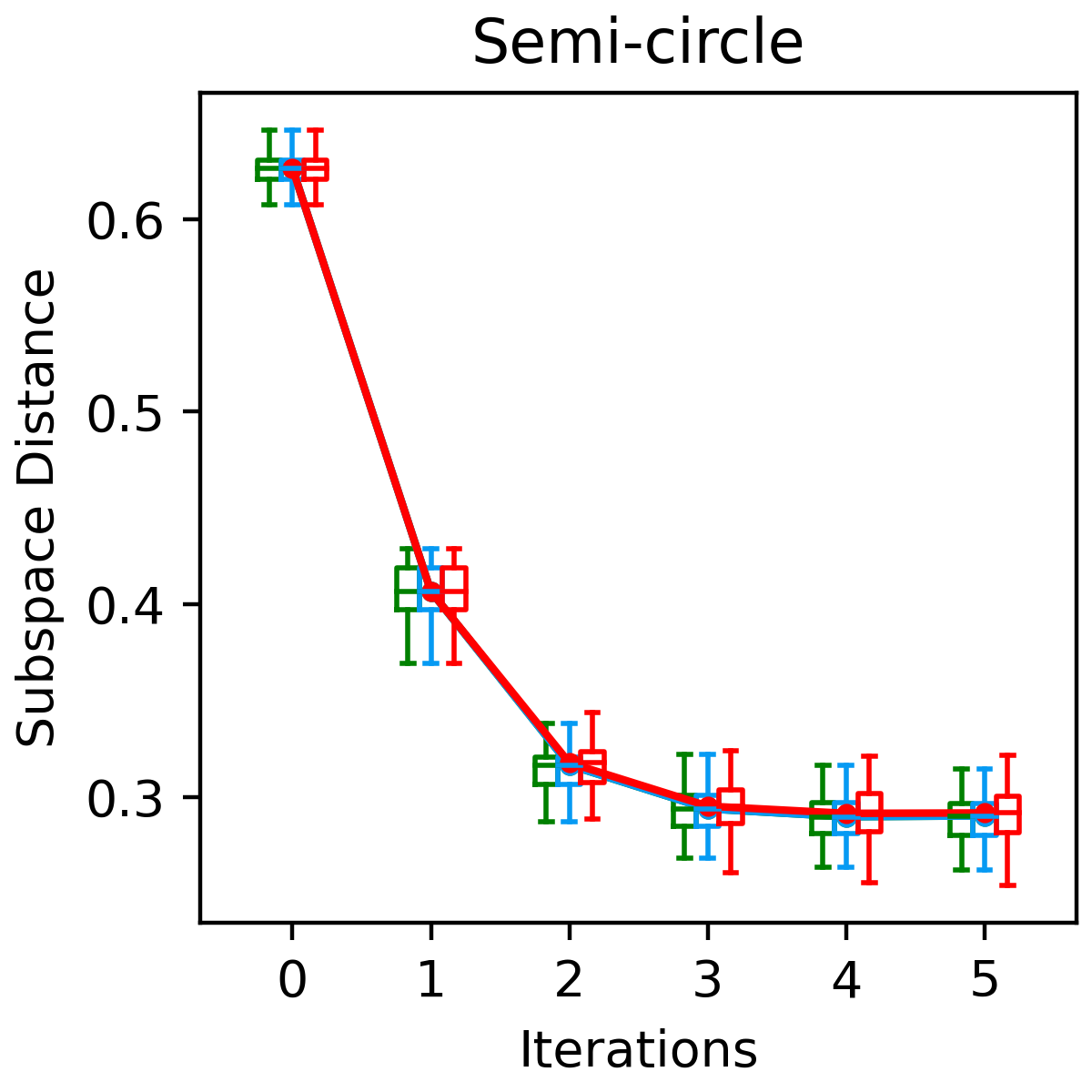}
\includegraphics[width=0.3\textwidth]{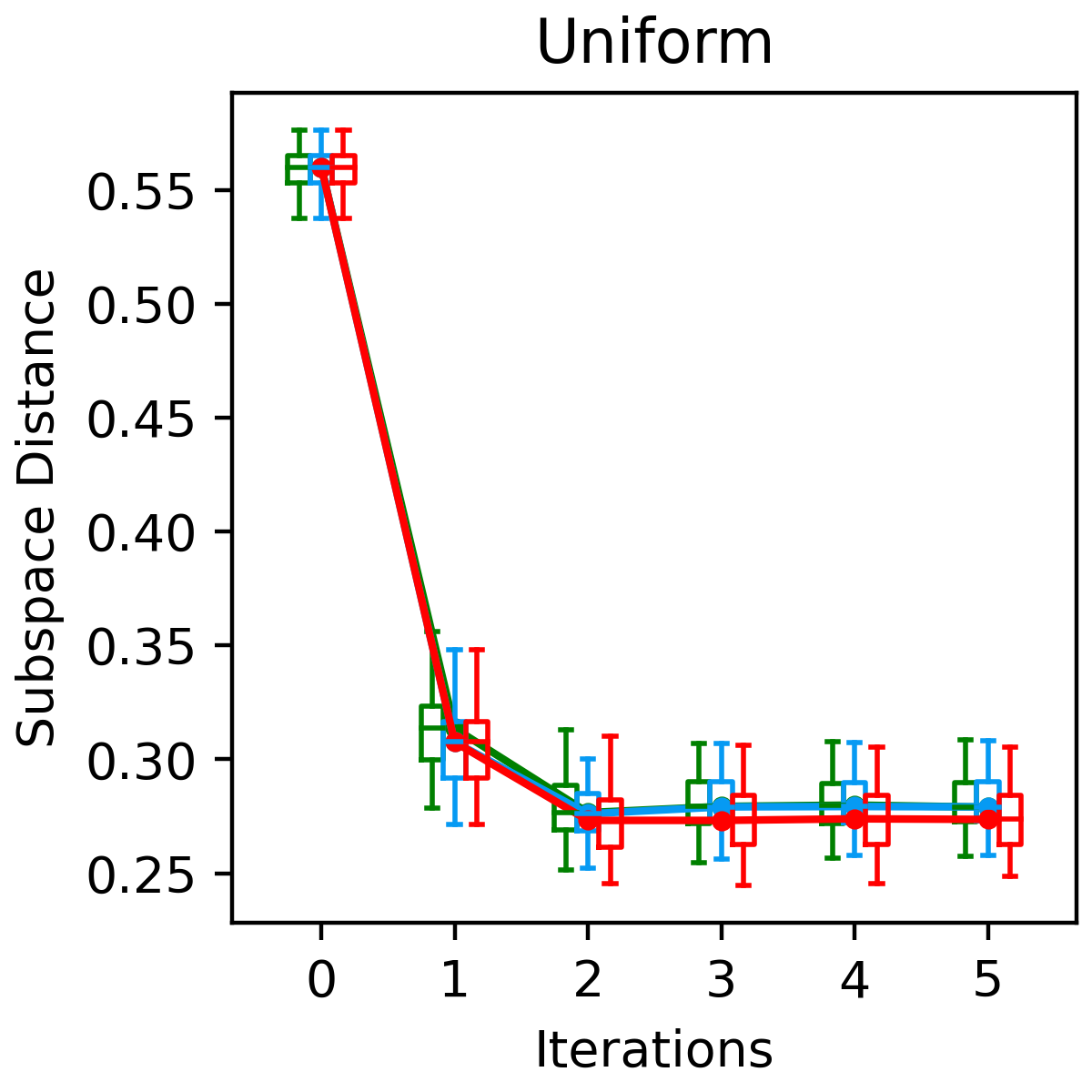}
\includegraphics[width=0.3\textwidth]{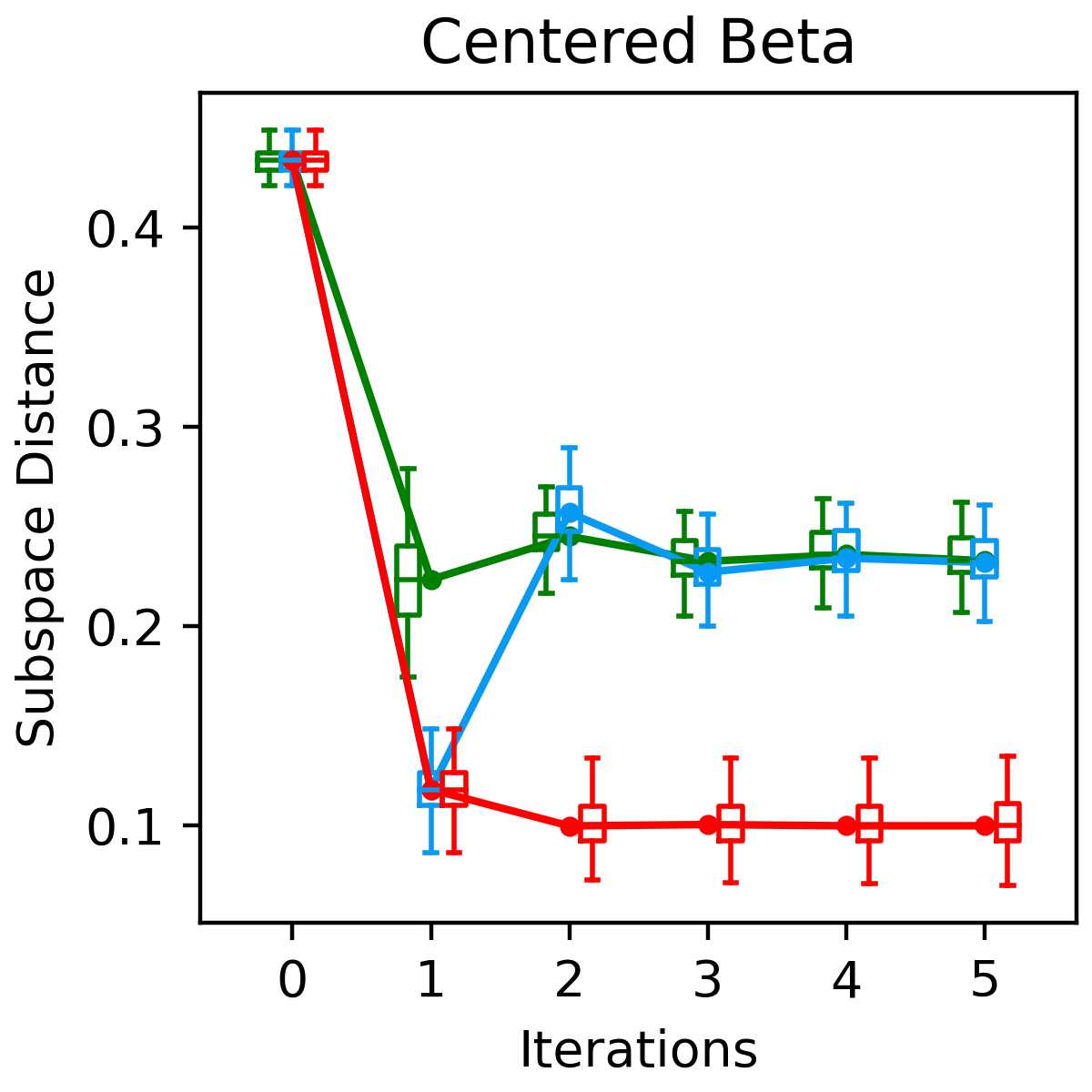}\\
\includegraphics[width = \textwidth]{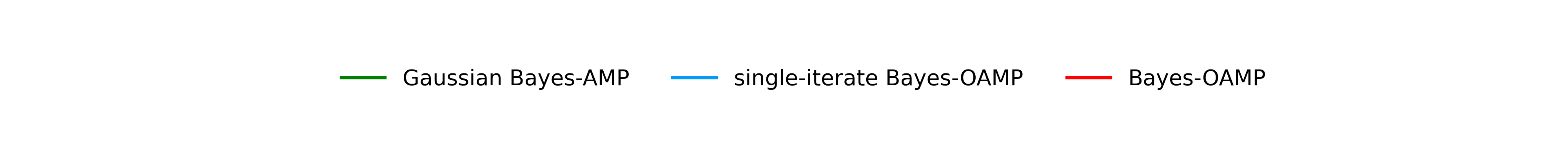}
\caption{Estimation errors for AMP iterates $\bU_t$ in the symmetric spiked
model with $n=4000$, rank-2 signal, and signal prior $U_* \sim
\frac{1}{2}\delta_{(0,1)}+\frac{1}{4}\delta_{(\sqrt{2},-1)}+\frac{1}{4}\delta_{(-\sqrt{2},-1)}$.
Boxes indicate the $\{25,50,75\}$-percentiles across 50 random trials, 
and whiskers indicate $1.5 \times \text{inter-quartile range}$.
Iteration 0 corresponds to the spectral initialization $\bU_0$.
The noise spectral distributions are (left) the semicircle law,
(middle) $\Uniform[-\sqrt{3},\sqrt{3}]$, and (right) standarized $\Beta(3,1)$.}
\label{fig:sym:algocombat}
\end{figure}

In all three settings, $\Lambda$ is normalized to have mean $\kappa_1 = 0$ and
variance $\kappa_2 = 1$.
Figure~\ref{fig:sym:algocombat} compares estimation error across
AMP iterations, for an example with a rank-2 signal where
$K'=K=2$, the two signal strengths are $(\theta_1,\theta_2)=(2,1.6)$, and 
the elements of the two signal vectors are drawn i.i.d. from the discrete 
three-point prior $U_*$ defined by
\begin{equation}\label{eq:threepointprior}
\frac{1}{2}\delta_{(0,1)}+\frac{1}{4}\delta_{(\sqrt{2},-1)}+\frac{1}{4}\delta_{(-\sqrt{2},-1)}.
\end{equation}
The error is defined by the subspace distance $\|\Pi_{\bU_*}-\Pi_{\bU_t}\|$
where $\Pi_\bU \in \RR^{n \times n}$ is the orthogonal
projector onto the two-dimensional column span of $\bU$. 
From the left panel, for GOE noise, we observe that the three algorithms yield
comparable per-iteration error, and there is negligible
degradation in accuracy from estimating the spectral free cumulants and the more
complex state parameters in Bayes-OAMP.
In the middle panel, for Uniform noise spectrum,
Gaussian Bayes-AMP remains reasonably robust to the misspecification of the
noise distribution. In the right panel, for Centered Beta noise spectrum,
we observe a significant improvement of Bayes-OAMP over the other two
approaches, which both exhibit non-monotonic error across iterations.

\begin{figure}
\centering
\begin{subfigure}[b]{\textwidth}
\caption{Gaussian Bayes-AMP}\label{fig:sim:marginalggamp}
\includegraphics[width=0.3\textwidth]{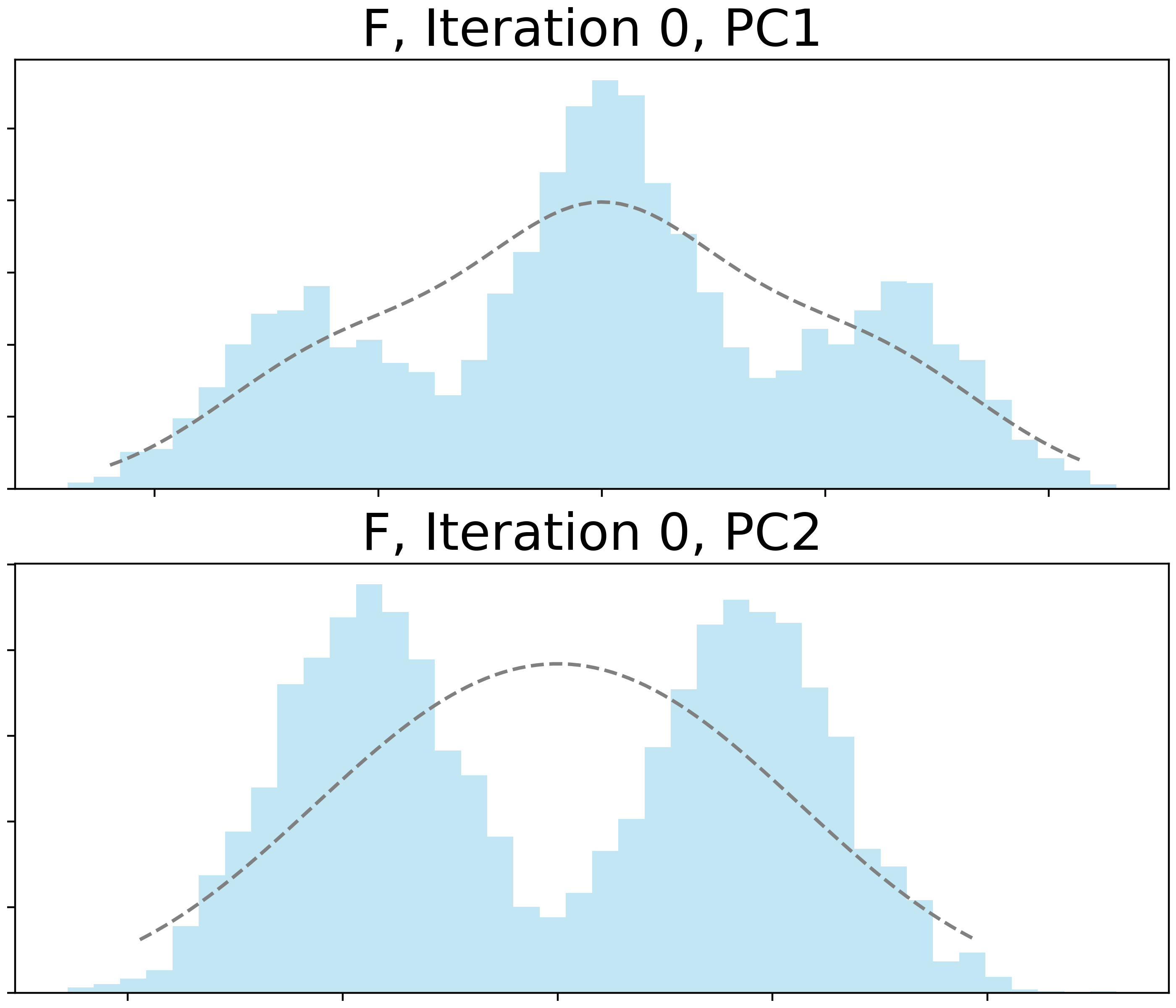}
\hfill
\includegraphics[width=0.3\textwidth]{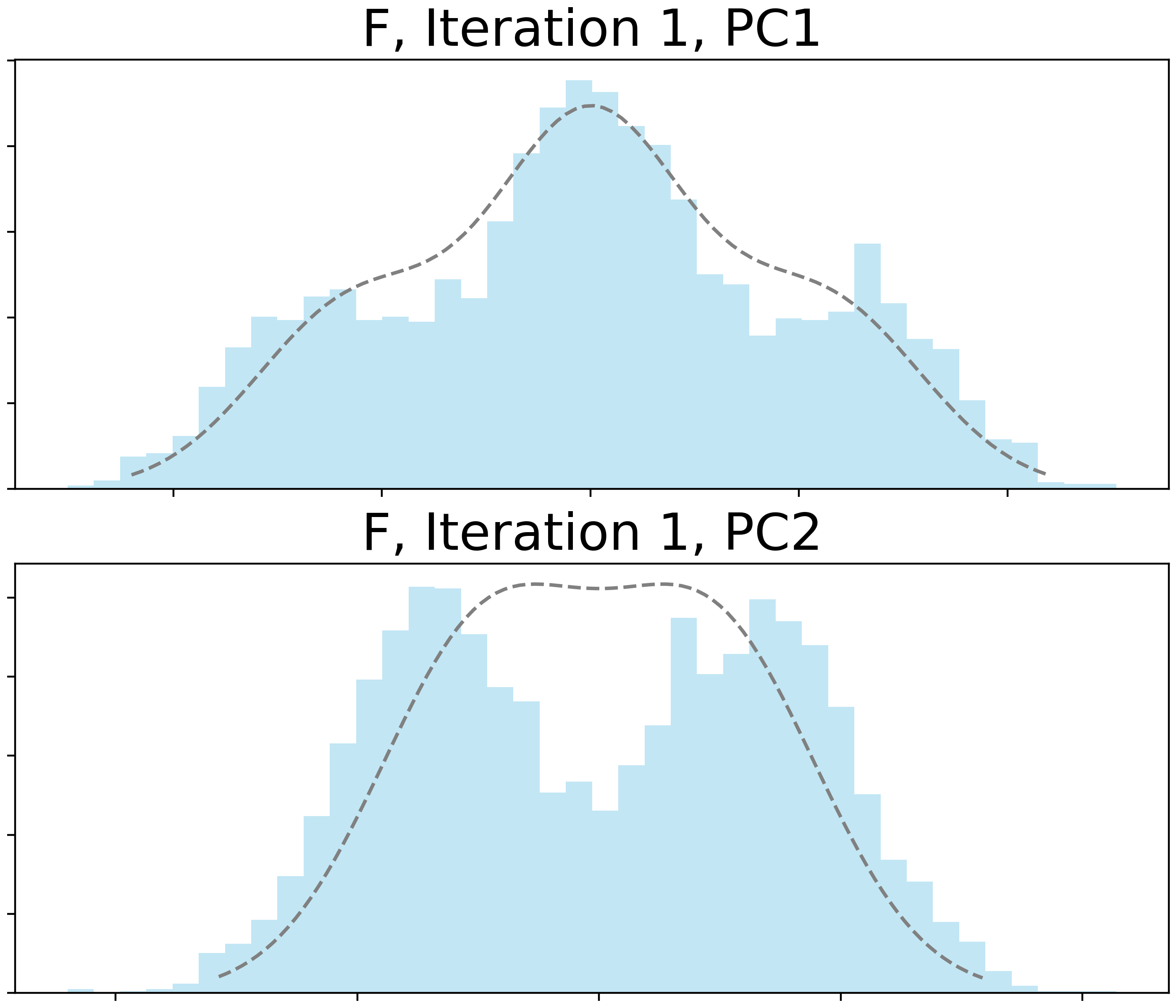}
\hfill
\includegraphics[width=0.3\textwidth]{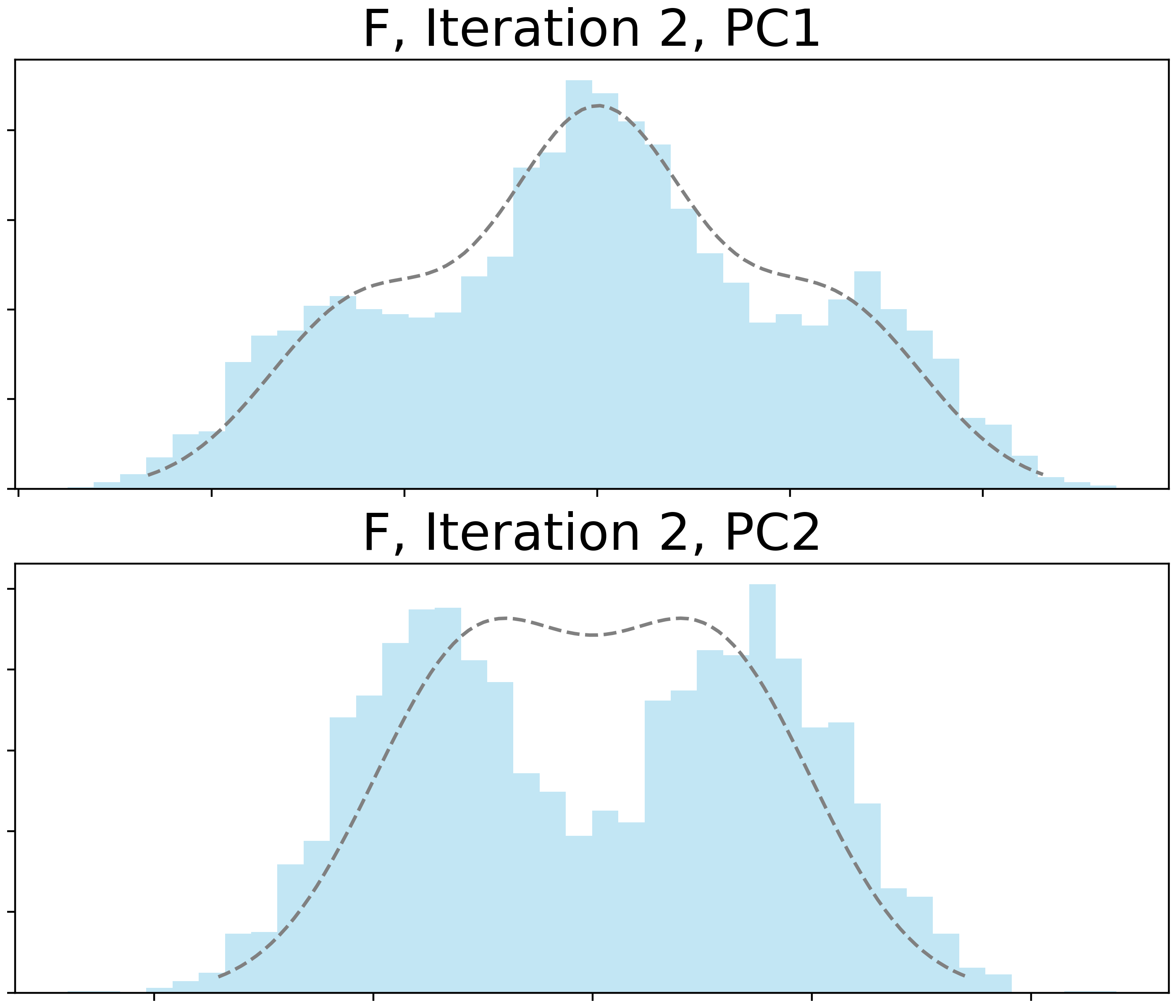}
\end{subfigure}
\\\vspace{\baselineskip}
\begin{subfigure}[b]{\textwidth}
\caption{Bayes-OAMP} \label{fig:sim:marginalrriamp}
\includegraphics[width=0.3\textwidth]{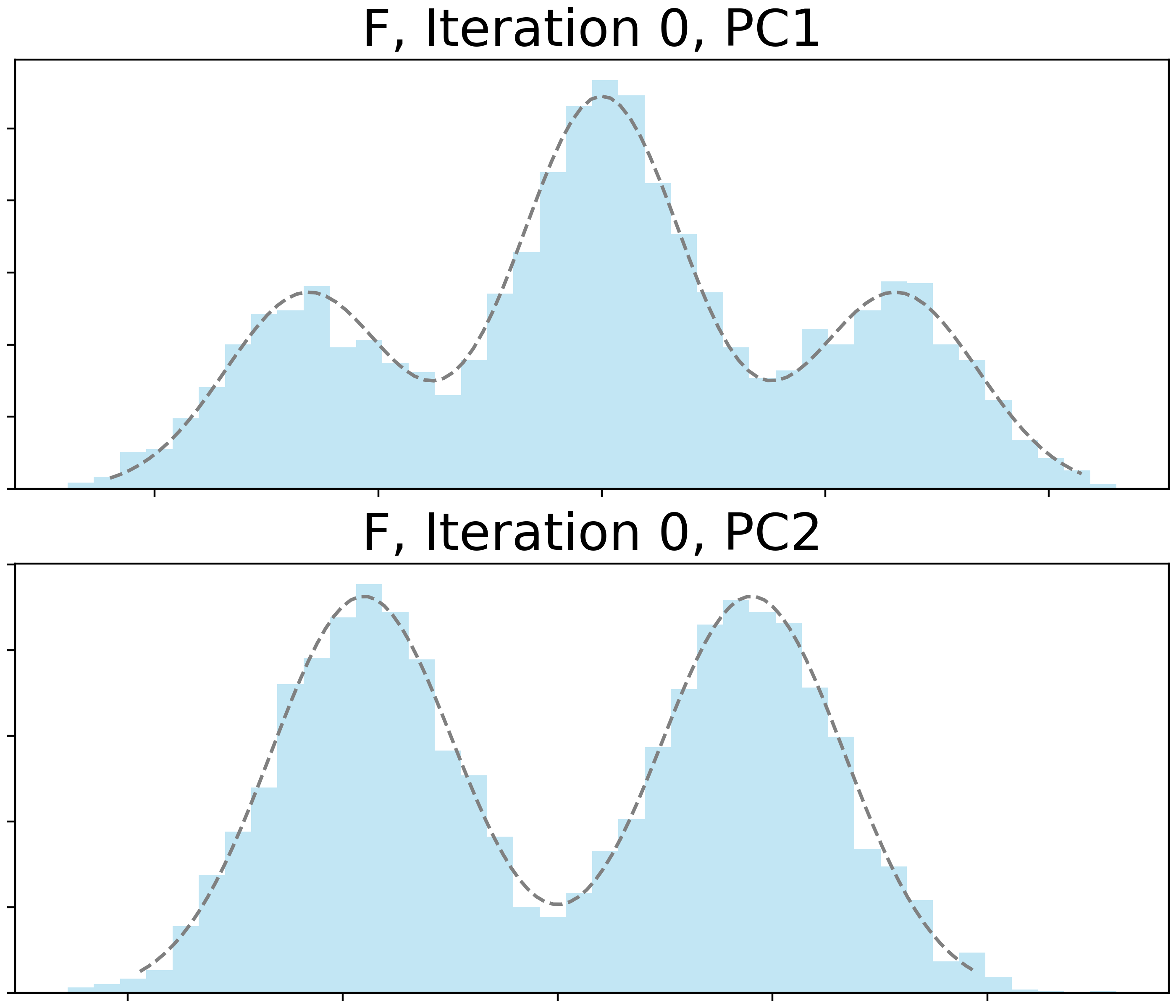}
\hfill
\includegraphics[width=0.3\textwidth]{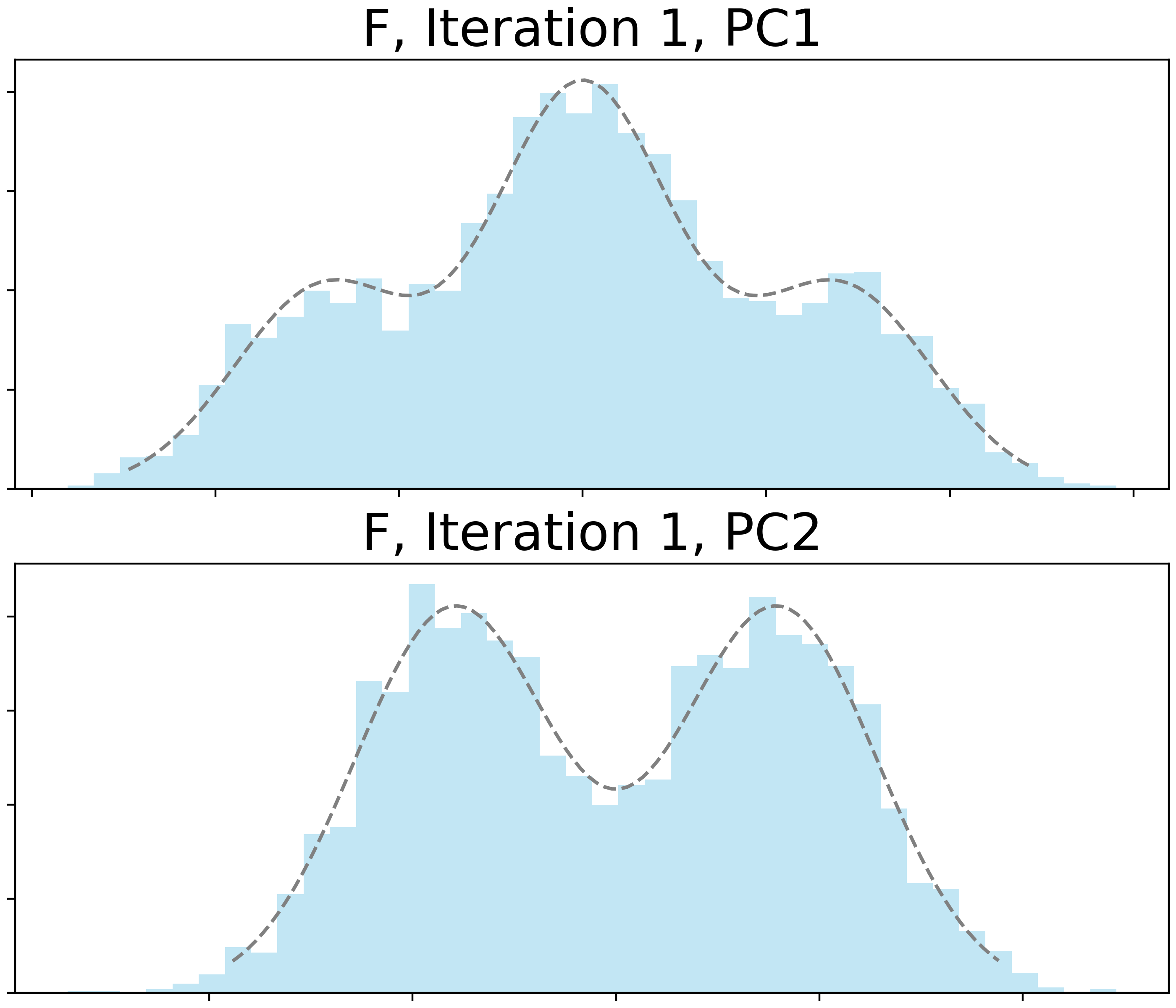}
\hfill
\includegraphics[width=0.3\textwidth]{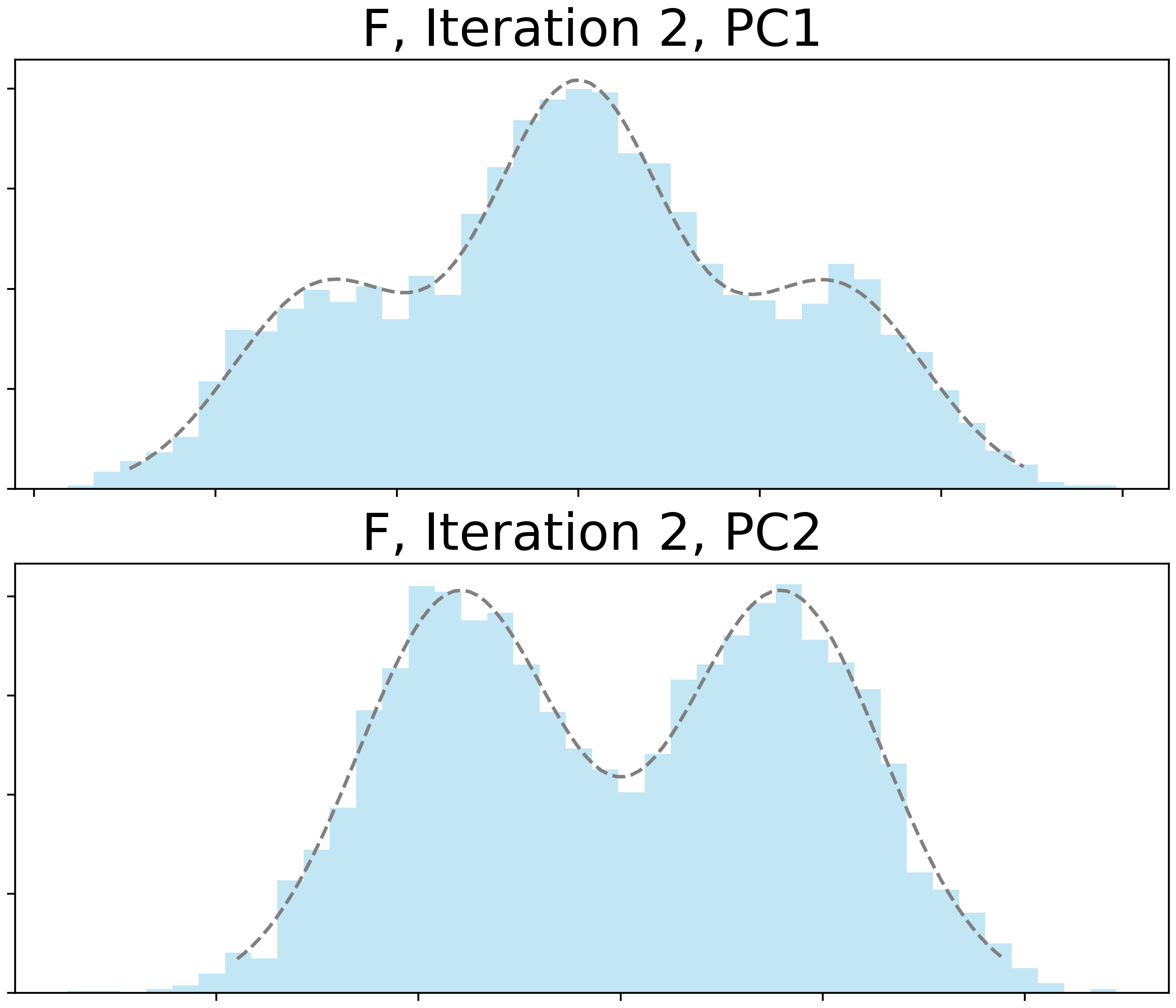}
\end{subfigure}
\caption{Distributions of iterates $\bF_0,\bF_1,\bF_2$ in the Centered Beta noise
example of Figure \ref{fig:sym:algocombat}. Shown are histograms of the empirical
distributions of the two columns of each iterate $\bF_t$ (denoted PC1 and PC2),
overlaid with the marginal density of the corresponding coordinate of the state
evolution law $\cN(\bmu_t \cdot U_*,\bSigma_t)$. This density agrees closely for
Bayes-OAMP, whereas a large discrepancy is observed for the state evolution
prediction of Gaussian Bayes-AMP. 
} 
\label{fig:sym:marginal}
\end{figure}

Figure~\ref{fig:sym:marginal} depicts the accuracy of the state evolution
predictions for the distribution of AMP iterates $\bF_t$ in the Centered Beta
example. Marginal histograms of the two columns of $\bF_t$ are
overlaid with the corresponding densities of $F_t$, computed
using state evolution parameters $(\bmu_t,\bSigma_t)$ empirically estimated as in 
Section \ref{subsec:implementation}. Note that these densities are exactly the
ones assumed for posterior-mean denoising in (\ref{eq:posteriormean}), to
obtain the next iterate $\bU_{t+1}$. We observe a close agreement for Bayes-OAMP,
and a large discrepancy with the state evolution predictions of Gaussian
Bayes-AMP.

\begin{figure}
\centering
\includegraphics[width=0.8\textwidth]{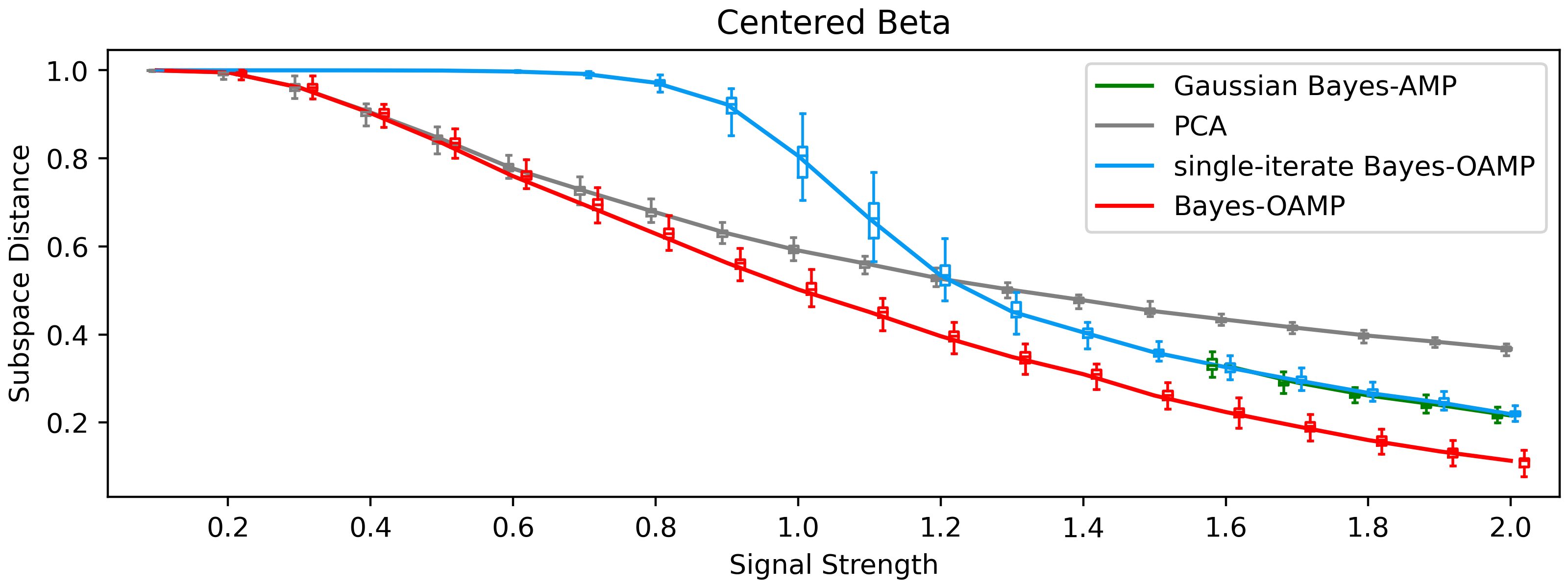}
\caption{Estimation errors for the sample eigenvector and three AMP algorithms,
in a symmetric model with Centered Beta noise spectrum, $n = 4000$, rank-1
signal with prior $U_*\sim \frac{1}{2}\delta_{-1} + \frac{1}{2}\delta_{1}$, and
signal strength $\theta$ varying from $0.1$ to $2.0$. The spectral ``phase
transition'' occurs at $\theta=0$.}
\label{fig:sym:BetaTheta}
\end{figure}

Figure~\ref{fig:sym:BetaTheta} illustrates estimation error across different
signal strengths, in an example having rank-1 signal and the same
Centered Beta noise.
Note that since the density of $\Beta(3,1)$ does not decay to 0 at its right
edge, the spectral phase transition point for this example is 0, and any
positive signal strength is super-critical.
We observe that Bayes-OAMP remains effective and improves over the spectral 
initialization for any positive signal strength.
In contrast, single-iterate Bayes-OAMP exhibits the following behavior:
below some critical signal strength, it diverges from the informative spectral
initialization to an uninformative solution. This type of behavior was
also reported in~\cite[Section 4]{mondelli2021pca} for a different prior.
Our implementation of Gaussian Bayes-AMP uses the Cauchy-transform $G(\cdot)$
based on the semicircle law to infer $\theta$, and thus is only applicable when
the largest sample eigenvalue exceeds 2, corresponding roughly to
$\theta \geq 1.6$ in this example. For these values of $\theta$, we observe its
accuracy to be close to that of single-iterate Bayes-OAMP.

\paragraph{Rectangular model.}
\begin{figure}
\centering
\includegraphics[width=0.25\textwidth]{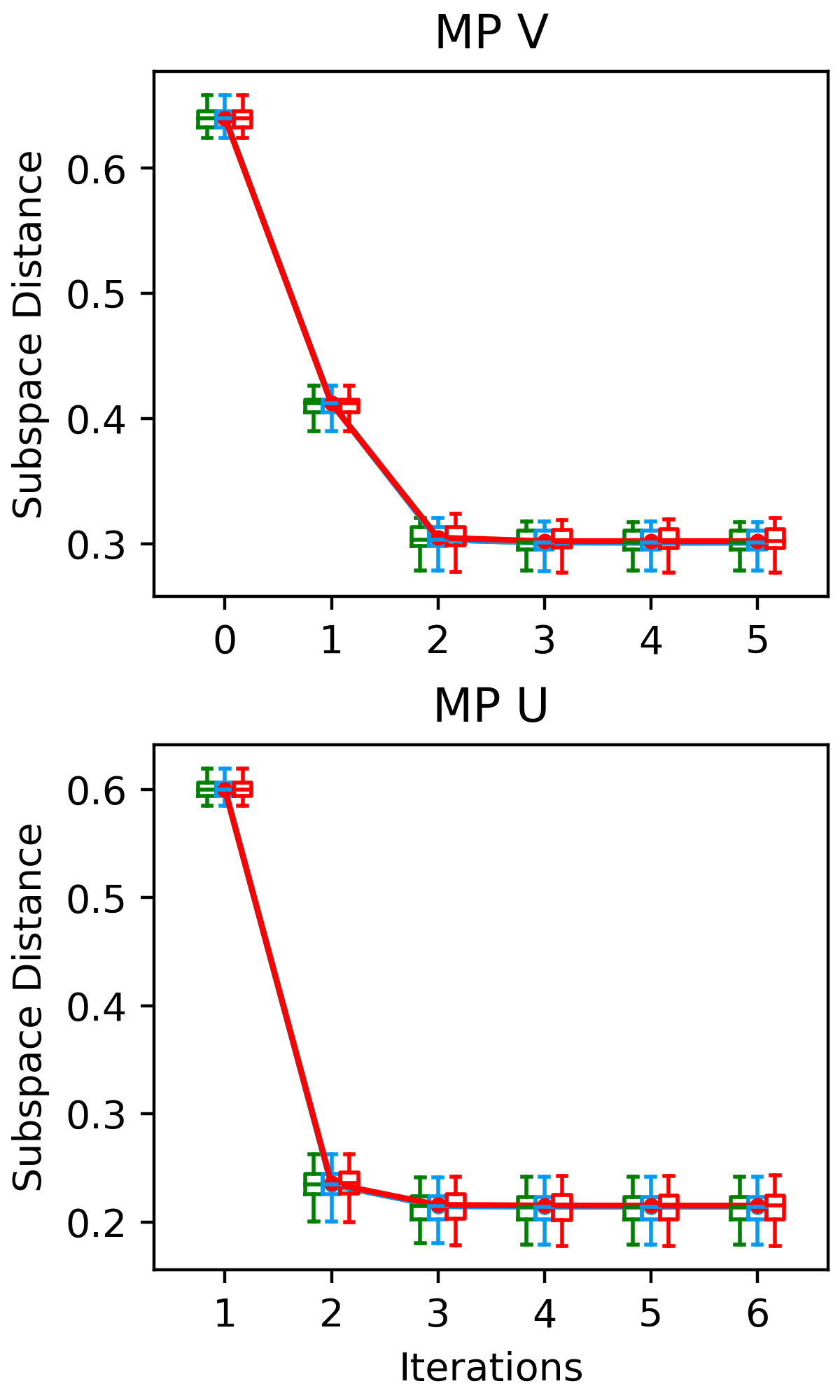}
\includegraphics[width=0.25\textwidth]{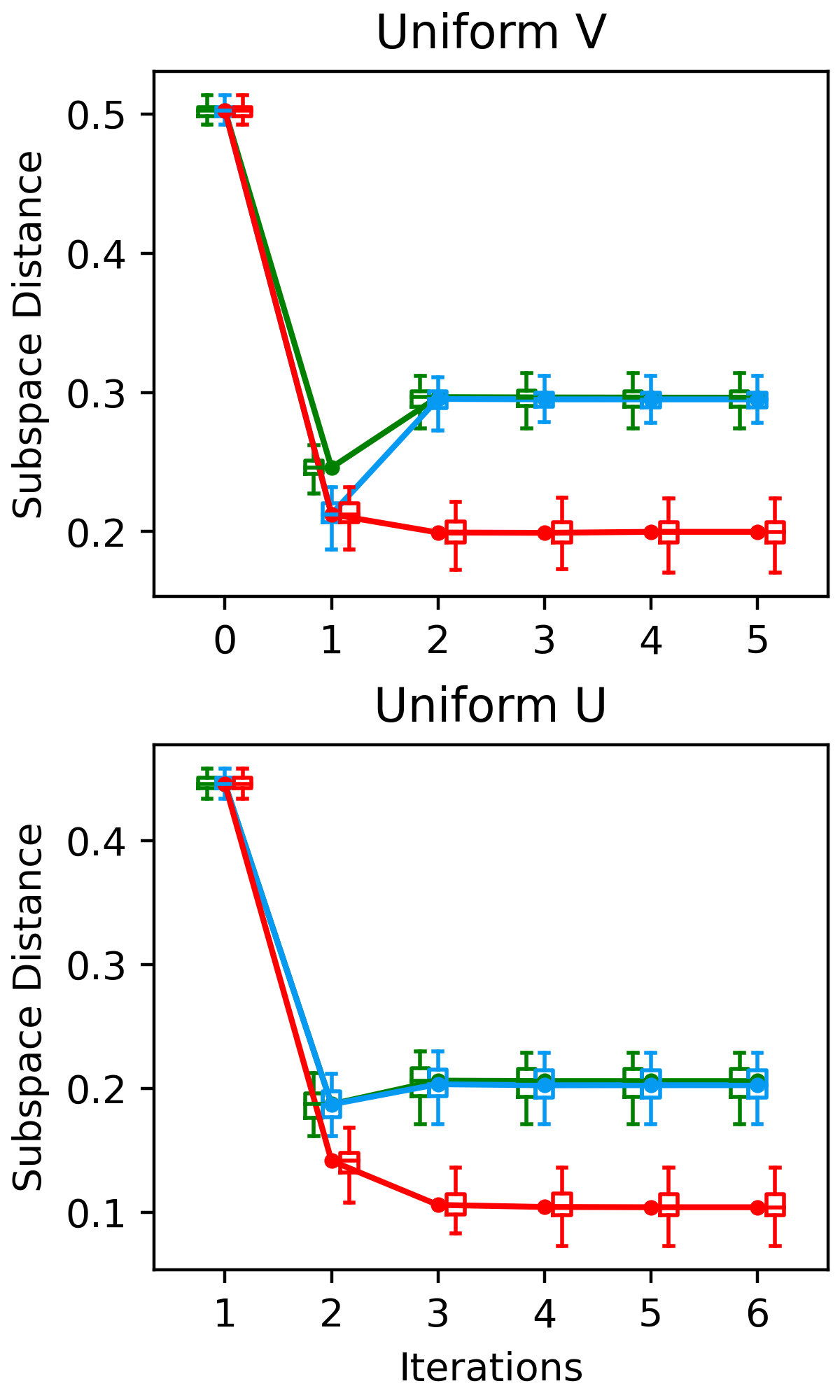}
\includegraphics[width=0.25\textwidth]{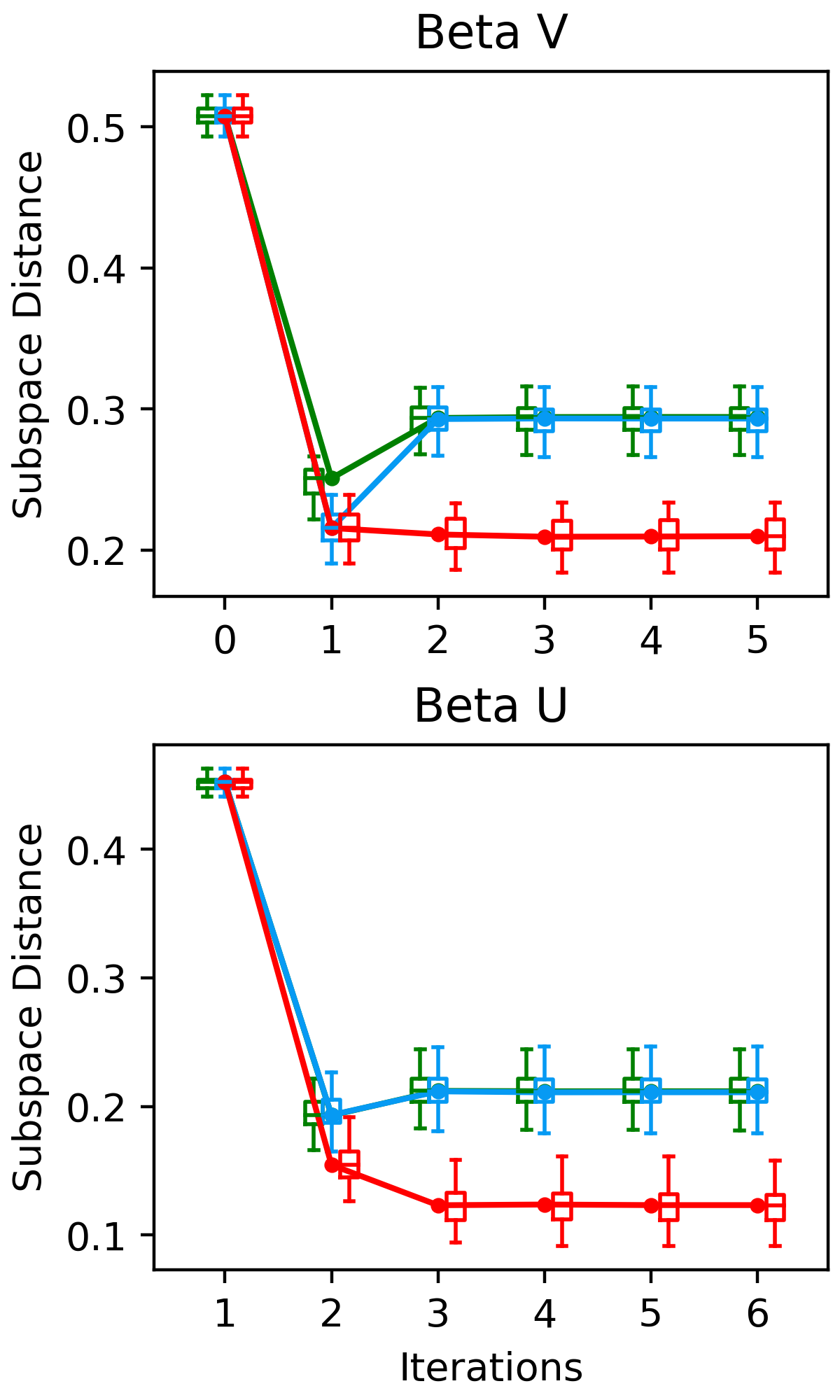}\\
\includegraphics[width = \textwidth]{figs/subspaceDist/sym/algo_label.png}
\caption{Estimation errors for AMP iterates $\bU_t$ and $\bV_t$ in the
rectangular spiked model with $(m,n)=(3000,4000)$, rank-2 signal, 
signal priors $U_*,V_* \sim
\frac{1}{2}\delta_{(0,1)}+\frac{1}{4}\delta_{(\sqrt{2},-1)}+\frac{1}{4}\delta_{(-\sqrt{2},-1)}$,
and signal strengths $(\theta_1,\theta_2)=(2,1.5)$. 
Iterates $\bV_0$ and $\bU_1$ correspond to the
spectral initializations. The noise spectral distributions are (left)
square-root of the Marcenko-Pastur law, (middle)
$\Uniform[\sqrt{3/7},2\sqrt{3/7}]$, and (right) the $\sqrt{5/3}\cdot\Beta(3,1)$ distribution.}
\label{fig:rec:algocombat}
\end{figure}

We consider the rectangular model\[
\bX=\sum_{k=1}^{K'} \frac{\theta_k}{\sqrt{mn}}\bu_*^k {\vb_*^k}^\top + \bW \in
\bbR^{m\times n}
\]with
$(m,n)=(3000,4000)$, again with $K'=K=2$,
signal strengths $(\theta_1,\theta_2)=(2,1.5)$, and the discrete three-point
prior (\ref{eq:threepointprior}) for both $U_*$ and $V_*$. We consider the
settings
\begin{itemize}
\item (Marcenko-Pastur) $\bW$ has i.i.d.\ $\cN(0,1/n)$ entries, and the limit
singular value distribution $\Lambda$ is the square-root of a Marcenko-Pastur law.
\item (Uniform) $\bW=\bO^\top \bLambda \bQ$ where $\bLambda$ has
i.i.d.\ $\Uniform[\sqrt{3/7},2\sqrt{3/7}]$ diagonal entries, and $(\bO,\bQ)$ are
uniformly random.
\item (Beta) $\bW=\bO^\top \bLambda \bQ$ where $\bLambda$ has i.i.d.\ 
$\sqrt{5/3}\cdot\Beta(3,1)$ diagonal entries, and $(\bO,\bQ)$ are uniformly random.
\end{itemize}
In all three settings, the limiting singular value distribution $\Lambda$ of $\bW$ is normalized so that
$\EE[\Lambda^2]=1$. 

The exact form of Bayes-OAMP algorithm for the rectangular model can be found in 
Appendix~\ref{sec:numerical_rec}. 
Figure \ref{fig:rec:algocombat} compares
per-iteration estimation errors, where we again observe that Bayes-OAMP achieves the
same error as Gaussian Bayes-AMP and single-iterate Bayes-OAMP for i.i.d.\
noise, but improves over the other procedures for the remaining noise settings.

\section{Conclusion}

We have developed AMP algorithms for both symmetric and rectangular spiked random 
matrix models in the context of orthogonally invariant noise. These algorithms
extend those of \cite{fan2020approximate} and \cite{mondelli2021pca} by allowing
for matrix-valued AMP iterates, multivariate non-linearities, and spectral
initializations using super-critical sample eigenvectors or singular vectors of the
observed data. We have derived the forms of the Onsager debiasing coefficients
for a general class of such algorithms, and established rigorous Gaussian state
evolutions for their iterates. These depend on the spectral distributions of
the noise via their (symmetric or rectangular) free cumulants.

We developed one application of such algorithms to estimate the
super-critical signal vectors in these models, by choosing Bayes posterior mean
denoising functions as the non-linearities in AMP. This Bayes-OAMP approach is
similar to the algorithms of \cite{rangan2012iterative,montanari2021estimation},
but applies the posterior mean conditional on all preceding AMP iterates,
instead of only the single preceding iterate. 
Subsequent work of
\cite{barbier2023fundamental} suggests that alternating this posterior mean
with identity nonlinearities may lead to further improvements of the Bayes-OAMP
method, and potentially attain the Bayes-optimal estimation error.
Computation of the
posterior mean is enabled by the above state evolution, whose parameters may be
estimated empirically. We observe in simulation that Bayes-OAMP can 
yield improved estimation accuracy over both the spectral initialization and
single-iterate AMP algorithms, and can be more robust than standard AMP
algorithms that are designed for white noise. These simulations suggest that
the method may be sufficiently accurate and stable for practical use in PCA
applications.

\clearpage
\appendix

\section{Rectangular AMP with independent initialization}\label{sec:indrect}
In this section, we present the results for AMP with independent initialization
in the rectangular case.
We will then discuss the rectangular signal-plus-noise models and the 
corresponding spectrally initialized AMP algorithm in
Appendix~\ref{sec: main rec}.

Let $\bW \in \RR^{m \times n}$ be a rectangular matrix, with singular
value decomposition
$\bW=\bO^\top \bLambda \bQ$. To simplify the exposition, let us assume that
\[\gamma \defeq m/n \leq 1.\]
For $m/n>1$, our results may be applied to $\bW^\top$.
We denote $\bLambda=\diag(\blambda)$ where
$\blambda \in \RR^m$ are the singular values of $\bW$. We assume that $\bO$
and $\bQ$ are Haar-distributed orthogonal bases of singular vectors, so
$\bW$ is bi-rotationally invariant in law.

For fixed $J,L \geq 0$ and $K \geq 1$, consider matrices of side
information $\bE \in \RR^{m \times J}$ and $\bF \in \RR^{n \times L}$, and
sequences of Lipschitz functions $u_2,u_3,\ldots$ and
$v_1,v_2,\ldots$. We study an AMP algorithm with
initialization $\bU_1 \in \RR^{m \times K}$ independent of $\bW$, having the
iterates (of dimensions $m \times K$ and $n \times K$)
\begin{align}
\bZ_t&=\bW^\top \bU_t-\bV_1b_{t1}^\top-\ldots-\bV_{t-1}b_{t,t-1}^\top\label{eq:AMPrectz}\\
\bV_t&=v_t(\bZ_1,\ldots,\bZ_t,\bF)\label{eq:AMPrectv} \\
\bY_t&=\bW\bV_t-\bU_1a_{t1}^\top-\ldots-\bU_ta_{tt}^\top\label{eq:AMPrecty}\\
\bU_{t+1}&=u_{t+1}(\bY_1,\ldots,\bY_t,\bE).\label{eq:AMPrectu}
\end{align}
Here again, $b_{ts},a_{ts} \in \RR^{K \times K}$ are Onsager debiasing coefficient
matrices, and $v_t(\cdot)$ and $u_{t+1}(\cdot)$ are applied row-wise.

\paragraph{Debiasing coefficients.} 
Let $\Lambda \in \RR$ be a non-negative random variable with compact support, which will be the limit singular value  distribution of $\Wb$.
Let $\{\kappa_{2j}\}_{j \geq 1}$ be the rectangular free cumulants of
$\Lambda$ with aspect ratio $\gamma=m/n$---see \cite[Section
2.4]{fan2020approximate} for definitions. For $T \geq 1$,
define analogously to (\ref{eq:phi}) the block-lower-triangular $TK \times TK$ matrices
\[\bphi_T=\Big(\langle \partial_s \bU_r \rangle\Big)_{r,s \in \{1,\ldots,T\}}
\qquad
\bpsi_T=\Big(\langle \partial_s \bV_r \rangle\Big)_{r,s \in \{1,\ldots,T\}}\]
with rows blocks indexed by $r$ and column blocks by $s$. Here,
denoting $y_{t,i},z_{t,i},e_i,f_i$ as the $i^\text{th}$
rows of $\bY_t,\bZ_t,\bE,\bF$, we set
\[\langle \partial_s \bU_r \rangle=\frac{1}{m}\sum_{i=1}^m \partial_s
u_r(y_{1,i},\ldots,y_{r-1,i},e_i),
\qquad \langle \partial_s \bV_r \rangle=\frac{1}{n}\sum_{i=1}^n 
\partial_s v_r(z_{1,i},\ldots,z_{r,i},f_i)\]
and use the conventions $\partial_s u_r=0$ for $s \geq r$ and $\partial_s v_r=0$
for $s>r$. Then the matrices $a_{ts}$ and $b_{ts}$ in (\ref{eq:AMPrectz}) and
(\ref{eq:AMPrecty}) up to iteration $T$ are the blocks of
\begin{align*}
\ba_T&=\sum_{j=0}^\infty \kappa_{2(j+1)} \bpsi_T
(\bphi_T\bpsi_T)^j \defeq
{\footnotesize\begin{pmatrix} a_{11} &  &  & 
\\ a_{21} & a_{22} &  & \\ \vdots & \vdots & \ddots &  \\ a_{T1} & a_{T2} &
\cdots & a_{TT} \end{pmatrix}},\\
\bb_T&=\gamma\sum_{j=0}^\infty \kappa_{2(j+1)}\bphi_T (\bpsi_T\bphi_T)^j
\defeq {\footnotesize\begin{pmatrix} 0 & & & & \\
b_{21} & 0 & & & \\
b_{31} & b_{32} & 0 & & \\
\vdots & \vdots & \ddots & \ddots & \\
b_{T1} & b_{T2} & \cdots & b_{T,T-1} & 0 \end{pmatrix}}.
\end{align*}

\paragraph{State Evolution.} The state of this algorithm up to iteration $T$
is characterized by two covariance matrices
$\bSigma_T,\bOmega_T \in \RR^{TK \times TK}$ describing the limit multivariate
Gaussian laws of the rows of $(\bY_1,\ldots,\bY_T)$ and $(\bZ_1,\ldots,\bZ_T)$.
These are defined recursively as follows.

Let $(U_1,E) \in \RR^{K+J}$ and $F \in \RR^L$ be
random vectors, which will be the limit empirical distribution of rows of
$(\bU_1,\bE)$ and $\bF$. Inductively for $t=1,2,3,\ldots$
having defined joint laws of $(U_1,\ldots,U_t,Y_1,\ldots,Y_{t-1},E)$ and
$(V_1,\ldots,V_{t-1},Z_1,\ldots,Z_{t-1},F)$, we define the $tK \times tK$ 
block matrices analogous to (\ref{eq:Delta}) and (\ref{eq:Phi}),
\[\bDelta_t=\Big(\EE[U_rU_s^\top]\Big)_{r,s \in \{1,\ldots,t\}},
\quad \bPhi_t=\Big(\EE[\partial_s
u_r(Y_1,\ldots,Y_{r-1},E)]\Big)_{r,s \in \{1,\ldots,t\}},\]
\[\bGamma_t=\Big(\EE[V_rV_s^\top]\Big)_{r,s \in \{1,\ldots,t\}},
\quad \bPsi_t=\Big(\EE[\partial_s v_r(Z_1,\ldots,Z_r,E)]\Big)_{r,s \in
\{1,\ldots,t\}},\]
with row blocks indexed by $r$ and column blocks by $s$. We then define
the covariance
\begin{align*}
\bOmega_t = \gamma \sum_{j=0}^\infty \bTheta^{(j)} 
[ \bPhi_t,\bPsi_t, \kappa_{2(j+1)}\bDelta_t, \kappa_{2(j+1)}\bGamma_t ]
\end{align*}
where we denote
\begin{align}
\label{eq:rec:Thetaj}
\bTheta^{(j)}[\bPhi,\bPsi,\kappa \bDelta,\kappa \bGamma]=\sum_{i=0}^j
(\bPhi\bPsi)^i (\kappa \bDelta) (\bPsi^\top\bPhi^\top)^{j-i}
+\sum_{i=0}^{j-1} (\bPhi\bPsi)^i\bPhi (\kappa \bGamma) \bPhi^\top
(\bPsi^\top\bPhi^\top)^{j-1-i}.
\end{align}
Note that the final $t^\text{th}$ row block of $\bPsi_t$ and $t^\text{th}$ row
and column blocks of $\bGamma_t$ are not
yet well-defined, as we have not yet defined $(V_t,Z_t)$. We permit this
ambiguity because the matrix products defining $\bTheta^{(j)}$ in
\eqref{eq:rec:Thetaj} do not depend on these blocks,
as the last column block of $\bPhi_t$ is 0. From
$\bOmega_t$, we define the joint law of $(V_1,\ldots,V_t,Z_1,\ldots,Z_t,F)$ by
\begin{equation}\label{eq:SErectZ}
(Z_1,\ldots,Z_t) \sim \cN(0,\bOmega_t) \independent F,
\quad V_s=v_s(Z_1,\ldots,Z_s,F) \text{ for each } s=1,\ldots,t.
\end{equation}

Now, having defined $(U_1,\ldots,U_t,Y_1,\ldots,Y_{t-1},E)$ and
$(V_1,\ldots,V_t,Z_t,\ldots,Z_t,F)$, all blocks of $\bGamma_t$ and $\bPsi_t$
are well-defined, and we define the covariance
\begin{equation*}
\bSigma_t=\sum_{j=0}^\infty 
\bXi^{(j)}[\bPhi_t,\bPsi_t, \kappa_{2(j+1)}\bDelta_t,\kappa_{2(j+1)} \bGamma_t]
\end{equation*}
where
\begin{align}
\label{eq:rec:Xij}
\bXi^{(j)}[\bPhi,\bPsi,\kappa\bDelta,\kappa\bGamma] = \sum_{i=0}^j
(\bPsi\bPhi)^i(\kappa\bGamma)(\bPhi^\top\bPsi^\top)^{j-i} + \sum_{i=0}^{j-1}
(\bPsi\bPhi)^i\bPsi(\kappa\bDelta)\bPsi^\top(\bPhi^\top\bPsi^\top)^{j-1-i}
\end{align}
From $\bSigma_t$, we then define the joint law of
$(U_1,\ldots,U_{t+1},Y_1,\ldots,Y_t,E)$ by
\begin{equation}\label{eq:SErectY}
(Y_1,\ldots,Y_t) \sim \cN(0,\bSigma_t) \independent (U_1,E), \quad
U_{s+1}=u_{s+1}(Y_1,\ldots,Y_s,E) \text{ for each } s=1,\ldots,t,
\end{equation}
completing these inductive definitions. Under these definitions, it may be
checked that the upper-left $(t-1) \times (t-1)$ blocks of $\bSigma_t$ and
$\bOmega_t$ coincide with $\bSigma_{t-1}$ and $\bOmega_{t-1}$.

This state evolution characterizes the iterates of the AMP algorithm
(\ref{eq:AMPrectz}--\ref{eq:AMPrectu}) under the following assumptions.

\begin{assumption}\label{assump:rectW}
The ratio $\gamma=m/n \leq 1$ is fixed as $m,n \to \infty$.
The matrix $\bW=\bO^\top \diag(\blambda)\bQ$ and random
variable $\Lambda$ satisfy
\begin{enumerate}[label=(\alph*)]
\item $\bO$ and $\bQ$ are random, independent, and Haar-distributed over the
orthogonal groups.
\item $\blambda$ is independent of $\bO,\bQ$, with empirical distribution
converging weakly a.s.\ to $\Lambda$ as $m,n \to \infty$.
\item $\Lambda$ has compact support. Denoting $\lambda_+=\max
\operatorname{supp}(\Lambda)$, $\max(\blambda) \to \lambda_+$ a.s.\ as 
$m,n \to \infty$.
\end{enumerate}
\end{assumption}
\begin{assumption}\label{assump:rectindinit}
The AMP initialization $\bU_1$, functions $u_2,u_3,\ldots$ and
$v_1,v_2,\ldots$, and random vectors $(U_1,E)$ and $F$ satisfy
\begin{enumerate}[label=(\alph*)]
\item $\bU_1,\bE,\bF$ are independent of $\bO,\bQ$, with
$(\bU_1,\bE) \toWtwo (U_1,E)$ and $\bF \toWtwo F$ as $m,n \to \infty$.
\item Each $u_{t+1}(\cdot)$ and $v_t(\cdot)$ is Lipschitz in all
arguments. For each $s=1,\ldots,t$, $\partial_s u_{t+1}(Y_1,\ldots,Y_t,E)$
and $\partial_s v_t(Z_1,\ldots,Z_t,F)$ exist and are continuous on sets of
probability 1 under the laws of $(Y_1,\ldots,Y_t,E)$ and $(Z_1,\ldots,Z_t,F)$
defined by (\ref{eq:SErectY}) and (\ref{eq:SErectZ}).
\end{enumerate}
\end{assumption}

\begin{theorem}\label{thm:rectindinit}
Suppose Assumptions \ref{assump:rectW} and \ref{assump:rectindinit} hold.
Fix any $T \geq 1$, consider the AMP algorithm
(\ref{eq:AMPrectz}--\ref{eq:AMPrectu}) up to iteration $T$,
and define $(U_1,\ldots,U_{T+1},Y_1,\ldots,Y_T,E)$ and
$(V_1,\ldots,V_T,Z_1,\ldots,Z_T,F)$ by (\ref{eq:SErectY}) and
(\ref{eq:SErectZ}). Then almost surely as $m,n \to \infty$,
\begin{align*}
(\bU_1,\ldots,\bU_{T+1},\bY_1,\ldots,\bY_T,\bE) &\toWtwo 
(U_1,\ldots,U_{T+1},Y_1,\ldots,Y_T,E), \\
(\bV_1,\ldots,\bV_T,\bZ_1,\ldots,\bZ_T,\bF) &\toWtwo 
(V_1,\ldots,V_T,Z_1,\ldots,Z_T,F).
\end{align*}
\end{theorem}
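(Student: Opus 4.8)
The plan is to reduce the rectangular case to the symmetric case already handled (in the extended, matrix-valued form of Theorem \ref{thm:symindinit}), via the standard ``dilation'' trick that embeds a rectangular bi-rotationally invariant matrix into a larger symmetric orthogonally invariant one. Concretely, set $N = m+n$ and form the $N \times N$ symmetric matrix
\begin{equation*}
\widetilde{\bW} = \begin{pmatrix} 0 & \bW \\ \bW^\top & 0 \end{pmatrix}
= \begin{pmatrix} \bO^\top & 0 \\ 0 & \bQ^\top \end{pmatrix}
\begin{pmatrix} 0 & \bLambda \\ \bLambda^\top & 0 \end{pmatrix}
\begin{pmatrix} \bO & 0 \\ 0 & \bQ \end{pmatrix}.
\end{equation*}
The eigenvalues of $\widetilde{\bW}$ are $\pm\blambda$ together with $n-m$ zeros, so its limiting spectral law is the symmetrization of $\Lambda$ padded with an atom of mass $(1-\gamma)/(2-\gamma)\cdot$(appropriate normalization) at $0$; crucially, the \emph{odd} free cumulants of this symmetrized law vanish and the even ones are, up to the aspect-ratio factor, the rectangular free cumulants $\kappa_{2j}$ of $\Lambda$. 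The eigenvector matrix $\mathrm{diag}(\bO,\bQ)$ is \emph{not} Haar on $O(N)$, but this is exactly the situation addressed by the block-structured orthogonal invariance arguments in \cite{fan2020approximate}: conditioning on the block structure, the relevant conditional laws are still Haar on $O(m)\times O(n)$, and the symmetric AMP machinery applies to the lifted iterates with the appropriate bookkeeping.

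Next I would lift the AMP iteration. Stack the rectangular iterates into $N\times K$ matrices: let $\widetilde{\bU}_t = \begin{pmatrix}\bU_t \\ 0\end{pmatrix}$, $\widetilde{\bV}_t = \begin{pmatrix}0 \\ \bV_t\end{pmatrix}$, and interleave them so that the symmetric AMP on $\widetilde{\bW}$ with a suitably-defined sequence of (block-structured) non-linearities reproduces exactly the four-line recursion (\ref{eq:AMPrectz}--\ref{eq:AMPrectu}). Under this lift, $\widetilde{\bW}\widetilde{\bU}_t = \begin{pmatrix}0 \\ \bW^\top\bU_t\end{pmatrix}$ and $\widetilde{\bW}\widetilde{\bV}_t = \begin{pmatrix}\bW\bV_t \\ 0\end{pmatrix}$, so the $\bZ$- and $\bY$-updates become the symmetric $\Zb$-update restricted to the two blocks. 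One then checks that: (i) the Onsager matrix $\bb_T^{\mathrm{sym}} = \sum_j \kappa^{\mathrm{sym}}_{j+1}(\bphi^{\mathrm{sym}}_T)^j$ of the lifted system, when restricted to the block indices, collapses to the two formulas for $\ba_T$ and $\bb_T$ — the even/odd cumulant structure kills the ``cross'' terms and the alternating products $\bphi_T\bpsi_T$, $\bpsi_T\bphi_T$ emerge from powers of the block-Jacobian; and (ii) the symmetric state-evolution covariance $\bSigma^{\mathrm{sym}}_T = \sum_j \bTheta^{(j)}[\bPhi^{\mathrm{sym}}_T,\kappa^{\mathrm{sym}}_{j+2}\bDelta^{\mathrm{sym}}_T]$ block-decomposes into the pair $(\bSigma_T,\bOmega_T)$ with the $\bTheta^{(j)}$ and $\bXi^{(j)}$ operators of (\ref{eq:rec:Thetaj})–(\ref{eq:rec:Xij}) as the diagonal blocks. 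Finally, Theorem \ref{thm:symindinit} applied to the lifted system gives $\widetilde{\bW}2$-convergence of the full stacked iterate vector, and reading off the two blocks yields the two claimed convergences; the side information $\bE,\bF$ is carried along as part of the lifted non-linearity's extra arguments, and Assumptions \ref{assump:rectW}–\ref{assump:rectindinit} are seen to imply the lifted analogues of Assumptions \ref{assump:symW}–\ref{assump:symindinit} (compact support with the explicit edge convergence, independence of the bases, Lipschitz non-linearities, and a.e.-continuity of Jacobians under the lifted Gaussian law).

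The main obstacle I anticipate is \emph{not} the algebra of matching Onsager coefficients and covariances — that is bookkeeping, guided by the known scalar-$K$ correspondence in \cite{fan2020approximate} — but rather the fact that $\mathrm{diag}(\bO,\bQ)$ fails to be Haar on $O(N)$, so Theorem \ref{thm:symindinit} does not apply verbatim to $\widetilde{\bW}$. Two routes handle this. The cleaner route is to not invoke Theorem \ref{thm:symindinit} as a black box but to re-run its proof — which itself proceeds by the conditioning/orthogonal-decomposition method — directly in the block-structured setting, tracking the two Haar bases $\bO\in O(m)$ and $\bQ\in O(n)$ separately; this is the strategy implicit in \cite{fan2020approximate} and requires verifying that every step of the symmetric induction (the conditional-distribution lemma for $\bW$ given past iterates, the concentration of inner products and of $\langle\partial_s\bU_r\rangle$, the non-degeneracy of the covariance needed to invoke a.e.-continuity of the Jacobians) respects the block structure. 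The alternative, more modular route is to state and prove a ``block-structured symmetric AMP'' variant of Theorem \ref{thm:symindinit} once, and then deduce Theorem \ref{thm:rectindinit} as a corollary. I would take the second route for expository cleanliness, relegating the routine verification that the lifted data satisfy the block-structured assumptions, and the routine cumulant/covariance matching computations, to an appendix, and flag that the only genuinely new ingredient beyond Theorem \ref{thm:symindinit} is the two-sided invariance bookkeeping.
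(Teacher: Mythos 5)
The paper does not prove this theorem by dilation. Its actual proof is a direct modification of the rectangular proof of \cite[Theorem 5.3 and Corollary 5.4]{fan2020approximate}, carried out by the same two-sided conditioning argument over the two independent Haar bases $\bO$ and $\bQ$, and upgraded to $K\geq 1$ matrix-valued iterates, Wasserstein-2 convergence, and weakened differentiability in exactly the way Appendix \ref{appendix:indInitproof} describes for the symmetric case.

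Your reduction has a concrete error, and it is not a cosmetic one: the claim that the even free cumulants of the symmetrized law of the dilation $\widetilde{\bW}=\begin{pmatrix}0 & \bW \\ \bW^\top & 0\end{pmatrix}$ agree with the rectangular free cumulants $\kappa_{2j}$ of $\Lambda$ ``up to the aspect-ratio factor'' is false. Take $\bW$ with i.i.d.\ $\cN(0,1/n)$ entries and $\gamma=m/n<1$, normalized so $\EE[\Lambda^2]=1$. Then $\kappa_4^{\mathrm{rect}}=0$ (Gaussian $\bW$ is the rectangular semicircular element, so all rectangular cumulants of order $>2$ vanish). But the dilated spectral law $\tilde\mu$ has $m_{2k}(\tilde\mu)=\tfrac{2\gamma}{1+\gamma}\EE[\Lambda^{2k}]$, and since $\EE[\Lambda^4]=1+\gamma$ for Marcenko--Pastur, its fourth free cumulant is $\kappa_4(\tilde\mu)=m_4-2m_2^2=2\gamma-\tfrac{8\gamma^2}{(1+\gamma)^2}$, which is nonzero for every $\gamma<1$ (e.g.\ $\kappa_4(\tilde\mu)=1/9$ at $\gamma=1/2$). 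So the two cumulant sequences have different vanishing patterns and cannot differ by any fixed scalar. This is precisely why the rectangular state evolution in (\ref{eq:rec:Thetaj})--(\ref{eq:rec:Xij}) must be built from rectangular free cumulants rather than the free cumulants of $\tilde\mu$, and why applying the symmetric Onsager formula $\sum_j \kappa_{j+1}(\tilde\mu)(\bphi^{\mathrm{sym}}_T)^j$ to the dilated system would give the \emph{wrong} debiasing. The obstruction is not merely that $\mathrm{diag}(\bO,\bQ)$ fails to be Haar on $O(m+n)$; even if you could somehow push through a conditioning argument with the naive symmetric free cumulants, the resulting coefficients would not match the rectangular ones, and the iterates would not be Gaussian.

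Your ``route 2'' --- prove a block-structured variant of Theorem~\ref{thm:symindinit} directly --- is sound as a plan, but at that point the dilation is only notational packaging: the block-structured variant \emph{is} Theorem~\ref{thm:rectindinit}, and its proof is the two-sided conditioning argument with both bases $\bO\in O(m)$ and $\bQ\in O(n)$, which is exactly what \cite{fan2020approximate} does and what this paper extends. The ``routine cumulant/covariance matching'' you relegate to an appendix is where the rectangular free cumulants enter, and that step is not a bookkeeping consequence of the symmetric theorem; it is the content of the rectangular theorem. I would drop the dilation framing entirely and instead follow the paper's stated route: adapt the proof of \cite[Theorem 5.3 and Corollary 5.4]{fan2020approximate} with the same three modifications used in Appendix~\ref{appendix:indInitproof} for the symmetric case.
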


Theorem \ref{thm:rectindinit} extends
\cite[Theorem 5.3 and Corollary 5.4]{fan2020approximate} in ways that are
analogous to the extensions provided by the
preceding Theorem \ref{thm:symindinit} in the square symmetric
setting. The proof of Theorem \ref{thm:rectindinit} modifies the proofs of
\cite[Theorem 5.3 and Corollary 5.4]{fan2020approximate} using the same ideas as
described in Appendix \ref{appendix:indInitproof} to prove
Theorem \ref{thm:symindinit}, and we omit this proof for brevity.

\begin{remark}\label{remark:indinitestimatekappa_rec}
Similar to the symmetric case in Remark \ref{remark:indinitestimatekappa_sym},
we have defined $b_{ts},a_{ts}$ in (\ref{eq:AMPrectz}) and (\ref{eq:AMPrecty})
using the rectangular free cumulants $\{\kappa_j\}$ of the limit singular value
distribution. 
Theorem~\ref{thm:rectindinit} then also holds for any AMP algorithm where 
$b_{ts},a_{ts}$ are replaced by $b_{ts}',a_{ts}'$ such that 
$\|b_{ts}-b_{ts}'\| \to 0$ and
$\|a_{ts}-a_{ts}'\| \to 0$ a.s.\ as $m,n \to \infty$. In particular, they hold
if $b_{ts},a_{ts}$ are instead defined with $\{\kappa_j\}$ being any consistent
estimates of these limit free cumulants.
\end{remark}

\section{Spectral initialization for the rectangular spiked model}\label{sec: main rec}

In the rectangular setting, we consider analogously a rank-$K'$ spiked
signal-pluse-noise model
\begin{align}\label{eq:rec:rankkmodel}
\bX=\sum_{k=1}^{K'} \frac{\theta_k}{\sqrt{mn}}\bu_*^k {\vb_*^k}^\top + \bW \in
\bbR^{m\times n}.
\end{align}
Here $\bu_*^1,\ldots,\bu_*^{K'}$ and $\bv_*^1,\ldots,\bv_*^{K'}$ are $K'$ pairs
of left and right signal singular vectors, normalized so that
\begin{equation}\label{eq:uvnormalization}
\|\bu_*^k\|^2=m, \quad {\bu_*^j}^\top \bu_*^k=0,
\quad \|\bv_*^k\|^2=n, \quad {\bv_*^j}^\top \bv_*^k=0
\quad \text{ for all } j \neq k \in \{1,\ldots,K'\}.
\end{equation}
We again order the signal singular values $\theta_1,\ldots,\theta_{K'}$ (not
necessarily in sorted order) so that the first $K$ will correspond to the
spectral initialization. We will assume
$\bW=\bO^\top \bLambda \bQ$ is bi-rotationally invariant in law.

We denote by
\begin{align}\label{rec sigval}
  \lambda_1(\bX),\ldots,\lambda_{K'}(\bX)  
\end{align}
the largest $K'$ sample singular values of $\bX$, sorted in the same order as
$\theta_1,\ldots,\theta_{K'}$. We denote their associated sample left singular
vectors by $\fb_\pca^1,\ldots,\fb_\pca^{K'}$ and right singular
vectors by $\gb_\pca^1,\ldots,\gb_\pca^{K'}$, normalized such that
for all $j \neq k \in \{1,\ldots,K'\}$,
\begin{align}\label{rec sigvec}
  \|\fb_\pca^k\|^2=m,\;\;{\fb_\pca^k}^\top \bu_*^k \geq 0,\;\;
{\fb_\pca^j}^\top \fb_\pca^k=0,
\qquad \|\gb_\pca^k\|^2=n,\;\;{\gb_\pca^k}^\top \bv_*^k \geq 0,\;\;
{\gb_\pca^j}^\top \gb_\pca^k=0.  
\end{align}

\subsection{Preliminaries on sample singular vectors and the rectangular
$R$-transform}

\paragraph{Singular vectors and spectral phase transition.}
We review results of \cite{benaych2012singular} that characterize the leading
sample singular values/vectors of $\bX$ and the associated spectral phase
transition. (See \cite[Remark 2.6]{benaych2012singular} for the equivalence with
our setting.)

Let $\Lambda$ be the limit singular value distribution of $\bW$, and recall its
largest point of support $\lambda_+$ defined
in Assumption \ref{assump:rectW}(c). Recall $\gamma=m/n \leq 1$.
Following \cite{benaych2012singular}, for $z \in (\lambda_+, \infty)$, define
\begin{align}\label{eq:rec:Dtransform}
\varphi(z) &= \bbE\left[\frac{z}{z^2 - \Lambda^2}\right],
&
\bar{\varphi}(z) &= \gamma \varphi(z) + \frac{1-\gamma}{z},
&
D(z) &= \varphi(z)\bar{\varphi}(z).
\end{align}
The function $D(z)$ is strictly decreasing on $(\lambda_+,\infty)$.
Let $D^{-1}(z)$ be its functional inverse on $(0,D(\lambda_+))$,
where $D(\lambda_+) = \lim_{z \to \lambda_+} D(z)$. 
For each $k \in \{1,\ldots,K'\}$ where $1/\theta_k^2$ belongs to this domain of $D^{-1}(z)$, define
\begin{align}
\lambda_{\pca,k} & =  D^{-1}(1/\theta_k^2),
&
\mu_{\pca,k}^2 & =
\frac{-2\varphi(\lambda_{\pca,k})}{\theta_k^2 D'(\lambda_{\pca,k})},
& 
\nu_{\pca,k}^2 & = 
\frac{-2\bar{\varphi}(\lambda_{\pca,k})}{\theta_k^2{D'(\lambda_{\pca,k})}}.
\label{eq:rec:pcasol}
\end{align}
The following theorem summarizes results of
\cite[Theorems 2.8 and 2.9]{benaych2012singular}.

\begin{theorem}[\cite{benaych2012singular}]\label{thm: recSpike}
Suppose $\theta_1,\ldots,\theta_{K'}$ are distinct, $\gamma=m/n \leq 1$, and
these are fixed as $m,n \to \infty$. Suppose $\bW$ satisfies
Assumption \ref{assump:rectW}. Then for each $k \in \{1,\ldots,K'\}$
where $\theta_k>(D(\lambda_+))^{-1/2}$, almost surely
\[\lim_{m,n\to\infty} \lambda_k(\bX)=\lambda_{\pca,k}, \quad
\lim_{m,n\to\infty} \left(\frac{{\bff_\pca^k}^\top
\bu^k_*}{m}\right)^2=\mu_{\pca,k}^2, \quad
\lim_{m,n\to\infty} \left(\frac{{\bg_\pca^k}^\top \bv^k_*}{n}\right)^2 =
\nu_{\pca,k}^2,\]
\[\lim_{m,n\to\infty} \left(\frac{{\bff_\pca^k}^\top
\bu^j_*}{m}\right)^2=0, \quad
\lim_{m,n\to\infty} \left(\frac{{\bg_\pca^k}^\top \bv^j_*}{n}\right)^2 = 0
\quad \text{ for all } j \in \{1,\ldots,K'\} \setminus \{k\}.\]
For each other $k \in \{1,\ldots,K'\}$, almost surely
$\lim_{m,n \to \infty} \lambda_k(\bX)=\lambda_+$.
\end{theorem}

This describes a spectral phase transition phenomenon analogous to Theorem
\ref{thm:symspike}, where signal singular values are ``super-critical'' if
$\theta_k>(D(\lambda_+))^{-1/2}$. If $D(\lambda_+)=\infty$, then all signals are
super-critical. Whether this occurs is again determined by the decay of the law
of $\Lambda$ at the spectral edge $\lambda_+$,
c.f.\ \cite[Proposition~2.11]{benaych2012singular}.

\paragraph{Rectangular $R$-transform.}
To introduce the rectangular $R$-transform, first denote
\[T(z)=(1+z)(1+\gamma z), \qquad
U(z)=\frac{-\gamma-1+\sqrt{(1+\gamma)^2+4\gamma z}}{2\gamma},\]
so that $T(U(z-1))=z$. Following~\cite[Eq.\ (8)]{benaych2012singular}, 
the rectangular $R$-transform is defined for $z \in (0,D(\lambda_+))$ by
\begin{equation}\label{eq:rectRdef}
R(z)=U(z(D^{-1}(z))^2 - 1).
\end{equation}
Recalling the rectangular free cumulants $\{\kappa_{2j}\}_{j \geq 1}$ of
$\Lambda$, for sufficiently small $z>0$, $R(z)$ and its derivative admit the
convergent series expansions
\begin{align}\label{eq:rec:Rseries}
R(z)=\sum_{j=1}^\infty \kappa_{2j} z^j,
\qquad
R'(z)=\sum_{j=0}^\infty (j+1)\kappa_{2(j+1)}z^j.
\end{align}
See \cite[Section 3.4]{benaych2009rectangular}, with the notational
identification $H_\mu(z^{-2})=D(z)$ and hence
$(D^{-1}(z))^2=1/H_\mu^{-1}(z)$ as shown in \cite[Appendix
C.3]{fan2020approximate}.

The identity $T(U(z-1))=z$ gives $T(R(z))=(1+\gamma R(z))(1+R(z))
=z(D^{-1}(z))^2$. Then the definition of $\lambda_{\pca,k}$ in
\eqref{eq:rec:pcasol} may be written as
\begin{align*}
\lambda_{\pca,k}^2/\theta_k^2=
(1+\gamma R(\theta_k^{-2}))(1+R(\theta_k^{-2})).
\end{align*}
Rearranging this identity, we may factor $\theta_k^2$ as the product of
\begin{align}
\label{eq:rec:def:thetauv}
\theta_{v,k} &\defeq \frac{\lambda_{\pca,k}}{\sqrt{\gamma} (1 + R(\theta_k^{-2}))}\quad \textnormal{and}\quad
\theta_{u,k} \defeq \frac{\lambda_{\pca,k} \sqrt{\gamma}}{1 + \gamma
R(\theta_k^{-2})}.
\end{align}
Then $\theta_{u,k}$ and $\theta_{v,k}$ satisfy
\begin{align*}
\theta_{u,k} \cdot \theta_{v,k} &= \theta_k^2,
&
\frac{\lambda_{\pca,k}}{\sqrt{\gamma}\theta_{v,k}} - R(\theta_k^{-2}) & = 1,
&
\frac{\lambda_{\pca,k} \sqrt{\gamma}}{\theta_{u,k}} - \gamma R(\theta_k^{-2}) &= 1.
\end{align*}
\cite[Appendix C.3]{fan2020approximate} verifies that
$\mu_{\pca,k}^2$ and $\nu_{\pca,k}^2$ in \eqref{eq:rec:pcasol} 
may also be expressed via $R(\theta_k^{-2})$ and $R'(\theta_k^{-2})$ as
\begin{equation}\label{eq:rec:pcaSolFZequi}
\mu_{\pca,k}^2
=\frac{T(R(\theta_k^{-2})) - \theta_k^{-2}
T'(R(\theta_k^{-2}))R'(\theta_k^{-2})}{1 + \gamma R(\theta_k^{-2})}, \quad
\nu_{\pca,k}^2 =
\frac{T(R(\theta_k^{-2})) - \theta_k^{-2} T'(R(\theta_k^{-2}))R'(\theta_k^{-2})}{1
+ R(\theta_k^{-2})}.
\end{equation}

\subsection{AMP algorithm}

We again isolate the first $K\leq K'$ (unsorted) signal components
for the spectral initialization, and define
\begin{align*}
S &= \diag(\theta_{1},\ldots, \theta_{K})\in\RR^{K\times K}, &
S' &= \diag(\theta_{1},\ldots, \theta_{K'})\in\RR^{K'\times K'},\\
S_u &= \diag(\theta_{u,1},\ldots, \theta_{u,K})\in\RR^{K\times K}, &
 S_v &= \diag(\theta_{v,1},\ldots, \theta_{v,K})\in\RR^{K\times K},\\
\bU_*'&=(\bu_*^1,\ldots,\bu_*^{K'})\in\RR^{m\times K'},
&
\bV_*'&=(\bv_*^1,\ldots,\bv_*^{K'})\in\RR^{n\times K'},\\
\bF_\pca&=(\bff_\pca^1,\ldots,\bff_\pca^K)\in\RR^{m\times K},
&
\bG_\pca&=(\bg_\pca^1,\ldots,\bg_\pca^K)\in\RR^{n\times K}.
\end{align*}
Similar to the symmetric setting, if $\theta_1,\ldots,\theta_K$ are
unknown, then it is sufficient to use consistent estimates of these values, and 
the estimation procedure is outlined in Section~\ref{subsec:implementation_rec}.
We consider an AMP algorithm with matrix-valued
iterates of dimensions $n\times K$ and $m \times K$, initialized spectrally at 
\begin{align}
\begin{aligned}\label{eq:rec:spectralInitAlgo}
\bF_0 &= \bF_\pca, &\bU_0 &= \bU_1 = \bF_\pca S_u^{-1},\\
\bG_0 = \bG_1 &= \bG_\pca, & \bV_0 &= \bG_\pca S_v^{-1}.
\end{aligned}
\end{align}
Duplications of $\bU_1=\bU_0$ and $\bG_1=\bG_0$ are
introduced here to simplify the expression
of the state evolution to follow. For $t \geq 1$ and sequences of Lipschitz
functions $v_t: \bbR^{tK}\to \bbR^K$ and $u_{t+1}:\bbR^{(t+1)K} \to \bbR^{K}$,
this algorithm computes the iterations
\begin{align}
\label{eq:rec:AMPPCA}
\begin{aligned}
\bV_t &= v_t(\bG_1,\ldots, \bG_t),\\
\bF_t &= \bX\bV_t - \sum_{j=0}^t  \bU_j a_{tj}^\top,\\
\bU_{t+1} &= u_{t+1}(\bF_0, \ldots, \bF_t),\\
\bG_{t+1} &= \bX^\top \bU_{t+1} - \sum_{j=0}^{t}
\bV_j b_{t+1, j}^\top.
\end{aligned}
\end{align}
Thus each $v_t(\cdot)$ and $u_{t+1}(\cdot)$ may depend on all preceding iterates
$\bG_s$ and $\bF_s$, including the spectral initializations $\bG_1$ and $\bF_0$.

\paragraph{Debiasing coefficients.}
We define two $(T+1)K \times (T+1) K$ block-lower-triangular matrices 
\begin{align*}
\bphi_T = \begin{pmatrix}
0 & 0 & \cdots & 0 & 0\\
S_u^{-1}& 0 & \cdots & 0 & 0\\
\langle\partial_0\bU_2\rangle & \langle\partial_1\bU_2\rangle & \cdots & 0 & 0\\
\vdots & \vdots & \ddots & \vdots & \vdots\\
\langle\partial_0\bU_T\rangle & \langle\partial_1\bU_T\rangle & \cdots &\langle\partial_{T-1}\bU_T\rangle & 0
\end{pmatrix},
\quad
\bpsi_T = \begin{pmatrix}
S_v^{-1} & 0 & 0 & \cdots & 0 \\
0 &  \langle\partial_1\bV_1\rangle & 0 & \cdots & 0 \\
0 & \langle\partial_1\bV_2\rangle & \langle\partial_2\bV_2\rangle & \cdots & 0 \\
\vdots & \vdots & \vdots & \ddots & \vdots \\
0 & \langle\partial_1\bV_T\rangle & \langle\partial_2\bV_T\rangle & \cdots &\langle\partial_T\bV_T\rangle  
\end{pmatrix}.
\end{align*}
Here $S_u^{-1},S_v^{-1}$ may be interpreted as
$\langle\partial_0\Ub_1\rangle,\langle\partial_0\Vb_0\rangle$, and the first
column of $\bpsi_T$ as $\langle\partial_0\Vb_t\rangle=0$ for all $t\geq 1$.
For each fixed $s \geq 1$,
define $\tilde\kappa_{2s},\hat\kappa_{2s} \in \RR^{K \times K}$ by the
matrix series
\begin{align}\label{eq:rec:kappaseries}
	\tilde\kappa_{2s} = \sum_{j=0}^{\infty} \kappa_{2(j+s)}S^{-2j},\qquad 
	\hat\kappa_{2s} = \sum_{j=0}^{\infty} (j+1) \kappa_{2(j+s)}S^{-2j}.
\end{align}
Then define the $(T + 1)K\times (T + 1)K$ matrices
\begin{align*}
\ba_T &=  \sum_{j=0}^\infty \kappa_{2(j+1)} \bpsi_T (\bphi_T\bphi_T)^{j},
&
\tilde\ba_T &= \sum_{j=0}^\infty  \bpsi_T (\bphi_T\bphi_T)^{j} \odot \tkappa_{2(j+1)},
\\
\bb_T & = \gamma\sum_{j=0}^\infty \kappa_{2(j+1)} \bphi_T (\bpsi_T\bphi_T)^{j},
&
\tilde\bb_T & = \gamma\sum_{j=0}^\infty  \bphi_T (\bpsi_T\bphi_T)^{j} \odot \tkappa_{2(j+1)}.
\end{align*}
Indexing blocks by $\{0,\ldots,T\}$ and writing $[t,s]$ to denote the $K\times K$ submatrix corresponding to row block $t$ and column block $s$, we set the debiasing coefficients of~\eqref{eq:rec:AMPPCA} up to iteration $T$ as
\begin{align}
\label{eq:rec:debiaseCoef}
a_{ts} &=
\begin{cases}
\tilde\ba_T[t,s] &\textnormal{if $s = 0$},\\
\ba_T[t,s] &\textnormal{otherwise}.
\end{cases}
&
b_{ts} &=
\begin{cases}
\tilde\bb_T[t,s] &\textnormal{if $s = 0$},\\
\bb_T[t,s] &\textnormal{otherwise}.
\end{cases}
\end{align}

\paragraph{State Evolution.} The state of this
AMP algorithm is described in terms of two recursively defined
sequences of mean transformations
\[\bmu_T=\begin{pmatrix} \mu_0 \\ \vdots \\ \mu_T \end{pmatrix}
\in \RR^{(T+1)K \times K'},
\qquad 
\bnu_T=\begin{pmatrix} \nu_0 \\ \vdots \\ \nu_T \end{pmatrix}
\in \RR^{(T+1)K \times K'},
\]
and covariance matrices $\bSigma_T = \{\sigma_{st}\}_{0\leq s,t\leq T}$ and
$\bOmega_T = \{\omega_{st}\}_{0\leq s,t\leq T}$.

Let $U_*',V_*' \in \bbR^{K'}$ be random vectors satisfying
$\EE[U_*'{U_*'}^\top]=\EE[V_*'{V_*'}^\top]=\Id$, representing the limit empirical
distributions of rows of $\bU_*',\bV_*'$. Define $\mu_\pca = \diag(\mu_{\pca,1},
\ldots, \mu_{\pca,K})\in \bbR^{K \times K'}$ and $\nu_\pca = \diag(\nu_{\pca,1},
\ldots, \nu_{\pca,K})\in \bbR^{K\times K'}$, where the last $K'-K$ columns are 0.
We initialize 
\begin{align*}
\bmu_0 &= \mu_0 = \mu_\pca,
&
\bSigma_0 &= \sigma_{00} = \Id - \mu_\pca\mu_\pca^\top, \\
\bnu_1 & = 
\begin{pmatrix}
	\nu_0 \\
	 \nu_1
\end{pmatrix}
= 
\begin{pmatrix}
	\nu_\pca \\
	 \nu_\pca
\end{pmatrix},
&
\bOmega_1 &= 
\begin{pmatrix}
\omega_{00} & \omega_{01}\\
\omega_{10} & \omega_{11}
\end{pmatrix}
= 
\begin{pmatrix}
\Id - \nu_\pca\nu_\pca^\top & \Id - \nu_\pca\nu_\pca^\top\\
\Id - \nu_\pca\nu_\pca^\top & \Id - \nu_\pca\nu_\pca^\top
\end{pmatrix}
\end{align*}
corresponding to $\Fb_0$ and $(\Gb_0,\Gb_1)$ in \eqref{eq:rec:spectralInitAlgo}.
For each $t \geq 1$,
having defined $(\bmu_{t-1}, \bSigma_{t-1}, \bnu_{t},\bOmega_{t})$, the next
state $(\bmu_t, \bSigma_t,\bnu_{t+1},\bOmega_{t+1})$ is constructed as follows:

To define $(\bmu_t,\bSigma_t)$, first define joint laws for random vectors
$(U_*',U_0,\ldots,U_t,F_0,\ldots,F_{t-1})$ and
$(V_*',V_0,\ldots,V_t,G_0,\ldots,G_t)$ by
\begin{align}
\label{eq:rec:limitDistPcaAmp}
\begin{aligned}
(F_0,\ldots, F_{t-1})\mid U_*' &\sim
\calN(\bmu_{t-1}\cdot U_*',\,\bSigma_{t-1})\\
U_0 &= U_1 = S_u^{-1}F_0 \text{ and }
U_s = u_s(F_0, \ldots,F_{s-1})\textnormal{ for } s = 2,\ldots,t,\\
(G_0,\ldots, G_{t})\mid V_*' &\sim
\calN( \bnu_{t}\cdot V_*',\,\bOmega_{t})\\
V_0 &= S_v^{-1} G_0 \text{ and }
V_s = v_s(G_1, \ldots,G_{s})\textnormal{ for } s = 1,\ldots,t.
\end{aligned}
\end{align}
Then define $\bmu_t$ to have the blocks
\begin{align}\label{def:recmu}
    \mu_s =  \bbE[V_s{V_*'}^\top] \cdot S'/\sqrt{\gamma} \text{ for each }
s=0,\ldots,t.
\end{align}
For $s=0$, it may be checked from (\ref{eq:rec:def:thetauv}) and
(\ref{eq:rec:pcaSolFZequi}) that
$\theta_{u,k}/\theta_{v,k}=\gamma(\mu_{\pca,k}^2/\nu_{\pca,k}^2)$, and hence
that this coincides with the above initialization $\mu_\pca$.
Next, decompose the second moment matrix of $(U_0,\ldots,U_t)$ into four parts
in the same way as~\eqref{eq: sym Delta_t},
\begin{align*}
\bDelta_t \defeq &\underbrace{{\footnotesize\begin{pmatrix}
		0 & 0 & \cdots & 0 \\
		0 & \bbE[U_1U_1^\top] & \cdots & \bbE[U_1U_t^\top] \\
		\vdots & \vdots & \ddots & \vdots\\
		0 & \bbE[U_tU_1^\top] & \cdots & \bbE[U_tU_t^\top] \\
	\end{pmatrix}}}_{\widebar{\bDelta}_t} + \underbrace{{\footnotesize\begin{pmatrix}
		0 & \bbE[U_0U_1^\top] & \cdots & \bbE[U_0U_t^\top] \\
		0 & 0 & \cdots & 0 \\
		\vdots & \vdots & \ddots & \vdots\\
		0 & 0 & \cdots & 0\\
	\end{pmatrix}}}_{\widetilde\bDelta_t}\notag\\
&\hspace{2in} + \underbrace{{\footnotesize\begin{pmatrix}
		0 & 0 & \cdots & 0 \\
		\bbE[U_1U_0^\top] & 0 & \cdots & 0 \\
		\vdots & \vdots & \ddots & \vdots\\
		\bbE[U_tU_0^\top] & 0 & \cdots & 0\\
	\end{pmatrix}}}_{\widetilde\bDelta_t^\top}
+ \underbrace{{\footnotesize\begin{pmatrix}
		\bbE[U_0U_0^\top] & 0 & \cdots & 0 \\
		0 & 0 & \cdots & 0 \\
		\vdots & \vdots & \ddots & \vdots\\
		0 & 0 & \cdots & 0 \\
	\end{pmatrix}}}_{\widehat\bDelta_t}.
\end{align*}
Decompose the second moment matrix for $(V_0,\ldots,V_t)$ analogously as
$\bGamma_t \defeq \widebar\bGamma_t + \widetilde\bGamma_t + \widetilde\bGamma_t^\top +
\widehat\bGamma_t$. Set
\begin{align*}
\bDelta^{(j)}_t &= \kappa_{2(j+1)}\widebar\bDelta_t + \tilde\kappa_{2(j+1)}\odot\widetilde\bDelta_t + \widetilde\bDelta_t^\top\odot\tilde\kappa_{2(j+1)} + \hat\kappa_{2(j+1)}\odot\widehat\bDelta_t
,\\
{\makebox[\widthof{$\bDelta$}]{$\bGamma$}}^{(j)}_t & =
\kappa_{2(j+1)}\widebar{\makebox[\widthof{$\bDelta$}]{$\bGamma$}}_t +
\tilde\kappa_{2(j+1)}\odot\widetilde{\makebox[\widthof{$\bDelta$}]{$\bGamma$}}_t
+
\widetilde{\makebox[\widthof{$\bDelta$}]{$\bGamma$}}_t^\top\odot\tilde\kappa_{2(j+1)}
+ \hat\kappa_{2(j+1)}\odot\widehat{\makebox[\widthof{$\bDelta$}]{$\bGamma$}}_t.
\end{align*}
Define the large-$n$ limits of $\bphi_t,\bpsi_t$ as
\[\bPhi_t=\Big(\EE[\partial_s u_r(F_0,\ldots,F_{r-1})]\Big)_{r,s \in
\{0,\ldots,t\}},
\qquad \bPsi_t=\Big(\EE[\partial_s v_r(G_1,\ldots,G_r)]\Big)_{r,s \in
\{0,\ldots,t\}}\]
again with the identifications $\partial_0 u_1=S_u^{-1}$,
$\partial_0 v_0=S_v^{-1}$, and $\partial_0 v_t=0$ for $t \geq 1$. Then,
recalling $\bXi^{(j)}[\cdot,\cdot,\cdot,\cdot]$ from~\eqref{eq:rec:Xij}, define
\begin{align}\label{def:recSigma}
\bSigma_t = \sum_{j=0}^\infty \bXi^{(j)}[\bPhi_t, \bPsi_t, \bDelta_t^{(j)},
\bGamma_t^{(j)}].
\end{align}

Next, to define $(\bnu_{t+1},\bOmega_{t+1})$, first define from
$(\bmu_t,\bSigma_t,\bnu_t,\bOmega_t)$ the joint laws for random vectors
$(U_*',U_0,\ldots,U_{t+1}, F_0, \ldots, F_t)$ and $(V_*',V_0, \ldots, V_{t}, G_0,\ldots, G_t)$ according to \eqref{eq:rec:limitDistPcaAmp}.
Then define $\bnu_{t+1}$ to have the blocks $\nu_s = \bbE[U_s{U_*'}^\top] \cdot
S' \sqrt{\gamma}$ for $s=0,\ldots,t+1$. For $s=0$, this coincides with the
initialization $\nu_\pca$.
Then, extending the above definitions of
$\bPhi_t,\bPsi_t,\bDelta_t^{(j)},\bGamma_t^{(j)}$ from $t$ to $t+1$ and recalling
$\bTheta^{(j)}[\cdot,\cdot,\cdot,\cdot]$ from~\eqref{eq:rec:Thetaj}, define
\begin{align*}
\bOmega_{t+1} = \sum_{j=0}^\infty \bTheta^{(j)}[\bPhi_{t+1}, \bPsi_{t+1},
\bDelta_{t+1}^{(j)}, \bGamma_{t+1}^{(j)}].
\end{align*}
The last row block of $\bPsi_{t+1}$ and the last row and column block of
$\bGamma_{t+1}^{(j)}$ are undefined, as we have not yet defined
$(V_{t+1}, G_{t+1})$. As in Section \ref{sec:indrect},
we permit this ambiguity because the last column block of $\bPhi_{t+1}$ is 0, so
the matrix products defining $\bTheta^{(j)}$ do not depend on these undefined blocks.
It may be checked from these recursive definitions that the first $t$ blocks of
$\bmu_t$ coincide with
$\bmu_{t-1}$, and the upper-left $t \times t$ blocks of
$\bSigma_t$ coincide with $\bSigma_{t-1}$. 
The same holds for $\bnu_{t+1}$ and $\bOmega_{t+1}$.

Our main result shows that this state evolution provides a rigorous
characterization of the AMP algorithm
(\ref{eq:rec:spectralInitAlgo}--\ref{eq:rec:AMPPCA})
with spectral initialization, under the following assumptions.

\begin{assumption}\label{assump:rec:AMP}
\begin{enumerate}[label=(\alph*)]
\item $\Ub_*'=(\bu_*^1,\ldots,\bu_*^{K'})$ and
$\Vb_*'=(\bv_*^1,\ldots,\bv_*^{K'})$ are independent of $\Ob$ and $\Qb$, satisfy
(\ref{eq:uvnormalization}), and $\bU_*' \toWtwo U_*'$,
$\bV_*' \toWtwo V_*'$ a.s.\ as $m,n \to \infty$ where
$\EE[U_*'{U_*'}^\top]=\EE[V_*'{V_*'}^\top]=\Id$.

\item Each $u_{t+1}(\cdot)$ is Lipschitz in all arguments. For each
$s=0,\ldots,t$ and all $(\bmu,\bSigma)$ in a sufficiently small open neighborhood of
$(\bmu_t,\bSigma_t)$ defined by (\ref{def:recmu}) and (\ref{def:recSigma}),
$\partial_s u_{t+1}(F_0,\ldots,F_t)$ exists and is continuous on
a set of probability 1 under the marginal law of $(F_0,\ldots,F_t)$ defined by
$(F_0,\ldots,F_t) \mid U_*' \sim \cN(\bmu \cdot U_*',\bSigma)$. The same holds for
each $v_t(\cdot)$ with respect to $(\bnu_t,\bOmega_t)$ and $(G_1,\ldots,G_t)$.
\item The values $\theta_1, \ldots, \theta_{K'}$ are distinct. For each $k \in
\{1,\ldots, K\}$, $\theta_k \geq G(1/\lambda_+)>0$ and there exists some
constant $\iota\in(0,1)$ such that
\begin{align*}
\frac{\lambda_+ \sqrt{\gamma}}{\theta_{u,k}} + \gamma\sum_{j=1}^\infty
\frac{|\kappa_{2j}|}{\theta_k^{2j}\iota^{2j-1}} < 1  \quad\text{and}\quad
\frac{\lambda_+}{\theta_{v,k}\sqrt{\gamma}} + \sum_{j=1}^\infty \frac{|\kappa_{2j}|}{\theta_k^{2j}\iota^{2j-1}}<1.
\end{align*}
\end{enumerate}
\end{assumption}

Assumption \ref{assump:rec:AMP}(c) requires the signal
singular values $\theta_1,\ldots,\theta_K$ for the first $K$ selected signals to
exceed a constant depending only $\gamma=m/n$ and the
law of $\Lambda$. In particular, these signal values are super-critical in the
sense of Theorem \ref{thm:rectindinit}, the series (\ref{eq:rec:Rseries}) for
$R(z)$ and $R'(z)$ are absolutely convergent at $z=1/\theta_k^2$, and the series
(\ref{eq:rec:kappaseries}) defining $\tilde{\kappa}_{2s},\hat{\kappa}_{2s}$ are
also absolutely convergent.

\begin{theorem}\label{thm:rec:AMPPCA}
Consider the rectangular spiked model~\eqref{eq:rec:rankkmodel}, where
Assumptions~\ref{assump:rectW} and \ref{assump:rec:AMP} hold. For any $T\geq 1$,
consider the spectrally initialized AMP algorithm
(\ref{eq:rec:spectralInitAlgo}--\ref{eq:rec:AMPPCA}) up to iteration T, and
define~$(U_*',U_0,\ldots,U_{T+1}, F_0,\ldots,F_T)$
and~ 
$(V_*',V_0,\ldots,V_{T}, G_0,\ldots,G_{T})$ 
by~\eqref{eq:rec:limitDistPcaAmp}. Then almost surely as $m,n\to \infty$,
\begin{align*}
(\bU_*',\bU_0,\ldots,\bU_{T+1}, \bF_0,\ldots,\bF_T)&\toWtwo
(U_*',U_0,\ldots,U_{T+1}, F_0,\ldots,F_T),
\\
(\bV_*',\bV_0,\ldots,\bV_{T}, \bG_0,\ldots,\bG_{T})
&\toWtwo
(V_*',V_0,\ldots,V_{T}, G_0,\ldots,G_{T}).
\end{align*}
\end{theorem}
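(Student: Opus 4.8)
\subsection{Proof idea}

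The plan is to prove Theorem~\ref{thm:rec:AMPPCA} by the same two-phase argument used for Theorem~\ref{thm:sym} in Section~\ref{sec: sym proof sketch}, now applied to the coupled pair of iterate sequences $(\bU_t,\bF_t)$ and $(\bV_t,\bG_t)$. First I would introduce an auxiliary rectangular AMP algorithm with two phases. In the first phase, with iterations indexed from $-\tau$ to $1$ for a large fixed $\tau$, I would take an initialization independent of $\bW$ and apply purely \emph{linear} updates whose non-linearities are the diagonal right-multiplications $v_t(\cdot)=(\cdot)\,S_v^{-1}$ and $u_{t+1}(\cdot)=(\cdot)\,S_u^{-1}$, mirroring the spectral initialization~(\ref{eq:rec:spectralInitAlgo}). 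Up to the Onsager corrections prescribed by Theorem~\ref{thm:rectindinit}, these updates run a rescaled alternating power iteration on $\bX\bX^\top$ and $\bX^\top\bX$, which for super-critical signals drives the iterates toward the sample singular vectors $\bF_\pca$ and $\bG_\pca$ of $\bX$. In the second phase, with iterations $t\geq 1$, I would apply the same functions $v_t,u_{t+1}$ as in~(\ref{eq:rec:AMPPCA}), starting from the iterates $\bF_0^{(\tau)}$ and $\bG_1^{(\tau)}$ produced at the end of phase one.

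The combined auxiliary algorithm is a rectangular AMP algorithm with an independent initialization and side information $\bU_*'$ and $\bV_*'$, so I would invoke Theorem~\ref{thm:rectindinit} (together with Remark~\ref{remark:indinitestimatekappa} to absorb the diagonal rescalings into the debiasing coefficients) to obtain an explicit state evolution for its iterates, with parameters $(\bmu_t^{(\tau)},\bSigma_t^{(\tau)},\bnu_{t}^{(\tau)},\bOmega_{t}^{(\tau)})$ for $t\geq 0$. The two facts to establish are then: (i) as $\tau\to\infty$, the linear phase reaches the spectral initialization, i.e.
\[
\lim_{\tau\to\infty}\limsup_{m,n\to\infty}\frac{\|\bF_0^{(\tau)}-\bF_\pca\|_\F}{\sqrt m}=0,
\qquad
\lim_{\tau\to\infty}\limsup_{m,n\to\infty}\frac{\|\bG_1^{(\tau)}-\bG_\pca\|_\F}{\sqrt n}=0,
\]
and hence, propagating through the Lipschitz functions $u_t,v_t$ and the multiplications by $\bX,\bX^\top$, the analogous vanishing of $\|\bF_t^{(\tau)}-\bF_t\|_\F/\sqrt m$, $\|\bG_t^{(\tau)}-\bG_t\|_\F/\sqrt n$, $\|\bU_t^{(\tau)}-\bU_t\|_\F/\sqrt m$ and $\|\bV_t^{(\tau)}-\bV_t\|_\F/\sqrt n$ for each fixed $t\geq 0$; and (ii) as $\tau\to\infty$, the parameters $(\bmu_t^{(\tau)},\bSigma_t^{(\tau)},\bnu_{t+1}^{(\tau)},\bOmega_{t+1}^{(\tau)})$ converge to $(\bmu_t,\bSigma_t,\bnu_{t+1},\bOmega_{t+1})$ defined in~(\ref{def:recmu})--(\ref{def:recSigma}) and the ensuing recursion. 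Combining (i), (ii) and the state evolution of the auxiliary algorithm then yields the claimed Wasserstein-2 limits by a routine triangle-inequality argument.

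The hard part will be step (i). The state evolution of the linear phase is itself a matrix recursion in the overlap and covariance parameters, and I would need to show it converges as $\tau\to\infty$ to a fixed point that reproduces precisely the overlaps $\mu_{\pca,k}^2,\nu_{\pca,k}^2$ of Theorem~\ref{thm: recSpike} and the residual covariances $\Id-\mu_\pca\mu_\pca^\top$ and $\Id-\nu_\pca\nu_\pca^\top$; verifying this amounts to manipulating the rectangular $R$-transform identities relating $D^{-1}$, $\theta_{u,k}$, $\theta_{v,k}$ to $R(\theta_k^{-2})$ and $R'(\theta_k^{-2})$, as in~(\ref{eq:rec:def:thetauv}) and~(\ref{eq:rec:pcaSolFZequi}). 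I would then argue that the random linear iterates concentrate about these deterministic predictions uniformly in $\tau$, so that in the limit they coincide with $\bF_\pca,\bG_\pca$ up to negligible error, using that the super-critical sample singular vectors are pinned down (up to sign) by their overlaps with the signals and by the convergence $\lambda_k(\bX)\to\lambda_{\pca,k}$. The large-signal condition in Assumption~\ref{assump:rec:AMP}(c) enters exactly here, ensuring geometric contraction of the linear recursion toward its fixed point together with absolute convergence of the $R$-transform series at $z=1/\theta_k^2$; this is the role played by the analogous assumption in the symmetric proof. The remaining ingredients --- extracting the explicit auxiliary state evolution from Theorem~\ref{thm:rectindinit} and matching its $\tau\to\infty$ limit with~(\ref{def:recmu})--(\ref{def:recSigma}) --- are, as in the symmetric case, lengthy but essentially algebraic computations with the free-cumulant power series, which I would relegate to an appendix.
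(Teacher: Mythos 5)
Your proposal is correct and follows essentially the same two--phase auxiliary-AMP strategy as the paper's proof in Appendix~B: a linear phase with $u_{t+1}(\cdot)=(\cdot)S_u^{-1}$, $v_t(\cdot)=(\cdot)S_v^{-1}$ whose state evolution is extracted via a rectangular analogue of Corollary~\ref{cor:auxAMPSE} (Corollary~\ref{cor: auxAMPrec}), a linear-phase convergence lemma (Lemma~\ref{lem:rec:linearamp}), and an induction matching auxiliary iterates, debiasing coefficients, and $(\bmu_t^{(\tau)},\bSigma_t^{(\tau)},\bnu_t^{(\tau)},\bOmega_t^{(\tau)})$ with the spectrally initialized AMP and $(\bmu_t,\bSigma_t,\bnu_t,\bOmega_t)$.

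One clarification on your step (i), which you correctly flag as the hard part: the argument cannot rest on the super-critical singular vectors being ``pinned down by their overlaps with the signals and by $\lambda_k(\bX)\to\lambda_{\pca,k}$'' --- a rank-$K'$ overlap constraint leaves the $n-K'$ sub-critical directions entirely unconstrained. What is actually needed, and what the paper does, is a geometric contraction applied directly to the projections $\ell_t^{k,i}={\fb_\pca^i}^\top\fb_t^k/m$ and $r_t^{k,i}={\gb_\pca^i}^\top\gb_t^k/n$ of the linear iterates onto each sub-critical sample singular direction $i\in\{K'+1,\dots,m\}$, using a weighted $\ell^\infty$ norm with geometric weights $\iota^j$; the two scalar conditions in Assumption~\ref{assump:rec:AMP}(c) are precisely what make the alternating update map $h\circ g$ on these projection sequences a $\rho$-contraction for some $\rho<1$ (after a Weyl interlacing argument to bound the bulk singular values by $\lambda_++\delta$). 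Once the sub-critical residual is killed, the state-evolution identity $\mu_t\equiv\mu_\pca$, $\nu_t\equiv\nu_\pca$ (a consequence of the chosen initialization and $S_uS_v=S^2$, not a separately-proved fixed-point statement) forces $\xi_t^k\to 1$, and the covariance convergence $\sigma_{st},\omega_{st}\to\Id-\mu_\pca\mu_\pca^\top,\Id-\nu_\pca\nu_\pca^\top$ follows from the iterate convergence rather than from an abstract analysis of the state-evolution recursion. The rest of your outline, including the role of (\ref{eq:rec:def:thetauv}) and (\ref{eq:rec:pcaSolFZequi}) and the deferral of the free-cumulant algebra to the appendix, matches the paper.
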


\begin{remark}\label{remark:PCAinitestimatekappa_rec}
As in Remark \ref{remark:indinitestimatekappa_rec},
we have defined $b_{ts},a_{ts}$ in (\ref{eq:rec:debiaseCoef}) using the free 
cumulants $\{\kappa_j\}$ of the limit spectral distributions, as well as the 
true signal values $\theta_1,\ldots,\theta_K$. 
Theorem~\ref{thm:rec:AMPPCA} then also holds when $b_{ts},a_{ts}$ are replaced 
by $b_{ts}',a_{ts}'$ such that
$\|b_{ts}-b_{ts}'\| \to 0$ and $\|a_{ts}-a_{ts}'\| \to 0$ a.s., and in
particular if $\{\kappa_j\}$, $\{\tilde\kappa_j\}$, $\{\hat\kappa_j\}$, and
$\theta_1,\ldots,\theta_K$ are
replaced by consistent estimates of these quantities.
\end{remark}

\section{Orthogonal AMP for Bayesian PCA}\label{sec:numerical_rec}

We discuss in this section an application 
to estimating the signal vectors $\bu_*^k$ and $\bv_*^k$ in the
preceding signal-plus-noise model in Eq.~\eqref{eq:rec:rankkmodel}.

Analogously, the distributions of $U_*',V_*' \in \RR^{K'}$
for the row-wise limits of $\bU_*',\bV_*'$ may be interpreted as Bayesian
``priors'' for these rows. We also consider a setting where $K \leq K'$, and consider the following
additional assumption for the laws of $U_*'$ and $V_*'$.

\begin{assumption}\label{assump:BayesOAMP_rec}
The last $K'-K$ coordinates of $U_*'$ and $V_*'$ have mean 0, and are independent of
the first $K$ coordinates. For the rectangular model, the same holds for $V_*'$.
\end{assumption}

Similarly, the components $\bu_*^k$ and $\bv_*^k$ should ideally be grouped into
small subsets of dependent signals, with the signals within each subset estimated
together to maximally leverage their joint structure. 

Let us
write
\[(F_0,\ldots,F_t) \mid U_* \sim \cN(\bmu_t \cdot U_*,\,\bSigma_t),
\qquad (G_0,\ldots,G_t) \mid V_* \sim \cN(\bnu_t \cdot V_*,\,\bOmega_t)\]
where $U_*,V_*$ are the first $K$ coordinates of $U_*',V_*'$, and
$\bmu_t,\bnu_t \in \RR^{(t+1)K \times K}$.

\subsection{Bayes-OAMP}\label{subsec:BayesOAMP_rec}

For the rectangular model (\ref{eq:rec:rankkmodel}), we consider analogously the
Bayes-OAMP algorithm which estimates the first $K$ components $\bU_* \in \RR^{m
\times K}$ and $\bV_* \in \RR^{n \times K}$ of $\bU_*'$ and $\bV_*'$, using in
(\ref{eq:rec:spectralInitAlgo}--\ref{eq:rec:AMPPCA}) the denoisers
\begin{align*}
v_t(g_1,\ldots,g_t)&=\EE[V_* \mid (G_1,\ldots,G_t)=(g_1,\ldots,g_t)],\\
u_{t+1}(f_0,\ldots,f_t)&=\EE[U_* \mid (F_0,\ldots,F_t)=(f_0,\ldots,f_t)].
\end{align*}
Recall that $\bG_1=\bG_\pca$ and $\bF_0=\bF_\pca$, so these 
posterior means are conditional also on the spectral initializations. As in the
symmetric setting, the asymptotic mean-squared-errors satisfy
\[\MSE(\bV_{t+1}) \leq \MSE(\bV_t) \leq \MSE(\bG_\pca),
\qquad \MSE(\bU_{t+1}) \leq \MSE(\bU_t) \leq \MSE(\bF_\pca).\]

\subsection{Estimating the debiasing corrections and state evolution}\label{subsec:implementation_rec}

Numerical implementations of the Bayes-OAMP algorithms require estimating the
debiasing coefficients and state evolution parameters that describe the
conditional laws of $(F_0,\ldots,F_t)$ and $(G_0,\ldots,G_t)$. We describe here
one approach for this estimation for the rectangular model.

We estimate the law
of $\Lambda$ by the empirical singular value distribution of $\bX$, with leading
$K'$ singular values removed. We then compute the
empirical moments, and estimate the rectangular free cumulants $\{\kappa_{2s}\}$
via the moment-cumulant relations, see e.g.\ \cite[Section
2.4]{fan2020approximate}. Using this estimated law of $\Lambda$ to compute
$\varphi(\cdot)$, $\bar\varphi(\cdot)$, $D(\cdot)$, and $D'(\cdot)$ in~\eqref{eq:rec:Dtransform}, we
estimate $\theta_k^2$ by $1/\sqrt{D(\lambda_{\pca,k})}$ and
$D^{-1}(1/\theta_k)^2$ by $\lambda_{\pca,k}$ based on \eqref{eq:rec:pcasol}.
We then estimate $\mu_{\pca,k}^2,\nu_{\pca,k}^2$ using \eqref{eq:rec:pcasol},
$R(\theta_k^{-2})$ using \eqref{eq:rectRdef}, $R'(\theta_k^{-2})$ using~\eqref{eq:rec:pcaSolFZequi} and
$\theta_{u,k},\theta_{v,k}$ using \eqref{eq:rec:def:thetauv}.

Applying \eqref{eq:rec:kappaseries} and \eqref{eq:rec:Rseries}, we
estimate $\tilde{\kappa}_2$ by $\diag(\theta_1^2R(\theta_1^{-2}),
\ldots,\theta_K^2R(\theta_K^{-2}))$ and
$\hat{\kappa}_2$ by $\diag(R'(\theta_1^{-2}),\ldots,R'(\theta_K^{-2}))$,
and the remaining $\tilde{\kappa}_{2s}$ and $\hat{\kappa}_{2s}$ based on the
recursions
\begin{align*}
\tilde{\kappa}_{2s} &= \sum_{j = 0}^\infty \kappa_{2(s+j)} S^{-2j}
= \kappa_{2s} + \sum_{j=0}^\infty \kappa_{2(s+1+j)} S^{-2j}\cdot S^{-2} =
\kappa_{2s} + \tilde{\kappa}_{2(s+1)} S^{-2},\\
\hat{\kappa}_{2s} &= \sum_{j = 0}^\infty (j+1) \kappa_{2(s+j)} S^{-2j}
= \tilde{\kappa}_{2s} + \sum_{j = 0}^\infty (j+1)\cdot \kappa_{2(s+1+j)} S^{-2j} \cdot S^{-2}
= \tilde{\kappa}_{2s} + \hat{\kappa}_{2(s+1)} S^{-2}. 
\end{align*}
This yields consistent estimates of the debiasing coefficients $b_{ts},a_{ts}$.
The state evolution parameters $(\bmu_t,\bSigma_t,\bnu_t,\bOmega_t)$ are then
estimated using $\bphi_t,\bpsi_t$ and the empirical second-moment matrices
$m^{-1}\bU_s^\top \bU_t$ and $n^{-1}\bV_s^\top \bV_t$ as in the symmetric setting.

\section{Proof for symmetric square matrices}\label{appendix:symproof}

\subsection{State evolution for auxiliary AMP}

As discussed in Section \ref{sec: sym proof sketch}, in the setting of Theorem
\ref{thm:sym}, we consider an auxiliary
AMP algorithm starting at time index $-\tau$, with an initialization
$\bU_{-\tau}^{(\tau)} \in \RR^{n \times K}$ independent of $\bW$ and having
the iterates, for $t=-\tau,-\tau+1,-\tau+2,\ldots$
\begin{equation}\label{eq:AMPspikedindinit}
\bF_t^{(\tau)}=\bX\bU_t^{(\tau)}-\sum_{s=-\tau}^t
\bU_s^{(\tau)}b_{ts}^{(\tau)\top}, \qquad
\bU_{t+1}^{(\tau)}=u_{t+1}(\bF_{-\tau}^{(\tau)},\ldots,\bF_t^{(\tau)}).
\end{equation}
The coefficients $b_{ts}^{(\tau)}$ are defined as follows: For $T \geq 1$, we define
\[\bphi^{(\tau)}_{\all,T}=\Big(\langle \partial_s \bU_r^{(\tau)}
\rangle\Big)_{r,s \in \{-\tau,\ldots,T\}},
\qquad \bb_{\all,T}^{(\tau)}=\sum_{j=0}^\infty
\kappa_{j+1}\big(\bphi_{\all,T}^{(\tau)}\big)^j\]
where $\{\kappa_j\}_{j \geq 1}$ are the free cumulants of the limit eigenvalue
distribution $\Lambda$ for $\bW$. We take the
above debiasing coefficients up to iteration $T$ to be the blocks,
for $s,t \in \{-\tau,\ldots,T\}$,
\begin{equation}\label{eq: sym debias coef aux}
b_{ts}^{(\tau)}=\bb_{\all,T}^{(\tau)}[t,s]
\end{equation}

Supposing that $(\bU_*',\bU_{-\tau}^{(\tau)}) \toWtwo
(U_*',U_{-\tau}^{(\tau)})$, we define recursively the following state evolution:
Having defined the joint law of $(U_*',U_{-\tau}^{(\tau)},\ldots,U_t^{(\tau)},
F_{-\tau}^{(\tau)},\ldots,F_{t-1}^{(\tau)})$ for some $t \geq -\tau$, we define
$\bmu_{\all,t}^{(\tau)} \in \RR^{(t+\tau+1)K \times K'}$ having the blocks
\[\mu_s^{(\tau)}=\EE[U_s{U_*'}^\top] \cdot S' \in \RR^{K \times
K'} \text{ for each } s=-\tau,\ldots,t.\]
Recalling the function $\bTheta^{(j)}[\cdot,\cdot]$ from~\eqref{eq:Sigma}, we define also the $(t+\tau+1)K \times (t+\tau+1)K$ matrices
\[\bDelta_{\all,t}^{(\tau)}=\Big(\EE[U_r^{(\tau)}U_s^{(\tau)\top}]\Big)_{r,s \in
\{-\tau,\ldots,t\}}, \qquad
\bPhi_{\all,t}^{(\tau)}=\Big(\EE[\partial_s u_r(F_{-\tau}^{(\tau)},\ldots,F_{r-1}^{(\tau)})]
\Big)_{r,s \in \{-\tau,\ldots,t\}},\]
\[\bSigma_{\all,t}^{(\tau)}=\sum_{j=0}^\infty \bTheta^{(j)}
[\bPhi_{\all,t}^{(\tau)},\,\kappa_{j+2}\bDelta_{\all,t}^{(\tau)}].\]
Then we define the next joint law of $(U_*',U_{-\tau}^{(\tau)},\ldots,
U_{t+1}^{(\tau)},F_{-\tau}^{(\tau)},\ldots,F_t^{(\tau)})$ by
\[(F_{-\tau}^{(\tau)},\ldots,F_t^{(\tau)}) \mid U_*' \sim
\cN\Big(\bmu_{\all,t}^{(\tau)} \cdot U_*',\;
\bSigma_{\all,t}^{(\tau)}\Big), \quad
U_{s+1}^{(\tau)}=u_{s+1}(F_{-\tau}^{(\tau)},\ldots,F_s^{(\tau)})
\text{ for } s=-\tau,\ldots,t.\]

\begin{corollary}\label{cor:auxAMPSE}
In the symmetric spiked model (\ref{eq:sym:rankkModel}), suppose
Assumption \ref{assump:symW} holds for $\bW$. Suppose the initialization
$\bU_{-\tau}^{(\tau)} \in
\RR^{n \times K}$ is independent of $\bW$, and $(\bU_*',\bU_{-\tau}^{(\tau)})
\toWtwo (U_*',U_{-\tau}^{(\tau)})$ a.s.\ as $n \to \infty$. Suppose each function
$u_{t+1}(\cdot)$ is Lipschitz, and each derivative $\partial_s
u_{t+1}(F_{-\tau}^{(\tau)},\ldots,F_t^{(\tau)})$ exists and is continuous on a
set of probability 1 under the above law of
$(F_{-\tau}^{(\tau)},\ldots,F_t^{(\tau)})$. Then for any
$T \geq 1$, a.s.\ as $n \to \infty$,
\[(\bU_*',\bU_{-\tau}^{(\tau)},\ldots,\bU_{T+1}^{(\tau)},
\bF_{-\tau}^{(\tau)},\ldots,\bF_T^{(\tau)})
\toWtwo (U_*',U_{-\tau}^{(\tau)},\ldots,U_{T+1}^{(\tau)},
F_{-\tau}^{(\tau)},\ldots,F_T^{(\tau)}).\]
\end{corollary}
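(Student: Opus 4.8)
The plan is to recognize the auxiliary AMP \eqref{eq:AMPspikedindinit} as an instance of the orthogonally-invariant AMP of Theorem~\ref{thm:symindinit}, run on the noise matrix $\bW$ with the signal factors $\bU_*'$ playing the role of the side information $\bE$, and then to read off the claimed state evolution. Substituting $\bX=\bW+\tfrac1n\bU_*'S'(\bU_*')^\top$ (the definition \eqref{eq:sym:rankkModel}, using $S'=\diag(\theta_1,\ldots,\theta_{K'})$) into \eqref{eq:AMPspikedindinit} and writing $\alpha_t^{(\tau)}:=\tfrac1n(\bU_*')^\top\bU_t^{(\tau)}\in\RR^{K'\times K}$ for the empirical overlaps yields
\[
\bF_t^{(\tau)}=\bW\bU_t^{(\tau)}+\bU_*'\,S'\,\alpha_t^{(\tau)}-\sum_{s=-\tau}^t\bU_s^{(\tau)}b_{ts}^{(\tau)\top}.
\]
After the harmless reindexing $-\tau\mapsto 1,-\tau+1\mapsto 2,\ldots$, the only feature of this recursion not already covered by \eqref{eq:AMPz}--\eqref{eq:AMPu} is the rank-$K'$ offset $\bU_*'S'\alpha_t^{(\tau)}$, whose $i$-th row (as a column vector in $\RR^K$) is $\alpha_t^{(\tau)\top}S'u_{*,i}'$, and which therefore depends only on the side information $\bU_*'$ and on the overlaps $\alpha_t^{(\tau)}$.

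To absorb this offset I would analyze a companion AMP on $\bW$ alone. Let $(\mu_s^{(\tau)})_{s\geq-\tau}$ be the deterministic blocks of $\bmu^{(\tau)}_{\all,t}$ produced by the state evolution in the statement, and define the \emph{shifted} non-linearities
\[
\hat u_{t+1}(\hat f_{-\tau},\ldots,\hat f_t,u_*')\defeq u_{t+1}\big(\hat f_{-\tau}+\mu_{-\tau}^{(\tau)}u_*',\;\ldots,\;\hat f_t+\mu_t^{(\tau)}u_*'\big),
\]
which are Lipschitz, with $\partial_s\hat u_{t+1}$ equal to $\partial_s u_{t+1}$ evaluated at the shifted arguments, since the offset is independent of the non-side-information coordinates; moreover the pushforward of the shifted-argument law is exactly the $(F_{-\tau}^{(\tau)},\ldots,F_t^{(\tau)})$-law of the statement, so the continuity hypothesis of the corollary transfers to Assumption~\ref{assump:symindinit}(b) for $\hat u_{t+1}$. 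The AMP on $\bW$ with side information $\bE=\bU_*'$, initialization $\bU_{-\tau}^{(\tau)}$, and non-linearities $\hat u_{t+1}$ is then exactly of the form \eqref{eq:AMPz}--\eqref{eq:AMPu}; Assumptions~\ref{assump:symW} and~\ref{assump:symindinit} hold for it by the corollary's hypotheses, so Theorem~\ref{thm:symindinit} gives $W_2$-convergence of its iterates $(\hat\bU_s,\hat\bF_s)$, where $\hat\bF_s$ is the $\bZ_s$-iterate of \eqref{eq:AMPz} augmented by the signal mean $\bU_*'(\mu_s^{(\tau)})^\top$, to limits $(\hat U_s,\hat F_s)$. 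Unwinding \eqref{eq:SEindinit} for this companion AMP and inducting on $t$ reproduces exactly the state evolution in the statement: the Onsager matrices use the same free cumulants $\{\kappa_j\}$ of $\Lambda$, so the mean-zero Gaussian part coincides with $\bSigma^{(\tau)}_{\all,t}=\sum_j\bTheta^{(j)}[\bPhi^{(\tau)}_{\all,t},\kappa_{j+2}\bDelta^{(\tau)}_{\all,t}]$; the $u_*'$-linear shift contributes precisely the conditional mean $\bmu^{(\tau)}_{\all,t}\cdot U_*'$; and $\bDelta^{(\tau)}_{\all,t},\bPhi^{(\tau)}_{\all,t}$ are, by construction, the second-moment and averaged-Jacobian matrices of this companion state evolution. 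In particular, jointly with $\bU_*'\toWtwo U_*'$, this gives $\tfrac1n(\bU_*')^\top\hat\bU_t\to\EE[U_*'\hat U_t^\top]$ a.s., i.e. the overlaps formed from the companion iterates satisfy $\alpha_t^{(\tau)\top}S'\to\mu_t^{(\tau)}$ a.s.

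It remains to transfer these conclusions to the true iterates of \eqref{eq:AMPspikedindinit}, by a joint induction on $t$. Suppose $n^{-1/2}\|\bU_s^{(\tau)}-\hat\bU_s\|_\F\to0$ and $n^{-1/2}\|\bF_s^{(\tau)}-\hat\bF_s\|_\F\to0$ a.s.\ for all $s\leq t$. Then $\tfrac1n(\bU_*')^\top\bU_t^{(\tau)}$ has the same a.s.\ limit as $\tfrac1n(\bU_*')^\top\hat\bU_t$, so the true signal offset $\bU_*'S'\alpha_t^{(\tau)}$ and the $\mu_t^{(\tau)}$-shift of the companion AMP differ by $o(\sqrt n)$ in Frobenius norm (using $\|\bU_*'\|_{\mathrm{op}}=\sqrt n$ from \eqref{eq:unormalization} and $\|S'\|$ fixed). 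Since $\|\bW\|$ is bounded by Assumption~\ref{assump:symW}(c), the coefficients $b_{ts}^{(\tau)}$ are bounded (finite sums in the bounded empirical averaged Jacobians) and asymptotically equal to those of the companion AMP, and each $u_{t+1}$ is Lipschitz, an elementary perturbation estimate propagates the bound one step: $n^{-1/2}\|\bF_t^{(\tau)}-\hat\bF_t\|_\F\to0$ and $n^{-1/2}\|\bU_{t+1}^{(\tau)}-\hat\bU_{t+1}\|_\F\to0$ a.s. The asserted $W_2$-convergence of $(\bU_*',\bU_{-\tau}^{(\tau)},\ldots,\bU_{T+1}^{(\tau)},\bF_{-\tau}^{(\tau)},\ldots,\bF_T^{(\tau)})$ then follows from the $W_2$-convergence of the companion AMP together with these vanishing Frobenius-norm perturbations. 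The main obstacle is exactly this entanglement: the AMP to which Theorem~\ref{thm:symindinit} applies must be built from the a priori unknown limit means $\mu_s^{(\tau)}$, so the overlap concentration $\alpha_t^{(\tau)\top}S'\to\mu_t^{(\tau)}$ and the iterate-closeness must be carried through the \emph{same} induction, while one simultaneously verifies that the companion state evolution is self-consistent with the formulas in the statement.
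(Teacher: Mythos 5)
Your proposal follows exactly the paper's route: decompose $\bX=\bW+n^{-1}\bU_*'S'{\bU_*'}^\top$, replace the empirical overlap $n^{-1}S'{\bU_*'}^\top\bU_t^{(\tau)}$ by its deterministic state-evolution limit $\mu_t^{(\tau)\top}$ to build a companion AMP on $\bW$ alone (with $\bU_*'$ as side information and the mean shift absorbed into the non-linearities), invoke Theorem~\ref{thm:symindinit} for that companion, and close by an induction showing the true and companion iterates agree to $o(\sqrt n)$ in Frobenius norm. This is the same argument as in the paper's proof of Corollary~\ref{cor:auxAMPSE}, which cites the analogous induction from \cite[Theorem 3.1(a)]{fan2020approximate}; your write-up just makes explicit the perturbation estimate and the self-consistency point that the paper delegates to that citation.
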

\begin{proof}
The proof is similar to \cite[Theorem 3.1(a)]{fan2020approximate}.
Recalling $\bX=n^{-1}\bU_*' S' {\bU_*'}^\top+\bW$,
we write the iterations (\ref{eq:AMPspikedindinit}) as
\[\bF_t^{(\tau)}=\frac{1}{n}\bU_*' \cdot S'\cdot{\bU_*'}^\top \bU_t^{(\tau)}
+\bW\bU_t^{(\tau)}-\sum_{s=-\tau}^t \bU_s^{(\tau)}b_{ts}^{(\tau)\top}.\]
Then, approximating $n^{-1}S' \cdot {\bU_*'}^\top \bU_t^{(\tau)}$ by
$S' \cdot \EE[U_*'U_t^{(\tau)\top}]=\mu_t^{(\tau)\top}$, we consider an
alternative AMP sequence with the same initialization
$\tilde{\bU}_{-\tau}=\bU_{-\tau}^{(\tau)}$ and ``side information'' $\bU_*'$,
defined by
\begin{align*}
\tilde{\bZ}_t&=\bW\tilde{\bU}_t
-\sum_{s=-\tau}^t \tilde{\bU}_s\tilde{b}_{ts}^\top,\\
\tilde{\bF}_t&=\tilde{\bZ}_t+\bU_*' \mu_t^{(\tau)\top},\\
\tilde{\bU}_{t+1}&=\tilde{u}_{t+1}(\tilde{\bZ}_{-\tau},\ldots,\tilde{\bZ}_t,
\bU_*') \defeq u_{t+1}(\tilde{\bF}_{-\tau},\ldots,\tilde{\bF}_t).
\end{align*}
Here, the debiasing coefficients are defined as
\[\tilde{b}_{ts}=\langle \partial_s
\tilde{u}_{t+1}(\tilde{\bZ}_{-\tau},\ldots,\tilde{\bZ}_t,\bU_*') \rangle
=\langle \partial_s u_{t+1}(\tilde{\bF}_{-\tau},\ldots,\tilde{\bF}_t) \rangle.\]
Using Theorem \ref{thm:symindinit} to analyze this AMP algorithm, we have
a.s.\ as $n \to \infty$ that
\[(\bU_*',\tilde{\bU}_{-\tau},\ldots,\tilde{\bU}_{T+1},\tilde{\bF}_{-\tau},
\ldots,\tilde{\bF}_T) \toWtwo (U_*',U_{-\tau}^{(\tau)},\ldots,
U_{t+1}^{(\tau)},F_{-\tau}^{(\tau)},\ldots,F_t^{(\tau)})\]
for each fixed $t \geq -\tau$, as described by the above state evolution.
Then, applying the same inductive argument as
in \cite[Theorem 3.1(a)]{fan2020approximate}, we obtain a.s.\ as $n \to \infty$
\[n^{-1}\|\bU_t^{(\tau)}-\tilde{\bU}_t\|_F^2 \to 0,
\quad n^{-1}\|\bF_t^{(\tau)}-\tilde{\bF}_t\|_F^2 \to 0\]
for each fixed $t \geq -\tau$, which implies this corollary.
\end{proof}

As discussed in Section \ref{sec: sym proof sketch},
we now specialize this auxiliary AMP algorithm (\ref{eq:AMPspikedindinit})
to the two-phase algorithm
\begin{align*}
u_{t+1}(\bF_{-\tau}^{(\tau)},\ldots,\bF_t^{(\tau)}) = \begin{cases}
\bF_t^{(\tau)} S^{-1}
& \text{ for } -\tau+1 \leq t <0,\\
u_{t+1}(\bF_0^{(\tau)},\ldots,\bF_t^{(\tau)}) & \text{ for } t\geq 0.
\end{cases}
\end{align*}
The auxiliary algorithm is initialized at
\[\Ub_{-\tau}^{(\tau)} = (\ub_{-\tau}^1,\ldots,\ub_{-\tau}^K) \text{ with }
\ub_{-\tau}^k = \Big(\mu_{\pca,k}\ub_*^k +
\sqrt{1-\mu_{\pca,k}^2}\cdot\bz_k\Big)\Big/\theta_k \text{ for each } k=1,\ldots,K,\]
where $\mu_{\pca,k}$ is as defined in Theorem \ref{thm:symspike}, and
$\bz_1,\ldots,\bz_K$ are independent standard Gaussian random vectors also
independent of $\bW$.

For each $t\geq 1$, we adopt the following block decomposition of
$\bphi_{\all,t}^{(\tau)}$, where the first block corresponds to indices
$\{-\tau,\ldots,-1\}$ and the second to indices $\{0,\ldots,t\}$:
\begin{align*}
\bphi_{\all,t}^{(\tau)} = \begin{pmatrix}
\bphi_{--}^{(\tau)} & \bphi_{-t}^{(\tau)}\\
\bphi_{t-}^{(\tau)} & \bphi_t^{(\tau)}
\end{pmatrix},\quad \text{where } \bphi_{--}^{(\tau)}\in\RR^{\tau K\times \tau K} \text{ and } \bphi_t^{(\tau)}\in\RR^{(t+1)K\times(t+1)K}.
\end{align*}
Note that, due to the lower-triangular form of $\bphi_{\all,t}^{(\tau)}$ and the linear update rule for the first $\tau$ steps, we have $\bphi_{-t}^{(\tau)}=0$ and
\begin{align*}
    \bphi_{--}^{(\tau)} = \begin{pmatrix}
        0 & 0 & \cdots & 0 & 0\\
        S^{-1} & 0 & \cdots & 0 & 0\\
        0 & S^{-1} & \cdots & 0 & 0\\
        \vdots & \vdots & \ddots & \vdots & \vdots\\
        0 & 0 & \cdots & S^{-1} & 0
    \end{pmatrix}, \qquad \bphi_{t-}^{(\tau)} = \begin{pmatrix}
        0 & \cdots & 0 & S^{-1}\\
        0 & \cdots & 0 & 0\\
        \vdots & \ddots & \vdots & \vdots\\
        0 & \cdots & 0 & 0
    \end{pmatrix}.
\end{align*}
Applying Lemma \ref{lemma: aux algebra fact} with $\Ab=\bphi_{\all,t}^{(\tau)}$ and $\Bb=S^{-1}$, for all $r\in \{0,\ldots,t\}$ and $c\in \{1,\ldots,\tau\}$,
\begin{align}\label{eq: sym aux Phi prop}
    (\bphi_{\all,t}^{(\tau)})^j[r,-c] = \begin{cases}
    (\bphi_t^{(\tau)})^{j-c}[r,0] S^{-c} & 1\leq c\leq j,\\
    0 & j<c.
    \end{cases}
\end{align}

Similarly, for the state evolution, we decompose
\begin{align}\label{eq: sym aux DeltaPhiSigma}
\bmu_{\all,t}^{(\tau)}=\begin{pmatrix} \bmu_{-}^{(\tau)} \\ \bmu_t^{(\tau)}
\end{pmatrix},\quad
    \bDelta_{\all,t}^{(\tau)} = \begin{pmatrix}
	    \bDelta_{--}^{(\tau)} & \bDelta_{-t}^{(\tau)}\\
	    \bDelta_{t-}^{(\tau)} & \bDelta_t^{(\tau)}
	\end{pmatrix},\quad \bPhi_{\all,t}^{(\tau)} = \begin{pmatrix}
	    \bPhi_{--}^{(\tau)} & \bPhi_{-t}^{(\tau)}\\
	    \bPhi_{t-}^{(\tau)} & \bPhi_t^{(\tau)}
	\end{pmatrix},\quad \bSigma_{\all,t}^{(\tau)} = \begin{pmatrix}
        \bSigma_{--}^{(\tau)} & \bSigma_{-t}^{(\tau)} \\
        \bSigma_{t-}^{(\tau)} & \bSigma_t^{(\tau)}
    \end{pmatrix}.
\end{align}

\subsection{Phase I -- linear AMP}

We first establish the convergence of the iterates and the associated state
evolution of the first $\tau$ steps of this auxiliary AMP algorithm, as $\tau \to
\infty$. For notational convenience, in this section only, we re-index these
iterates as $1,2,\ldots,\tau$ and provide a standalone result for such a linear AMP
algorithm.

Let $\Ub_1=(\ub_1^1,\ldots,\ub_1^K)$ with each $\ub_1^k =
(\mu_{\pca,k}\ub_*^k + \sqrt{1 - \mu_{\pca,k}^2} \cdot \bz_k) / \theta_k$.
The first $\tau$ iterates of the above auxiliary AMP algorithm has the structure
of the following linear AMP:
\begin{equation}\label{eq: linear AMP}
        \Fb_t = \Xb\Ub_t - \sum_{i=1}^t \kappa_{t-i+1} \Ub_i S^{-(t-i)},
\qquad \Ub_{t+1} = \Fb_t S^{-1}.
\end{equation}
We write $\bF_0=\Ub_1 S$.
Up to iterate $\tau$, let
$\bmu_\tau=(\mu_t)_{1\leq t\leq\tau}$ and $\bSigma_\tau=(\sigma_{st})_{1 \leq
s,t \leq \tau}$ be the parameters of
the state evolution describing this linear AMP, where $\mu_t \in \RR^{K \times
K'}$ and $\sigma_{st} \in \RR^{K \times K}$.
Recall
\[\mu_\pca = \diag(\mu_{\pca,1},\ldots,\mu_{\pca,K})\in\RR^{K\times K'},
\quad \Id-\mu_\pca\mu_\pca^\top=\diag(1-\mu_{\pca,1}^2,\ldots,1-\mu_{\pca,K}^2)
\in \RR^{K \times K}.\]
Then Corollary \ref{cor:auxAMPSE} ensures
\begin{align*}
    (\Ub_*',\Ub_1,\ldots,\Ub_\tau,\Fb_1,\ldots,\Fb_\tau) \toWtwo
(U_*',U_1,\ldots,U_\tau,F_1,\ldots,F_\tau)
\end{align*}
where the limiting distribution is defined by
\[(F_1,\ldots,F_\tau) \mid U_*' \sim \cN(\bmu_\tau \cdot U_*',\bSigma_\tau),\]
\begin{align}\label{eq: sym se linear AMP}
U_1 \sim S^{-1} \cdot \cN(\mu_\pca \cdot U_*',\,\Id-\mu_\pca\mu_\pca^\top),
\quad U_t= S^{-1} F_{t-1} \text{ for } 2\leq t\leq\tau.
\end{align}

\begin{lemma}\label{lemma: sym linear AMP}
Under Assumptions \ref{assump:symW} and \ref{assump:symAMP}(a) and (c),
the following hold for the linear AMP algorithm \eqref{eq: linear AMP}:
\begin{enumerate}[label=(\alph*)]
\item $\lim_{t\to\infty}\uplim_{n\to\infty} \|\Fb_t - \Fb_\pca\|_\F/\sqrt{n}=0$ a.s.
\item The state evolution satisfies
$\mu_t=\mu_\pca$ for every
$t \geq 1$, and $\lim_{\min(s,t)\to\infty} \sigma_{st}=\Id-\mu_\pca\mu_\pca^\top$.
\end{enumerate}
\end{lemma}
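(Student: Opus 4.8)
The plan is to feed the linear iteration \eqref{eq: linear AMP} into the abstract state evolution of Corollary \ref{cor:auxAMPSE}, and separately to recognise this iteration as a self-normalizing power method that converges to $\bF_\pca$. A useful reduction first: since $S$ is diagonal and the update $\Ub_{t+1}=\Fb_tS^{-1}$ is separable, the $K$ columns of the iterates evolve independently, so it suffices to analyse each column $k$ as a scalar-parameter linear AMP driven by $\Xb$, with signal strength $\theta_k$ and initialization $\Fb_0^{(k)}=\mu_{\pca,k}\ub_*^k+\sqrt{1-\mu_{\pca,k}^2}\,\bz_k$. Substituting $\Ub_i^{(k)}=\Fb_{i-1}^{(k)}/\theta_k$ back into \eqref{eq: linear AMP} telescopes the Onsager sum into the constant-coefficient recursion $\Fb_t^{(k)}=\theta_k^{-1}\Xb\,\Fb_{t-1}^{(k)}-\sum_{j=1}^{t}\kappa_j\theta_k^{-j}\Fb_{t-j}^{(k)}$, which is the object we study.

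For part (b), the identity $\mu_t=\mu_\pca$ is immediate from the linear structure. By \eqref{eq: sym se linear AMP} and $\mu_s=\EE[U_s{U_*'}^\top]S'$, the initialization gives $\mu_1=S^{-1}\mu_\pca S'=\mu_\pca$ (the diagonal conjugation cancels), and the linear update gives the recursion $\mu_{t+1}=S^{-1}\mu_tS'$, so $\mu_t=\mu_\pca$ for all $t\ge1$ by induction. The convergence $\sigma_{st}\to\Id-\mu_\pca\mu_\pca^\top$ I will obtain \emph{from} part (a): under the state-evolution law $\sigma_{st}=\EE[F_sF_t^\top]-\mu_s\mu_t^\top$ with $\mu_s=\mu_t=\mu_\pca$, while $W_2$-convergence gives $\EE[F_sF_t^\top]=\lim_n\tfrac1n\big(\langle\Fb_s^{(k)},\Fb_t^{(l)}\rangle\big)_{k,l}$; once $\|\Fb_t-\Fb_\pca\|_\F/\sqrt n\to0$, the normalizations $\|\bff_\pca^k\|^2=n$ and ${\bff_\pca^k}^\top\bff_\pca^l=0$ force $\EE[F_sF_t^\top]\to\Id$ as $\min(s,t)\to\infty$, which is the claim.

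The substance is part (a). Introduce the matrix-valued generating function $\mathsf F_k(x)=\sum_{t\ge0}\Fb_t^{(k)}x^t$; the recursion above is equivalent to $M_k(x)\,\mathsf F_k(x)=\Fb_0^{(k)}$ with $M_k(x)=\Id-\tfrac{x}{\theta_k}\big(\Xb-R(x/\theta_k)\Id\big)$, $R$ the $R$-transform of $\Lambda$. Assumption \ref{assump:symAMP}(c), i.e.\ \eqref{eq:largetheta}, ensures the series for $R$ converges in a neighbourhood of $x/\theta_k=0$ and, crucially, that there is $\rho>1$ for which (a.s., for $n$ large) $\|M_k(x)^{-1}\|$ is bounded uniformly on $1<|x|\le\rho$ and on $|x|=1$ away from $x=1$: a zero of $M_k(x)$ there would force $\Xb$ to have an eigenvalue $\tfrac{\theta_k}{x}+R(x/\theta_k)=G^{-1}(x/\theta_k)$, which for $|x|\le\rho$ the quantitative bound \eqref{eq:largetheta} excludes for the bulk eigenvalues of $\Xb$ (which concentrate on $[\lambda_-,\lambda_+]$ with extremes converging to the edges by Assumption \ref{assump:symW}), whereas the supercritical sample eigenvalue $\lambda_k(\Xb)\to\lambda_{\pca,k}=G^{-1}(1/\theta_k)$ produces exactly one simple zero, at $x\to1$. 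Extracting $[x^t]\mathsf F_k(x)$ by pushing the contour out to $|x|=\rho$, the tail is $O(\rho^{-t})$ uniformly in $n$ (using the resolvent bound and $\|\Fb_0^{(k)}\|/\sqrt n\to1$); the residue at the $x\approx1$ zero is $c_{n,k}\bff_\pca^k$ with $c_{n,k}=\langle\bff_\pca^k,\Fb_0^{(k)}\rangle/\big(n\,(-\mu_k'(1))\big)$ for $\mu_k(x)=1-\tfrac{x}{\theta_k}(\lambda_{\pca,k}-R(x/\theta_k))$, and the identities $R(1/\theta_k)=\lambda_{\pca,k}-\theta_k$ and $\theta_k^{-2}R'(1/\theta_k)=1-\mu_{\pca,k}^2$ from \eqref{eq:sigmasq_series_expansion} give $-\mu_k'(1)=\mu_{\pca,k}^2$, while Theorem \ref{thm:symspike} and $\langle\bff_\pca^k,\bz_k\rangle/n\to0$ give $\langle\bff_\pca^k,\Fb_0^{(k)}\rangle/n\to\mu_{\pca,k}^2$, so $c_{n,k}\to1$. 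Any remaining zeros inside $|x|<\rho$ (from other supercritical $\theta_j$, at $x=\theta_k/\theta_j$) contribute residues proportional to $\langle\bff_\pca^j,\Fb_0^{(k)}\rangle/n\to0$, hence $o(\sqrt n)$. This gives $\limsup_n\|\Fb_t^{(k)}-\bff_\pca^k\|_\F/\sqrt n\to0$ as $t\to\infty$; summing over $k$ proves (a), and then the argument of the previous paragraph closes (b).

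I expect the main obstacle to be exactly the uniform resolvent bound on $\|M_k(x)^{-1}\|$ near and on $|x|=1$ — ruling out spurious poles created by the bulk spectrum of $\Xb$ and controlling the boundary circle — which is precisely what forces the explicit largeness condition \eqref{eq:largetheta} on $|\theta_k|$ and the auxiliary parameter $\iota$. The remaining pieces (the telescoping, the mean recursion, the residue evaluation, and the passage between the generating-function limit and the $\limsup_n$ statement) are essentially bookkeeping; note in particular that Assumption \ref{assump:symAMP}(b) plays no role here, since the linear denoisers are smooth.
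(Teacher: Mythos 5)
Your proof of part (a) takes a genuinely different route from the paper's. The paper projects $\fb_t^k$ onto each sample eigenvector and shows the residuals $r_t^{k,i}={\fb_\pca^i}^\top\fb_t^k/n$ satisfy a $\rho$-contraction in the weighted sup-norm $\|(x_0,x_{-1},\ldots)\|=\sup_j|x_{-j}|\iota^j$, with $\rho=\max(\eta,\iota)<1$ and $\eta$ essentially the left side of \eqref{eq:largetheta}; this gives $|r_t^{k,i}|\le\rho^t|r_0^{k,i}|$ simultaneously over all bulk $i$ on the eigenvalue-localization event, and the aligned coefficient $\xi_t^k\to1$ is then extracted separately from $\mu_t=\mu_\pca$ and Theorem~\ref{thm:symspike}. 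You instead write the columnwise recursion as a generating-function identity $M_k(x)\sum_t\fb_t^kx^t=\fb_0^k$ with $M_k(x)=\Id-\tfrac{x}{\theta_k}(\bX-R(x/\theta_k)\Id)$, locate the dominant pole near $x=1$ created by the spiked sample eigenvalue, verify it is simple via $-\mu_k'(1)=\mu_{\pca,k}^2$, evaluate its residue, and push the coefficient-extraction contour to $|x|=\rho'>1$, controlling the bulk through a uniform resolvent bound. Both arguments produce the same geometric decay; yours is more structural (pole and residue exhibit both the aligned and orthogonal components at once, and explain where the $\rho'^{-t}$ rate comes from), whereas the paper's contraction is more elementary and avoids complex analysis and the uniform resolvent estimate. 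Part (b) is handled the same way in both: $\mu_{t+1}=S^{-1}\mu_tS'$ forces $\mu_t\equiv\mu_\pca$, and $\sigma_{st}\to\Id-\mu_\pca\mu_\pca^\top$ follows from part (a) and the orthonormality of $\Fb_\pca$.

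One point to tighten in your argument: with $\rho'=1/\iota$, the naive lower bound is $|P(x,\lambda)|\ge1-\tfrac{1}{\iota}\bigl(\tfrac{\|\Lambda\|_\infty+\delta}{|\theta_k|}+\sum_j\tfrac{|\kappa_j|}{|\theta_k|^j\iota^{j-1}}\bigr)$ for $|x|\le\rho'$ and bulk $\lambda$, and the prefactor $1/\iota>1$ can make this negative even when \eqref{eq:largetheta} holds; so you cannot in general take the contour radius to be $1/\iota$. The fix is to pick $\rho'>1$ close to $1$: the bound at $|x|=\rho'$ reads $1-\tfrac{\rho'(\|\Lambda\|_\infty+\delta)}{|\theta_k|}-\sum_j\tfrac{|\kappa_j|\rho'^j}{|\theta_k|^j}$, and since $L:=\tfrac{\|\Lambda\|_\infty}{|\theta_k|}+\sum_j\tfrac{|\kappa_j|}{|\theta_k|^j}<1$ (the $\iota\to1^-$ limit of \eqref{eq:largetheta}, to which it is in fact equivalent), this stays positive for $\rho'$ sufficiently close to $1$ and $\delta$ small. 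The paper's $\iota$-weighted norm and your annulus margin $\rho'-1$ are two ways of encoding the same slack in \eqref{eq:largetheta}. You should also state explicitly that Weyl interlacing (the paper's event $\cE_n$) is what confines the bulk eigenvalues of $\bX$ to $[\lambda_--\delta,\lambda_++\delta]$ a.s.\ for large $n$; this is needed for your resolvent bound and is currently implicit.
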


\begin{proof}
Recall the sample eigenvalues $\lambda_k(\bX)$ for $k=1,\ldots,K'$
from (\ref{eq:lambdaX}), constituting the $K_+$ largest and $K_-$ smallest
eigenvalues of $\bX$, with associated eigenvectors $\fb_\pca^k$
satisfying (\ref{eq:fpca}). Denote the remaining eigenvalues
and eigenvectors as $\lambda_i(\bX)$ and $\fb_\pca^i$ for $i=K'+1,\ldots,n$ in
any order, with the same normalization $\|\fb_\pca^i\|=\sqrt{n}$.
Let
\[\cS=\Big\{k \in \{1,\ldots,K'\}:\theta_k>1/G(\lambda_+)
\text{ or } \theta_k<1/G(\lambda_-)\Big\}\]
be the indices corresponding to ``super-critical'' signal eigenvalues as
characterized by Theorem \ref{thm:symspike}.
Denote $\|\Lambda\|_\infty=\max(|\lambda_+|,|\lambda_-|)$,
fix a small constant $\delta>0$, and define the event
\[\cE_n=\big\{|\lambda_i(\bX)|<\|\Lambda\|_\infty+\delta \text{ for all }
i \notin \cS\big\}.\]
Then $\cE_n$ occurs almost surely for all large $n$,
where this bound for $i \in \{1,\ldots,K'\} \setminus \cS$ follows from Theorem
\ref{thm:symspike}, and that for $i \in \{K'+1,\ldots,n\}$ follows from
Assumption \ref{assump:symW}(c) and Weyl's eigenvalue interlacing inequality.

Let $\fb_t^k$ be the $k^{\text{th}}$ column of the linear AMP iterate
$\Fb_t$. For part (a), it suffices to show
\[\lim_{t\to\infty}\limsup_{n\to\infty} \|\fb_t^k - \fb_\pca^k\|/\sqrt{n}=0\]
for each $k\in\{1,\ldots,K\}$. Fixing any such $k$,
by the definition of linear AMP in \eqref{eq: linear AMP}, we have
\begin{align}\label{eq: sym f linear AMP}
    \fb_t^k = \frac{1}{\theta_k}\cdot \Xb\fb_{t-1}^k - \sum_{j=1}^t \frac{\kappa_{t-j+1}}{\theta_k^{t-j+1}}\cdot \fb_{j-1}^k.
\end{align}

We first show that the component of $\fb_t^k$ orthogonal to
$\fb_\pca^k$ vanishes a.s.\ in the limits $t \to \infty$ and $n \to\infty$.
Define $r_t^{k,i} = {\fb_\pca^i}^\top\fb_t^k/n$ for each $i \in
\{1,\ldots,n\}$. Then applying (\ref{eq: sym f linear AMP}),
\begin{align}\label{eq: sym residual linear AMP}
    r_t^{k,i} &= \frac{1}{\theta_k}\cdot \frac{(\fb_\pca^i)^\top
\Xb\fb_{t-1}^k}{n} - \sum_{j=0}^{t-1} \frac{\kappa_{t-j}}{\theta_k^{t-j}}\cdot
\frac{(\fb_\pca^i)^\top\fb_{j}^k}{n}\notag\\
    &= \frac{\lambda_i(\bX)}{\theta_k}\cdot r_{t-1}^{k,i} - \sum_{j=0}^{t-1}
\frac{\kappa_{t-j}}{\theta^{t-j}_k}\cdot r_{j}^{k,i}.
\end{align}

For any $i\in \cS \setminus\{k\}$, by the initialization $\fb_0^k =
\theta_k\ub_1^k=\mu_{\pca,k}\bu_*^k+\sqrt{1-\mu_{\pca,k}^2} \cdot \bz_k$, we have
\begin{align*}
r_0^{k,i} &= \frac{\mu_{\pca,k}}{n}(\fb_\pca^i)^\top\ub_*^k +
\frac{\sqrt{1-\mu_{\pca,k}^2}}{n}(\fb_\pca^i)^\top\bz_k \to 0
\end{align*}
a.s.\ as $n \to \infty$,
where the first term converges to 0 by Theorem \ref{thm:symspike},
and the second term converges to 0 since ${\fb_\pca^i}^\top \bz_k/n
\sim \cN(0,1/n)$.
Thus, it follows from the recursion \eqref{eq: sym residual linear AMP} that
\begin{align}\label{eq: sym residual spike}
    \lim_{n\to\infty} r_{t}^{k,i} = 0 \text{ a.s.\ for each fixed }
t\geq 0 \text{ and } i\in \cS \setminus\{k\}.
\end{align}

For any $i\notin \cS$, consider a space $\cX$ of bounded infinite-dimensional
vectors with elements in $[0,\infty)$.
For each $t\geq 0$, we define an element $\brho_{t}^{k,i}\in\cX$ by
padding 0's after $(r_{t}^{k,i},r_{t-1}^{k,i},\ldots,r_{0}^{k,i})$, i.e.,
\begin{align*}
    \brho_{t}^{k,i} = (r_{t}^{k,i}, r_{t-1}^{k,i},\ldots, r_{0}^{k,i},0,0,\ldots).
\end{align*}
For some $\iota\in(0,1)$ chosen as in Assumption \ref{assump:symAMP}(c), let us consider a norm $\|\cdot\|$ on $\cX$ defined by
\begin{align*}
    \|(x_0,x_{-1},x_{-2},\ldots)\| = \sup_{j\geq 0} |x_{-j}|\cdot \iota^{j}.
\end{align*}
Consider a map $g:\cX\to\cX$ defined as $g(\brho_{t-1}^{k,i})=\brho_{t}^{k,i}$
for each $t\geq 1$. We verify that $g$ is contractive with respect to the norm
$\|\cdot\|$: Let $\{\brho_t\}_{t\geq1}$ and $\{\tilde\brho_t\}_{t\geq 1}$ be two sequences of vectors in $\cX$ given by
\begin{align*}
    \begin{cases}
    \brho_t = (r_t,r_{t-1},\ldots,r_0,0,\ldots)\\
    \tilde\brho_t = (\tilde r_t,\tilde r_{t-1},\ldots,\tilde r_0,0,\ldots)
    \end{cases}
\end{align*}
where both $\{r_t\}_{t\geq 1}$ and $\{\tilde r_t\}_{t\geq 1}$ satisfy the same recursion as in \eqref{eq: sym residual linear AMP}. Then we have
\begin{align*}
    \|g(\brho_{t-1}) - g(\tilde\brho_{t-1})\| = \|\brho_t - \tilde\brho_t\|
    &= \sup_{0\leq j\leq t} |r_j - \tilde r_j| \cdot \iota^{t-j}
    = \max\left\{|r_t - \tilde r_t|, \iota\cdot\|\brho_{t-1} - \tilde\brho_{t-1}\|\right\}.
\end{align*}
We then need to control $|r_t-\tilde r_t|$. It follows from \eqref{eq: sym
residual linear AMP} that
\begin{align*}
    |r_t-\tilde r_t| &= \left| \frac{\lambda_i(\bX)}{\theta_k}\cdot(r_{t-1}-\tilde r_{t-1}) - \sum_{j=0}^{t-1} \frac{\kappa_{t-j}}{\theta_k^{t-j}}(r_j-\tilde r_j)\right|\\
    &\leq \frac{|\lambda_i(\Xb)|}{|\theta_k|}|r_{t-1}-\tilde r_{t-1}| + \sum_{j=0}^{t-1}\frac{|\kappa_{t-j}|}{|\theta_k|^{t-j}\iota^{t-1-j}} |r_j-\tilde r_j|\iota^{t-1-j}\\
    &\leq \left(\frac{|\lambda_i(\bX)|}{|\theta_k|} + \sum_{j=1}^t\frac{|\kappa_j|}{|\theta_k|^j\iota^{j-1}}\right) \cdot \max_{0\leq j\leq t-1}|r_{j}-\tilde r_{j}|\iota^{t-1-j}\\
    &\leq \underbrace{\left(\frac{\|\Lambda\|_\infty+\delta}{|\theta_k|} + \sum_{j=1}^\infty \frac{|\kappa_j|}{|\theta_k|^j\iota^{j-1}}\right)}_{\eta} \cdot \|\brho_{t-1} - \tilde\brho_{t-1}\|
\end{align*}
where the last inequality holds on the event $\cE_n$ defined above. For sufficiently
small $\delta>0$, we have $\eta\in(0,1)$ by Assumption~\ref{assump:symAMP}(c).
Then denoting $\rho=\max(\eta,\iota)\in(0,1)$, we obtain
\begin{align*}
    \|g(\brho_{t-1}) - g(\tilde\brho_{t-1})\| &\leq \rho \cdot \|\brho_{t-1} - \tilde\brho_{t-1}\|.
\end{align*}
Therefore, $g$ is a $\rho$-contraction. Applying this property to
$\{\brho_{t}^{k,i}\}_{t\geq 1}$ yields
\begin{align}\label{eq: sym f project}
    |r_{t}^{k,i}| &\leq \|(r_{t}^{k,i},r_{t-1}^{k,i},\ldots,r_{0}^{k,i},0,\ldots) - (0,0,\ldots)\|
    \leq \rho^t \cdot \|(r_{0}^{k,i},0,\ldots) - (0,0,\ldots)\|
    = \rho^t \cdot |r_{0}^{k,i}|.
\end{align}
This holds simultaneously for all $i\notin \cS$ on the event $\cE_n$.

Now write $\fb_t^k=\xi_t^k\fb_\pca^k + \rb_t^k$ where $\rb_t^k$ is
orthogonal to $\fb_\pca^k$. Since $\{\fb_\pca^i/\sqrt{n}\}_{i=1,\ldots,n}$ is
an orthonormal basis of $\RR^n$, we can expand $\rb_t^k$ as
\begin{align*}
\rb_t^k = \sum_{i=1,i\neq k}^n \frac{{\fb_\pca^i}^\top\fb_t^k}{n}
\cdot \fb_\pca^i = \underbrace{\sum_{i \in \cS \setminus \{k\}}
r_{t}^{k,i}\fb_\pca^i}_{\cI_1} + \underbrace{\sum_{i \notin \cS} r_{t}^{k,i}\fb_\pca^i}_{\cI_2}.
\end{align*}
By \eqref{eq: sym residual spike}, we have
$\lim_{n\to\infty}\|\cI_1\|/\sqrt{n}=0$ for each fixed $t \geq 1$. For $\cI_2$,
we apply \eqref{eq: sym f project} on the event $\cE_n$:
\begin{align*}
\frac{\|\cI_2\|^2}{n} &= \sum_{i \notin \cS} (r_{t}^{k,i})^2 \leq \rho^{2t} \cdot
\sum_{i \notin \cS} (r_0^{k,i})^2 \leq \rho^{2t} \cdot \frac{\|\rb_0^k\|^2}{n}
\leq \rho^{2t} \cdot \frac{\|\fb_0^k\|^2}{n}.
\end{align*}
By the initialization
$\fb_0^k=\mu_{\pca,k}\ub_*^k+\sqrt{1-\mu_{\pca,k}^2}\cdot\bz_k$, we have
$\lim_{n \to \infty} \|\fb_0^k\|^2/n=1$.
Thus, as desired,
\begin{align}\label{eq: sym residual bound}
    \lim_{t\to\infty}\uplim_{n\to\infty} \frac{\|\rb_t^k\|}{\sqrt{n}}
 = 0.
\end{align}

We now show that $\xi_t^k\defeq{\fb_t^k}^\top \fb_\pca^k/n \to 1$ by using the
state evolution \eqref{eq: sym se linear AMP} of linear AMP.
By this state evolution, we have
\begin{align*}
    \mu_{t+1} = \EE[U_{t+1}{U_*}'^\top] \cdot S' = S^{-1} \cdot \EE[F_t
{U_*'}^\top] \cdot S' = S^{-1}\cdot \EE[(\mu_t U_*' + Z_t){U_*'}^\top] \cdot S' = S^{-1} \mu_t S'
\end{align*}
where the second equality follows from $U_{t+1} = S^{-1} F_t$ and the last
equality is due to $Z_t\independent U_*'$ and $\EE[{U_*'}{U_*'}^\top]=\Id$. 
From the definition of $\bU_1$, it may be checked that
$\mu_1 = \mu_\pca \in \RR^{K \times K'}$. Then it
follows from the above that $\mu_t=\mu_\pca$ for all $t\geq 1$. Thus for each $k\in\{1,\ldots, K\}$, we have
\begin{align*}
\mu_{\pca,k}=\lim_{n \to \infty} \frac{{\fb_\pca^k}^\top \ub_*^k}{n},
\qquad
    \mu_{\pca,k} = \lim_{n\to\infty}\frac{{\fb_t^k}^\top\ub_*^k}{n} =
    \lim_{n\to\infty} \xi_t^k\cdot\frac{{\fb_\pca^k}^\top\ub_*^k}{n} +
    \frac{{\rb_t^k}^\top\ub_*^k}{n}
\end{align*}
where the left equality follows from Theorem~\ref{thm:symspike}, and the right
equality applies $\mu_t=\mu_\pca$. This further implies that
\begin{align*}
    \uplim_{n\to\infty} \left|(\xi_t^k-1)\frac{{\fb_\pca^k}^\top\ub_*^k}{n}\right| &\leq \uplim_{n\to\infty} \frac{\|\rb_t^k\|}{\sqrt{n}}.
\end{align*}
Since $\lim_{n\to\infty} {\fb_\pca^k}^\top\ub_*^k/n=\mu_{\pca,k}\neq 0$ a.s.\ by Theorem \ref{thm:symspike}, taking the limit as $t\to\infty$ on both sides, it follows from \eqref{eq: sym residual bound} that
\begin{align}\label{eq: sym xi linear AMP}
    \lim_{t\to\infty}\uplim_{n\to\infty} |\xi_t^k-1| = 0.
\end{align}
Then, recalling $\fb_t^k=\xi_t^k\fb_\pca^k+\rb_t^k$ and
combining with \eqref{eq: sym residual bound},
\begin{align*}
    \lim_{t\to\infty}\uplim_{n\to\infty} \frac{\|\fb_t^k - \fb_\pca^k\|}{\sqrt{n}}=0.
\end{align*}
This shows both part (a) and the claim about $\mu_t$ in part (b).

It remains to show the convergence of $\sigma_{st}$ in part (b). By \eqref{eq:
sym se linear AMP} and the above identities
$\EE[F_t{U_*'}^\top]=\mu_t=\mu_\pca$, we have
\begin{align*}
    \sigma_{st} &= \EE[(F_s-\mu_s U_*')(F_t-\mu_t U_*')^\top] = \EE[F_sF_t^\top] -
\mu_\pca\mu_\pca^\top = \lim_{n\to\infty} \frac{\Fb_s^\top\Fb_t}{n} -
\mu_\pca\mu_\pca^\top.
\end{align*}
Thus, using the notation $\langle \bu\bv \rangle=\bu^\top \bv/n$
and recalling the decomposition $\bff_t^k = \xi_t^k \bff_\pca^k + \br_t^k$,
we can bound the difference between $(1-\mu_{\pca,k}^2)\ind\{k=k'\}$ and the 
$(k,k')^{\text{th}}$ entry of $\sigma_{st}$ as
\begin{align*}
    &\quad|\sigma_{st}[k,k'] - (1-\mu_{\pca,k}^2)\ind\{k=k'\}|\\
    &\leq \uplim_{n\to\infty}\left| \xi_s^k\xi_t^{k'}\langle\fb_\pca^k\fb_\pca^{k'}\rangle - \ind\{k=k'\} + \xi_s^k\langle\fb_\pca^k\rb_t^{k'}\rangle + \xi_t^{k'}\langle\fb_\pca^{k'}\rb_s^k\rangle + \langle\rb_s^k\rb_t^{k'}\rangle\right|\\
    &\leq \uplim_{n\to\infty} \ind\{k=k'\}|\xi_s^k\xi_t^{k'}-1| + |\xi_s^k\langle\fb_\pca^k\rb_t^{k'}\rangle| + |\xi_t^{k'}\langle\fb_\pca^{k'}\rb_s^k\rangle| + |\langle\rb_s^k\rb_t^{k'}\rangle|
\end{align*}
where the last inequality applies the triangle inequality and the orthogonality and normalization of $\fb_\pca^k$ in
(\ref{eq:fpca}). Finally, by the convergence of $\rb_t$ in \eqref{eq: sym
residual bound} and that of $\xi_t$ in \eqref{eq: sym xi linear AMP} as $t \to
\infty$, we get
\begin{align*}
    \lim_{\min(s,t)\to\infty} \sigma_{st}[k,k'] = (1-\mu_{\pca,k}^2)\cdot
\ind\{k=k'\},
\end{align*}
i.e.\ $\lim_{\min(s,t) \to \infty} \sigma_{st}=\Id-\mu_\pca\mu_\pca^\top$.
\end{proof}

The auxiliary AMP iterations $\bU_{-\tau}^{(\tau)},\bF_{-\tau}^{(\tau)},
\bU_{-\tau+1}^{(\tau)},\ldots$ are defined by this linear AMP algorithm up to
the iterates $\bU_0^{(\tau)}$ and $\bF_0^{(\tau)}$. Thus,
translating Lemma \ref{lemma: sym linear AMP} back to the indexing of this
auxiliary AMP algorithm, and recalling the spectral initializations
$\bF_0=\bF_{\pca}$ and $\bU_0=\bF_{\pca}S^{-1}$,
the lemma implies the following:
\begin{multline}
\label{eq:symlinearAMP}
\lim_{\tau \to \infty} \uplim_{n \to \infty}
\frac{\|\bF_{i}^{(\tau)}-\bF_0\|_\F}{\sqrt{n}}=0, \quad
\lim_{\tau \to \infty} \uplim_{n \to \infty}
\frac{\|\bU_{i}^{(\tau)}-\bU_0\|_\F}{\sqrt{n}}=0
\text{ for all fixed } i \leq 0,\\
\mu_{i}^{(\tau)}=\mu_\pca, \quad
\lim_{\tau \to \infty}
\bSigma_{\all,t}^{(\tau)}[i,j]=\Id-\mu_\pca\mu_\pca^\top \text{ for all fixed }
i,j \leq 0.
\end{multline}

\subsection{Phase II - auxiliary AMP}\label{sec: sym aux proof}
Now we proceed to prove Theorem~\ref{thm:sym} which provides a precise
characterization of the state evolution of the AMP algorithm with spectral
initialization for symmetric matrices.\\

\begin{proof}[Proof of Theorem~\ref{thm:sym}]
We show by induction that the following statements hold a.s.\ for each $t
\geq 0$. In particular, part \ref{enum: sym W2 convergence} is the main result
that we want to prove, and all of the other parts serve as a road map for the
proof. Parts (a)--(d) will apply standard comparison arguments, and the specific
definitions (\ref{eq: sym debias coef spec}) and (\ref{def: sym Sigma_T}) will
be used to verify parts (e) and (f).

\begin{enumerate}[label=(\alph*), ref=(\alph*)]

\item \label{enum: sym uconverge} $\lim_{\tau\to\infty} \uplim_{n\to\infty} \| \Ub_t^{(\tau)} - \Ub_t \|_\F /
\sqrt{n} = 0$ and $\lim_{n\to\infty}\|\Ub_t\|_\F/\sqrt{n}<C_t$ for a constant
$C_t>0$.

\item \label{enum: sym W2 convergence}
$(\Ub_*',\Ub_0,\ldots,\Ub_t,\Fb_0,\ldots,\Fb_{s-1}) \toWtwo
(U_*',U_0,\ldots,U_t,F_0,\ldots,F_{t-1})$ where the limit distribution is
defined by the recursion \eqref{eq:limitDistPCAAMP}.

\item \label{enum: sym PhiDeltaconverge} $\limtau \bPhi_t^{(\tau)} = \bPhi_t$
and $\limtau \bDelta_t^{(\tau)} = \bDelta_t$.

\item\label{enum: sym samplePhiconverge} $\limtaun \|\bphi_t^{(\tau)} -
\bphi_t\| = 0$.

\item \label{enum: sym fconverge} $\lim_{\tau\to\infty} \uplim_{n\to\infty} \| \Fb_t^{(\tau)} - \Fb_t\|_\F /
\sqrt{n} = 0$ and $\lim_{n \to \infty} \|\Fb_s\|_\F/\sqrt{n}<C_t$ for a
constant $C_t>0$.

\item \label{enum: sym stateconverge} $\lim_{\tau\to\infty}\bmu_t^{(\tau)} =
\bmu_t$ and $\limtau \bSigma_t^{(\tau)} = \bSigma_t$.
\end{enumerate}

Denote by $t^{(a)}, t^{(b)},\ldots,t^{(f)}$ the claims of parts (a-f)
{at} iteration $t$. We induct on $t$. For the base case $t=0$, parts (a), (e), and (f) follow from
(\ref{eq:symlinearAMP}) proved in the previous section, and the
initializations $\bU_0=\bF_{\pca}S^{-1}$ and $\bF_0=\bF_{\pca}$. For \ref{enum:
sym PhiDeltaconverge} and \ref{enum: sym samplePhiconverge}, we have
$\bphi_0^{(\tau)}=\bphi_0=\bPhi_0^{(\tau)}=\bPhi_0=0$, while
$\lim_{\tau \to \infty} \bDelta_0^{(\tau)}=\bDelta_0$ follows from part (a).
For \ref{enum: sym W2 convergence}, $(\bU_*',\bU_0) \toWtwo (U_*',U_0)$ follows
from the convergence $(\bU_*',\bU_0^{(\tau)}) \toWtwo (U_*',U_0^{(\tau)})$ for
the auxiliary AMP algorithm, together with (\ref{eq:symlinearAMP}). Thus all statements hold for $t=0$. 

For the induction step, let $t \geq 1$, and assume $s^{(a-f)}$ holds for $0\leq s\leq t-1$. Let us
then prove statements (a--f) for iteration $t$.

\paragraph{Part~\ref{enum: sym uconverge}}
By Assumption~\ref{assump:symAMP}(b), $u_t$ is Lipschitz, so there exists some $L>0$ such that
\begin{align*}
	\frac{\|\Ub_t^{(\tau)} - \Ub_t\|_\F}{\sqrt{n}} =
\frac{\|u_t(\Fb_0^{(\tau)},\ldots,\Fb_{t-1}^{(\tau)}) -
u_t(\Fb_0,\ldots,\Fb_{t-1})\|_\F}{\sqrt{n}} \leq L\cdot \sum_{s=0}^{t-1}
\frac{\|\Fb_s^{(\tau)} - \Fb_s\|_\F}{\sqrt{n}}.
\end{align*}
Then by the induction hypothesis $t-1^{(e)}$, 
$\lim_{\tau\to\infty} \uplim_{n\to\infty} \|\Ub_t^{(\tau)} - \Ub_t\|_\F/\sqrt{n} = 0$.
Moreover, for a constant $C_t>0$,
\begin{align*}
    \lim_{n\to\infty}\frac{\|\Ub_t\|_{\F}^2}{n} = \limn
\frac{\left\|u_t(\Fb_0,\ldots,\Fb_{t-1})\right\|_{\F}^2}{n} =
\EE\left[\|u_t(F_0,\ldots,F_{t-1})\|^2\right]<C_t.
\end{align*}

\paragraph{Part~\ref{enum: sym W2 convergence}}
Let $u_{*,i}\in\RR^{K'}$ denote the $i^{\text{th}}$ row of $\Ub_*'\in\RR^{n\times
K'}$, and similarly for the other matrix variables. Let $g$ be any pseudo-Lipschitz
function, i.e.\ $|g(x)-g(y)| \leq C(1+\|x\|+\|y\|)\|x-y\|$ for a constant $C>0$.
Then there exist some constant $C'>0$ such that 
\begin{align*}
&\quad \frac{1}{n} \sum_{i=1}^n \left|
g(u_{*,i},u^{(\tau)}_{0,i},\ldots,u^{(\tau)}_{t,i},
f^{(\tau)}_{0,i},\ldots,f^{(\tau)}_{t-1,i}) -
g(u_{*,i},u_{0,i},\ldots,u_{t,i},f_{0,i},\ldots,f_{t-1,i})\right| \\
&\leq \frac{C}{n} \sum_{i=1}^n \left(1 + \|u_{*,i}\| + \sum_{s=0}^t (\|u_{s,i}\|
+\|u_{s,i}^{(\tau)}\|)+\sum_{s=0}^{t-1}
(\|f_{s,i}\|+\|f_{s,i}^{(\tau)}\|)\right) \cdot
\left(\sum_{s=0}^t \|u_{s,i} - u_{s,i}^{(\tau)}\| + \sum_{s=0}^{t-1}\|f_{s,i} -
f_{s,i}^{(\tau)}\|\right)\\
&\leq \frac{C'}{n} \left(\sum_{i=1}^n\left(1 + \|u_{*,i}\|^2 + \sum_{s=0}^t
(\|u_{s,i}\|^2 +\|u_{s,i}^{(\tau)}\|^2) +\sum_{s=0}^{t-1} (\|f_{s,i}\|^2
+\|f_{s,i}^{(\tau)}\|^2)\right)\right)^{1/2} \cdot \\
&\hspace{2in}\left(\sum_{i=1}^n \sum_{s=0}^t \|u_{s,i} - u_{s,i}^{(\tau)}\|^2 +
\sum_{s=0}^{t-1}\|f_{s,i} - f_{s,i}^{(\tau)}\|^2\right)^{1/2}\\
&\leq 
C' \underbrace{\left(1+\frac{\|\Ub_*'\|_{\F} + \sum_{s=0}^t
\|\Ub_{s}\|_{\F} + \|\Ub_s^{(\tau)}\|_\F
+\sum_{s=0}^{t-1}
\|\Fb_{s}\|_\F+\|\Fb_{s}^{(\tau)}\|_\F}{\sqrt{n}}\right)}_{\cI_1} \cdot \\
&\hspace{3in}\underbrace{\frac{\sum_{s=0}^t \|\Ub_{s} - \Ub_{s}^{(\tau)}\|_{\F}
+\sum_{s=0}^{t-1} \|\Fb_{s} - \Fb_{s}^{(\tau)}\|_\F}{\sqrt{n}}}_{\cI_2}
\end{align*}
where the last two inequalities follow from Cauchy-Schwartz and the triangle
inequality for the Frobenius norm, respectively.
By $t^{(a)}$ and the induction hypothesis $t-1^{(e)}$,
$\limtaun \cI_1<C_t$ for a constant $C_t>0$,
and $\lim_{\tau\to\infty} \uplim_{n\to\infty} \cI_2=0$. It then follows that
\begin{align*}
\lim_{\tau\to\infty} \uplim_{n\to\infty} \frac{1}{n}
\sum_{i=1}^n
\left|g(u_{*,i},u^{(\tau)}_{0,i},\ldots,u^{(\tau)}_{t,i},f^{(\tau)}_{0,i},\ldots,f^{(\tau)}_{t-1,i})
- g(u_{*,i},u_{0,i},\ldots,u_{t,i},f_{0,i},\ldots,f_{t-1,i}) \right| = 0.
\end{align*}
By the induction hypothesis $t-1^{(f)}$ and Assumption
\ref{assump:symAMP}(b), for all sufficiently large $\tau$ and each $1 \leq s<r
\leq t$, $\partial_s u_r(F_0^{(\tau)},\ldots,F_{r-1}^{(\tau)})$ exists and is
continuous on a set of probability 1 under the law of
$F_0^{(\tau)},\ldots,F_{r-1}^{(\tau)}$ prescribed by Corollary
\ref{cor:auxAMPSE}. Thus, for all sufficiently large $\tau$,
Corollary \ref{cor:auxAMPSE} applies to this
auxiliary AMP algorithm up to iteration $t$, to yield
\begin{align*}
\limn \frac{1}{n} \sum_{i=1}^n
g(u_{*,i},u^{(\tau)}_{0,i},\ldots,u^{(\tau)}_{t,i},f^{(\tau)}_{0,i},\ldots,f^{(\tau)}_{t-1,i})
&=
\bbE\left[g (U_*', U^{(\tau)}_0,\ldots, U^{(\tau)}_t,
F^{(\tau)}_0,\ldots,F^{(\tau)}_{t-1})\right].
\end{align*}
Recalling $(F_0^{(\tau)},\ldots,F_{t-1}^{(\tau)}) \mid U_*' \sim
\cN(\bmu_{t-1}^{(\tau)} U_*', \bSigma_{t-1}^{(\tau)})$ and $(F_0,\ldots,F_{t-1})
\mid U_*' \sim \cN(\bmu_{t-1}U_*',\bSigma_{t-1})$, let us couple these laws
by setting
$(F_0^{(\tau)},\ldots,F_{t-1}^{(\tau)})=\bmu_{t-1}^{(\tau)}U_*' +
(\bSigma_{t-1}^{(\tau)})^{1/2}Z$ and $(F_0,\ldots,F_{t-1})=\bmu_{t-1}U_*' +
\bSigma_{t-1}^{1/2}Z$ for a standard Gaussian vector $Z$ independent of $U_*'$.
Since $\limtau \bmu_{t-1}^{(\tau)}=\bmu_{t-1}$ and $\limtau
\bSigma_{t-1}^{(\tau)}=\bSigma_{t-1}$ by $t-1^{(f)}$,
and $g(\cdot)$ is pseudo-Lipschitz, by the dominated convergence theorem,
\begin{align*}
\limtau
\bbE\left[g (U_*', U^{(\tau)}_0,\ldots, U^{(\tau)}_t, F^{(\tau)}_0,\ldots,F^{(\tau)}_{t-1})\right]
=\bbE\left[g (U_*', U_0,\ldots, U_t, F_0,\ldots,F_{t-1})\right].
\end{align*}
Combining the above three displays, this shows for any pseudo-Lipschitz function $g$ that
\[\limn\frac{1}{n} \sum_{i=1}^n
g(u_{*,i},u_{0,i},\ldots,u_{t,i},f_{0,i},\ldots,f_{t-1,i})
=g(U_*',U_0,\ldots,U_t,F_0,\ldots,F_{t-1}),\]
which implies the desired Wasserstein-2 convergence in part (b).

\paragraph{Part~\ref{enum: sym PhiDeltaconverge}}
Since the upper-left $tK \times tK$ submatrix of $\bPhi_t^{(\tau)}$ is exactly
equal to $\bPhi_{t-1}^{(\tau)}$ and the last column block of
$\bPhi_t^{(\tau)}$ is 0, by the induction hypothesis $t-1^{(c)}$, it suffices to show that the last row
block of $\bPhi_t^{(\tau)}$ converges to that of $\bPhi_t$, and similarly for $\bDelta_t^{(\tau)}$ and $\bDelta_t$. For any $s\in\{0,\ldots,t-1\}$, we have
\begin{align*}
    \bPhi_t^{(\tau)}[t,s] = \EE\left[\partial_s u_t(F_0^{(\tau)},\ldots,F_{t-1}^{(\tau)})\right] \quad\textnormal{and}\quad \bPhi_t[t,s]=\EE\left[\partial_su_t(F_0,\ldots,F_{t-1})\right].
\end{align*}
By Assumption \ref{assump:symAMP}(b), $\partial_su_t$ is bounded and continuous on
a set of probability 1 under $(F_0,\ldots,F_{t-1})$,
so by weak convergence of $(F_0^{(\tau)},\ldots,F_{t-1}^{(\tau)})$ to 
$(F_0,\ldots,F_{t-1})$ under the above coupling,
\begin{align*}
    \limtau \bPhi_t^{(\tau)}[t,s] = \bPhi_t[t,s].
\end{align*}
Similarly, for any $s\in\{0,\ldots,t\}$, $\limtau
\bDelta_t^{(\tau)}[t,s]=\bDelta_t[t,s]$ by the induced coupling
of $(U_0^{(\tau)},\ldots,U_t^{(\tau)})$ and $(U_0,\ldots,U_t)$ and the dominated
convergence theorem.

\paragraph{Part~\ref{enum: sym samplePhiconverge}}
As above, we only need to show that the last row block of $\bphi_t^{(\tau)}$
converges to that of $\bphi_t$. For any $s\in\{0,\ldots,t-1\}$, by the auxiliary
AMP state evolution of Corollary \ref{cor:auxAMPSE},
\begin{align}\label{eq: sym aux samplephi limit}
    \lim_{n\to\infty}\bphi_t^{(\tau)}[t,s] &= \lim_{n\to\infty}\langle\partial_s u_t(\Fb_0^{(\tau)},\ldots,\Fb_{t-1}^{(\tau)})\rangle
    = \EE\left[\partial_s u_t(F_0^{(\tau)},\ldots,F_{t-1}^{(\tau)})\right]
    = \bPhi_t^{(\tau)}[t,s]
\end{align}
where the second equality again follows from $\partial_s u_t$
being bounded and continuous on a set of probability 1 under
$(F_0^{(\tau)},\ldots,F_{t-1}^{(\tau)})$, for sufficiently large $\tau$.
Similarly, by $t^{(b)}$,
\begin{align}
\label{eq: sym samplephi limit}
\lim_{n\to\infty} \bphi_t[t,s] = \EE\left[\partial_su_t(F_0,\ldots,F_{t-1})\right] = \bPhi_t[t,s].
\end{align}
Combining \eqref{eq: sym aux samplephi limit} and \eqref{eq: sym samplephi
limit}, it then follows from $t^{(c)}$ that
\begin{align*}
\limtaun \left(\bphi_t^{(\tau)}[t,s] - \bphi[t,s]\right)=0.
\end{align*}

\paragraph{Part~\ref{enum: sym fconverge}}

We now control the difference between $\Fb_t^{(\tau)}$ and $\Fb_t$. By their
definitions in \eqref{eq:AMPspikedindinit} and~$\eqref{eq:AMPPCAf}$, applying the triangle inequality yields
\begin{align}\label{eq: sym f diff}
	\frac{\|\Fb_t^{(\tau)} - \Fb_t\|_\F}{\sqrt{n}} \leq
\frac{\|\Xb\|\|\Ub_t^{(\tau)} - \Ub_t\|_\F}{\sqrt{n}} +
\frac{1}{\sqrt{n}}\left\|\sum_{i=-\tau}^{t}  \Ub_i^{(\tau)}b_{t,i}^{(\tau)\top}
- \sum_{i=0}^t \Ub_ib_{t,i}^\top \right\|_\F.
\end{align}
Since $\limn \|\Xb\|<C$ a.s.\ for a constant $C>0$, the first term vanishes in the limit $n\to\infty$ by
$t^{(a)}$.
For the second term in \eqref{eq: sym f diff}, we can further decompose and bound it by
\begin{align}\label{eq: step 2 f difference 2}
	\frac{1}{\sqrt{n}}\cdot\left\|\sum_{i=-\tau}^{t} \Ub_i^{(\tau)}
b_{t,i}^{(\tau)\top} - \sum_{i=0}^t \Ub_ib_{t,i}^\top \right\|_\F \leq
\underbrace{\frac{1}{\sqrt{n}}\cdot \left\|\sum_{i=-\tau}^{0} \Ub_i^{(\tau)}
b_{t,i}^{(\tau)\top} - \Ub_0b_{t,0}^\top\right\|_\F}_{\cI_1} +
\underbrace{\sum_{i=1}^t
\frac{1}{\sqrt{n}}\cdot\left\|\Ub_i^{(\tau)}b_{t,i}^{(\tau)\top}  -
\Ub_ib_{t,i}^\top\right\|_\F}_{\cI_2}.
\end{align}

\subparagraph{Term $\cI_1$.} By the triangle inequality,
\begin{align}\label{eq: f diff at 0}
\frac{1}{\sqrt{n}}\cdot\left\|\sum_{i=-\tau}^0
\Ub_i^{(\tau)}b_{t,i}^{(\tau)\top}  - \Ub_0b_{t,0}^\top\right\|_\F
&\leq \underbrace{\left\| \sum_{i=-\tau}^0 b_{t,i}^{(\tau)} - b_{t,0}
\right\|}_{\mathrm{\cI_{1,1}}} \cdot \frac{\|\Ub_0\|_\F}{\sqrt{n}} +
\underbrace{\sum_{i=-\tau}^0 \|b_{t,i}^{(\tau)}\| \cdot \frac{\|\Ub_i^{(\tau)} -
\Ub_0\|_\F}{\sqrt{n}}}_{\cI_{1,2}}.
\end{align}
Since $\|\Ub_0\|_\F/\sqrt{n} = \|\Fb_\pca S^{-1}\|_\F/\sqrt{n}<C$
for a constant $C>0$, it suffices to bound $\cI_{1,1}$ and $\cI_{1,2}$.
To bound $\cI_{1,1}$, recall the definition of debiasing coefficients in
\eqref{eq: sym debias coef spec}:
	\[b_{t,0} = \widetilde\Bb_t[t,0] = \sum_{j=0}^\infty \bphi_t^j[t,
0]\tilde\kappa_{j+1}=\sum_{j=0}^t \bphi_t^j[t,0]\tilde\kappa_{j+1},\]
where the second equality applies $\bphi_t^j=0$ when $j>t$. Recall also the
debiasing coefficients in the auxiliary AMP sequence from~\eqref{eq: sym debias coef aux}:
For every $i\in\{-\tau,\ldots,0\}$, 
\begin{equation}\label{eq:btitau}
b_{t,i}^{(\tau)} = \Bb_t^{(\tau)}[t,i] = \sum_{j=0}^\infty \kappa_{j+1}
\left(\bphi_{\all,t}^{(\tau)}\right)^j[t,i] = \sum_{j=-i}^{-i+t} \kappa_{j+1}
\left(\bphi_t^{(\tau)}\right)^{j+i}[t,0] S^i
\end{equation}
where the last equality follows from~\eqref{eq: sym aux Phi prop} and the fact that $(\bphi_t^{(\tau)})^j=0$ when $j>t$. Therefore,
\begin{align*}
\sum_{i=-\tau}^0 b_{t,i}^{(\tau)} &= \sum_{i=-\tau}^0 \sum_{j=-i}^{-i+t} \kappa_{j+1} \left(\bphi_t^{(\tau)}\right)^{j+i}[t,0] S^{i}\\
&= \sum_{j=0}^t \left(\bphi_t^{(\tau)}\right)^j[t,0] \left(\sum_{i=-\tau}^0 \kappa_{j-i+1}S^i \right)\\
&= \sum_{j=0}^t \left(\bphi_t^{(\tau)}\right)^j[t,0]\tilde\kappa_{j+1} +
\sum_{j=0}^t \left(\bphi_t^{(\tau)}\right)^j[t,0]\sum_{i=\tau+1}^\infty \kappa_{j+i+1} S^{-i}
\end{align*}
and thus
\begin{align*}
	\sum_{i=-\tau}^0 b_{t,i}^{(\tau)} - b_{t,0} &= \sum_{j=0}^t
\rbr{(\bphi_t^{(\tau)})^j[t,0] - \bphi_t^j[t,0]} \tilde\kappa_{j+1} +
\sum_{j=0}^t (\bphi_t^{(\tau)})^j[t,0] \left(\sum_{i=\tau+1}^\infty
\kappa_{j+i+1}S^{-i}\right).
\end{align*}
Taking the limits $\tau \to \infty$ and $n \to \infty$,
the first term converges to 0 by $t^{(d)}$. Since each $u_t$ is Lipschitz, we
have $\|(\bphi_t^{(\tau)})^j[t,0]\|\leq C_t$ for some constant $C_t>0$ and all
$j=0,\ldots,t$, and thus the second term will also converge to 0 as
$\tau\to\infty$ by the absolute convergence of the series defining
$\tkappa_{j+1}$ in (\ref{eq:sym:kappaseries}), as a consequence of
Assumption \ref{assump:symAMP}(c). We therefore conclude
\begin{align}\label{eq: b converge at 0}
	\limtaun \sum_{i=-\tau}^0 b_{t,i}^{(\tau)} - b_{t,0} = 0.
\end{align}

To bound $\cI_{1,2}$, we apply Lemma \ref{lemma:ziplock} with $x^{(\tau)}_i =
\|b_{t,-i}^{(\tau)}\|$ and $y_i^{(\tau)} =
\uplim_{n\to\infty}\|\Ub_{-i}^{(\tau)} - \Ub_0\|_\F/\sqrt{n}$ for every
$i=0,\ldots,\tau$. We need to verify that $\{x_i^{(\tau)}\}_{i=0,\ldots,\tau}$
and  $\{y_i^{(\tau)}\}_{i=0,\ldots,\tau}$ satisfy the conditions of Lemma
\ref{lemma:ziplock}. By (\ref{eq:btitau}) and Assumption \ref{assump:symAMP}(c),
for some constants $C_t, C'_t> 0$ independent of $i$ and $\tau$,
\begin{align*}
\sum_{i=0}^\tau |x_i^{(\tau)}| = \sum_{i=0}^{\tau} \|b_{t,-i}^{(\tau)}\|
&\leq  \sum_{i=0}^{\tau} \sum_{j=i}^{i+t} |\kappa_{j+1}| \cdot \left\|(\bphi_t^{(\tau)})^{j-i}[t,0] S^{-i}\right\|\\
&\leq C_t \sum_{i=0}^{\tau} \sum_{j=0}^{t} \frac{|\kappa_{i+j+1}|}{\min_{k\in\{1,\ldots,K\}}\theta_k^{i}}
\leq C_t \sum_{j=0}^t \sum_{i=0}^\infty
\frac{|\kappa_{i+j+1}|}{\min_{k\in\{1,\ldots,K\}}\theta_k^i}< C_t'.
\end{align*}
Therefore $\{x_i^{(\tau)}\}_{i=0,\ldots,\tau}$ is uniformly bounded. Furthermore,
\[\sum_{i=\cT}^\tau |x_i^{(\tau)}| 
\leq C_t \sum_{j=0}^t \sum_{i=\cT}^\infty
\frac{|\kappa_{i+j+1}|}{\min_{k\in\{1,\ldots,K\}}\theta_k^i},\]
so for any $\epsilon>0$, there exists some $\cT>0$ such that
$\sum_{i=\cT}^{\tau} |x_i^{(\tau)}|<\epsilon$ for all $\tau \geq \cT$.
For $y_i^{(\tau)}$, we have
\[|y_i^{(\tau)}| \leq \limn
\frac{\|\bU_{-i}^{(\tau)}\|_\F}{\sqrt{n}}+\limn \frac{\|\bU_0\|_\F}{\sqrt{n}}.\]
Lemma \ref{lemma: sym linear AMP} implies that the first limit exists, depends
on $(i,\tau)$ only via the difference $\tau-i$, and 
$\lim_{\tau-i \to \infty} \limn
\|\bU_{-i}^{(\tau)}\|_\F/\sqrt{n}<C$ for a constant $C>0$. Then there is
a constant $C'>0$ independent of $(i,\tau)$ for which
$|y_i^{(\tau)}|<C'$, so $\{y_i^{(\tau)}\}_{i=0,\ldots,\tau}$ is uniformly bounded.
By (\ref{eq:symlinearAMP}),
$\lim_{\tau\to\infty} y_i^{(\tau)}=0$ for each $0\leq i\leq\cT$.
Hence, applying Lemma \ref{lemma:ziplock},
\begin{align}\label{eq: f diff at 0 2}
    \lim_{\tau\to\infty} \uplim_{n\to\infty} \sum_{i=-\tau}^0 \|b_{t,i}^{(\tau)}\|\cdot
\frac{\|\Ub_i^{(\tau)}-\Ub_0\|_\F}{\sqrt{n}}=0.
\end{align}
Combining \eqref{eq: f diff at 0}, \eqref{eq: b converge at 0} and \eqref{eq: f diff at 0 2}, we obtain that
\begin{align}\label{eq: sym term1 converge}
    \lim_{\tau\to\infty} \uplim_{n\to\infty} \frac{1}{\sqrt{n}}\cdot \left\|\sum_{i=-\tau}^0 \Ub_i^{(\tau)}
b_{t,i}^{(\tau)\top} - \Ub_0b_{t,0}^\top\right\|_\F = 0.
\end{align}

\subparagraph{Term $\cI_2$.}
The convergence of the term $\cI_2$ in \eqref{eq: step 2 f difference
2} is a more straightforward comparison: By the triangle inequality,
	\[\sum_{i=1}^t\frac{1}{\sqrt{n}} \cdot
\left\|\Ub_i^{(\tau)}b_{t,i}^{(\tau)\top} - \Ub_ib_{t,i}^\top\right\|_\F \leq
\sum_{i=1}^t\|b_{t,i}^{(\tau)}\| \cdot \frac{\|\Ub_i^{(\tau)} -
\Ub_i\|_\F}{\sqrt{n}} + \sum_{i=1}^t\|b_{t,i}^{(\tau)} - b_{t,i}\| \cdot
\frac{\|\Ub_i\|_\F}{\sqrt{n}}.\]
For each $i\in\{0,\ldots,t\}$, by the definition of the debiasing coefficients in \eqref{eq: sym debias coef aux}, we have
\begin{align*}
    b_{t,i}^{(\tau)} &= \sum_{j=0}^\infty \kappa_{j+1}
\left(\bphi_t^{(\tau)}\right)^j[t,i]=
\sum_{j=0}^t \kappa_{j+1}\left(\bphi_t^{(\tau)}\right)^j[t,i].
\end{align*}
Since each $u_t$ is Lipschitz,
this implies $\|b_{t,i}^{(\tau)}\|<C_t$ for a constant $C_t>0$.
Recalling the definitions of $b_{t,i}^{(\tau)}$ in \eqref{eq: sym
debias coef aux} and $b_{t,i}$ in \eqref{eq: sym debias coef spec},
and applying $t^{(d)}$, also $\limtaun b_{t,i}^{(\tau)}-b_{t,i}=0$.
Then combining with $t^{(a)}$,
\begin{align}\label{eq: sym term2 converge}
	\lim_{\tau\to\infty} \uplim_{n\to\infty} \sum_{i=1}^t \frac{1}{\sqrt{n}}\cdot
\left\|\Ub_i^{(\tau)}b_{t,i}^{(\tau)\top} - \Ub_ib_{t,i}^\top\right\|_\F = 0.
\end{align}
Combining \eqref{eq: sym f diff}, \eqref{eq: step 2 f difference 2}, \eqref{eq:
sym term1 converge}, and \eqref{eq: sym term2 converge} shows
$\lim_{\tau\to\infty} \uplim_{n\to\infty} \|\Fb_t^{(\tau)}-\Fb_t\|_\F/\sqrt{n} = 0$ as desired for part (e).

\paragraph{Part~\ref{enum: sym stateconverge}}
Recall that $\bSigma_t^{(\tau)}$ is the lower-right $(t+1)K\times(t+1)K$
submatrix of $\bSigma_{\all,t}^{(\tau)}$ as in \eqref{eq: sym aux
DeltaPhiSigma}. By the block decompositions of $\bPhi_{\all,t}^{(\tau)}$ and
$\bDelta_{\all,t}^{(\tau)}$ in \eqref{eq: sym aux DeltaPhiSigma}, we obtain
\begin{align}\label{eq: sym aux Sigma block_multiplication}
	\bSigma_t^{(\tau)} &= \underbrace{\sum_{j=0}^\infty \kappa_{j+2}
\sum_{i=0}^j (\bPhi_{\all,t}^{(\tau)})_{t-}^i \bDelta_{--}^{(\tau)}
(\bPhi_{\all,t}^{(\tau)\top})_{-t}^{j-i}}_{\widehat\bSigma_t^{(\tau)}} +
\underbrace{\sum_{j=0}^\infty \kappa_{j+2} \sum_{i=0}^j (\bPhi_t^{(\tau)})^i
\bDelta_{t-}^{(\tau)} (\bPhi_{\all,t}^{(\tau)\top})_{-t}^{j-i}}_{\widetilde\bSigma_t^{(\tau)}} \notag\\
	&\qquad + \underbrace{\sum_{j=0}^\infty \kappa_{j+2} \sum_{i=0}^j
(\bPhi_{\all,t}^{(\tau)})_{t-}^i \bDelta_{-t}^{(\tau)} (\bPhi_t^{(\tau)\top})^{j-i}}_{(\widetilde{\bSigma}_t^{(\tau)})^\top} + \underbrace{\sum_{j=0}^\infty \kappa_{j+2} \sum_{i=0}^j (\bPhi_t^{(\tau)})^i \bDelta_t^{(\tau)} (\bPhi_t^{(\tau)\top})^{j-i}}_{\widebar\bSigma_t^{(\tau)}}
\end{align}
where we observe that the third term is the transpose of the second term. To
show $\limtau\bSigma_t^{(\tau)} = \bSigma_t$, 
we recall
$\bDelta_t=\widebar\bDelta_t+\widetilde\bDelta_t+\widetilde\bDelta_t^\top+\widehat\bDelta_t$,
and correspondingly decompose $\bSigma_{t}$ in \eqref{def: sym Sigma_T} as follows:
\begin{align}
    \bSigma_t &= \sum_{j=0}^\infty \bTheta^{(j)}[\bPhi_t,
\kappa_{j+2}\widebar\bDelta_t + \tilde\kappa_{j+2}\odot\widetilde\bDelta_t +
\widetilde\bDelta_t^\top\odot\tilde\kappa_{j+2} +
\hat\kappa_{j+2}\odot\widehat\bDelta_t]\nonumber\\
    &= \underbrace{\sum_{j=0}^\infty\sum_{i=0}^j\bPhi_t^i\left[\hat\kappa_{j+2}\odot \widehat\bDelta_t -
\tilde\kappa_{j+2}\odot\widehat\bDelta_t -
\widehat\bDelta_t^\top\odot\tilde\kappa_{j+2} + \kappa_{j+2}
\widehat\bDelta_t\right] (\bPhi_t^\top)^{j-i}}_{\widehat\bSigma_t} +
\underbrace{\sum_{j=0}^\infty\kappa_{j+2}\sum_{i=0}^j\bPhi_t^i\bDelta_t(\bPhi_t^\top)^{j-i}}_{\widebar\bSigma_t}\notag\\
    &\qquad + \underbrace{\sum_{j=0}^\infty\sum_{i=0}^j
\bPhi_t^i\left[(\tilde\kappa_{j+2}-\kappa_{j+2}\Id)\odot(\widetilde\bDelta_t+\widehat\bDelta_t)
+ (\widetilde\bDelta_t^\top+\widehat\bDelta_t^\top)\odot(\tilde\kappa_{j+2} -
\kappa_{j+2}\Id)\right](\bPhi_t^\top)^{j-i}}_{\widetilde\bSigma_t}.\label{eq: sym Sigma decomp}
\end{align}
We will show that, for any $r,c\in\{0,1,\ldots,t\}$,
\begin{align*}
\lim_{\tau\to\infty}\widehat\bSigma_t^{(\tau)}[r,c] = \widehat\bSigma_t[r,c], \quad
\lim_{\tau\to\infty}\left(\widetilde\bSigma_t^{(\tau)} +
\widetilde\bSigma_t^{(\tau)\top}\right)[r,c] = \widetilde\bSigma_t [r,c], \quad  \lim_{\tau\to\infty} \widebar\bSigma_t^{(\tau)}[r,c] = \widebar\bSigma_t [r,c].
\end{align*}

\subparagraph{Convergence of $\widehat\bSigma_t^{(\tau)}$} We have
\begin{align*}
\widehat\bSigma_t^{(\tau)}[r,c] &= \sum_{j=0}^\infty \kappa_{j+2}
\sum_{i=0}^j \sum_{\alpha,\beta=1}^{\tau} (\bPhi_{\all,t}^{(\tau)})^i[r,-\alpha]
\bDelta^{(\tau)}_{\all, t}[-\alpha,-\beta] (\bPhi_{\all,t}^{(\tau)\top})^{j-i}[-\beta,c]\notag\\
&=\sum_{j=0}^\infty \sum_{i=0}^j \sum_{\alpha=1}^{i\wedge\tau}\sum_{\beta=1}^{(j-i)\wedge\tau} \kappa_{j+2} (\bPhi_t^{(\tau)})^{i-\alpha}[r,0] S^{-\alpha} \bDelta^{(\tau)}_{--}[-\alpha,-\beta]S^{-\beta} (\bPhi_t^{(\tau)\top})^{j-i-\beta}[0,c]
\end{align*}
where the second equality follows from \eqref{eq: sym aux Phi prop}. Let us
write $\bDelta^{(\tau)}_{--}[-\alpha,-\beta]=
(\bDelta^{(\tau)}_{--}[-\alpha,-\beta]-\bDelta_t[0,0])+\bDelta_t[0,0]$, and
introduce $p=i-\alpha$ and $q=j-i-\beta$ to re-index this summation by
$(p,q,\alpha,\beta)$. This yields
\begin{align*}
\widehat\bSigma_t^{(\tau)}[r,c] 
&= \underbrace{\sum_{p,q=0}^\infty (\bPhi_t^{(\tau)})^p[r,0]
\left(\sum_{\alpha,\beta=1}^\tau \kappa_{p+q+\alpha+\beta+2} 
S^{-\alpha}(\bDelta_t^{(\tau)}[-\alpha,-\beta] -
\bDelta_t[0,0])S^{-\beta}\right) (\bPhi_t^{(\tau)\top})^q[0,c]}_{\widehat\cI_1^{(\tau)}}\notag\\
&\qquad + \underbrace{\sum_{p,q=0}^\infty (\bPhi_t^{(\tau)})^p[r,0]
\left(\sum_{\alpha,\beta=1}^\tau \kappa_{p+q+\alpha+\beta+2} 
S^{-\alpha} \bDelta_t[0,0]S^{-\beta}\right) (\bPhi_t^{(\tau)\top})^q[0,c]
}_{\widehat\cI_2^{(\tau)}}.
\end{align*}
Note that the summations over $p,q$ are in fact finite and may be restricted to
$p,q \in [0,t]$, because $(\bPhi_t^{(\tau)})^p=0$ for all $p>t$.

We first show $\widehat\cI_1^{(\tau)}$ vanishes as $\tau\to\infty$.
Since each block
$\|\bPhi_t^{(\tau)}[r,0]\|$ is bounded by a constant and the summation may be
restricted to $p,q \in [0,t]$, there exists some constant $C_t>0$ such that
\begin{align*}
|\widehat\cI_1^{(\tau)}| 
&\leq C_t \max_{\ell=0}^{2t} \sum_{\alpha,\beta=1}^{\tau}
\frac{|\kappa_{\ell+\alpha+\beta+2}|}{\min_{k\in\{1,\ldots,K\}}|\theta_k|^{\alpha+\beta}}\cdot \left\|\bDelta^{(\tau)}_{--}[-\alpha,-\beta] - \bDelta_t[0,0]\right\|\notag\\
&\leq C_t \max_{\ell=0}^{2t} \sum_{i=2}^{2\tau}
\frac{i\cdot|\kappa_{\ell+i+2}|}{\min_{k\in\{1,\ldots,K\}}|\theta_k|^i} \cdot \sup_{\substack{\alpha, \beta\geq 1 \\\alpha + \beta = i}} \left\|\bDelta^{(\tau)}_{--
}[-\alpha,-\beta] - \bDelta_t[0,0]\right\|.
\end{align*}
Fixing any $\ell \in [0,2t]$, we apply Lemma \ref{lemma:ziplock} with
$x_i^{(2\tau)} = i\cdot|\kappa_{\ell+i+2}| / \min_{k\in\{1,\ldots,K\}}|\theta_k|^i$ and
\begin{align*}
    y_i^{(2\tau)} = \sup_{\substack{\alpha,\beta\geq 1 \\ \alpha + \beta = i}}
\left\|\bDelta^{(\tau)}_{--}[-\alpha,-\beta] - \bDelta_t[0,0]\right\|.
\end{align*}
By Assumption \ref{assump:symAMP}(c),
$\{x_i^{(2\tau)}\}_{i=1,\ldots,2\tau}$ satisfies the condition of Lemma
\ref{lemma:ziplock}.
By Lemma \ref{lemma: sym linear AMP}, for any fixed $\alpha,\beta \geq 0$,
\begin{align}
\lim_{\tau\to\infty}\bDelta^{(\tau)}_{--}[-\alpha,-\beta]
&=\lim_{\tau \to \infty} \EE[U_{-\alpha}^{(\tau)}U_{-\beta}^{(\tau)\top}]\notag\\
&=\lim_{\tau \to \infty} S^{-1} \EE[F_{-\alpha-1}^{(\tau)}F_{-\beta-1}^{(\tau)\top}]
S^{-1}\notag\\
&=S^{-1} \cdot\left(
\lim_{\tau\to\infty}\bSigma_{\all,t}^{(\tau)}[-\alpha-1,-\beta-1]+\mu_{-\alpha-1}^{(\tau)}\cdot\mu_{-\beta-1}^{(\tau)\top}\right)
\cdot S^{-1}=S^{-2},\label{eq:Deltatau}
\end{align}
where the last equality applies (\ref{eq:symlinearAMP}). Identifying also
\begin{equation*}
\bDelta_t[0,0]=\EE[U_0U_0^\top]=S^{-2}
\end{equation*}
because $\bU_0=\bF_{\pca}S^{-1} \toWtwo U_0$, this gives
$\lim_{\tau\to\infty} y_i^{(2\tau)}=0$ for every fixed $i$. Observe that
$\EE[\|F_{-\alpha}^{(\tau)}\|]$ depends on $(\alpha,\tau)$ only via the
difference $\tau-\alpha$, and (\ref{eq:symlinearAMP}) implies
$\lim_{\tau-\alpha \to \infty} \EE[\|F_{-\alpha}^{(\tau)}\|]<C$ for a constant
$C>0$. Then $\EE[\|F_{-\alpha}^{(\tau)}\|]<C'$ for a constant $C'>0$ and all
$(\alpha,\tau)$, implying that
$\{y_i^{(2\tau)}\}$ is uniformly bounded. Then it follows from Lemma
\ref{lemma:ziplock} that $\lim_{\tau\to\infty} |\widehat\cI_1^{(\tau)}|=0$.

Next, we show that $\limtau \widehat\cI_2^{(\tau)}=\widehat\bSigma_t[r,c]$.
Taking $\tau\to\infty$, by the convergence of $\bPhi_t^{(\tau)}$ to
$\bPhi_t$ in $t^{(c)}$,
    \[\lim_{\tau\to\infty} \widehat\cI_2^{(\tau)} = \sum_{p,q=0}^\infty
\bPhi_t^p[r,0] \left(\sum_{\alpha,\beta=1}^\infty\kappa_{p+q+\alpha+\beta+2}
S^{-\alpha}\bDelta_t[0,0]S^{-\beta}\right) (\bPhi_t^\top)^q[0,c].\]
Recall that $\bDelta_t[0,0]=S^{-2}$, so this commutes with $S^{-\beta}$. Then,
applying
\begin{align*}
&\sum_{\alpha,\beta=1}^\infty\kappa_{p+q+\alpha+\beta+2}
S^{-\alpha}\bDelta_t[0,0]S^{-\beta}\\
&=\sum_{\alpha,\beta=0}^\infty\kappa_{p+q+\alpha+\beta+2}
S^{-(\alpha+\beta)}\bDelta_t[0,0]
-2\sum_{\alpha=0}^\infty \kappa_{p+q+\alpha+2} S^{-\alpha}\bDelta_t[0,0]
+\kappa_{p+q+2}\bDelta_t[0,0]\\
&=\left(\hkappa_{p+q+2}-2\tkappa_{p+q+2}+\kappa_{p+q+2}\Id\right)\bDelta_t[0,0],
\end{align*}
we identify this limit as $\widehat{\bSigma}_t[r,c]$. So
\begin{align}\label{eq: sym hatSigma converge}
    \lim_{\tau\to\infty} \widehat\bSigma_t^{(\tau)} = \widehat\bSigma_t.
\end{align}

\subparagraph{Convergence of $\widetilde\bSigma_t^{(\tau)}$} Next, we determine
the limit of $\widetilde\bSigma_t^{(\tau)}$. Applying
\eqref{eq: sym aux Phi prop} and re-indexing the summation similarly as above 
by setting $p = i$ and $q = j-i-\beta$,
\begin{align*}
    \widetilde\bSigma_t^{(\tau)}[r,c] &=
\sum_{j=0}^\infty\sum_{i=0}^j\sum_{\alpha=0}^t\sum_{\beta=1}^\tau \kappa_{j+2}
(\bPhi_t^{(\tau)})^i [r,\alpha] \bDelta_{t-}^{(\tau)}[\alpha,-\beta]
(\bPhi_{\all,t}^{(\tau)\top})^{j-i}[-\beta,c]\\
    &= \sum_{j=0}^\infty\sum_{i=0}^j\sum_{\alpha=0}^t\sum_{\beta=1}^\tau \kappa_{j+2} (\bPhi_t^{(\tau)})^i[r,\alpha] \bDelta_{t-}^{(\tau)}[\alpha,-\beta] S^{-\beta} (\bPhi_t^{(\tau)\top})^{j-i-\beta}[0,c]\\
    &= \underbrace{\sum_{p,q=0}^\infty (\bPhi_t^{(\tau)})^p[r,\alpha]
\left(\sum_{\alpha=0}^t\sum_{\beta=1}^\tau \kappa_{p+q+\beta+2} 
 (\bDelta_{t-}^{(\tau)}[\alpha,-\beta] - \bDelta_t[\alpha, 0])
S^{-\beta}\right) (\bPhi_t^{(\tau)\top})^q[0,c]}_{\widetilde\cI_1^{(\tau)}}\\
    &\qquad + \underbrace{\sum_{p,q=0}^\infty(\bPhi_t^{(\tau)})^p[r,\alpha]
\left(\sum_{\alpha=0}^t\sum_{\beta=1}^\tau \kappa_{p+q+\beta+2}
\bDelta_t[\alpha, 0] S^{-\beta}\right)  (\bPhi_t^{(\tau)\top})^q[0,c]}_{\widetilde\cI_2^{(\tau)}}.
\end{align*}
For each fixed $\alpha \in [0,t]$ and $\beta>0$, note that by $t^{(a)}$,
\[\lim_{\tau \to \infty} \EE[\|U_\alpha^{(\tau)}-U_\alpha\|^2]=0\]
and by $t^{(a)}$ and (\ref{eq:Deltatau}),
\begin{align*}
\limtau \EE[\|U_{-\beta}^{(\tau)}-U_0\|^2]
&=\limtau \EE[\|U_{-\beta}^{(\tau)}-U_0^{(\tau)}\|^2]\\
&=\limtau \operatorname{Tr} \Big(\bDelta^{(\tau)}_{\all,t}[-\beta,-\beta]
-2\bDelta^{(\tau)}_{\all, t}[-\beta,0]+\bDelta^{(\tau)}_{\all,t}[0,0]\Big)=0.
\end{align*}
So
\[\limtau \bDelta^{(\tau)}_{t-}[\alpha,-\beta]-\bDelta_t[\alpha,0]
=\limtau \EE[U_\alpha^{(\tau)}U_{-\beta}^{(\tau)\top}]
-\EE[U_\alpha U_0^\top]=0.\]
Then by Lemma \ref{lemma:ziplock} and a similar argument as used previously
for $\widehat\cI_1^{(\tau)}$, this implies
$\lim_{\tau\to\infty}\widetilde\cI_1^{(\tau)}=0$. For
$\widetilde\cI_2^{(\tau)}$,
we take $\tau \to \infty$ and write
$\sum_{\beta=1}^\infty \kappa_{p+q+\beta+2}S^{-\beta}
=\tkappa_{p+q+2}-\kappa_{p+q+2}\Id$. Then
\begin{align*}
    \limtau\widetilde\bSigma_t^{(\tau)}[r,c] &= \sum_{p,q=0}^\infty
\sum_{\alpha=0}^t  \bPhi_t^p[r,\alpha] \Big(\bDelta_t[\alpha,
0] (\tkappa_{p+q+2}-\kappa_{p+q+2}\Id)\Big) (\bPhi_t^\top)^q[0,c]\\
&=\sum_{p,q=0}^\infty\Big[
\bPhi_t^p \Big((\widetilde\bDelta_t^\top+\widehat\bDelta_t^\top)
\odot (\tkappa_{p+q+2}-\kappa_{p+q+2}\Id)\Big) (\bPhi_t^\top)^q\Big][r,c].
\end{align*}
Summing this limit with its transpose, we obtain
\begin{align}\label{eq: sym tildeSigma sum converge}
    \limtau (\widetilde\bSigma_t^{(\tau)}+\widetilde\bSigma_t^{(\tau)\top})
    &= \widetilde\bSigma_t.
\end{align}

\subparagraph{Convergence of $\overline{\bSigma}_t^{(\tau)}$} By the convergence
of $\bPhi_t^{(\tau)},\bDelta_t^{(\tau)}$ to $\bPhi_t,\bDelta_t$ in $t^{(c)}$ and the fact that the summation in $\widebar\bSigma_t^{(\tau)}$ is finite and may be restricted to $j\leq 2t$, we immediately have
\begin{align}\label{eq: sym barSigma converge}
    \limtau \widebar\bSigma_t^{(\tau)} &= 
    \limtau \sum_{j=0}^\infty \kappa_{j+2} \sum_{i=0}^j (\bPhi_t^{(\tau)})^i
\bDelta_t^{(\tau)} (\bPhi_t^{(\tau)\top})^{j-i}\notag\\
    &= \sum_{j=0}^\infty \kappa_{j+2} \sum_{i=0}^j \bPhi_t^i \bDelta_t (\bPhi_t^\top)^{j-i}= \widebar\bSigma_t.
\end{align}
Collecting the decomposition of $\bSigma_t^{(\tau)}$ in \eqref{eq: sym aux Sigma
block_multiplication}, that of $\bSigma_t$ in \eqref{eq: sym Sigma decomp}, and the convergence of each component in \eqref{eq: sym hatSigma converge}, \eqref{eq: sym tildeSigma sum converge} and \eqref{eq: sym barSigma converge},
we obtain $\limtau \bSigma_t^{(\tau)} = \bSigma_t$ as desired for part (f).
\end{proof}

\section{Proof for rectangular matrices}

The proof strategy for the rectangular case is similar as that for the symmetric case, where an auxiliary AMP algorithm serves as a bridge between the actual algorithm and the desired state evolution.

\subsection{State evolution for auxiliary AMP}

In the setting of Theorem \ref{thm:rec:AMPPCA}, consider the auxiliary AMP algorithm
with initialization $\bU_{-\tau}^{(\tau)} \in \RR^{m \times K}$ independent of
$\bW$, having the iterates for $t=-\tau,-\tau+1,-\tau+2,\ldots$
\begin{align}
\begin{aligned}\label{alg:auxAMPrec}
\bG_t^{(\tau)}&=\bX^\top \bU_t^{(\tau)}-\sum_{s=-\tau}^{t-1} \bV_s^{(\tau)}
b_{ts}^{(\tau)\top},
\qquad \bV_t^{(\tau)}=v_t(\bG_{-\tau}^{(\tau)},\ldots,\bG_t^{(\tau)}),\\
\bF_t^{(\tau)}&=\bX\bV_t^{(\tau)}-\sum_{s=-\tau}^t
\bU_t^{(\tau)}a_{ts}^{(\tau)\top},
\qquad \bU_{t+1}^{(\tau)}=u_{t+1}(\bF_{-\tau}^{(\tau)},\ldots,\bF_t^{(\tau)}).
\end{aligned}
\end{align}
For $T \geq 1$, we define
\[\bphi_{\all,T}^{(\tau)}=\Big(\langle \partial_s \bU_r^{(\tau)} \rangle\Big)_{r,s \in
\{-\tau,\ldots,T\}}, \qquad
\bpsi_{\all,T}^{(\tau)}=\Big(\langle \partial_s \bV_r^{(\tau)} \rangle\Big)_{r,s \in
\{-\tau,\ldots,T\}},\]
\[\ba_{\all,T}^{(\tau)}=\sum_{j=0}^\infty
\kappa_{2(j+1)}\bpsi_{\all,T}^{(\tau)}(\bphi_{\all,T}^{(\tau)}
\bpsi_{\all,T}^{(\tau)})^j, \qquad
\bb_{\all,T}^{(\tau)}=\gamma \sum_{j=0}^\infty
\kappa_{2(j+1)}\bphi_{\all,T}^{(\tau)}(\bpsi_{\all,T}^{(\tau)}
\bphi_{\all,T}^{(\tau)})^j\]
where $\{\kappa_{2j}\}_{j \geq 1}$ are the rectangular free cumulants of the
limit singular value distribution $\Lambda$ for $\bW$. We set the above
debiasing coefficients as the blocks
\begin{align}\label{eq: rec debiascoef aux}
    a_{ts}^{(\tau)}=\ba_{\all,T}^{(\tau)}[t,s],\qquad b_{ts}^{(\tau)}=\bb_{\all,T}^{(\tau)}[t,s].
\end{align}

Supposing that $(\bU_*',\bU_{-\tau}^{(\tau)}) \toWtwo (U_*',U_{-\tau}^{(\tau)})$,
we define the following state evolution: Having defined joint laws
$(U_*',U_{-\tau}^{(\tau)},\ldots,U_t^{(\tau)},F_{-\tau}^{(\tau)},\ldots,
F_{t-1}^{(\tau)})$ and $(V_*',V_{-\tau}^{(\tau)},\ldots,V_{t-1}^{(\tau)},
G_{-\tau}^{(\tau)},\ldots,G_{t-1}^{(\tau)})$, we define
\[\bnu_{\all,t}^{(\tau)}=\begin{pmatrix} \nu_{-\tau}^{(\tau)} \\ \vdots \\
\nu_t^{(\tau)}\end{pmatrix},
\qquad \text{ where } \nu_s^{(\tau)}=\EE[U_s{U_*'}^\top] \cdot S'\sqrt{\gamma} 
\in \RR^{K \times K'} \text{ for each } s=-\tau,\ldots,t.\]
We set
\[\bDelta_{\all,t}^{(\tau)}=\Big(\EE[U_rU_s^\top]_{r,s \in \{-\tau,\ldots,t\}}
\Big)_{r,s \in \{-\tau,\ldots,t\}},
\qquad \bPhi_{\all,t}^{(\tau)}=\Big(\EE[\partial_s
u_r(F_1,\ldots,F_{r-1})]\Big)_{r,s \in \{-\tau,\ldots,t\}},\]
\[\bGamma_{\all,t}^{(\tau)}=\Big(\EE[V_rV_s^\top]\Big)_{r,s \in \{-\tau,\ldots,t\}},
\quad \bPsi_{\all,t}^{(\tau)}=\Big(\EE[\partial_s
v_r(G_1,\ldots,G_r)]\Big)_{r,s \in \{-\tau,\ldots,t\}},\]
leaving the last row and column blocks of $\bGamma_{\all,t}^{(\tau)}$ and
$\bPsi_{\all,t}^{(\tau)}$ momentarily undefined, and set
\begin{align*}
\bOmega_{\all,t}^{(\tau)} = \gamma\sum_{j=0}^\infty \bTheta^{(j)}[\bPhi_{\all,t}^{(\tau)}, \bPsi_{\all,t}^{(\tau)}, \kappa_{2(j+1)}\bDelta_{\all,t}^{(\tau)}, \kappa_{2(j+1)}\bGamma_{\all,t}^{(\tau)}],
\end{align*}
where $\bTheta^{(j)}[\cdot,\cdot,\cdot,\cdot]$ is defined in \eqref{eq:rec:Thetaj}.
We define the joint law of $(V_*',V_{-\tau}^{(\tau)},\ldots,V_t^{(\tau)},
G_{-\tau}^{(\tau)},\ldots,G_t^{(\tau)})$ by
\[(G_{-\tau}^{(\tau)},\ldots,G_t^{(\tau)}) \mid V_*'
\sim \cN\Big(\bnu_{\all,t}^{(\tau)}\cdot V_*',\;
\bOmega_{\all,t}^{(\tau)}\Big),\]
\[V_s^{(\tau)}=v_s(G_{-\tau}^{(\tau)},\ldots,G_s^{(\tau)})
\text{ for } s=-\tau,\ldots,t.\]
Recalling $\bXi^{(j)}[\cdot,\cdot,\cdot,\cdot]$ from~\eqref{eq:rec:Xij}, we set
\begin{align}
\bmu_{\all,t}^{(\tau)}&=\begin{pmatrix} \mu_{-\tau}^{(\tau)} \\ \vdots \\
\mu_t^{(\tau)}\end{pmatrix},
\qquad \text{ where } \mu_s^{(\tau)}=\EE[V_s{V_*'}^\top] \cdot S'/\sqrt{\gamma}
\text{ for each } s=-\tau,\ldots,t,\notag\\
\bSigma_{\all,t}^{(\tau)} &= \sum_{j=0}^\infty \bXi^{(j)} [\bPhi_{\all,t}^{(\tau)}, \bPsi_{\all,t}^{(\tau)}, \kappa_{2(j+1)}\bDelta_{\all,t}^{(\tau)}, \kappa_{2(j+1)}\bGamma_{\all,t}^{(\tau)}],
\label{eq: rec aux Sigma}
\end{align}
and define the joint law of $(U_*',U_{-\tau}^{(\tau)},\ldots,U_{t+1}^{(\tau)},
F_{-\tau}^{(\tau)},\ldots,F_t^{(\tau)})$ by
\[(F_{-\tau}^{(\tau)},\ldots,F_t^{(\tau)}) \mid U_*'
\sim \cN\Big(\bmu_{\all,t}^{(\tau)} \cdot U_*',\;
\bSigma_{\all,t}^{(\tau)}\Big),\]
\[U_{s+1}^{(\tau)}=u_{s+1}(F_{-\tau}^{(\tau)},\ldots,F_s^{(\tau)})
\text{ for } s=-\tau,\ldots,t.\]

\begin{corollary}\label{cor: auxAMPrec}
In the rectangular spiked model (\ref{eq:rec:rankkmodel}), suppose Assumption
\ref{assump:rectW} holds for $\Wb$. Suppose the initialization
$\Ub_{-\tau}^{(\tau)}\in\RR^{m\times K}$ is independent of $\Wb$, and $(\bU_*',\bU_{-\tau}^{(\tau)})
\toWtwo (U_*',U_{-\tau}^{(\tau)})$ a.s.\ as $m,n \to \infty$. Suppose 
$u_t(\cdot)$ and $v_t(\cdot)$ are Lipschitz, and $\partial_s u_{t+1}(F_{-\tau}^{(\tau)},\ldots,F_t^{(\tau)})$ and $\partial_s v_t(G_{-\tau}^{(\tau)},\ldots,G_t^{(\tau)})$ exist and are continuous on a set of probability 1 under the above laws of $(F_{-\tau}^{(\tau)},\ldots,F_t^{(\tau)})$ and $(G_{-\tau}^{(\tau)},\ldots,G_t^{(\tau)})$ respectively. Then for any $T \geq 1$, a.s.\ as $m,n \to \infty$,
\begin{align*}
(\bU_*',\bU_{-\tau}^{(\tau)},\ldots,\bU_{T+1}^{(\tau)},
\bF_{-\tau}^{(\tau)},\ldots,\bF_T^{(\tau)}) &\toWtwo
(U_*',U_{-\tau}^{(\tau)},\ldots,U_{T+1}^{(\tau)},F_{-\tau}^{(\tau)},
\ldots,F_T^{(\tau)})\\
(\bV_*',\bV_{-\tau}^{(\tau)},\ldots,\bV_T^{(\tau)},
\bG_{-\tau}^{(\tau)},\ldots,\bG_T^{(\tau)}) &\toWtwo
(V_*',V_{-\tau}^{(\tau)},\ldots,V_T^{(\tau)},G_{-\tau}^{(\tau)},
\ldots,G_T^{(\tau)})
\end{align*}
\end{corollary}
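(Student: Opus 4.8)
The plan is to follow the same strategy used for Corollary~\ref{cor:auxAMPSE}, with Theorem~\ref{thm:symindinit} replaced by its rectangular counterpart Theorem~\ref{thm:rectindinit}. Writing the spike as $\bX=\tfrac{1}{\sqrt{mn}}\bU_*'S'{\bV_*'}^\top+\bW$, the two matrix products appearing in~(\ref{alg:auxAMPrec}) become
\[
\bX^\top\bU_t^{(\tau)}=\bV_*'\bigl(\sqrt{\gamma}\,S'\cdot\tfrac{1}{m}{\bU_*'}^\top\bU_t^{(\tau)}\bigr)+\bW^\top\bU_t^{(\tau)},
\qquad
\bX\bV_t^{(\tau)}=\bU_*'\bigl(\tfrac{1}{\sqrt{\gamma}}\,S'\cdot\tfrac{1}{n}{\bV_*'}^\top\bV_t^{(\tau)}\bigr)+\bW\bV_t^{(\tau)}.
\]
Since $\bU_*'$ and $\bV_*'$ are independent of $\bO,\bQ$, one expects $\tfrac{1}{m}{\bU_*'}^\top\bU_t^{(\tau)}\to\EE[U_*'{U_t^{(\tau)}}^\top]$ and $\tfrac{1}{n}{\bV_*'}^\top\bV_t^{(\tau)}\to\EE[V_*'{V_t^{(\tau)}}^\top]$, so the two low-rank pre-factors above are asymptotically $\nu_t^{(\tau)\top}$ and $\mu_t^{(\tau)\top}$, respectively --- which is exactly how $\nu_t^{(\tau)}$ and $\mu_t^{(\tau)}$ were defined above the corollary.

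This motivates introducing a comparison AMP algorithm of the form~(\ref{eq:AMPrectz}--\ref{eq:AMPrectu}), with side information $\bU_*'$ on the $u$-side and $\bV_*'$ on the $v$-side, sharing the same initialization $\widetilde\bU_{-\tau}=\bU_{-\tau}^{(\tau)}$: set $\widetilde\bZ_t=\bW^\top\widetilde\bU_t-\sum_s\widetilde\bV_s\widetilde b_{ts}^\top$, $\widetilde\bG_t=\widetilde\bZ_t+\bV_*'\nu_t^{(\tau)\top}$, $\widetilde\bV_t=v_t(\widetilde\bG_{-\tau},\ldots,\widetilde\bG_t)$, $\widetilde\bY_t=\bW\widetilde\bV_t-\sum_s\widetilde\bU_s\widetilde a_{ts}^\top$, $\widetilde\bF_t=\widetilde\bY_t+\bU_*'\mu_t^{(\tau)\top}$, and $\widetilde\bU_{t+1}=u_{t+1}(\widetilde\bF_{-\tau},\ldots,\widetilde\bF_t)$, where $\widetilde a_{ts},\widetilde b_{ts}$ are the debiasing coefficients prescribed by Theorem~\ref{thm:rectindinit}, built from the sample Jacobian averages $\bphi,\bpsi$ of this algorithm and the rectangular free cumulants $\{\kappa_{2j}\}$. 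After re-indexing the iterates $-\tau,-\tau+1,\ldots$ as $1,2,\ldots$ to match the notation of Section~\ref{sec:indrect}, Theorem~\ref{thm:rectindinit} applies to this algorithm and yields the Wasserstein-2 convergence of its iterates. I would then verify that the state evolution it produces coincides with the one defined above the corollary: this is a bookkeeping computation translating the mean-shifted variables $\widetilde\bG_t,\widetilde\bF_t$ back to the centered $\widetilde\bZ_t,\widetilde\bY_t$, and matching the $\bTheta^{(j)}$ and $\bXi^{(j)}$ recursions~\eqref{eq:rec:Thetaj}--\eqref{eq:rec:Xij} and the debiasing formula~\eqref{eq: rec debiascoef aux} against the $R$-transform series~\eqref{eq:rec:Rseries}, entirely parallel to~\cite[Theorem~3.1]{fan2020approximate}.

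Finally, the corollary follows from an inductive comparison argument showing, for each fixed $t$ and a.s.\ as $m,n\to\infty$, that $m^{-1/2}\|\bU_t^{(\tau)}-\widetilde\bU_t\|_\F\to0$, $n^{-1/2}\|\bV_t^{(\tau)}-\widetilde\bV_t\|_\F\to0$, and likewise for $\bG_t^{(\tau)}$ and $\bF_t^{(\tau)}$; combining this with the Wasserstein-2 convergence of the comparison iterates gives the statement. I expect the main obstacle to be this comparison step, which at each iteration needs two ingredients: (i) concentration of the low-rank pre-factors $\tfrac{1}{m}{\bU_*'}^\top\bU_t^{(\tau)}$ and $\tfrac{1}{n}{\bV_*'}^\top\bV_t^{(\tau)}$ around their deterministic limits, using independence of $\bU_*',\bV_*'$ from $\bO,\bQ$ together with the Wasserstein-2 convergence carried along the induction; and (ii) the asymptotic agreement $\|a_{ts}^{(\tau)}-\widetilde a_{ts}\|\to0$ and $\|b_{ts}^{(\tau)}-\widetilde b_{ts}\|\to0$ of the debiasing coefficients, which follows from convergence of the sample Jacobian averages and the continuity hypotheses of Corollary~\ref{cor: auxAMPrec}. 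The alternating two-sided structure ($\bU\to\bG\to\bV\to\bF\to\bU$) requires threading the induction through both families of iterates, but each step is the rectangular analogue of the corresponding symmetric argument, with the a.s.\ boundedness of $\|\bX\|$ and the Lipschitzness of $u_t,v_t$ controlling the propagation of errors as in the proof of Theorem~\ref{thm:sym}.
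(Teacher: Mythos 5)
Your proposal is correct and matches the paper's own argument: the paper also decomposes $\bX$ into spike plus noise, introduces a comparison AMP sequence $\widetilde\bU_t,\widetilde\bV_t,\widetilde\bZ_t,\widetilde\bY_t$ with side information $\bU_*',\bV_*'$ and the low-rank terms replaced by their deterministic limits $\bU_*'\mu_t^{(\tau)\top},\bV_*'\nu_t^{(\tau)\top}$, invokes Theorem~\ref{thm:rectindinit} for the comparison sequence, and closes via an inductive Frobenius-norm comparison of the auxiliary iterates against the comparison iterates, exactly as you outline.
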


\begin{proof}
The proof is similar to that of Corollary \ref{cor:auxAMPSE}:
Recalling $\bX = \frac{1}{\sqrt{mn}}\bU_*' S' {\bV_*'}^\top + \bW$, we write the
iterations~(\ref{alg:auxAMPrec}) as 
\begin{align*}
\bG_t^{(\tau)}&=\frac{1}{\sqrt{mn}}\bV_*' {S'} {\bU_*'}^\top \bU_t^{(\tau)} + \bW^\top\bU_t^{(\tau)} 
-\sum_{s=-\tau}^{t-1} \bV_s^{(\tau)} b_{ts}^{(\tau)\top},
\\
\bF_t^{(\tau)}&=\frac{1}{\sqrt{mn}}\bU_*' S' {\bV_*'}^\top\bV_t^{(\tau)} + \bW \bV^{(\tau)\top} 
-\sum_{s=-\tau}^t\bU_t^{(\tau)}a_{ts}^{(\tau)\top}.
\end{align*}
Then, approximating $\frac{{S'}{\bU_*'}^\top \bU_t^{(\tau)}}{\sqrt{mn}}$ by 
$\sqrt{\gamma}{S'}\bbE[U_*'{U_t^{(\tau)\top}}]=\nu_t^{(\tau)\top}$ and
$\frac{S'{\bV_*'}^\top \bV_t^{(\tau)}}{\sqrt{mn}}$ by $(1/\sqrt{\gamma})S' \bbE[V_*'V_t^{(\tau)\top}] 
= \mu_t^{(\tau)\top}$,
we consider an alternative AMP sequence with the same initialization 
$\tilde{\bU}_{-\tau} = \bU_{-\tau}^{(\tau)}$ and side information 
$\bU_*'$ and $\bV_*'$, defined by 
\begin{align*}
\tilde{\bZ}_t&=\bW^\top \tilde{\bU}_t
-\sum_{s=-\tau}^{t-1} \tilde{\bV}_sb_{ts}^{\top},
\quad \tilde{\bG}_t = \tilde{\bZ} + \bV_*' \nu_t^{(\tau)\top},
\quad \tilde{\bV}_t=\tilde{v}_t(\tilde\bZ_{-\tau},\ldots,\tilde\bZ_t, \bV_*')
\defeq v_t(\tilde\bG_{-\tau},\ldots,\tilde\bG_t),
\\%
\tilde{\bY}_t&=\bW\tilde{\bV}_t^{(\tau)}
-\sum_{s=-\tau}^t\tilde{\bU}_ta_{ts}^{\top},
\quad \tilde\bF_t = \tilde{\bY}_t + \bU_*' \mu_t^{(\tau)\top},
\quad \tilde\bU_{t+1}=\tilde{u}_{t+1}(\tilde\bY_{-\tau}\ldots,\tilde\bY_t,
\bU_*')
\defeq u_{t+1}(\tilde\bF_{-\tau}\ldots,\tilde\bF_t).
\end{align*}
We may apply Theorem~\ref{thm:rectindinit} to analyze this AMP algorithm,
together with an inductive argument as
in~\cite[Theorem 3.4(a)]{fan2020approximate} to show
\begin{align*}
n^{-1} \|\bG_t^{(\tau)} - \tilde\bG_t\|_F^2\to 0,\;
n^{-1} \|\bV_t^{(\tau)} - \tilde\bV_t\|_F^2\to 0,\;
m^{-1} \|\bF_t^{(\tau)} - \tilde\bF_t\|_F^2 \to 0,\;
m^{-1} \|\bU_{t+1}^{(\tau)} - \tilde\bU_{t+1}\|_F^2 \to 0
\end{align*}
for each fixed $t \geq -\tau$, which implies this corollary.
\end{proof}

Now we specialize the auxiliary AMP algorithm in \eqref{alg:auxAMPrec} to the two-phased algorithm where
\begin{align}\label{eq: rec aux denoiser v}
    v_t(\Gb_{-\tau}^{(\tau)},\ldots,\Gb_t^{(\tau)}) = \begin{cases}
    \Gb_t^{(\tau)}S_v^{-1} & -\tau\leq t\leq 0,\\
    v_t(\Gb_1^{(\tau)},\ldots,\Gb_t^{(\tau)}) & t\geq 1
    \end{cases}
\end{align}
and
\begin{align}\label{eq: rec aux denoiser u}
    u_{t+1}(\Fb_{-\tau}^{(\tau)},\ldots,\Fb_t^{(\tau)}) = \begin{cases}
    \Fb_t^{(\tau)}S_u^{-1} & -\tau\leq t\leq 0,\\
    u_{t+1}(\Fb_0^{(\tau)},\ldots,\Fb_t^{(\tau)}) & t\geq 1.
    \end{cases}
\end{align}
This auxiliary algorithm is initialized at
\begin{align*}
    \Ub_{-\tau}^{(\tau)} = (\ub_{-\tau}^1,\ldots,\ub_{-\tau}^k) \quad \text{with } \ub_{-\tau}^k = (\mu_{\pca,k}\ub_*^k + \sqrt{1-\mu_{\pca,k}^2}\cdot\yb_k) / \theta_{u,k} \text{ for each } k=1,\ldots,K,
\end{align*}
where each $\mu_{\pca,k}$ is defined in Theorem \ref{thm: recSpike}, and $\yb_1,\ldots,\yb_K$ are independent standard Gaussian random vectors also independent of $\Wb$.   

Similar as in the symmetric case, for each $t\geq 1$, we adopt the following block decomposition of $\bphi_{\all,t}^{(\tau)}$:
\begin{align*}
\bphi_{\all,t}^{(\tau)} = \begin{pmatrix}
\bphi_{--}^{(\tau)} & \bphi_{-t}^{(\tau)}\\
\bphi_{t-}^{(\tau)} & \bphi_t^{(\tau)}
\end{pmatrix}, \text{ where } \bphi_{--}^{(\tau)}\in\RR^{\tau K\times \tau K} \text{ and } \bphi_t^{(\tau)}\in\RR^{(t+1)K\times (t+1)K}.
\end{align*}
Due to the linear update rule for the first $\tau$ steps, we have $\bphi_{-t}^{(\tau)} = 0$ and
\begin{align}\label{eq: rec aux phi block}
    \bphi_{--}^{(\tau)} = \begin{pmatrix}
        0 & 0 & \cdots & 0 & 0\\
        S_u^{-1} & 0 & \cdots & 0 & 0\\
        0 & S_u^{-1} & \cdots & 0 & 0\\
        \vdots & \vdots & \ddots & \vdots & \vdots\\
        0 & 0 & \cdots & S_u^{-1} & 0
    \end{pmatrix}\in\RR^{\tau K \times \tau K},\  \bphi_{t-}^{(\tau)} = \begin{pmatrix}
        0 & \cdots & 0 & S_u^{-1}\\
        0 & \cdots & 0 & 0\\
        \vdots & \ddots & \vdots & \vdots\\
        0 & \cdots & 0 & 0
    \end{pmatrix}\in\RR^{(t+1)K\times\tau K}.
\end{align}
Similarly, we write
\begin{align*}
\bpsi_{\all,t}^{(\tau)} = \begin{pmatrix}
\bpsi_{--}^{(\tau)} & \bpsi_{-t}^{(\tau)}\\
\bpsi_{t-}^{(\tau)} & \bpsi_{t}^{(\tau)}
\end{pmatrix}\in\RR^{(\tau+t+1)K\times(\tau+t+1)K}
\end{align*}
while now by \eqref{eq: rec aux denoiser v}, the blocks are given by $\bpsi_{-t}^{(\tau)} = \bpsi_{t-}^{(\tau)}=0$ and
\begin{align}\label{eq: rec aux psi block}
\tbpsi_{--} = \begin{pmatrix}
S_v^{-1} & 0 & \cdots & 0\\
0 & S_v^{-1} & \cdots & 0\\
\vdots & \vdots & \ddots & \vdots\\
0 & 0 & \cdots & S_v^{-1}
\end{pmatrix}\in\RR^{\tau K\times \tau K},\ 
\tbpsi_{t} =
\begin{pmatrix}
S_v^{-1} & 0 & \cdots &  0\\
0 & \langle\partial_1\bV_1\rangle& \cdots &  0\\
\vdots & \vdots & \ddots & \vdots\\
0 & \langle\partial_1\bV_t\rangle & \cdots & \langle\partial_t\bV_t\rangle
\end{pmatrix}\in\RR^{(t+1)K\times(t+1)K}.
\end{align}

We first establish some important properties of $\bpsi_{\all,t}^{(\tau)}$ and $\bphi_{\all,t}^{(\tau)}$ that will be useful in the analysis. By the block decomposition, we have
\begin{align*}
    \bpsi_{\all,t}^{(\tau)}\bphi_{\all,t}^{(\tau)} = \begin{pmatrix}
    \bpsi_{--}^{(\tau)}\bphi_{--}^{(\tau)} & 0\\
    \bpsi_t^{(\tau)}\bphi_{t-}^{(\tau)} & \bpsi_t^{(\tau)}\bphi_t^{(\tau)}
    \end{pmatrix},\quad \bphi_{\all,t}^{(\tau)}\bpsi_{\all,t}^{(\tau)} = \begin{pmatrix}
    \bphi_{--}^{(\tau)}\bpsi_{--}^{(\tau)} & 0\\
    \bphi_{t-}^{(\tau)}\bpsi_{--}^{(\tau)} & \bphi_t^{(\tau)}\bpsi_t^{(\tau)}
    \end{pmatrix}
\end{align*}
where, recalling $S_uS_v=S^2$,
\begin{align*}
    \bpsi_{--}^{(\tau)}\bphi_{--}^{(\tau)} = \bphi_{--}^{(\tau)}\bpsi_{--}^{(\tau)} = {\footnotesize \begin{pmatrix}
        0 & \cdots & 0 & 0\\
        S^{-2} & \cdots & 0 & 0\\
        \vdots & \ddots & \vdots & \vdots\\
        0 & \cdots & S^{-2} & 0
    \end{pmatrix}},\quad
    \bpsi_t^{(\tau)}\bphi_{t-}^{(\tau)} = \bphi_{t-}^{(\tau)}\bpsi_{--}^{(\tau)} =  {\footnotesize \begin{pmatrix}
        0 & \cdots & 0 & S^{-2}\\
        0 & \cdots & 0 & 0\\
        \vdots & \ddots & \vdots & \vdots\\
        0 & \cdots & 0 & 0
    \end{pmatrix}}.
\end{align*}
Thus applying Lemma \ref{lemma: aux algebra fact} with
$\Ab=\bpsi_{\all,t}^{(\tau)}\bphi_{\all,t}^{(\tau)}$ and $\Bb=S^{-2}$ yields,
for any $r\in\{0,\ldots,t\}$ and $c\in\{1,\ldots,\tau\}$,
\begin{align}\label{eq: rec psiphi identity}
    (\bpsi_{\all,t}^{(\tau)}\bphi_{\all,t}^{(\tau)})^j[r,-c] =
(\bpsi_t^{(\tau)}\bphi_t^{(\tau)})^{j-c}[r,0]S^{-2c} & \ind\{1\leq c\leq j\}.
\end{align}
Similarly, we also have
\begin{align}\label{eq: rec phipsi identity}
    (\bphi_{\all,t}^{(\tau)}\bpsi_{\all,t}^{(\tau)})^j[r,-c] = (\bphi_t^{(\tau)}\bpsi_t^{(\tau)})^{j-c}[r,0]S^{-2c} & \ind\{1\leq c\leq j\}
\end{align}
Moreover, we can also establish analogous properties of
$(\bpsi_{\all,t}^{(\tau)}\bphi_{\all,t}^{(\tau)})^j\bpsi_{\all,t}^{(\tau)}$ and
$(\bphi_{\all,t}^{(\tau)}\bpsi_{\all,t}^{(\tau)})^j\bphi_{\all,t}^{(\tau)}$:
\begin{align}\label{eq: rec psiphipsi identity}
    (\bpsi_{\all,t}^{(\tau)}\bphi_{\all,t}^{(\tau)})^j\bpsi_{\all,t}^{(\tau)}[r,-c] &= \sum_{i=-\tau}^t (\bpsi_{\all,t}^{(\tau)}\bphi_{\all,t}^{(\tau)})^j[r,i]\bpsi_{\all,t}^{(\tau)}[i,-c]\notag\\
    &= (\bpsi_{\all,t}^{(\tau)}\bphi_{\all,t}^{(\tau)})^j[r,-c]\bpsi_{\all,t}^{(\tau)}[-c,-c]\notag\\
    &= (\bpsi_t^{(\tau)}\bphi_t^{(\tau)})^{j-c}[r,0]S^{-2c}S_v^{-1}  \ind\{1\leq c\leq j\}\notag\\
    &= (\bpsi_t^{(\tau)}\bphi_t^{(\tau)})^{j-c}\bpsi_t^{(\tau)}[r,0]S^{-2c} \ind\{1\leq c\leq j\}
\end{align}
where the second and fourth equalities follow from \eqref{eq: rec aux psi
block}, and the third equality is due to \eqref{eq: rec psiphi identity}.
Similarly, 
\begin{align}\label{eq: rec phipsiphi identity}
    (\bphi_{\all,t}^{(\tau)}\bpsi_{\all,t}^{(\tau)})^j\bphi_{\all,t}^{(\tau)}[r,-c] &= \sum_{i=-\tau}^t (\bphi_{\all,t}^{(\tau)}\bpsi_{\all,t}^{(\tau)})^j[r,i]\bphi_{\all,t}^{(\tau)}[i,-c]\notag\\
    &= (\bphi_{\all,t}^{(\tau)}\bpsi_{\all,t}^{(\tau)})^j[r,-c+1]\bphi_{\all,t}^{(\tau)}[-c+1,c]\notag\\
    &=(\bphi_t^{(\tau)}\bpsi_t^{(\tau)})^{j-c+1}[r,0]S^{-2(c-1)} S_u^{-1} \ind\{1\leq c\leq j\}\notag\\
    &=(\bphi_t^{(\tau)}\bpsi_t^{(\tau)})^{j-c}\bphi_t^{(\tau)}[r,0]S_v^{-1}S^{-2(c-1)}S_u^{-1} \ind\{1\leq c\leq j\}\notag\\
    &= (\bphi_t^{(\tau)}\bpsi_t^{(\tau)})^{j-c}\bphi_t^{(\tau)}[r,0]S^{-2c} \ind\{1\leq c\leq j\}
\end{align}
where the second and fourth equalities follow from \eqref{eq: rec aux phi block}
and \eqref{eq: rec aux psi block}, and the third equality is due to \eqref{eq: rec phipsi identity}.
All the above identities hold for $\taubPsi_{\all,t}$ and $\taubPhi_{\all,t}$ as well.

Finally, for the state evolution, we decompose
\begin{align*}
&\bnu_{\all,t}^{(\tau)} = \begin{pmatrix} \bnu_-^{(\tau)} \\ \bnu_t^{(\tau)}
\end{pmatrix}, \qquad\bmu_{\all,t}^{(\tau)} = \begin{pmatrix} \bmu_-^{(\tau)} \\
\bmu_t^{(\tau)} \end{pmatrix},\\
&\bDelta_{\all,t}^{(\tau)} = \begin{pmatrix}
\bDelta_{--}^{(\tau)} & \bDelta_{-t}^{(\tau)}\\
\bDelta_{t-}^{(\tau)} & \bDelta_t^{(\tau)}
\end{pmatrix},\quad \bPhi_{\all,t}^{(\tau)} = \begin{pmatrix}
\bPhi_{--}^{(\tau)} & \bPhi_{-t}^{(\tau)}\\
\bPhi_{t-}^{(\tau)} & \bPhi_t^{(\tau)}
\end{pmatrix}, \quad
\bSigma_{\all,t}^{(\tau)} = \begin{pmatrix}
\bSigma_{--}^{(\tau)} & \bSigma_{-t}^{(\tau)}\\
\bSigma_{t-}^{(\tau)} & \bSigma_t^{(\tau)}
\end{pmatrix},\\
&\bGamma_{\all,t}^{(\tau)} = \begin{pmatrix}
\bGamma_{--}^{(\tau)} & \bGamma_{-t}^{(\tau)}\\
\bGamma_{t-}^{(\tau)} & \bGamma_t^{(\tau)}
\end{pmatrix}, \quad \bPsi_{\all,t}^{(\tau)} = \begin{pmatrix}
\bPsi_{--}^{(\tau)} & \bPsi_{-t}^{(\tau)}\\
\bPsi_{t-}^{(\tau)} & \bPsi_t^{(\tau)}
\end{pmatrix}, \quad \bOmega_{\all,t}^{(\tau)} = \begin{pmatrix}
\bOmega_{--}^{(\tau)} & \bOmega_{-t}^{(\tau)}\\
\bOmega_{t-}^{(\tau)} & \bOmega_t^{(\tau)}
\end{pmatrix}.
\end{align*}

\subsection{Phase I: Linear AMP for rectangular matrices}
As in the symmetric case, we first establish the convergence of the iterates and the associated state evolution of the first $\tau$ steps of the auxiliary AMP algorithm, in the limit as $\tau\to\infty$. We again reindex these iterates as $1,2,\ldots,\tau$.

Specifically, let $\bU_1 = (\ub_1^1,\ldots,\ub_1^K)$ with each $\ub_1^k =
(\mu_{\pca,k} \bu_*^k + \sqrt{1 - \mu_{\pca,k}^2} \cdot \yb_k)/\theta_{u,k}$. We write $\Fb_0=\Ub_1 S_u$. Then the first $\tau$ iterates of the above auxiliary AMP algorithm have the structure of the following linear AMP:
\begin{align}\label{eq:rec:linearamp_algo}
\begin{aligned}
\bG_t &= \bX^\top \Fb_{t-1}S_u^{-1} - \gamma \sum_{j=1}^{t-1}  
\kappa_{2j} \bG_{t-j} S^{-2j},\\
\Fb_t &= \bX \bG_{t} S_v^{-1} 
- \sum_{j=1}^{t} \kappa_{2j}
\Fb_{t-j} S^{-2j}.
\end{aligned}
\end{align}
Up to iterate $\tau$, let $\bmu_\tau = (\mu_t)_{1\leq t\leq\tau}, \bnu_{\tau} = (\nu_t)_{1\leq t\leq\tau}, \bSigma_\tau = (\sigma_{st})_{1\leq s,t\leq\tau}$ and $\bOmega_\tau = (\omega_{st})_{1\leq s,t\leq\tau}$ be the parameters of the state evolution describing this linear AMP, where each $\mu_t,\nu_t\in\RR^{K\times K'}$ and $\sigma_{st},\omega_{st}\in\RR^{K\times K}$. Then it follows from Corollary \ref{cor: auxAMPrec} that
\begin{align}\label{eq: rec linear AMP state evolution}
\begin{aligned}
    (\bF_1, \ldots, \bF_\tau) &\toWtwo \bmu_\tau \cdot U_*' + (Y_1,\ldots,
Y_\tau) \textnormal{ with } (Y_1,\ldots,Y_\tau) \sim \calN(0,\bSigma_\tau)
\independent U_*' ,\\
    (\bG_1, \ldots, \bG_\tau) &\toWtwo \bnu_\tau \cdot V_*' + (Z_1,\ldots,
Z_\tau) \textnormal{ with } (Z_1,\ldots,Z_\tau) \sim \calN(0,\bOmega_\tau)
\independent V_*'.
\end{aligned}
\end{align}
Recall
\[\mu_\pca=(\mu_{\pca,1},\ldots,\mu_{\pca,K}) \in \RR^{K \times K'},
\qquad \nu_\pca=(\nu_{\pca,1},\ldots,\nu_{\pca,K}) \in \RR^{K \times K'}.\]

\begin{lemma}\label{lem:rec:linearamp}
Under Assumptions~\ref{assump:rectW} and \ref{assump:rec:AMP}(a) and (c), the following holds for the linear AMP algorithm~\eqref{eq:rec:linearamp_algo}:
\begin{enumerate}[label=(\alph*)]
\item $\lim_{t\to\infty}\uplim_{m,n\to\infty} \|\Fb_t - \Fb_\pca\|_\F/\sqrt{m} =
0$ and $\lim_{t\to\infty}\uplim_{m,n\to\infty} \|\Gb_t - \Gb_\pca\|_\F/\sqrt{n} = 0$ a.s.
\item The state evolution satisfies $\mu_t=\mu_\pca$ and $\nu_t=\nu_\pca$ for
every $t\geq 1$, and $\lim_{\min(s,t)\to\infty} \sigma_{st} =
\Id-\mu_\pca\mu_\pca^\top$ and $\lim_{\min(s,t)\to\infty} \omega_{st} =
\Id-\nu_\pca\nu_\pca^\top$.
\end{enumerate}
\end{lemma}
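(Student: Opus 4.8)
The plan is to adapt the proof of Lemma~\ref{lemma: sym linear AMP} to the left--right alternating structure of~\eqref{eq:rec:linearamp_algo}, working one column at a time. Fix $k \in \{1,\ldots,K\}$; let $\fb_t^k,\gb_t^k$ denote the $k$th columns of $\bF_t,\bG_t$, and let $\{\fb_\pca^i/\sqrt m\}_{i\le m}$ and $\{\gb_\pca^i/\sqrt n\}_{i\le n}$ be orthonormal bases of $\RR^m$ and $\RR^n$ that extend the leading $K'$ sample left/right singular vectors of $\bX$ (bulk vectors in any order, with the $n-m$ kernel directions of $\bX$ appended on the right). Set $p_t^{k,i}=(\fb_\pca^i)^\top\fb_t^k/m$ and $q_t^{k,i}=(\gb_\pca^i)^\top\gb_t^k/n$. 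Using $\bX\gb_\pca^i=\gamma^{-1/2}\lambda_i(\bX)\fb_\pca^i$, $\bX^\top\fb_\pca^i=\gamma^{1/2}\lambda_i(\bX)\gb_\pca^i$, the identity $\theta_{u,k}\theta_{v,k}=\theta_k^2$, and the fact that $\fb_0^k=\mu_{\pca,k}\bu_*^k+\sqrt{1-\mu_{\pca,k}^2}\,\yb_k$, the column-$k$ restriction of~\eqref{eq:rec:linearamp_algo} reduces to the coupled scalar recursions
\[
q_t^{k,i}=\frac{\sqrt\gamma\,\lambda_i(\bX)}{\theta_{u,k}}\,p_{t-1}^{k,i}-\gamma\sum_{j=1}^{t-1}\frac{\kappa_{2j}}{\theta_k^{2j}}q_{t-j}^{k,i},
\qquad
p_t^{k,i}=\frac{\lambda_i(\bX)}{\sqrt\gamma\,\theta_{v,k}}\,q_t^{k,i}-\sum_{j=1}^{t}\frac{\kappa_{2j}}{\theta_k^{2j}}p_{t-j}^{k,i},
\]
with $q_t^{k,i}=0$ on the appended kernel directions. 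Note that the two leading coefficients are exactly the first terms appearing in Assumption~\ref{assump:rec:AMP}(c).

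Next I would show these residual alignments vanish. For the remaining signal directions $i\in\{1,\ldots,K'\}\setminus\{k\}$, Theorem~\ref{thm: recSpike} gives $p_0^{k,i}\to0$ a.s.\ (the cross-alignments $(\fb_\pca^i)^\top\bu_*^k/m$ vanish, and $(\fb_\pca^i)^\top\yb_k/m\to0$), so the finite-length recursion above forces $p_t^{k,i}\to0$ and $q_t^{k,i}\to0$ a.s.\ for every fixed $t$. For the bulk directions $i>K'$ I interleave the two recursions into one sequence $w_0:=p_0^{k,i}$, $w_{2t-1}:=q_t^{k,i}$, $w_{2t}:=p_t^{k,i}$, which obeys $w_s=c_s w_{s-1}-\sum_{j\ge1}d_{s,j}w_{s-2j}$ with $(c_s,d_{s,j})=(\sqrt\gamma\lambda_i(\bX)/\theta_{u,k},\,\gamma\kappa_{2j}/\theta_k^{2j})$ for odd $s$ and $(\lambda_i(\bX)/(\sqrt\gamma\theta_{v,k}),\,\kappa_{2j}/\theta_k^{2j})$ for even $s$. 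Exactly as in Lemma~\ref{lemma: sym linear AMP}, on the event $\cE_{m,n}=\{\lambda_i(\bX)<\lambda_++\delta\ \text{for all}\ i>K'\}$ --- which holds a.s.\ for all large $m,n$ by Weyl's singular-value interlacing and Assumption~\ref{assump:rectW}(c) --- the shift map $(w_{s-1},w_{s-2},\ldots)\mapsto(w_s,w_{s-1},\ldots)$ fixes the zero sequence and is, for each parity, a $\rho$-contraction in the weighted norm $\|(y_0,y_{-1},\ldots)\|=\sup_{l\ge0}|y_{-l}|\,\iota^{\,l}$, with $\rho=\max(\iota,\eta)<1$ and
\[
\eta=\max\Big\{\frac{\sqrt\gamma(\lambda_++\delta)}{\theta_{u,k}}+\gamma\sum_{j\ge1}\frac{|\kappa_{2j}|}{\theta_k^{2j}\,\iota^{\,2j-1}},\ \ \frac{\lambda_++\delta}{\sqrt\gamma\,\theta_{v,k}}+\sum_{j\ge1}\frac{|\kappa_{2j}|}{\theta_k^{2j}\,\iota^{\,2j-1}}\Big\},
\]
which is $<1$ for $\delta>0$ small by Assumption~\ref{assump:rec:AMP}(c) --- the two inequalities stated there are precisely what make the odd and even half-steps contract. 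Iterating gives $|p_t^{k,i}|\le\rho^{2t}|p_0^{k,i}|$ and $|q_t^{k,i}|\le\rho^{2t-1}|p_0^{k,i}|$ simultaneously over all bulk $i$ on $\cE_{m,n}$. This bulk contraction is the step I expect to be the main obstacle: it is where the two-step structure, the $m$-versus-$n$ normalizations, and the exact form of Assumption~\ref{assump:rec:AMP}(c) all have to line up.

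Writing $\fb_t^k=\xi_t^k\fb_\pca^k+\rb_t^k$ with $\rb_t^k=\sum_{i\ne k}p_t^{k,i}\fb_\pca^i$, and $\gb_t^k=\zeta_t^k\gb_\pca^k+\sbb_t^k$ with $\sbb_t^k=\sum_{i\ne k}q_t^{k,i}\gb_\pca^i$, I split each residual into its (fixed-$t$ vanishing) signal part and its bulk part and bound the latter using $\sum_{i>K'}(p_0^{k,i})^2\le\|\rb_0^k\|^2/m\le\|\fb_0^k\|^2/m\to1$; this yields $\lim_{t\to\infty}\uplim_{m,n\to\infty}\|\rb_t^k\|/\sqrt m=0$ and $\lim_{t\to\infty}\uplim_{m,n\to\infty}\|\sbb_t^k\|/\sqrt n=0$. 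Independently, an induction on the state-evolution recursion for $(\bmu_t,\bnu_t)$ --- using $U_t=S_u^{-1}F_{t-1}$, $V_t=S_v^{-1}G_t$, $\EE[U_*'{U_*'}^\top]=\EE[V_*'{V_*'}^\top]=\Id$, the formulas $\nu_s=\sqrt\gamma\,\EE[U_s{U_*'}^\top]S'$ and $\mu_s=\EE[V_s{V_*'}^\top]S'/\sqrt\gamma$, and the identity $\theta_{u,k}/\theta_{v,k}=\gamma\,\mu_{\pca,k}^2/\nu_{\pca,k}^2$ (equivalently $\sqrt\gamma\,S_u^{-1}\mu_\pca S'=\nu_\pca$ and $S_v^{-1}\nu_\pca S'/\sqrt\gamma=\mu_\pca$) --- shows $\mu_t=\mu_\pca$ and $\nu_t=\nu_\pca$ for every $t\ge1$. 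Then~\eqref{eq: rec linear AMP state evolution} gives $(\fb_t^k)^\top\bu_*^k/m\to\mu_{\pca,k}$ and $(\gb_t^k)^\top\bv_*^k/n\to\nu_{\pca,k}$; comparing with Theorem~\ref{thm: recSpike} (which gives $(\fb_\pca^k)^\top\bu_*^k/m\to\mu_{\pca,k}>0$ and $(\gb_\pca^k)^\top\bv_*^k/n\to\nu_{\pca,k}>0$) forces $\uplim_{m,n\to\infty}|\xi_t^k-1|\le\mu_{\pca,k}^{-1}\uplim_{m,n\to\infty}\|\rb_t^k\|/\sqrt m$ and the analogous bound for $\zeta_t^k$, hence $\lim_{t\to\infty}\uplim_{m,n\to\infty}|\xi_t^k-1|=\lim_{t\to\infty}\uplim_{m,n\to\infty}|\zeta_t^k-1|=0$. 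Part~(a) follows from $\|\fb_t^k-\fb_\pca^k\|\le|\xi_t^k-1|\,\|\fb_\pca^k\|+\|\rb_t^k\|$ (and its $\gb$-analogue) summed over $k$, and the $\mu_t,\nu_t$ claims are the first half of part~(b). For the covariances, since $\mu_s=\mu_t=\mu_\pca$ one has $\sigma_{st}=\EE[F_sF_t^\top]-\mu_\pca\mu_\pca^\top=\lim_{m,n\to\infty}\bF_s^\top\bF_t/m-\mu_\pca\mu_\pca^\top$; expanding $\bF_s^\top\bF_t/m$ through the decomposition $\fb_s^k=\xi_s^k\fb_\pca^k+\rb_s^k$, using the orthonormality of $\{\fb_\pca^k/\sqrt m\}$, the boundedness of the $\xi_s^k$, and the vanishing of $\|\rb_s^k\|/\sqrt m$ and $|\xi_s^k-1|$ as $\min(s,t)\to\infty$, gives $\lim_{\min(s,t)\to\infty}\sigma_{st}=\Id-\mu_\pca\mu_\pca^\top$, and the identical argument on the right side gives $\lim_{\min(s,t)\to\infty}\omega_{st}=\Id-\nu_\pca\nu_\pca^\top$, completing part~(b).
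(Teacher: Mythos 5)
Your proof is correct and takes essentially the same route as the paper: project each column onto the sample singular basis, reduce to coupled scalar recursions, use a contraction in the weighted sup-norm $\sup_l|y_{-l}|\iota^l$ for bulk directions (with Assumption~\ref{assump:rec:AMP}(c) making each half-step contract), and conclude $\xi_t^k,\zeta_t^k\to1$ via the state-evolution identities $\mu_t=\mu_\pca$, $\nu_t=\nu_\pca$. The only cosmetic difference is that you interleave $p$ and $q$ into a single sequence $w_s$ with parity-dependent coefficients, whereas the paper keeps two vectors $\brho_t,\bvarphi_t$ and composes two maps $g,h$; both yield the same geometric decay of the bulk residual, and the remainder of your argument (signal-direction vanishing, $\mu$--$\nu$ telescoping via $S_uS_v=S^2$, and the covariance limit) matches the paper step for step.
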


\begin{proof}
Recall the $K'$ largest sample singular values of $\Xb$ in \eqref{rec sigval}
and the associated singular vectors in \eqref{rec sigvec}. We denote the
remaining singular values and vectors as $\lambda_i(\Xb), \fb_\pca^i$ and
$\gb_\pca^i$ for $i=K'+1,\ldots,m$ in any order, with the same normalization
that $\|\fb_\pca^i\|=\sqrt{m}$ and $\|\gb_\pca^i\|=\sqrt{n}$. Thus
$\Xb\gb_\pca^i/\sqrt{n} = \lambda_i(\Xb)\fb_\pca^i/\sqrt{m}$ and
${\fb_\pca^i}^\top\Xb/\sqrt{m}=\lambda_i(\Xb)\gb_\pca^i/\sqrt{n}$ for
$i=1,\ldots,m$. Let
\[\cS=\Big\{i \in \{1,\ldots,K'\}:\theta_k>(D(\lambda_+))^{-1/2}\Big\}\]
be the set of ``super-critical'' signal values as characterized by Theorem
\ref{thm: recSpike}.
Denote $\|\Lambda\|_\infty = \lambda_+$, fix a small constant $\delta>0$, and define the event
\begin{align*}
    \cE_{m,n} = \{\lambda_i(\Xb)\leq \|\Lambda\|_\infty + \delta \text{
for all } i \notin \cS\}.
\end{align*}
Then $\cE_{m,n}$ occurs almost surely for all large $m$ and $n$, where this
bound for $i \in \{1,\ldots,K'\} \setminus \cS$ follows from Theorem \ref{thm: recSpike}, and that
for $i=K'+1,\ldots,n$ follows from Assumption \ref{assump:rectW}(c) and
Weyl's singular value interlacing inequality.

Let $\fb_t^k$ and $\gb_t^k$ be the $k^\text{th}$ columns of the linear AMP iterates $\Fb_t$ and $\Gb_t$. For part (a), it suffices to show that
\begin{align*}
    \lim_{t\to\infty}\uplim_{m,n\to\infty} \frac{\|\fb_t^k-\fb_\pca^k\|}{\sqrt{m}} + \frac{\|\gb_t^k-\gb_\pca^k\|}{\sqrt{n}} = 0 
\end{align*}
for each $k=1,\ldots,K$. Fixing any such $k$, by the definition of linear AMP in \eqref{eq:rec:linearamp_algo}, we have
\begin{align}\label{eq: rec fg_t linear AMP}
    \gb_t^k = \frac{1}{\theta_{u,k}}\cdot \Xb^\top \fb_{t-1}^k - \gamma\sum_{j=1}^{t-1} \frac{\kappa_{2j}}{\theta_k^{2j}}\gb_{t-j}^k,\qquad \fb_t^k = \frac{1}{\theta_{v,k}}\cdot \Xb\gb_t^k - \sum_{j=1}^t \frac{\kappa_{2j}}{\theta_k^{2j}}\fb_{t-j}^k.
\end{align}

We first show that the component of $\fb_t^k$ orthogonal to $\fb_\pca^k$ and
that of $\gb_t^k$ orthogonal to $\gb_\pca^k$ vanish a.s.\ in the limits
$t\to\infty$ and $m,n\to\infty$. 
Note that this linear AMP update ensures that $\bg_t^k$ is 
always in the span of $\{\bg_\pca^i\}_{i=1,\ldots,m}$.
For each $t\geq 0$ and $i\in\{1,\ldots,m\}$, define
$\ell_t^{k,i} = (\fb_\pca^i)^\top\fb_t^k/m$. For each $t \geq 1$ and
$i \in \{1,\ldots,m\}$, define
$r_t^{k,i} = {(\gb_\pca^i)}^\top\gb_t^k/n$. We further
extend these definitions by setting
$r_0^{k,i}=0$ for $t=0$ and all $i\in\{1,\ldots,m\}$.
Then applying \eqref{eq: rec fg_t linear AMP}, for all $i\in\{1,\ldots,m\}$,
\begin{align}\label{eq: rec linear recursion left}
\begin{aligned}
    \ell_t^{k,i} &= \frac{1}{\theta_{v,k}}\cdot \frac{{\fb_\pca^i}^\top\Xb\gb_t^k}{m} - \sum_{j=1}^t \frac{\kappa_{2j}}{\theta_k^{2j}} \cdot \frac{{\fb_\pca^i}^\top\fb_{t-j}^k}{m}\\
    &= \frac{\lambda_i(\Xb)}{\theta_{v,k}} \cdot \frac{{\gb_\pca^i}^\top\gb_t^k}{\sqrt{mn}} - \sum_{j=1}^t \frac{\kappa_{2j}}{\theta_k^{2j}} \ell_{t-j}^{k,i}\\
    &= \frac{\lambda_i(\Xb)}{\theta_{v,k}\sqrt{\gamma}} r_t^{k,i} -
\sum_{j=1}^{t} \frac{\kappa_{2j}}{\theta_k^{2j}} \ell_{t-j}^{k,i}.
\end{aligned}
\end{align}
Similarly, for all $i \in \{1,\ldots,m\}$,
\begin{align}\label{eq: rec linear recursion right}
\begin{aligned}
    r_t^{k,i} &= \frac{1}{\theta_{u,k}} \cdot \frac{{\gb_\pca^i}^\top\Xb^\top\fb_{t-1}^k}{n} - \gamma\sum_{j=1}^{t-1} \frac{\kappa_{2j}}{\theta_k^{2j}}\cdot \frac{{\gb_\pca^i}^\top \gb_{t-j}^k}{n}\\
    &= \frac{\lambda_i(\Xb)}{\theta_{u,k}} \cdot \frac{{\fb_\pca^i}^\top\fb_{t-1}^k}{\sqrt{mn}} - \gamma\sum_{j=1}^{t}\frac{\kappa_{2j}}{\theta_k^{2j}} r_{t-j}^{k,i}\\
    &= \frac{\lambda_i(\Xb)\sqrt{\gamma}}{\theta_{u,k}} \ell_{t-1}^{k,i} - \gamma\sum_{j=1}^{t}\frac{\kappa_{2j}}{\theta_k^{2j}} r_{t-j}^{k,i}
    \end{aligned}
\end{align}
where the second equality applies our convention $r_0^{k,i}\equiv0$. First, for any
$i\in \cS\setminus\{k\}$, by the initialization $\fb_0^k =
\theta_{u,k}\ub_1^k = \mu_{\pca,k}\ub_*^k + \sqrt{1-\mu_{\pca,k}^2}\cdot\yb_k$, we have
\begin{align*}
    \ell_0^{k,i} = \frac{\mu_{\pca,k}}{m} {\fb_\pca^i}^\top \ub_*^k +
\frac{\sqrt{1-\mu_{\pca,k}^2}}{m}{\fb_\pca^i}^\top \yb_k \to 0
\end{align*}
a.s.\ as $m,n\to\infty$, where the first term converges to 0 by Theorem \ref{thm:
recSpike}, and the second term converges to 0 since
${\fb_\pca^i}^\top\yb_k/m\sim\cN(0,1/m)$. Thus, it follows from the recursions \eqref{eq: rec linear recursion left} and \eqref{eq: rec linear recursion right} that
\begin{align}\label{eq: rec residual spike}
    \lim_{m,n\to\infty} \ell_t^{k,i} = \lim_{m,n\to\infty} r_t^{k,i} = 0 \text{
a.s. for each fixed } t\geq 0 \text{ and } i\in \cS\setminus\{k\}.
\end{align}

Next, for each $i \notin \cS$, consider a space $\cX$ of bounded
infinite-dimensional vectors with elements in $[0,\infty)$. For each $t\geq 1$, we define two elements $\brho_t^{k,i}, \bvarphi_t^{k,i}\in\cX$ as
\begin{align*}
    \brho_t^{k,i} = (\ell_t^{k,i},r_t^{k,i},\ldots,\ell_0^{k,i},r_0^{k,i},0,\ldots), \quad \bvarphi_t^{k,i} = (r_t^{k,i},\ell_{t-1}^{k,i},r_{t-1}^{k,i},\ldots,\ell_0^{k,i},r_0^{k,i},0,\ldots).
\end{align*}
Let $\iota\in(0,1)$ be chosen as in Assumption \ref{assump:rec:AMP}(c), and let us consider a norm $\|\cdot\|$ on $\cX$ defined by
\begin{align*}
    \|(x_0,x_{-1},x_{-2},\ldots)\| = \sup_{k\geq 0} |x_{-k}| \cdot \iota^k.
\end{align*}
Consider a map $g:\cX\to\cX$ defined as $g(\brho_{t-1}^{k,i})=\bvarphi_t^{k,i}$ and another map $h:\cX\to\cX$ given by $h(\bvarphi_t^{k,i}) = \brho_t^{k,i}$. Then we have $\brho_t^{k,i} = (h\circ g)(\brho_{t-1}^{k,i})$. We verify that both $g$ and $h$ are contractive with respect to the norm $\|\cdot\|$ on $\cX$. Let $\{(\brho_t,\bvarphi_t))\}_{t\geq 1}$ and $\{(\tilde\brho_t,\tilde\bvarphi_t)\}_{t\geq 1}$ be two sequences given by
\begin{align*}
    \begin{cases}
    \brho_t = (\ell_t,r_t,\ldots,\ell_0,r_0,0,\ldots)\\
    \bvarphi_t = (r_t,\ell_{t-1},r_{t-1},\ldots,\ell_0,r_0,0,\ldots)
    \end{cases},\qquad
    \begin{cases}
    \tilde\brho_t = (\tilde\ell_t,\tilde r_t,\ldots,\tilde\ell_0, \tilde r_0,0,\ldots)\\
    \tilde\bvarphi_t = (\tilde r_t,\tilde\ell_{t-1},\tilde r_{t-1},\ldots,\tilde\ell_0,\tilde r_0,0,\ldots)
    \end{cases}
\end{align*}
where both $\{(\ell_t,r_t)\}_{t\geq 1}$ and $\{(\tilde\ell_t,\tilde r_t)\}_{t\geq 1}$ satisfy the same recursions as in \eqref{eq: rec linear recursion left} and \eqref{eq: rec linear recursion right}. Note that
\begin{align}\label{eq: rec g contraction 1}
    \|g(\brho_{t-1}) - g(\tilde\brho_{t-1})\| = \|\bvarphi_t - \tilde\bvarphi_t\| &= \max\left\{|r_t-\tilde r_t|, \max_{1\leq j\leq t} |\ell_{t-j}-\tilde\ell_{t-j}|\iota^{2j-1}, \max_{1\leq j\leq t} |r_{t-j} - \tilde r_{t-j}|\iota^{2j}\right\}\notag\\
    &=\max\{|r_t-\tilde r_t|, \iota\cdot \|\brho_{t-1} - \tilde\brho_{t-1}\|\}.
\end{align}
Then we need to control $|r_t-\tilde r_t|$. It follows from \eqref{eq: rec linear recursion right} that
\begin{align}\label{eq: rec g contraction 2}
    |r_t-\tilde r_t| &= \left| \frac{\lambda_i(\Xb)\sqrt{\gamma}}{\theta_{u,k}}(\ell_{t-1}-\tilde\ell_{t-1}) - \gamma \sum_{j=1}^t\frac{\kappa_{2j}}{\theta_k^{2j}} (r_{t-j} - \tilde r_{t-j}) \right|\notag\\
    &\leq \frac{\lambda_i(\Xb)\sqrt{\gamma}}{|\theta_{u,k}|} |\ell_{t-1}-\tilde\ell_{t-1}| + \gamma\sum_{j=1}^t \frac{|\kappa_{2j}|}{\theta_k^{2j}\iota^{2j-1}} |r_{t-j}-\tilde r_{t-j}|\iota^{2j-1}\notag\\
    &\leq \left(\frac{\lambda_i(\Xb)\sqrt{\gamma}}{|\theta_{u,k}|} + \gamma\sum_{j=1}^t\frac{|\kappa_{2j}|}{\theta_k^{2j}\iota^{2j-1}}\right) \cdot \max\left\{|\ell_{t-1}-\tilde\ell_{t-1}|,\max_{1\leq j\leq t} |r_{t-j}-\tilde r_{t-j}|\iota^{2j-1} \right\}\notag\\
    &\leq \underbrace{\left(\frac{(\|\Lambda\|_\infty+\delta)\sqrt{\gamma}}{|\theta_{u,k}|} + \gamma\sum_{j=1}^{\infty} \frac{|\kappa_{2j}|}{\theta_k^{2j}\iota^{2j-1}}\right)}_{\eta_1} \cdot \|\brho_{t-1}-\tilde\brho_{t-1}\|
\end{align}
where the last inequality holds on the above event $\cE_{m,n}$. For sufficiently
small $\delta$, we have $\eta_1\in(0,1)$ by Assumption
\ref{assump:rec:AMP}(c). Similarly,
\begin{align}\label{eq: rec h contraction 1}
    \|h(\bvarphi_t) - h(\tilde\bvarphi_t)\| = \|\brho_t - \tilde\brho_t\| &= \max\left\{|\ell_t-\tilde\ell_t|, \max_{1\leq j\leq t}|\ell_{t-j}-\tilde\ell_{t-j}|\iota^{2j}, \max_{0\leq j\leq t} |r_{t-j}-\tilde r_{t-j}| \iota^{2j+1} \right\}\notag\\
    &= \max\{|\ell_t-\tilde\ell_t|, \iota \cdot
\|\bvarphi_t-\tilde\bvarphi_t\|\}.
\end{align}
By \eqref{eq: rec linear recursion left}, we have on the event $\cE_{m,n}$ that
\begin{align}\label{eq: rec h contraction 2}
    |\ell_t-\tilde\ell_t| &= \left| \frac{\lambda_i(\Xb)}{\theta_{v,k}\sqrt{\gamma}}(r_t-\tilde r_t) - \sum_{j=1}^{t}\frac{\kappa_{2j}}{\theta_k^{2j}}(\ell_{t-j}-\tilde\ell_{t-j}) \right|\notag\\
    &\leq \frac{\lambda_i(\Xb)}{|\theta_{v,k}|\sqrt{\gamma}}|r_t-\tilde r_t| + \sum_{j=1}^t \frac{|\kappa_{2j}|}{\theta_k^{2j}\iota^{2j-1}} |\ell_{t-j}-\tilde\ell_{t-j}|\iota^{2j-1}\notag\\
    &\leq \underbrace{\left(
\frac{\|\Lambda\|_\infty+\delta}{|\theta_{v,k}|\sqrt{\gamma}} +
\sum_{j=1}^\infty \frac{|\kappa_{2j}|}{\theta_k^{2j}\iota^{2j-1}}
\right)}_{\eta_2} \cdot \|\bvarphi_t-\tilde\bvarphi_t\|.
\end{align}
We also have
$\eta_2\in(0,1)$ for sufficiently small $\delta$ by Assumption
\ref{assump:rec:AMP}(c). Combining \eqref{eq: rec g contraction 1}, \eqref{eq: rec g contraction 2}, \eqref{eq: rec h contraction 1} and \eqref{eq: rec h contraction 2}, we get
\begin{align*}
    \|(h\circ g)(\brho_{t-1}) - (h\circ g)(\tilde \brho_{t-1})\| \leq \underbrace{\max\{\eta_1,\iota\}\cdot\max\{\eta_2,\iota\}}_{\rho} \cdot \|\brho_{t-1}-\tilde\brho_{t-1}\|.
\end{align*}
Therefore, $h\circ g$ is a $\rho$-contraction for some $\rho\in(0,1)$. Applying this property to $\{\brho_t^{k,i}\}_{t\geq 1}$ yields
\begin{align}\label{eq: rec residual contraction}
    |\ell_t^{k,i}| + |r_t^{k,i}| &\leq \frac{1}{\iota} \|(\ell_t^{k,i},r_t^{k,i},\ldots,\ell_0^{k,i},r_0^{k,i},0\ldots) - (0, 0,\ldots)\|\notag\\
    &\leq \frac{\rho^t}{\iota} \|(\ell_0^{k,i},r_0^{k,i},0,\ldots) - (0,0,\ldots)\|\notag\\
    &= \frac{\rho^t}{\iota} (|\ell_0^{k,i}| + |r_0^{k,i}|).
\end{align}
This holds simultaneously for all $i\notin \cS$ on the event $\cE_{m,n}$.

Now write $\fb_t^k = \xi_t^k\fb_\pca^k + \bell_t^k$ and $\gb_t^k =
\zeta_t^k\gb_\pca^k + \rb_t^k$ where $\bell_t^k\perp\fb_\pca^k$ and
$\rb_t^k\perp\gb_\pca^k$. Recalling that $\gb_t^k$ belongs to the span of
$\{\gb_\pca^i\}_{i=1,\ldots,m}$, we can expand $\bell_t^k$ and $\rb_t^k$ as
\begin{align*}
    \bell_t^k = \sum_{i=1,i\neq k}^m \frac{{\fb_\pca^i}^\top\fb_t^k}{m}\fb_\pca^i = \sum_{i=1,i\neq k}^m \ell_t^{k,i} \cdot \fb_\pca^i \quad \text{and}\quad \rb_t^k = \sum_{i=1,i\neq k}^{m} \frac{{\gb_\pca^i}^\top \gb_t^k}{n} \gb_\pca^i = \sum_{i=1,i\neq k}^{m} r_t^{k,i} \cdot \gb_\pca^i.
\end{align*}
Thus, we further have
\begin{align*}
    \frac{\|\bell_t^k\|^2}{m} + \frac{\|\rb_t^k\|^2}{n} = \underbrace{\sum_{i
\in \cS \setminus \{k\}} \left((\ell_t^{k,i})^2 + (r_t^{k,i})^2\right)}_{\cI_1}
+ \underbrace{\sum_{i \notin \cS} \left((\ell_t^{k,i})^2 + (r_t^{k,i})^2\right)}_{\cI_2}.
\end{align*}
By \eqref{eq: rec residual spike}, we have $\lim_{n\to\infty}\cI_1=0$. For
$\cI_2$, it follows from \eqref{eq: rec residual contraction} and the definition
$r_0^{k,i} \equiv 0$ that
\begin{align*}
    \cI_2 &\leq \frac{2\rho^{2t}}{\iota^2} \sum_{i \notin \cS} \left( (\ell_0^{k,i})^2
+  (r_0^{k,i})^2\right) \leq \frac{2\rho^{2t}}{\iota^2} \cdot
\frac{\|\bell_0^k\|^2}{m} \leq \frac{2\rho^{2t}}{\iota^2} \cdot \frac{\|\fb_0^k\|^2}{m}.
\end{align*}
By the initialization $\fb_0^k = \mu_{\pca,k}\ub_*^k +
\sqrt{1-\mu_{\pca,k}^2}\cdot\yb_k$, we have $\lim_{m,n\to\infty} \|\fb_0^k\|^2/m=1$. Therefore, it follows that
\begin{align}\label{eq: rec linear AMP residual converge}
    \lim_{t\to\infty}\uplim_{m,n\to\infty} \frac{\|\bell_t^k\|}{\sqrt{m}} + \frac{\|\rb_t^k\|}{\sqrt{n}} = 0.
\end{align}

Next we show that as $t\to\infty$, $\xi_t^k \defeq {\fb_t^k}^\top\fb_\pca^k/m\to
1$ and $\zeta_t^k \defeq {\gb_t^k}^\top\gb_\pca^k/n\to 1$ by using the state
evolution \eqref{eq: rec linear AMP state evolution} of the linear AMP. For any
$t \geq 1$, we have
\begin{align*}
\nu_{t+1} = \EE[U_{t+1}{U_*'}^\top] S'\sqrt{\gamma}= S_u^{-1}
\EE[F_t{U_*'}^\top] S'\sqrt{\gamma} = S_u^{-1} \EE[(\mu_tU_*' + Y_t){U_*'}^\top] S'\sqrt{\gamma} = S_u^{-1}\mu_tS'\sqrt{\gamma}
\end{align*}
where the last equality is due to $Y_t\independent U_*'$, $\EE[Y_t]=0$, and
$\EE[U_*'{U_*'}^\top]=\Id$. Setting $U_1=S_u^{-1}F_0$, this holds also for $t=0$
upon identifying $\mu_0=\mu_\pca$ to match the initialization of $\Fb_0$.
We also have, for $t \geq 1$,
\begin{align*}
\mu_t = \frac{1}{\sqrt{\gamma}}\EE[V_t{V_*'}^\top]S' =
\frac{1}{\sqrt{\gamma}}S_v^{-1} \EE[G_tV_*'] S'= \frac{1}{\sqrt{\gamma}}S_v^{-1}
\EE[(\nu_t V_*'+Z_t){V_*'}^\top]S' = \frac{1}{\sqrt{\gamma}}S_v^{-1}\nu_tS'
\end{align*}
where the last equality is due to $Z_t\independent V_*'$, $\EE[Z_t]=0$, and
$\EE[V_*'{V_*'}^\top]=\Id$. Combining the above two equalities yields
\begin{align*}
\mu_{t+1} = S_v^{-1}S_u^{-1}\mu_t(S')^2. 
\end{align*}
Since $\mu_0 = \mu_\pca$ and $S_uS_v = S^2$, we have $\mu_t\equiv \mu_\pca$ for
all $t \geq 1$. Thus also $\nu_t\equiv S_u^{-1}\mu_\pca S'\sqrt{\gamma} =
\nu_\pca$ for all $t\geq 1$, by (\ref{eq:rec:def:thetauv}) and
(\ref{eq:rec:pcaSolFZequi}). Hence, for each $k\in\{1,\ldots,K\}$, we have
\begin{align*}
    \mu_{\pca,k} = \lim_{m,n \to \infty} \frac{{\fb_\pca^k}^\top \ub_*^k}{m},
\qquad
\mu_{\pca,k} = 
\lim_{m,n\to\infty} \frac{{\fb_t^k}^\top\ub_*^k}{m} = \lim_{m,n\to\infty}
\xi_t^k\frac{(\fb_\pca^k)^\top\ub_*^k}{m} + \frac{(\bell_t^k)^\top\ub_*^k}{m},
\end{align*}
where the left equality follows from Theorem~\ref{thm: recSpike} and the right equality applies $\mu_t \equiv \mu_\pca$.
This further implies
\begin{align*}
    \uplim_{m,n\to\infty}\left|(\xi_t^k-1)
\frac{(\fb_\pca^k)^\top\ub_*^k}{m}\right| \leq \uplim_{m,n\to\infty} \frac{\|\bell_t^k\|}{\sqrt{m}}.
\end{align*}
Since $\lim_{m,n\to\infty}{\fb_\pca^k}^\top\ub_*^k/m = \mu_{\pca,k}\neq 0$ a.s. by Theorem \ref{thm: recSpike}, taking the limit as $t\to\infty$ on both sides, it follows from \eqref{eq: rec linear AMP residual converge} that
\begin{align*}
\lim_{t\to\infty} \uplim_{m,n\to\infty}|\xi_t^k-1| = 0.
\end{align*}
Combining with \eqref{eq: rec linear AMP residual converge}, we have
\begin{align*}
    \lim_{t\to\infty} \uplim_{m,n\to\infty} \frac{\|\fb_t^k - \fb_\pca^k\|}{\sqrt{m}} = 0.
\end{align*}
Applying the same argument, we also get $\lim_{t\to\infty}\uplim_{m,n\to\infty} \|\gb_t^k - \gb_\pca^k\|/\sqrt{n} = 0$.

Finally, the convergence of $\sigma_{st}$ and $\omega_{st}$ can be derived using the exact same argument as we have used for the linear AMP for symmetric matrices. This completes the proof.
\end{proof}

Returning to the auxiliary AMP iterations
$\bU_{-\tau}^{(\tau)},\bG_{-\tau}^{(\tau)},\bV_{-\tau}^{(\tau)},\bF_{-\tau}^{(\tau)},\ldots$
defined by (\ref{eq: rec aux denoiser v}) and (\ref{eq: rec aux denoiser u}),
this linear AMP algorithm characterizes these iterates up to $\bU_1^{(\tau)}$
and $\bG_1^{(\tau)}$. Thus, translating Lemma \ref{lem:rec:linearamp} back to
this indexing and recalling the spectral initializations $\bF_0=\bF_\pca$,
$\bG_0=\bG_1=\bG_\pca$, $\bU_0=\bU_1=\bF_\pca S_u^{-1}$, and $\bV_0=\bG_\pca
S_v^{-1}$, we have
\begin{align}
\begin{aligned}
&\lim_{\tau \to \infty} \uplim_{m,n \to \infty}
\frac{\|\bF_{i}^{(\tau)}-\bF_0\|_\F}{\sqrt{m}}=0,
\quad \lim_{\tau \to \infty} \uplim_{m,n \to \infty}
\frac{\|\bV_{i}^{(\tau)}-\bV_0\|_\F}{\sqrt{n}}=0,\\
&\hspace{1in}\mu_{i}^{(\tau)}=\mu_\pca,\quad \lim_{\tau \to \infty}
\bSigma_{\all,t}^{(\tau)}[i,j]=\Id-\mu_\pca\mu_\pca^\top \text{ for all fixed }
i,j \leq 0, 
\end{aligned}
\label{eq:rec:linearamp1}
\end{align}
and
\begin{align}
&\lim_{\tau \to \infty} \uplim_{m,n \to \infty}
\frac{\|\bG_{i}^{(\tau)}-\bG_0\|_\F}{\sqrt{n}}
=\lim_{\tau \to \infty} \uplim_{m,n \to \infty}
\frac{\|\bG_{i}^{(\tau)}-\bG_1\|_\F}{\sqrt{n}}=0,\nonumber\\
&\lim_{\tau \to \infty} \uplim_{m,n \to \infty}
\frac{\|\bU_{i}^{(\tau)}-\bU_0\|_\F}{\sqrt{m}}
=\lim_{\tau \to \infty} \uplim_{m,n \to \infty}
\frac{\|\bU_{i}^{(\tau)}-\bU_1\|_\F}{\sqrt{m}}=0,\nonumber\\
&\hspace{1in}\nu_{i}^{(\tau)}=\nu_\pca,\quad \lim_{\tau \to \infty}
\bOmega_{\all,t}^{(\tau)}[i,j]=\Id-\nu_\pca\nu_\pca^\top \text{ for all fixed }
i,j \leq 1.\label{eq:rec:linearamp2}
\end{align}

\subsection{Phase II: auxiliary AMP for rectangular matrices}

\begin{proof}[Proof of Theorem~\ref{thm:rec:AMPPCA}]
We show by induction that the following statements hold a.s.\ for each $t\geq 0$.
\begin{enumerate}[label=(G.{\alph*})]
\item $\limtau\uplim_{m,n\to\infty}\|\Ub_t^{(\tau)} - \Ub_t\|_\F / \sqrt{m} = 0$
and $\lim_{m,n\to\infty}\|\Ub_t\|_\F/\sqrt{m}<C_t$ for a constant $C_t>0$. \label{enum:G:u}

\item $(\Ub_*',\Ub_0,\Ub_1,\ldots,\Ub_t,\Fb_0,\ldots,\Fb_{t-1}) \toWtwo (U_*',U_0,U_1,\ldots,U_s,F_0,\ldots,F_{t-1})$, where the limiting distribution is defined in \eqref{eq:rec:limitDistPcaAmp}. \label{enum:G:W2convergence}

\item $\limtau\taubPhi_t = \bPhi_t$ and $\limtau\taubDelta_t = \bDelta_t$. \label{enum:G:PhiDelta}

\item $\limtau \lim_{m,n \to \infty} \|\taubphi_t - \bphi_t\| = 0$.\label{enum:G:phi}

\item $\limtau\uplim_{m,n\to\infty}\|\Gb_t^{(\tau)} - \Gb_t\|_\F / \sqrt{n} = 0$
and $\lim_{m,n\to\infty} \|\Gb_t\|_\F/\sqrt{n}<C_t$ for a constant $C_t>0$.\label{enum:G:g}

\item
$\limtau \taubOmega_t = \bOmega_t$ and $\limtau \taubnu_t = \bnu_t$.\label{enum:G:nuOmega}
\end{enumerate}
and
\begin{enumerate}[label=(F.{\alph*})]
\item $\limtau\uplim_{m,n\to\infty} \|\Vb_t^{(\tau)} - \Vb_t \|/ \sqrt{n} = 0$ and
$\lim_{m,n\to\infty}\|\Vb_t\|_\F/\sqrt{n}<C_t$ for a constant $C_t>0$.\label{enum:F:v}

\item $(\Vb_*',\Vb_0,\Vb_1,\ldots,\Vb_t,\Gb_0,\ldots,\Gb_t) \toWtwo (V_*',V_0,V_1,\ldots,V_t,G_0,\ldots,G_t)$, where the limiting distribution is defined in \eqref{eq:rec:limitDistPcaAmp}. \label{enum:F:W2convergence}

\item $\limtau \taubPsi_t = \bPsi_t$ and $\limtau \taubGamma_t = \bGamma_t$.\label{enum:F:PsiGamma}

\item $\limtau \lim_{m,n \to \infty} \|\taubpsi_t - \bpsi_t\| = 0$.\label{enum:F:psi}

\item
$\limtau\uplim_{m,n\to\infty} \|\Fb_t^{(\tau)} - \Fb_t\|/\sqrt{m} = 0$ and
$\lim_{m,n\to\infty} \|\Fb_t\|_\F/\sqrt{m}<C_t$ for a constant $C_t>0$. \label{enum:F:f}

\item $\limtau \taubSigma_t = \bSigma_t$ and $\limtau \taubmu_t = \bmu_t$.\label{enum:F:muSigma}
\end{enumerate}

Denote by $t^{(G)}, t^{(F)}$ the claims of parts G and F {at 
iteration $t$}. The base cases of $0^{(G)},0^{(F)}$ and $1^{(G)}$ may be verified
from the implications (\ref{eq:rec:linearamp1}) and (\ref{eq:rec:linearamp2})
of Lemma~\ref{lem:rec:linearamp}, similar to the proof of Theorem \ref{thm:sym}.

Let us take $t \geq 1$, suppose by induction that $s^{(F)}$ holds for all
$0\leq s\leq t-1$ and $s^{(G)}$ holds for all $0\leq s \leq t$, and show
$t^{(F)}$. The proofs of $t^{(F.a-F.d)}$ apply the same arguments as in the proof
of Theorem~\ref{thm:sym}, and we omit these for brevity. We provide below the
proofs of $t^{(F.e)}$ and $t^{(F.f)}$.

\paragraph{Part~\ref{enum:F:f}}
We control the difference between $\Fb_t^{(\tau)}$ and $\Fb_t$. By their definitions in \eqref{alg:auxAMPrec} and~$\eqref{eq:rec:AMPPCA}$, applying the triangle inequality yields
\begin{align*}
\frac{\|\Fb_t^{(\tau)} - \Fb_t\|_\F}{\sqrt{m}} 
\leq 
\frac{\|\bX\| \|\Vb_t^{(\tau)} - \Vb_t\|_\F}{\sqrt{m}} +
\frac{1}{\sqrt{m}}\left\|\sum_{i=-\tau}^{t} \Ub_i^{(\tau)}a_{t,i}^{(\tau)\top}  -
\sum_{i=0}^{t} \Ub_i a_{t,i}^\top \right\|_\F.
\end{align*}
The first term vanishes in the limit $m,n \to \infty$ by $t^{(F.a)}$, as
$\lim_{m,n\to\infty}\|\Xb\|$ is finite. 
To show the second term will also converge to zero, we may write analogously to 
(\ref{eq: step 2 f difference 2})
\[\frac{1}{\sqrt{m}}\left\|\sum_{i=-\tau}^{t} \Ub_i^{(\tau)}a_{t,i}^{(\tau)\top}  -\sum_{i=0}^{t} \Ub_i a_{t,i}^\top \right\|_\F
\leq \underbrace{\frac{1}{\sqrt{m}}\left\|\sum_{i=-\tau}^0
\Ub_i^{(\tau)}a_{t,i}^{(\tau)\top} - \Ub_0 a_{t,0}^\top\right\|_\F}_{\cI_1}
+\underbrace{\sum_{i=1}^t \frac{1}{\sqrt{m}}
\left\|\Ub_i^{(\tau)}a_{t,i}^{(\tau)\top}-\Ub_i a_{t,i}^\top
\right\|_\F}_{\cI_2}.\]
The term $\cI_2$ converges to 0 by $t^{(G.a)}$, $t^{(G.d)}$, $t^{(F.d)}$, and
the same argument as in the proof of Theorem \ref{thm:sym}. For $\cI_1$, let us
further write, analogously to (\ref{eq: f diff at 0}),
\[\frac{1}{\sqrt{m}}\left\|\sum_{i=-\tau}^0
\Ub_i^{(\tau)}a_{t,i}^{(\tau)\top} - \Ub_0 a_{t,0}^\top\right\|_\F
\leq \underbrace{\left\|\sum_{i=-\tau}^0 a_{t,i}^{(\tau)}-a_{t,0}\right\|}_{\cI_{1,1}}
\cdot \frac{\|\Ub_0\|_\F}{\sqrt{m}}
+\underbrace{\sum_{i=-\tau}^0 \|a_{t,i}^{(\tau)}\| \cdot
\frac{\|\Ub_i^{(\tau)}-\Ub_0\|_\F}{\sqrt{m}}}_{\cI_{1,2}}.\]
The term $\cI_{1,2}$ converges to 0 by (\ref{eq:rec:linearamp2}) and
Lemma \ref{lemma:ziplock}, following again the same argument as in the proof of
Theorem \ref{thm:sym}. Thus it remains to verify the convergence of $\cI_{1,1}$.

Recall our definition of debiasing coefficients in~\eqref{eq:rec:debiaseCoef} and~\eqref{eq: rec debiascoef aux}:
\begin{align*}
a_{t,0} &= \widetilde{\Ab}_t[t,0] = \sum_{j=0}^\infty \bpsi_t(\bphi_t \bpsi_t)^j[t,0] \odot \tilde\kappa_{2(j+1)}, \\
\taua_{t,i} &= \taubA_{\all, t}[t,i] = \sum_{j=0}^\infty \kappa_{2(j+1)} \bpsi_{\all,t}^{(\tau)} (\bphi_{\all,t}^{(\tau)} \bpsi_{\all,t}^{(\tau)})^j [t,i] 
= \sum_{j=-i}^{-i+t} \kappa_{2(j+1)}
\bpsi_t^{(\tau)}(\bphi_t^{(\tau)} \bpsi_t^{(\tau)})^{j+i} [t,0] S^{2i}
\end{align*}
where the last equality comes from~\eqref{eq: rec psiphipsi identity}.
Therefore, we have
\begin{align*}
\limtau\limmn \sum_{i=-\tau}^0 \taua_{t,i}  - a_{t,0} &= \limtau\limmn
\sum_{i=-\tau}^{0}\sum_{j=-i}^{-i+t} \kappa_{2(j+1)}
\bpsi_t^{(\tau)}(\bphi_t^{(\tau)} \bpsi_t^{(\tau)})^{j+i}[t,0]S^{2i} - a_{t,0}\\
&= \sum_{j=0}^t \bpsi_t(\bphi_t\bpsi_t)^j[t,0] \left(\sum_{i=0}^\infty \kappa_{2(j+i+1)} S^{2i}\right) - a_{t,0} = 0
\end{align*}
by the expression of $a_{t,0}$ and the definition
of $\tilde\kappa_{2(j+1)}$. This shows $t^{(F.e)}$.

\paragraph{Part~\ref{enum:F:muSigma}}
By the definition of $\taubSigma_{\all, t}$ in~\eqref{eq: rec aux Sigma} and its block decomposition, we have
\begin{align*}
\taubSigma_t &= \underbrace{
\sum_{j= 0}^\infty 
\kappa_{2(j+1)} 
\sum_{i=0}^j \left[(\bPsi_{\all,t}^{(\tau)} \bPhi_{\all,t}^{(\tau)})^i \bGamma_{\all,t}^{(\tau)} (\bPhi_{\all,t}^{(\tau)\top} \bPsi_{\all,t}^{(\tau)\top})^{j-i}\right]_t 
}_{\cI_1^{(\tau)}}\\
&\qquad + \underbrace{\sum_{j=0}^\infty \kappa_{2(j+1)} \sum_{i=0}^{j-1} \left[(\bPsi_{\all,t}^{(\tau)}\bPhi_{\all,t}^{(\tau)})^i \bPsi_{\all,t}^{(\tau)} \bDelta_{\all,t}^{(\tau)} \bPsi_{\all,t}^{(\tau)\top} (\bPhi_{\all,t}^{(\tau)\top} \bPsi_{\all,t}^{(\tau)\top})^{j-i}\right]_t}_{\cI_2^{(\tau)}}
\end{align*}
where $[\cdot]_t$ means taking the lower-right $(t+1)K\times(t+1)K$ submatrix.
Similarly, we may decompose $\bSigma_t$ defined in \eqref{def:recSigma} as
$\bSigma_t = \cI_1 + \cI_2$.
To show $\limtau\taubSigma_t = \bSigma_t$, it suffices to show $\limtau
\cI_1^{(\tau)} = \cI_1$ and $\limtau \cI_2^{(\tau)} = \cI_2$.  

For $\cI_1^{(\tau)}$,
by the block decompositions of $\taubPhi_{\all,t}$, $\taubPsi_{\all,t}$, and $\taubDelta_{\all,t}$, we obtain
\begin{align}\label{eq: rec I1tau decompose}
\cI_1^{(\tau)} &= \underbrace{\sum_{j=0}^\infty \kappa_{2(j+1)} \sum_{i=0}^{j}
(\bPsi_{\all,t}^{(\tau)}\bPhi_{\all,t}^{(\tau)})^i_{t-} \bGamma_{--}^{(\tau)}
(\bPsi^\top_{\all,t}\bPhi^\top_{\all,t})^{j-i}_{-t}}_{\widehat\bSigma_t^{(\tau)}}
+ \underbrace{\sum_{j=0}^\infty \kappa_{2(j+1)} \sum_{i=0}^{j} (\bPsi_{t}^{(\tau)}\bPhi_{t}^{(\tau)})^i \bGamma_{t-}^{(\tau)} (\bPsi^{(\tau)\top}_{\all,t}\bPhi^{(\tau)\top}_{\all,t})^{j-i}_{-t}}_{\widetilde\bSigma_t^{(\tau)}} \notag\\
&+ \underbrace{\sum_{j=0}^\infty \kappa_{2(j+1)} \sum_{i=0}^{j} (\bPsi_{\all,t}^{(\tau)} \bPhi_{\all,t}^{(\tau)})^i_{t-} \bGamma_{-t}^{(\tau)} (\bPsi^{(\tau)\top}_{t}\bPhi^{(\tau)\top}_t)^{j-i}}_{\widetilde\bSigma_t^{(\tau)\top}}
+ \underbrace{\sum_{j=0}^\infty \kappa_{2(j+1)} \sum_{i=0}^{j} (\bPsi_{t}^{(\tau)} \bPhi_{t}^{(\tau)})^i \bGamma_{t}^{(\tau)} (\bPsi^{(\tau)\top}_{t}\bPhi^{(\tau)\top}_{t})^{j-i}}_{\widebar\bSigma_t^{(\tau)}}
\end{align}
where we observe that the third term is the transpose of the second term.
Recalling
$\bGamma_t=\widebar\bGamma_t+\widetilde\bGamma_t+\widetilde\bGamma_t^\top
+\widehat\bGamma_t$, we correspondingly decompose
\begin{align}\label{eq: rec I1 decompose}
\cI_1 &= \underbrace{\sum_{j=0}^\infty  (\bPsi_{t}\bPhi_{t})^i \left(\hat\kappa_{2(j+1)}\odot \widehat\bGamma_t - \tilde\kappa_{2(j+1)}\odot \widehat\bGamma_t - \widehat\bGamma_t^\top \odot\tilde\kappa_{2(j+1)} + \kappa_{2(j+1)} \widehat\bGamma_t\right) (\bPsi^\top_{t}\bPhi^\top_{t})^{j-i}}_{\widehat\bSigma_t}\notag\\
&\quad+ \underbrace{\sum_{j=0}^\infty  (\bPsi_{t}\bPhi_{t})^i \left[ (\tilde\kappa_{2(j+1)}-\kappa_{2(j+1)}\Id)\odot (\widehat\bGamma_t + \widetilde\bGamma_t) + (\widehat\bGamma_t + \widetilde\bGamma_t)^\top\odot (\tilde\kappa_{2(j+1)} - \kappa_{2(j+1)}\Id) \right] (\bPsi^\top_{t}\bPhi^\top_{t})^{j-i}}_{\widetilde\bSigma_t} \notag\\
&\quad + \underbrace{\sum_{j=0}^\infty \kappa_{2(j+1)} (\bPsi_{t}\bPhi_{t})^i 
\bGamma_t (\bPsi^\top_{t}\bPhi^\top_{t})^{j-i}}_{\widebar\bSigma_t}.
\end{align}
Let us show that for any $r,c \in \{0,1,\ldots,t\}$,
\begin{align*}
\lim_{\tau\to\infty}\widehat\bSigma_t^{(\tau)}[r,c] =
\widehat\bSigma_t[r,c],\quad
\lim_{\tau\to\infty}\left(\widetilde\bSigma_t^{(\tau)} +
\widetilde\bSigma_t^{(\tau)^\top}\right)[r,c] = \widetilde\bSigma_t[r,c],
\quad \lim_{\tau\to\infty} \widebar\bSigma_t^{(\tau)}[r,c] = \widebar\bSigma_t [r,c].
\end{align*}

\subparagraph{Convergence of $\widehat\bSigma_t^{(\tau)}$.}
First, for $\widehat\bSigma_t^{(\tau)}$, we have
\begin{align*}
\widehat\bSigma_t^{(\tau)}[r,c] &= 
\sum_{j=0}^{\infty}\kappa_{2(j+1)}
\sum_{i=0}^{j}\sum_{\alpha, \beta  =1}^\tau (\taubPsi_{\all,t} \taubPhi_{\all,t})^i[r,-\alpha] \taubGamma_{--}[-\alpha,-\beta] (\bPhi_{\all,t}^{(\tau)\top} \bPsi_{\all,t}^{(\tau)\top})^{j-i}[-\beta,c]\\
&= \sum_{j=0}^\infty \kappa_{2(j+1)} \sum_{i=0}^j \sum_{\alpha=1}^{i \wedge \tau}
\sum_{\beta=1}^{(j-i) \wedge \tau} (\bPsi_t^{(\tau)}\bPhi_t^{(\tau)})^{i-\alpha}[r,0] S^{-2\alpha}\bGamma_{--}^{(\tau)}[-\alpha,-\beta]S^{-2\beta} (\bPhi_t^{(\tau)\top}\bPsi_t^{(\tau)\top})^{j-i-\beta}[0,c]\\
&= \sum_{p,q=0}^\infty (\bPsi_t^{(\tau)}\bPhi_t^{(\tau)})^p[r,0] \left(\sum_{\alpha,\beta=1}^\tau \kappa_{2(p+q+\alpha+\beta+1)} S^{-2\alpha} \bGamma_{--}^{(\tau)}[-\alpha,-\beta] S^{-2\beta}\right) (\bPhi_t^{(\tau)\top}\bPsi_t^{(\tau)\top})^q[0,c] 
\end{align*}
where the second equality follows from \eqref{eq: rec psiphi
identity} and the third equality re-indexes the summation by setting
$p=i-\alpha$ and $q=j-i-\beta$. By following the same argument for the convergence of the corresponding $\widehat\bSigma_t^{(\tau)}$ in the proof
of Theorem \ref{thm:sym}, we obtain that 
\begin{align*}
\lim_{\tau\to\infty} \sum_{\alpha,\beta=1}^\tau \kappa_{2(p+q+\alpha+\beta+1)}S^{-2\alpha}\bGamma_{--}^{(\tau)}[-\alpha,-\beta]S^{-2\beta} &= \sum_{\alpha,\beta=1}^\infty \kappa_{2(p+q+\alpha+\beta+1)}S^{-2\alpha}\bGamma_t[0,0]S^{-2\beta}\\
&= \sum_{\alpha,\beta=1}^\infty \kappa_{2(p+q+\alpha+\beta+1)}S^{-2(\alpha+\beta)}\bGamma_t[0,0]\\
&= (\hat\kappa_{2(p+q+1)}-2\tilde\kappa_{2(p+q+1)} + \kappa_{2(p+q+1)}\Id)\bGamma_t[0,0]
\end{align*}
where the second equality is due to the fact that 
$\bGamma_t[0,0] = \EE[V_0V_0^\top] = S_v^{-2}$
is diagonal, and the third equality follows from
the definitions of $\hat\kappa$ and $\tilde\kappa$ in
\eqref{eq:sym:kappaseries}. Combining the above limit with the convergence of
$\bPsi_t^{(\tau)}$ and $\bPhi_t^{(\tau)}$ in $t^{(G.c)}$ and $t^{(F.c)}$, since $\bGamma_t[0,0] = \widehat\bGamma_t[0,0]$ by definition, we get
\begin{align*}
\lim_{\tau\to\infty} \widehat\bSigma_t^{(\tau)}[r,c] = \sum_{p,q=0}^\infty (\bPsi_t\bPhi_t)^p[r,0] \left( \hat\kappa_{2(j+1)} - 2\tilde\kappa_{2(j+2)} + \kappa_{2(j+1)}\Id \right) \widehat\bGamma_t[0,0] (\bPhi_t^\top \bPsi_t^\top)^q[0,c] = \widehat\bSigma_t[r,c].
\end{align*}

\subparagraph{Convergence of $\widetilde\bSigma_t^{(\tau)}$.}
Applying \eqref{eq: rec psiphi identity} and re-indexing the summation similarly as above,
\begin{align*}
\widetilde\bSigma_t^{(\tau)}[r,c] &= \sum_{j=0}^{\infty}\kappa_{2(j+1)}
\sum_{i=0}^{j} \sum_{\alpha = 0}^t\sum_{\beta  =1}^\tau (\taubPsi_t \taubPhi_t)^i [r,\alpha] \taubGamma_{t-}[\alpha,-\beta] (\bPhi_{\all,t}^{(\tau)\top} \bPsi_{\all,t}^{(\tau)\top})^{j-i}[-\beta,c]\\
&= \sum_{j=0}^{\infty}\kappa_{2(j+1)}
\sum_{i=0}^{j}\sum_{\alpha = 0}^t\sum_{\beta  =1}^{(j-i) \wedge \tau}
(\taubPsi_t \taubPhi_t)^i[r,\alpha] \taubGamma_{t-}[\alpha,-\beta]S^{-2\beta}
(\bPhi_{t}^{(\tau)\top} \bPsi_{t}^{(\tau)\top})^{j-i-\beta}[0,c]\\
&= \sum_{p,q=0}^\infty\sum_{\alpha=0}^t (\bPsi_{t}^{(\tau)}
\bPhi_{t}^{(\tau)})^p[r,\alpha] \left( \sum_{\beta=1}^\tau
\kappa_{2(p+q+\beta+1)} \bGamma_{t-}^{(\tau)}[\alpha,-\beta] S^{-2\beta}\right)
(\bPhi_{t}^{(\tau)\top} \bPsi_{t}^{(\tau)\top})^q [0,c].
\end{align*}
Applying the same argument for the convergence of the corresponding $\widetilde\bSigma_t^{(\tau)}$ in Theorem \ref{thm:sym},
\begin{align*}
    \lim_{\tau\to\infty} \widetilde\bSigma_t^{(\tau)}[r,c] &= \sum_{p,q=0}^\infty \sum_{\alpha=0}^t (\bPsi_t\bPhi_t)^p[r,\alpha] \left(\sum_{\beta=1}^\infty \kappa_{2(p+q+\alpha+\beta+1)} \bGamma_t[\alpha,0] S^{-2\beta}\right) (\bPhi_t^{\top} \bPsi_t^{\top})^q [0,c]\\
    &= \sum_{p,q=0}^\infty \sum_{\alpha=0}^t (\bPsi_t\bPhi_t)^p[r,\alpha] \bGamma_t[\alpha,0] (\tilde\kappa_{2(p+q+1)} - \kappa_{2(p+q+1)}\Id) (\bPhi_t^{\top} \bPsi_t^{\top})^q [0,c]\\
    &= \sum_{p,q=0}^\infty \sum_{\alpha=0}^t \left[(\bPsi_t\bPhi_t)^p (\widehat\bGamma_t + \widetilde\bGamma_t)^\top \odot (\tilde\kappa_{2(p+q+1)} - \kappa_{2(p+q+1)}\Id) (\bPhi_t^{\top} \bPsi_t^\top)^q \right] [r,c]
\end{align*}
where the second equality follows from the definition of $\tilde\kappa$ in \eqref{eq:sym:kappaseries}. Similarly, we also have
\begin{align*}
\lim_{\tau\to\infty} \widetilde\bSigma_t^{(\tau)\top}[r,c]
= \sum_{p,q=0}^\infty \sum_{\alpha=0}^t \left[(\bPsi_t\bPhi_t)^p (\tilde\kappa_{2(p+q+1)}-\kappa_{2(p+q+q)}\Id) \odot (\widehat\bGamma_t + \widetilde\bGamma_t) (\bPhi_t^\top\bPsi_t^\top)^q\right][r,c] 
\end{align*}

\subparagraph{Convergence of $\overline\bSigma_t^{(\tau)}$.}
It directly follows from the convergence of $\bPhi_t^{(\tau)}$,
$\bPsi_t^{(\tau)}$, and $\bGamma_t^{(\tau)}$ in $t^{(G.c)}$ and $t^{(F.c)}$ that
$\lim_{\tau\to\infty} \widebar\bSigma_t^{(\tau)} = \widebar\bSigma_t$.

Collecting the decomposition of $\cI_1^{(\tau)}$ in \eqref{eq: rec I1tau
decompose} and that of $\cI_1$ in \eqref{eq: rec I1 decompose}, and combining
all of the above, we have shown
$\lim_{\tau\to\infty} \cI_1^{(\tau)} = \cI_1$. The convergence of
$\cI_2^{(\tau)}$ can be established similarly by replacing
$(\bPsi_{\all,t}^{(\tau)}\bPhi_{\all,t}^{(\tau)})^i$ and
$\bGamma_{\all,t}^{(\tau)}$ with
$(\bPsi_{\all,t}^{(\tau)}\bPhi_{\all,t}^{(\tau)})^i\bPsi_{\all,t}^{(\tau)}$ and
$\bDelta_{\all,t}^{(\tau)}$, and applying instead the identity 
\eqref{eq: rec psiphipsi identity} in place of \eqref{eq: rec psiphi identity}.
Hence, we conclude that
$\lim_{\tau\to\infty} \bSigma_t^{(\tau)} = \bSigma_t$, which shows $t^{(F.f)}$.\\

Now supposing by induction that $s^{(F)}$ and $s^{(G)}$ hold for
$0 \leq s \leq t$, we may establish similarly $t+1^{(G.a-G.f)}$.
The proof is symmetric to the above, using the identities
\eqref{eq: rec phipsi identity} and \eqref{eq: rec phipsiphi identity} in place
of \eqref{eq: rec psiphi identity} and \eqref{eq: rec psiphipsi identity}, and
we omit this for brevity. This completes the induction.
\end{proof}

\section{Proof for independent initialization}\label{appendix:indInitproof}

The proof of Theorem \ref{thm:symindinit} follows closely the arguments
of \cite{fan2020approximate}, with minor modifications needed to extend the
results of \cite[Theorem 4.3 and Corollary 4.4]{fan2020approximate}
from vector-valued iterates in $\RR^n$ to matrix-valued iterates in
$\RR^{n \times K}$. As in \cite{fan2020approximate}, the theorem is first shown
under an additional nondegeneracy condition of Assumption
\ref{assump:nondegenerate} below, which ensures that the state evolution
covariance matrices $\bSigma_t$ are invertible for all $t \geq 1$. This enables
an inductive argument of partial conditioning on the randomness of the
Haar-orthogonal matrix $\bO$, to establish state evolution
characterizations for the original AMP iterates
$(\bU_1,\ldots,\bU_{t+1},\bZ_1,\ldots,\bZ_t,\bE)$ and also
for the above auxiliary iterates $(\bR_1,\ldots,\bR_t)$ and
\[\blambda=\diag(\bLambda) \in \RR^n.\]
The combinatorial arguments needed to close this induction
are the same as in \cite{fan2020approximate}.
The theorem without Assumption \ref{assump:nondegenerate} is then obtained by
applying this result to a slightly perturbed AMP sequence, and taking the limit
of the perturbation to 0. We describe these arguments in further detail below.

Write the AMP iterations (\ref{eq:AMPz}--\ref{eq:AMPu}) as 
\[\bR_t=\bO\bU_t, \quad \bS_t=\bO^\top \bLambda \bR_t,
\quad \bZ_t=\bS_t-\bU_1 b_{t1}^\top-\ldots-\bU_t b_{tt}^\top,
\quad \bU_{t+1}=u_{t+1}(\bZ_1,\ldots,\bZ_t,\bE).\]
For each $t \geq 1$ and $k \geq 0$, define the $tK \times tK$ matrix
\[\bL_t^{(k)}=\sum_{j=0}^\infty c_{k,j}\bTheta_t^{(j)},
\qquad \bTheta_t^{(j)}=\bTheta^{(j)}[\bPhi_t,\kappa_{j+2}\bDelta_t]\]
where $\{c_{k,j}\}_{k,j \geq 0}$ are the partial moment coefficients defined
in \cite[Section A.1]{fan2020approximate}, and $\bTheta^{(j)}[\cdot,\cdot]$
is as defined in (\ref{eq:Sigma}). We write
\[\bB_t=\sum_{j=0}^\infty \kappa_{j+1}\bPhi_t^j\]
as the large-$n$ limit of $\bb_t$, replacing $\bphi_t$ by its large-$n$ limit
$\bPhi_t$. (To simplify notation, we are using $\bphi_t,\bb_t^\top$ and
$\bPhi_t,\bB_t^\top$ in place of $\bPhi_t,\bB_t$ and
$\bPhi_t^\infty,\bB_t^\infty$ respectively in \cite{fan2020approximate}.
We have also defined
$\bSigma_t,\bTheta_t^{(j)},\bL_t^{(k)}$ directly in the large-$n$ limit,
instead of using the notation
$\bSigma_t^\infty,\bTheta_t^{(j,\infty)},\bL_t^{(k,\infty)}$.)
The identities and proofs of
\cite[Lemmas A.2 and A.3]{fan2020approximate} on the relations between
$\bB_t^\top,\bSigma_t,\bTheta_t^{(j)},\bL_t^{(k)}$ then hold without any modification,
as they rely only on the definitions of these matrices in terms of
$\bDelta_t,\bPhi_t$ and on the combinatorial relations between
$\{c_{k,j}\}_{k,j \geq 0}$.

We first show an extended form of
Theorem \ref{thm:symindinit} under the following additional assumption.

\begin{assumption}\label{assump:nondegenerate}
The random variables $\Lambda$ and $U_1$ satisfy
$\Var[\Lambda]>0$ and $\EE[U_1U_1^\top] \succ 0$ strictly.
Defining $(U_1,\ldots,U_{t+1},Z_1,\ldots,Z_t)$ by the state evolution
(\ref{eq:SEindinit}), for each $t \geq 1$ and any deterministic vector $v \in
\RR^K$, there do not exist constants
$\alpha_1,\ldots,\alpha_t,\beta_1,\ldots,\beta_t$ for which
$v^\top U_{t+1}$ is almost surely equal
to $\alpha_1 \cdot v^\top U_1+\ldots+\alpha_t \cdot v^\top U_t
+\beta_1 \cdot v^\top Z_1+\ldots+\beta_t \cdot v^\top Z_t$.
\end{assumption}

\begin{lemma}\label{lemma:symindinitextended}
Suppose Assumptions \ref{assump:symW}, \ref{assump:symindinit},
and \ref{assump:nondegenerate} hold.
Then almost surely for each fixed $t \geq 1$,
\begin{enumerate}[label=(\alph*)]
\item $\lim_{n \to \infty} (\langle \bU_r^\top \bU_s \rangle)_{r,s=1}^t
=\bDelta_t$ and $\lim_{n \to \infty} \bphi_t=\bPhi_t$.
\item For some random vectors $R_1,\ldots,R_t \in \RR^K$ having finite second
moment, $(\bR_1,\ldots,\bR_t,\blambda) \toWtwo (R_1,\ldots,R_t,\Lambda)$.
Furthermore, for each $k \geq 0$, $\EE[(R_1,\ldots,R_t)^\top \Lambda^k
(R_1,\ldots,R_t)]=\bL_t^{(k)}$.
\item $(\bU_1,\ldots,\bU_{t+1},\bZ_1,\ldots,\bZ_t,\bE) \toWtwo
(U_1,\ldots,U_{t+1},Z_1,\ldots,Z_t,E)$ as described in Theorem
\ref{thm:symindinit}.
\item The matrix
\[\begin{pmatrix} \bDelta_t & \bPhi_t \bSigma_t \\ \bSigma_t\bPhi_t^\top &
\bSigma_t \end{pmatrix}\]
is non-singular.
\end{enumerate}
\end{lemma}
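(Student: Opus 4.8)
I would prove (a)--(d) \emph{simultaneously by induction on $t$}, following the architecture of the proof of \cite[Theorem 4.3]{fan2020approximate} in the scalar case $K=1$ and flagging the changes forced by (i) matrix-valued iterates, (ii) merely Lipschitz non-linearities, and (iii) the weaker $W_2$ hypotheses. Work in the rotated coordinates $\bR_s=\bO\bU_s$, $\bS_s=\bO^\top\bLambda\bR_s$ introduced above, and let $\cF_{t}$ be the $\sigma$-field generated by $\bU_1,\bE,\bLambda$ together with $\bR_1,\dots,\bR_{t}$ and $\bS_1,\dots,\bS_{t-1}$; then $\bU_1,\dots,\bU_{t},\bZ_1,\dots,\bZ_{t-1}$ and $\bphi_{t-1}$ are $\cF_{t-1}$-measurable. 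For the base case $t=1$, use that $\bU_1\independent\bO$ with $\bO$ Haar: conditionally on $\bU_1$, $\bR_1=\bO\bU_1$ is a Haar rotation of a fixed matrix, so $\langle\bR_1^\top\bLambda^k\bR_1\rangle\to m_k\,\bDelta_1$ with $m_k=\EE[\Lambda^k]$, which is exactly $\bL_1^{(k)}$, and $\bZ_1=\bS_1-\kappa_1\bU_1$ is asymptotically $\cN(0,\bSigma_1)$ jointly with $(\bU_1,\bE)$; Assumption~\ref{assump:nondegenerate} ($\Var[\Lambda]>0$, $\EE[U_1U_1^\top]\succ0$) gives (d) at $t=1$.

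For the inductive step, condition on $\cF_{t-1}$. The constraints this reveals about $\bO$ are that it sends $\mathrm{span}\{\bU_1,\dots,\bU_{t-1}\}$ to $\mathrm{span}\{\bR_1,\dots,\bR_{t-1}\}$ and $\mathrm{span}\{\bS_1,\dots,\bS_{t-1}\}$ to $\mathrm{span}\{\bLambda\bR_1,\dots,\bLambda\bR_{t-1}\}$ in prescribed ways (using $\bO\bS_s=\bLambda\bR_s$). By the standard characterization of the conditional law of a Haar orthogonal matrix given such linear constraints, $\bO$ acts as a fixed isometry on the revealed input subspace and as an independent Haar rotation on its orthogonal complement. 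Decomposing $\bU_t$ into its projection onto $\mathrm{span}\{\bU_1,\dots,\bU_{t-1},\bS_1,\dots,\bS_{t-1}\}$ and the orthogonal remainder, one obtains
\[
\bR_t=\sum_{s<t}\bR_s\,c_{ts}^\top+\sum_{s<t}\bLambda\bR_s\,d_{ts}^\top+\tilde\bR_t,
\]
where the $K\times K$ coefficient matrices $c_{ts},d_{ts}$ are $\cF_{t-1}$-measurable (obtained by solving least squares against the Gram matrix of $(\bU_1,\dots,\bU_{t-1},\bS_1,\dots,\bS_{t-1})$, which converges to an invertible limit by the inductive hypotheses together with Assumption~\ref{assump:nondegenerate}), and $\tilde\bR_t$ is conditionally isotropic in the orthogonal complement, hence asymptotically a fresh Gaussian block independent of the past and of $\bLambda$. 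Applying $\bO^\top\bLambda$ and iterating this peeling over the powers $\bLambda^\ell\bR_s$ --- equivalently, running the ``long'' auxiliary AMP of \cite[Appendix A]{fan2020approximate} --- yields a representation of $\bS_t$, and hence of $\bZ_t$, as an $\cF_{t-1}$-measurable deterministic linear combination of $\bU_1,\dots,\bU_{t-1},\bZ_1,\dots,\bZ_{t-1}$ plus a fresh Gaussian-like block. The bookkeeping is purely combinatorial and is exactly \cite[Lemmas A.2 and A.3]{fan2020approximate} (which, as noted in the excerpt, hold verbatim since they use only the definitions of $\bB_t,\bSigma_t,\bTheta_t^{(j)},\bL_t^{(k)}$ in terms of $\bPhi_t,\bDelta_t$ and the recursions for $\{c_{k,j}\}$): the debiasing matrices $b_{ts}$, i.e.\ the blocks of $\bb_t=\sum_j\kappa_{j+1}\bphi_t^{\,j}$, are precisely those that cancel the deterministic memory terms, leaving $\bZ_t$ with limiting covariance $\bSigma_t$; and the rotated-side moments accumulate to $\EE[(R_1,\dots,R_t)^\top\Lambda^k(R_1,\dots,R_t)]=\bL_t^{(k)}$. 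Concentration of Lipschitz functions of Haar matrices and of Gaussians, fed by the inductive hypotheses, upgrades the conditional statements to the almost-sure limits in (a)--(c). The switch from $C^1$ to Lipschitz enters only through: $\partial_s u_{t+1}$ exists a.e.\ (Rademacher) and is continuous on a probability-$1$ set under the $\cN(0,\bSigma_t)$ law by Assumption~\ref{assump:symindinit}(b), so $\langle\partial_s\bU_{t+1}\rangle\to\EE[\partial_s u_{t+1}]$ via (c) at level $t$; and pseudo-Lipschitz (gradient of at most linear growth) test functions suffice, so only uniform integrability of the squared norms of the iterates is needed, which the induction also supplies --- this is exactly what makes $W_2$ (rather than all $W_p$) convergence sufficient throughout.

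Part (d) is carried along the induction. By Gaussian integration by parts one checks $\EE[U_rZ_j^\top]=(\bPhi_t\bSigma_t)[r,j]$ and $\EE[Z_rZ_j^\top]=\bSigma_t[r,j]$, so the block matrix in part (d) is exactly the Gram matrix $\EE[\tilde U\tilde U^\top]$ of $\tilde U=(U_1,\dots,U_t,Z_1,\dots,Z_t)$, hence positive semidefinite and singular iff some nonzero linear functional annihilates $\tilde U$ almost surely. If that happened, projecting out the genuinely Gaussian block $Z_t$ (whose conditional covariance given $Z_1,\dots,Z_{t-1}$ is non-degenerate because $\Var[\Lambda]>0$ and, by the inductive (d), the earlier blocks are non-degenerate) and then the new direction $U_t=u_t(Z_1,\dots,Z_{t-1},E)$ would contradict the ``no linear relation'' clause of Assumption~\ref{assump:nondegenerate} at stage $t-1$. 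The main obstacle I expect is precisely this propagation of non-degeneracy intertwined with the non-commutative matrix bookkeeping of the conditioning step: one must verify that every Gram matrix inverted by the Haar conditioning lemma stays uniformly invertible in the $n\to\infty$ limit, and that for $K>1$ the accumulation of the $\bLambda^\ell\bR_s$ peeling produces exactly $\sum_{i=0}^{j}\bPhi_t^{\,i}(\kappa_{j+2}\bDelta_t)(\bPhi_t^\top)^{j-i}$ with the blocks in the correct order; the remaining steps are a routine transcription of \cite{fan2020approximate}.
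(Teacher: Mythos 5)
Your plan reproduces, at a high level, the paper's actual proof: a simultaneous induction on $t$ of (a)--(d), with the Haar-conditioning/decomposition step (the paper's Lemma E.4 on orthogonal projections of Haar rotations) driving the inductive analysis of $(\bR_1,\ldots,\bR_t,\bLambda)$ and $(\bU_1,\ldots,\bU_{t+1},\bZ_1,\ldots,\bZ_t,\bE)$, the combinatorial identities of \cite[Lemmas A.2--A.3]{fan2020approximate} providing the bookkeeping that turns the $\bLambda^\ell\bR_s$ peeling into $\bL_t^{(k)}$ and $\bSigma_t$, and Assumption~\ref{assump:nondegenerate} propagating non-degeneracy. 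Your Gram-matrix phrasing of part (d) via Stein's lemma, and your account of how Lipschitz (Rademacher a.e.\ differentiability plus Assumption~\ref{assump:symindinit}(b)) and $W_2$-only hypotheses enter, match the modifications the paper flags relative to \cite[Lemma A.4]{fan2020approximate}; the only differences are presentational, e.g.\ the paper performs two slightly different conditionings within each inductive step (for $t{+}1^{(b)}$ and $t{+}1^{(c)}$) and verifies non-degeneracy directly through $\Cov[R_\perp]\succ0$ and $\Cov[S_\perp]\succ0$ rather than by your (equivalent) Gram-matrix argument.
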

\begin{proof}
The proof is an extension of that of
\cite[Lemma A.4]{fan2020approximate}. We denote $t^{(a),(b),(c),(d)}$ for the
claims of (a--d) up to iteration $t$.

Applying Lemma \ref{lemma:orthogproj}(a) to $\bR_1=\bO\bU_1$, we have
$(\blambda,\bR_1) \toWtwo (\Lambda,R_1)$ where $R_1 \sim
\cN(0,\EE[U_1U_1^\top])$ is independent of $\Lambda$. Then applying Lemma
\ref{lemma:productW2}, $n^{-1}\bR_1^\top \bLambda^k \bR_1 \to m_k \cdot
\EE[U_1U_1^\top]=m_k \cdot \bDelta_1$
where $m_k=\EE[\Lambda^k]$. Then $1^{(a)}$ and $1^{(b)}$
follow as in \cite[Lemma A.4]{fan2020approximate}. Conditional on
$\bR_1=\bO\bU_1$, we may represent the law of $\bS_1$ as
\[\bS_1=\bS_\parallel+\bS_\perp,
\qquad \bS_\parallel=\bU_1(\bR_1^\top \bR_1)^{-1} \bR_1^\top \bLambda \bR_1,
\qquad \bS_\perp=\Pi_{\bU_1^\perp} \tilde{\bO} \Pi_{\bR_1^\perp}^\top \bLambda \bR_1\]
where $\Pi_{\bU_1^\perp},\Pi_{\bR_1^\perp} \in \RR^{n \times (n-K)}$ are
projections orthogonal to the column spans of $\bU_1,\bR_1$, and $\tilde{\bO}$
is a Haar-orthogonal matrix of dimension $n-K$.
We have $n^{-1}\bR_1^\top \bR_1 \to \EE[U_1U_1^\top]=\bDelta_1$,
which is invertible by Assumption \ref{assump:nondegenerate}, and
\[n^{-1}(\Pi_{\bR_1^\perp}^\top \bLambda \bR_1)^\top
(\Pi_{\bR_1^\perp}^\top \bLambda \bR_1)=n^{-1} \bR_1^\top \bLambda^2 \bR_1
-n^{-1}\bR_1^\top \bLambda \bR_1 \cdot
(n^{-1} \bR_1^\top \bR_1)^{-1} \cdot n^{-1} \bR_1^\top \bLambda \bR_1\]
Then applying the same arguments as
\cite[Lemma A.4]{fan2020approximate}, using Lemma \ref{lemma:orthogproj}(a) to
analyze $\bS_\perp$, we obtain
\[(\bZ_1,\bU_1,\bE) \toWtwo (Z_1,U_1,E), \qquad (U_1,E) \independent
Z_1 \sim \cN(0,\kappa_2\bDelta_1).\]
Here, $\kappa_2\bDelta_1=\bSigma_1$.
Since $u_2(Z_1,E)$ is Lipschitz, Lemma \ref{lemma:LipschitzW2} then implies
$1^{(c)}$. Assumption \ref{assump:nondegenerate} and the identities
$\bSigma_1=\kappa_2\bDelta_1$ and $\bPhi_1=0$ then imply $1^{(d)}$.

Now suppose $t^{(a),(b),(c),(d)}$ hold. For $t+1^{(a)}$,
convergence to $\bDelta_{t+1}$ in
$t+1^{(a)}$ is immediate from $W_2$-convergence of $(\bU_1,\ldots,\bU_{t+1})$ in
$t^{(c)}$ and Lemma \ref{lemma:productW2} (applied with $k=0$). Since each
$u_s(\cdot)$ is Lipschitz, its derivatives are bounded and also continuous with
probability 1 with respect to the limit law of $(Z_1,\ldots,Z_t,E)$ in
$t^{(c)}$, by Assumption \ref{assump:symindinit}. Then the convergence
$\bphi_t \to \bPhi_t$ follows also from weak convergence of
$(\bZ_1,\ldots,\bZ_t,\bE)$ in $t^{(c)}$, which shows $t+1^{(a)}$.

For $t+1^{(b)}$, observe that by the $W_2$-convergence in $t^{(c)}$,
weak-differentiability of the Lipschitz function $u_s(\cdot)$, and
\cite[Proposition E.5]{fan2020approximate},
\[n^{-1}(\bZ_1,\ldots,\bZ_t)^\top (\bU_1,\ldots,\bU_t)
\to \bSigma_t \cdot \bPhi_t^\top.\]
Then conditioning $\bO$ on
\[(\bR_1,\ldots,\bR_t)=\bO(\bU_1,\ldots,\bU_t) \text{ and }
\bO(\bZ_1,\ldots,\bZ_t)=\bLambda(\bR_1,\ldots,\bR_t)-(\bR_1,\ldots,\bR_t)\bB_t^\top,\]
the same argument that shows \cite[Eq.\ (A.23)]{fan2020approximate}
yields $(\bR_1,\ldots,\bR_{t+1},\blambda) \toWtwo (R_1,\ldots,R_{t+1},\Lambda)$
where
\begin{equation*}
R_{t+1}=\begin{pmatrix} R_1 & \cdots & R_t & \Lambda R_1 & \cdots & \Lambda
R_t \end{pmatrix}\bUpsilon_t^{-1}\begin{pmatrix}
\bDelta_{t+1}[1:t,\;t+1] \\ \bPhi_{t+1}^\top[1:t,\;t+1] \end{pmatrix}+R_\perp.
\end{equation*}
Here
\[\bUpsilon_t=\begin{pmatrix} \bDelta_t & \bDelta_t \bB_t^\top+\bPhi_t\bSigma_t \\
\bPhi_t^\top & \bPhi_t^\top \bB_t^\top+\Id \end{pmatrix} \in \RR^{2tK \times
2tK},\]
and in the decomposition into blocks of size $K \times K$, 
$\bDelta_{t+1}[1:t,\;t+1]$ and $\bPhi_{t+1}^\top[1:t,\;t+1]$
denote the first $t$ row blocks of the last column block $t+1$ of
$\bDelta_{t+1}$ and $\bPhi_{t+1}^\top$. The random vector $R_\perp
\in \RR^K$ has the Gaussian law
\[R_\perp \sim \cN\left(0,\,\EE[U_{t+1}U_{t+1}^\top]
-\begin{pmatrix} \bDelta_{t+1}[1:t,\;t+1] \\ \bSigma_t
\bPhi_{t+1}^\top[1:t,\;t+1] \end{pmatrix}^\top
\begin{pmatrix} \bDelta_t & \bPhi_t \bSigma_t \\ \bSigma_t \bPhi_t^\top &
\bSigma_t \end{pmatrix}^{-1}
\begin{pmatrix} \bDelta_{t+1}[1:t,\;t+1] \\ \bSigma_t
\bPhi_{t+1}^\top[1:t,\;t+1] \end{pmatrix}\right).\]
For any fixed $v \in \RR^K$, we may check that $v^\top \Cov[R_\perp]v$
is the residual variance of the $L_2$-projection of $v^\top U_{t+1}$ onto the
linear span of $(v^\top Z_1,\ldots,v^\top Z_t,v^\top U_1,\ldots,v^\top U_t)$.
Thus, by Assumption \ref{assump:nondegenerate}, we have analogously to
\cite[Eq.\ (A.24)]{fan2020approximate} that
\begin{equation}\label{eq:Rperpnonsingular}
\Cov[R_\perp] \succ 0
\end{equation}
Note that Lemma \ref{lemma:productW2} implies
\[n^{-1}(\bR_1,\ldots,\bR_{t+1})^\top
\bLambda^k(\bR_1,\ldots,\bR_{t+1}) \to \EE[(R_1,\ldots,R_{t+1})^\top
\Lambda^k(R_1,\ldots,R_{t+1})].\]
Using \cite[Lemmas A.1 and A.2]{fan2020approximate} and the same arguments as in
\cite[Lemma A.4]{fan2020approximate} to analyze
$\EE[(R_1,\ldots,R_t)^\top
\Lambda^k R_{t+1}]$ and $\EE[R_{t+1}^\top \Lambda^k R_{t+1}]$,
we obtain $\EE[(R_1,\ldots,R_{t+1})^\top
\Lambda^k (R_1,\ldots,R_{t+1})]=\bL_{t+1}^{(k)}$ and hence conclude $t+1^{(b)}$.

For $t+1^{(c)}$, we define $\tilde{\bPhi}_t=\bPhi_{t+1}[1:t+1,\,1:t]$ and
$\tilde{\bB}_t^\top=\bB_{t+1}^\top[1:t+1,\,1:t]$ as the first $t$ column blocks of
$\bPhi_{t+1}$ and $\bB_{t+1}^\top$, and condition $\bO$ on
\[(\bR_1,\ldots,\bR_{t+1})=\bO(\bU_1,\ldots,\bU_{t+1}) \text{ and }
\bO(\bZ_1,\ldots,\bZ_t)=\bLambda(\bR_1,\ldots,\bR_{t+1})-(\bR_1,\ldots,\bR_{t+1})\tilde{\bB}_t^\top.\]
Applying again the $W_2$-convergence in $t^{(c)}$ and \cite[Proposition
E.5]{fan2020approximate}, we have
\[n^{-1}(\bZ_1,\ldots,\bZ_t)^\top(\bU_1,\ldots,\bU_{t+1}) \to
\bSigma_t \cdot \tilde{\bPhi}_t^\top.\]
Observe that
\[\begin{pmatrix} \bDelta_{t+1} & \tilde{\bPhi}_t\bSigma_t \\
\bSigma_t \tilde{\bPhi}_t^\top & \bSigma_t \end{pmatrix}\]
is invertible, as its submatrix removing row block $t+1$ and column block $t+1$
is invertible by
$t^{(d)}$, and the Schur-complement of the $(t+1,t+1)$ block is invertible by
(\ref{eq:Rperpnonsingular}). Then applying the same arguments as leading to
\cite[Eq.\ (A.33)]{fan2020approximate}, we get
$(\bU_1,\ldots,\bU_{t+1},\bZ_1,\ldots,\bZ_t,\bE,\bS_{t+1}) \toWtwo
(U_1,\ldots,U_{t+1},Z_1,\ldots,Z_t,E,S_{t+1})$ where
\[S_{t+1}=\begin{pmatrix} U_1 & \cdots & U_{t+1} \end{pmatrix}
\bB_{t+1}^\top[1:t,\;t+1]+\begin{pmatrix} Z_1 & \cdots & Z_t \end{pmatrix}
\bSigma_t^{-1} \cdot \bSigma_{t+1}[1:t,\,t+1]+S_\perp\]
and
\begin{align*}
S_\perp &\sim \cN\Bigg(0,\;\Bigg(\bL_{t+1}^{(2)}[t+1,t+1]
-\begin{pmatrix} \bL_{t+1}^{(1)}[1:t+1,\,t+1] \\
\bL_{t+1}^{(2)}[1:t,\,t+1] \end{pmatrix}^\top \times \\
&\hspace{1in}\begin{pmatrix} \bL_{t+1}^{(0)} & \bL_{t+1}^{(1)}[1:t,\,1:t+1] \\
\bL_{t+1}^{(1)}[1:t,\,1:t+1]^\top & \bL_t^{(2)} \end{pmatrix}^{-1}
\begin{pmatrix} \bL_{t+1}^{(1)}[1:t+1,\,t+1] \\
\bL_{t+1}^{(2)}[1:t,\,t+1] \end{pmatrix}\Bigg)\Bigg).
\end{align*}
Here again, we use $[1:t,\,t+1]$ to denote the first $t$ row blocks of the last
column block $t+1$, and similarly for $[t+1,\,t+1]$ and $[1:t+1,\,t+1]$. Since
$\bZ_{t+1}=\bS_{t+1}-\begin{pmatrix} \bU_1 & \cdots & \bU_{t+1} \end{pmatrix}
\bB_{t+1}^\top[1:t,\,t+1]$, the same computations as in \cite[Lemma
A.4]{fan2020approximate} show convergence of
$(\bU_1,\ldots,\bU_{t+1},\bZ_1,\ldots,\bZ_{t+1},\bE)$ as in $t+1^{(c)}$, where
the covariance of $(Z_1,\ldots,Z_{t+1})$ is exactly $\bSigma_{t+1}$. 
Since $\bU_{t+2}=u_{t+2}(\bZ_1,\ldots,\bZ_{t+1},\bE)$ and $u_{t+2}(\cdot)$ is
Lipschitz, $t+1^{(c)}$ follows from Lemma \ref{lemma:LipschitzW2}.

Finally, for $t+1^{(d)}$, observe from the definition of $\bL_t^{(k)}$ that for
any fixed vector $v \in \RR^K$, $v^\top \Cov[S_\perp]v$ is the residual variance
of the $L_2$-projection of $v^\top \Lambda R_{t+1}$ onto the linear span of
$(v^\top R_1,\ldots,v^\top R_{t+1},v^\top \Lambda R_1,\ldots,v^\top \Lambda
R_t)$. If $v^\top \Cov[S_\perp]v=0$, then this would imply
\[v^\top \Lambda R_{t+1}=\alpha_1 \cdot v^\top R_1+\ldots+\alpha_{t+1} \cdot
v^\top R_{t+1}+\beta_1 \cdot v^\top \Lambda R_1+\ldots+\beta_t \cdot v^\top
\Lambda R_t,\]
and hence $(\Lambda-\alpha_{t+1})v^\top R_\perp=f(v^\top R_1,\ldots,v^\top
R_t,\Lambda)$ for some function $f$. This implies that $R_\perp$ is constant
conditional on $(R_1,\ldots,R_t,\Lambda)$ and the positive-probability event
$\Lambda \neq \alpha_{t+1}$. This contradicts that $R_\perp$ is independent of
$(R_1,\ldots,R_t,\Lambda)$ with a Gaussian law of positive variance. So $v^\top
\Cov[S_\perp]v>0$ implying $\Cov[S_\perp] \succ 0$ strictly. Then $t+1^{(d)}$
follows as in \cite[Lemma A.4]{fan2020approximate}, concluding the proof.
\end{proof}

\begin{proof}[Proof of Theorem \ref{thm:symindinit}]
We may remove Assumption \ref{assump:nondegenerate} by studying a perturbed AMP
sequence and applying a continuity argument, as in \cite[Appendix
D]{fan2020approximate} and \cite{berthier2020state}. Consider the perturbed AMP
iterations
\[\bZ_t^\eps=\bW^\eps \bU_t^\eps-\bU_1^\eps b_{t1}^{\eps\top}-\ldots-\bU_t^\eps
b_{tt}^{\eps\top}, \quad \bU_{t+1}^\eps=u_{t+1}(\bZ_1^\eps,\ldots,\bZ_t^\eps,\bE)
+\eps \bG_{t+1}\]
where $\bW^\eps=\bO^\top \diag(\blambda+\eps \bgamma)\bO$, the vector $\bgamma
\in \RR^n$ has i.i.d.\
$\operatorname{Uniform}(-1,1)$ entries, each $b_{ts}^\eps$ is the version of
$b_{ts}$ defined with the limit free cumulants of $\bW^\eps$ in place of $\bW$,
and $\bG_{t+1} \in \RR^{n \times K}$ is an independent
matrix with i.i.d.\ $\cN(0,1)$ entries in each iteration. Then the same argument
as in \cite[Appendix D]{fan2020approximate} shows
\[\lim_{\eps \to 0} \limsup_{n \to \infty} n^{-1}\|\bZ_t^\eps-\bZ_t\|_F^2=0,
\qquad \lim_{\eps \to 0} \limsup_{n \to \infty}
n^{-1}\|\bU_t^\eps-\bU_t\|_F^2=0,\]
\[\lim_{\eps \to 0}
(\bDelta_t^\eps,\bPhi_t^\eps,\bB_t^{\eps\top},\bSigma_t^\eps)
=(\bDelta_t,\bPhi_t,\bB_t^\top,\bSigma_t)\]
where $\bDelta_t^\eps,\bPhi_t^\eps,\bB_t^\eps,\bSigma_t^\eps$ are the
matrices corresponding to this perturbed AMP sequence. Note that in 
\cite[Appendix D]{fan2020approximate}, the continuous-differentiability of each
function $u_s(\cdot)$ is used only to show the convergence $\bPhi_t \to
\bPhi_t^\infty$ at the end of the proof (i.e.\ $\bphi_t \to \bPhi_t$ in our
notation), and the relaxed condition of Assumption \ref{assump:symindinit}(b) is
sufficient for this statement. On the other hand,
Assumption \ref{assump:nondegenerate} holds for this perturbed AMP sequence, so 
this perturbed sequence is characterized by
Lemma \ref{lemma:symindinitextended}.
Combining these shows Theorem \ref{thm:symindinit}.
\end{proof}

\section{Auxiliary lemmas}\label{sec:aux}

\begin{lemma}\label{lemma:ziplock}
For each $\tau>0$, let $\{x_i^{(\tau)}\}_{0\leq i\leq \tau}$ and
$\{y_i^{(\tau)}\}_{0\leq i\leq \tau}$ be two sequences where $|x_i^{(\tau)}|$
and $|y_i^{(\tau)}|$ uniformly bounded by a constant $C>0$. Suppose, for any $\epsilon>0$, there exists some $\cT>0$ such that for all $\tau>\cT$,
\begin{align*}
    \sum_{i=\cT}^\tau |x_i^{(\tau)}| \leq \epsilon \quad \text{and} \quad
\lim_{\tau\to\infty}|y_i^{(\tau)}| = 0 \text{ for all } 0\leq i\leq \cT.
\end{align*}
Then $\limtau \sum_{i=0}^\tau x_i^{(\tau)} y_i^{(\tau)} = 0$.
\end{lemma}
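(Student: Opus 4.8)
The plan is to prove this by the standard device of splitting the sum at the threshold $\cT$ and estimating the ``head'' and ``tail'' separately. Fix $\epsilon>0$ and let $\cT>0$ be the value furnished by the hypothesis, so that $\sum_{i=\cT}^\tau |x_i^{(\tau)}|\le \epsilon$ for all $\tau>\cT$, and $\lim_{\tau\to\infty}|y_i^{(\tau)}|=0$ for each fixed $i\in\{0,\ldots,\cT\}$. For any $\tau>\cT$, the triangle inequality gives
\[
\Big|\sum_{i=0}^\tau x_i^{(\tau)} y_i^{(\tau)}\Big|
\;\le\; \sum_{i=0}^{\cT-1} |x_i^{(\tau)}|\,|y_i^{(\tau)}|
\;+\; \sum_{i=\cT}^{\tau} |x_i^{(\tau)}|\,|y_i^{(\tau)}|.
\]

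For the tail sum, I would use the uniform bound $|y_i^{(\tau)}|\le C$ together with the hypothesis to get $\sum_{i=\cT}^\tau |x_i^{(\tau)}|\,|y_i^{(\tau)}| \le C\sum_{i=\cT}^\tau |x_i^{(\tau)}| \le C\epsilon$. For the head sum, I would use the uniform bound $|x_i^{(\tau)}|\le C$ and observe that it is a sum over the \emph{fixed, finite} index set $\{0,\ldots,\cT-1\}$ (with $\cT$ independent of $\tau$), so $\sum_{i=0}^{\cT-1}|x_i^{(\tau)}|\,|y_i^{(\tau)}| \le C\sum_{i=0}^{\cT-1}|y_i^{(\tau)}| \to 0$ as $\tau\to\infty$, since each of the $\cT$ summands tends to $0$. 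Hence $\limsup_{\tau\to\infty}\big|\sum_{i=0}^\tau x_i^{(\tau)}y_i^{(\tau)}\big|\le C\epsilon$, and letting $\epsilon\downarrow 0$ yields $\lim_{\tau\to\infty}\sum_{i=0}^\tau x_i^{(\tau)}y_i^{(\tau)}=0$.

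I do not anticipate any genuine obstacle; this is an elementary two-parameter $\epsilon$-splitting argument. The only point that warrants a moment's care is that the head sum ranges over a finite index set whose size $\cT$ does not depend on $\tau$, so the termwise convergences $|y_i^{(\tau)}|\to 0$ may be combined into convergence of the finite sum; the hypothesis is stated precisely so that this $\cT$ can be chosen uniformly in $\tau>\cT$ for the tail bound as well.
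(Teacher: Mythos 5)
Your proof is correct and takes essentially the same route as the paper: split the sum at the threshold $\cT$, bound the finite head using $|x_i^{(\tau)}|\le C$ and the termwise convergence $|y_i^{(\tau)}|\to 0$, and bound the tail using $|y_i^{(\tau)}|\le C$ and the $\ell^1$ smallness of $\{x_i^{(\tau)}\}_{i\ge \cT}$. The only difference from the paper's write-up is that you split the index at $\cT-1$ rather than $\cT$ and phrase the estimate without the intermediate $\max$-factorization, which is purely cosmetic.
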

\begin{proof}
For any $\epsilon>0$, let $\cT$ be as given. Then for all $\tau>\cT$, we have
\begin{align*}
    \bigg|\sum_{i=0}^{\tau} x_i^{(\tau)} y_i^{(\tau)}\bigg| &\leq \max_{0\leq
i\leq\cT} |x_i^{(\tau)}| \cdot \sum_{i=0}^{\cT} |y_i^{(\tau)}| + \max_{\cT<i\leq \tau}|y_i^{(\tau)}| \cdot \sum_{i=\cT}^\tau |x_i^{(\tau)}|\\
    &\leq C\cdot \sum_{i=0}^{\cT} |y_i^{(\tau)}| + C\cdot \sum_{i=\cT}^\tau |x_i^{(\tau)}|
\end{align*}
where $C$ is the given uniform bound. Taking the limit $\tau\to\infty$ on both sides, we obtain
\begin{align*}
    \limsup_{\tau\to\infty} \bigg|\sum_{i=0}^{\tau} x_i^{(\tau)} y_i^{(\tau)}\bigg| \leq C\epsilon.
\end{align*}
Then since $\epsilon$ is arbitrary, the desired result follows.
\end{proof}

\begin{lemma}\label{lemma: aux algebra fact}
For any integers $\tau,t,K\geq 1$, let $\Ab\in\RR^{(\tau+t+1)K\times
(\tau+t+1)K}$ have the block decomposition
\begin{align*}
    \Ab = \begin{pmatrix}
    \Ab_{--} & 0\\
    \Ab_{+-} & \Ab_{++}
    \end{pmatrix}
\end{align*}
where $\Ab_{--}\in\RR^{\tau K\times\tau K}$ and $\Ab_{++} \in \RR^{(t+1)K \times
(t+1)K}$. Suppose $\Ab_{--}$ and $\Ab_{+-}$ have the further block decompositions
\begin{align*}
    \Ab_{--} = \begin{pmatrix}
        0 & 0 & \cdots & 0 & 0\\
        \Bb & 0 & \cdots & 0 & 0\\
        0 & \Bb & \cdots & 0 & 0\\
        \vdots & \vdots & \ddots & \vdots & \vdots\\
        0 & 0 & \cdots & \Bb & 0
    \end{pmatrix}, \quad \Ab_{+-} = \begin{pmatrix}
        0 & \cdots & 0 & \Bb\\
        0 & \cdots & 0 & 0\\
        \vdots & \ddots & \vdots & \vdots\\
        0 & \cdots & 0 & 0
    \end{pmatrix}
\end{align*}
for some $\Bb\in\RR^{K\times K}$. Then, indexing the row and column blocks by
$\{-\tau,-\tau+1,\ldots,t\}$, for all $r=0,\ldots,t$ and $c=1,\ldots,\tau$,
\begin{align*}
    \Ab^j[r,-c] = \begin{cases}
    \Ab_{++}^{j-c}[r,0] \cdot \Bb^{c} & 1\leq c\leq j,\\
    0 & j<c
    \end{cases}
\end{align*}
where $[r,-c]$ denotes the $K\times K$ submatrix corresponding to row block $r$ and column block $-c$.
\end{lemma}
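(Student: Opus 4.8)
The plan is to reduce the statement to the standard formula for the powers of a block lower-triangular matrix, and then to exploit the extreme sparsity of the off-diagonal block $\Ab_{+-}$ and of $\Ab_{--}$.

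\textbf{Step 1 (block power formula).} Viewing $\Ab$ as a $2\times 2$ block-triangular matrix with diagonal blocks $\Ab_{--},\Ab_{++}$ and lower-left block $\Ab_{+-}$, I would first prove by induction on $j$ that
\[
\Ab^j=\begin{pmatrix}\Ab_{--}^{\,j} & 0\\[2pt] \displaystyle\sum_{i=0}^{j-1}\Ab_{++}^{\,i}\,\Ab_{+-}\,\Ab_{--}^{\,j-1-i} & \Ab_{++}^{\,j}\end{pmatrix},
\]
the induction step following at once from $\Ab^j=\Ab\,\Ab^{j-1}$ and a shift of the summation index. In particular, for a ``plus'' row block $r\in\{0,\dots,t\}$ and a ``minus'' column block $-c$ with $c\in\{1,\dots,\tau\}$, this gives $\Ab^j[r,-c]=\sum_{i=0}^{j-1}\big(\Ab_{++}^{\,i}\,\Ab_{+-}\,\Ab_{--}^{\,j-1-i}\big)[r,-c]$ (and this sum is empty, hence $0$, when $j=0$).

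\textbf{Step 2 (sparsity and index bookkeeping).} The block $\Ab_{+-}$ has its only nonzero block at $\Ab_{+-}[0,-1]=\Bb$, so each summand factorizes as $\Ab_{++}^{\,i}[r,0]\;\Bb\;\Ab_{--}^{\,j-1-i}[-1,-c]$. Next I would record that $\Ab_{--}$ acts as the block shift carrying column block $-c'$ to row block $-(c'-1)$ with weight $\Bb$, so that $\Ab_{--}^{\,k}[-1,-c]$ equals $\Bb^{k}$ precisely when $c=k+1$ and vanishes otherwise (using the convention $\Ab_{--}^{0}=\Id$, and noting that $c\le\tau$ keeps all intermediate indices inside the block grid). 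Consequently, in the sum over $i$ only the single index $i=j-c$ can contribute a nonzero term, and this index lies in $\{0,\dots,j-1\}$ exactly when $1\le c\le j$; when $c>j$ the sum vanishes, giving $\Ab^j[r,-c]=0$. For $1\le c\le j$ the surviving term is $\Ab_{++}^{\,j-c}[r,0]\;\Bb\;\Bb^{\,(j-1)-(j-c)}=\Ab_{++}^{\,j-c}[r,0]\,\Bb^{\,c}$, which is exactly the asserted formula (and reads $\Id[r,0]\,\Bb^{c}$ when $c=j$, consistent with $M^{0}=\Id$).

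I do not expect a real obstacle in this lemma; it is essentially a path-counting identity. The only points needing care are the orientation of the shift in $\Ab_{--}$ (to get that $\Ab_{--}^{k}$ moves the $-1$ column by exactly $k$ steps, forcing $c=k+1$), keeping $\Ab_{++}^{\,j-c}[r,0]$ on the left and $\Bb^{c}$ on the right since $\Ab_{++}$ and $\Bb$ need not commute, and verifying the range constraint $1\le c\le\tau$ so no intermediate block index escapes $\{-\tau,\dots,t\}$.
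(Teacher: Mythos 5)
Your proof is correct and follows essentially the same approach as the paper: both start from the block power formula for block lower-triangular matrices and then exploit the sparsity of $\Ab_{+-}$ and the block-shift structure of $\Ab_{--}$ to identify the single surviving term. The index bookkeeping, including the forced value $i=j-c$ and the resulting $\Ab_{++}^{j-c}[r,0]\,\Bb^c$, matches the paper's computation.
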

\begin{proof}
Due to the special structure of $\Ab$, it is straightforward to verify (by induction) that for any $j\geq 1$, we have
\begin{align*}
    \Ab^j = \begin{pmatrix}
        \Ab_{--}^j & 0\\
        \sum_{i=0}^{j-1} \Ab_{++}^i \Ab_{+-} \Ab_{--}^{j-i-1} & \Ab_{++}^j
    \end{pmatrix}.
\end{align*}
Moreover, observe that for each $j-i-1 \geq 0$,
\begin{align*}
    \Ab_{+-}\Ab_{--}^{j-i-1} = \begin{pmatrix}
        0 & \cdots & 0 & \Bb^{j-i} & 0 & \cdots & 0\\
        0 & \cdots & 0 & 0 & 0 & \cdots & 0\\
        \vdots & \ddots & \vdots & \vdots & \vdots & \ddots & \vdots\\
        0 & \cdots & 0 & 0 & 0 & \cdots & 0
        \end{pmatrix}.
\end{align*}
Following the convention that $\Ab_{++}^0 = \Id$, we further have
\begin{align*}
    \sum_{i=0}^{j-1} \Ab_{++}^i\Ab_{+-}\Ab_{--}^{j-i-1} = \begin{pmatrix}
        0 & \cdots & 0 & \Ab_{++}^0[:,0]\Bb^j & \cdots & \Ab_{++}^{j-1}[:,0] \Bb
    \end{pmatrix},
\end{align*}
or equivalently, for all $r=0,\ldots,t$ and $c=1,\ldots,\tau$,
\begin{align*}
    \Ab^j[r,-c] = \begin{cases}
    \Ab_{++}^{j-c}[r,0] \cdot \Bb^{c} & 1\leq c\leq j,\\
    0 & j<c.
    \end{cases}
\end{align*}
\end{proof}

\begin{lemma}\label{lemma:productW2}
If $\bLambda=\diag(\blambda) \in \RR^{n \times n}$
and the random variable $\Lambda$ satisfy Assumption \ref{assump:symW},
and $\bR \in \RR^{n \times j}$ is such that $(\bR,\blambda) \toWtwo (R,\Lambda)$
as $n \to \infty$ for a random vector $R \in \RR^j$, then for any $k \geq 0$,
\[\lim_{n \to \infty} \frac{1}{n}\bR^\top \bLambda^k \bR=\EE[\Lambda^k \cdot
RR^\top] \in \RR^{j \times j}.\]
\end{lemma}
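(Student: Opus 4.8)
The plan is to reduce the statement to entrywise convergence of the $j\times j$ matrices and then apply the definition of $W_2$ convergence, after one small truncation. Fix indices $a,b\in\{1,\dots,j\}$, write $r_i=(r_{i,1},\dots,r_{i,j})\in\RR^j$ for the $i$-th row of $\bR$ and $\lambda_i$ for the $i$-th diagonal entry of $\bLambda$. Then the $(a,b)$-entry of $\tfrac1n\bR^\top\bLambda^k\bR$ is exactly $\tfrac1n\sum_{i=1}^n g(r_i,\lambda_i)$ with $g(r,\lambda)=r_a\lambda^k r_b$, and the $(a,b)$-entry of $\EE[\Lambda^k RR^\top]$ is $\EE[g(R,\Lambda)]$. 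One would like to feed $g$ directly into the hypothesis $(\bR,\blambda)\toWtwo(R,\Lambda)$. The one obstacle — and essentially the only one — is that $g$ is not of quadratic growth when $k\geq1$, since $|g(r,\lambda)|$ behaves like $\|r\|^2|\lambda|^k$, whereas the defining property of $W_2$ convergence tests only continuous functions $\varphi:\RR^{j+1}\to\RR$ with $|\varphi(u)|\leq C(1+\|u\|^2)$.

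To circumvent this, I would truncate the $\lambda$ variable. Set $C_0=\max(|\lambda_-|,|\lambda_+|)+1$, let $\operatorname{cl}(\lambda)=\max(-C_0,\min(C_0,\lambda))$ denote clipping to $[-C_0,C_0]$, and define
\[
g_{C_0}(r,\lambda)=r_a\,(\operatorname{cl}(\lambda))^k\,r_b .
\]
This function is continuous, and $|g_{C_0}(r,\lambda)|\leq C_0^k\|r\|^2\leq C_0^k\big(1+\|(r,\lambda)\|^2\big)$, so it is of quadratic growth. Hence the hypothesis $(\bR,\blambda)\toWtwo(R,\Lambda)$ applies to $g_{C_0}$ and gives
\[
\lim_{n\to\infty}\frac1n\sum_{i=1}^n g_{C_0}(r_i,\lambda_i)=\EE\big[g_{C_0}(R,\Lambda)\big].
\]

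The remaining step is to identify both sides with the desired quantities. On the right-hand side, $\Lambda$ has compact support contained in $[\lambda_-,\lambda_+]\subseteq[-C_0,C_0]$ by Assumption \ref{assump:symW}(c), so $\operatorname{cl}(\Lambda)=\Lambda$ almost surely and $\EE[g_{C_0}(R,\Lambda)]=\EE[\Lambda^k R_aR_b]$, which is the $(a,b)$-entry of $\EE[\Lambda^k RR^\top]$ (finite, since $\Lambda$ is bounded and $R$ has finite second moment as a consequence of the $W_2$ convergence). On the left-hand side, Assumption \ref{assump:symW}(c) also gives $\max(\blambda)\to\lambda_+$ and $\min(\blambda)\to\lambda_-$, so for all large $n$ every $\lambda_i$ lies in $[-C_0,C_0]$, whence $(\operatorname{cl}(\lambda_i))^k=\lambda_i^k$ and $\tfrac1n\sum_i g_{C_0}(r_i,\lambda_i)=\big(\tfrac1n\bR^\top\bLambda^k\bR\big)_{a,b}$ for all large $n$. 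Combining the last three displays yields $\big(\tfrac1n\bR^\top\bLambda^k\bR\big)_{a,b}\to\EE[\Lambda^k RR^\top]_{a,b}$, and since $a,b$ were arbitrary the matrix convergence follows. I do not anticipate any real difficulty beyond the growth-rate point just handled; the rest is routine bookkeeping of the truncation.
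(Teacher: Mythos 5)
Your proof is correct and takes essentially the same route as the paper: both truncate the $\lambda$ variable at a threshold just above $\max(|\lambda_-|,|\lambda_+|)$ to obtain a test function of quadratic growth, apply the $W_2$ convergence, and then use Assumption \ref{assump:symW}(c) (compact support of $\Lambda$ and a.s.\ convergence of the extreme eigenvalues) to remove the truncation on both sides. The only differences are cosmetic — the choice of truncation level and the precise form of the quadratic bound.
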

\begin{proof}
Fix any $a,b \in \{1,\ldots,j\}$, let $C>\max(|\lambda_-|,|\lambda_+|)$, and
define the function
$f_C(\lambda,r_a,r_b)=\min(\max(\lambda,-C),C)^k \cdot r_ar_b$. Then
$|f_C(\lambda,r_a,r_b)| \leq C^k(r_a^2+r_b^2)$, so by the given Wasserstein-2
convergence, a.s.
\[\frac{1}{n}\sum_{i=1}^n f_C(\lambda_i,r_{a,i},r_{b,i}) \to
\EE[f_C(\Lambda,R_a,R_b)].\]
The left side
coincides with the $(a,b)$ entry of $n^{-1}\bR^\top \bLambda^k \bR$ a.s.\ for
all large $n$, while the right side coincides with that of $\EE[\Lambda^k
\cdot RR^\top]$.
\end{proof}

\begin{lemma}\label{lemma:LipschitzW2}
Suppose $\bZ \in \RR^{n \times j}$ satisfies $\bZ \toWtwo Z$, and $u:\RR^j \to
\RR^k$ is Lipschitz. Then $(\bZ,u(\bZ)) \toWtwo (Z,u(Z))$.
\end{lemma}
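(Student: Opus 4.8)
The plan is to reduce the statement directly to the definition of Wasserstein-2 convergence, by composing the given test functions with $u$. Recall that $\bZ \toWtwo Z$ means $\frac{1}{n}\sum_{i=1}^n g(z_i) \to \EE[g(Z)]$ for every continuous $g:\RR^j\to\RR$ obeying $|g(z)|\le C(1+\|z\|^2)$, where $z_i$ denotes the $i^{\text{th}}$ row of $\bZ$. To establish $(\bZ,u(\bZ))\toWtwo(Z,u(Z))$, I would fix an arbitrary continuous $h:\RR^{j+k}\to\RR$ with $|h(z,w)|\le C(1+\|z\|^2+\|w\|^2)$, and set $g(z)=h(z,u(z))$. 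Since the $i^{\text{th}}$ row of $(\bZ,u(\bZ))$ is $(z_i,u(z_i))$, we have $\frac{1}{n}\sum_{i=1}^n h(z_i,u(z_i))=\frac{1}{n}\sum_{i=1}^n g(z_i)$ and $\EE[h(Z,u(Z))]=\EE[g(Z)]$, so it suffices to apply $\bZ\toWtwo Z$ to the single function $g$.

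The one thing to verify is that $g$ qualifies as an admissible test function. Continuity of $g$ is immediate, as it is the composition of the continuous maps $z\mapsto(z,u(z))$ and $h$. For the growth bound, the Lipschitz property of $u$ gives $\|u(z)\|\le\|u(0)\|+L\|z\|$ with $L$ its Lipschitz constant, hence $\|u(z)\|^2\le 2\|u(0)\|^2+2L^2\|z\|^2$, and therefore
\[
|g(z)|=\bigl|h(z,u(z))\bigr|\le C\bigl(1+\|z\|^2+\|u(z)\|^2\bigr)\le C\bigl(1+2\|u(0)\|^2+(1+2L^2)\|z\|^2\bigr)\le C'\bigl(1+\|z\|^2\bigr)
\]
with $C'=C\,\max\!\bigl(1+2\|u(0)\|^2,\,1+2L^2\bigr)$. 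Applying the hypothesis $\bZ\toWtwo Z$ to $g$ then yields $\frac{1}{n}\sum_{i=1}^n g(z_i)\to\EE[g(Z)]$, which is precisely the required convergence for $h$; since $h$ was arbitrary, the lemma follows.

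I do not expect any genuine obstacle here. The only step requiring (minor) care is the quadratic-growth verification for the composed function $g$, which is an immediate consequence of the at-most-linear growth of a Lipschitz map; everything else is bookkeeping with the definition of $\toWtwo$. (The same argument, applied coordinatewise to $(\bLambda,\bR)$ rather than to $\bZ$, is what underlies Lemma~\ref{lemma:productW2}, so no new ideas are needed.)
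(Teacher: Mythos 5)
Your proof is correct and matches the paper's argument exactly: both compose the arbitrary quadratically-growing test function with the map $z\mapsto(z,u(z))$, verify that the composed function remains admissible using the at-most-linear growth of the Lipschitz map $u$, and then invoke the hypothesis $\bZ\toWtwo Z$ directly.
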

\begin{proof}
Let $L$ be the Lipschitz constant of $u$.
For any continuous function $f:\RR^{j+k} \to \RR$ satisfying $|f(z,u)| \leq
C(1+\|z\|^2+\|u\|^2)$, we have
\[|f(z,u(z))| \leq C(1+\|z\|^2+(L\|z\|+u(0))^2)
\leq C'(1+\|z\|^2)\]
for a different constant $C'>0$.
Then $n^{-1}\sum_i f(z_i,u(z_i)) \to \EE[f(Z,u(Z))]$, so the result follows.
\end{proof}

\begin{lemma}\label{lemma:orthogproj}
Fix $J,L \geq 0$ and $K \geq 1$.
Let $\bO \in \RR^{(n-L) \times (n-L)}$ be a random Haar-uniform orthogonal
matrix. Let $\bE \in \RR^{n \times J}$ and $\bV \in \RR^{(n-L) \times K}$ satisfy
$\bE \toWtwo E$ and $n^{-1}\bV^\top \bV \to \Sigma \in \RR^{K \times K}$.
Let $\Pi \in \RR^{n \times (n-L)}$ be any
deterministic matrix with orthonormal columns. Then
almost surely as $n \to \infty$,
\[(\Pi \bO\bV,\bE) \toWtwo (Z,E)\]
where $Z \sim \cN(0,\Sigma) \in \RR^K$ is independent of $E \in \RR^J$.
\end{lemma}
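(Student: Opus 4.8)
First I would reduce to the case where $\Sigma$ is nonsingular (and $\bV$ has full column rank for all large $n$). If not, replace $\bV$ by $\bV^\delta = \bV + \sqrt\delta\,\bG'$ where $\bG' \in \RR^{(n-L)\times K}$ has i.i.d.\ $\cN(0,1)$ entries independent of everything. Since $\Pi^\top\Pi = \Id$ and $\bO^\top\bO = \Id$, one checks $n^{-1}(\bV^\delta)^\top\bV^\delta \to \Sigma + \delta\Id$ a.s.\ (the cross terms $n^{-1}\bV^\top\bG'$ vanish a.s.\ by Borel--Cantelli, using $\|\bV_{\cdot k}\|^2 = O(n)$) and $n^{-1}\|\Pi\bO\bV^\delta - \Pi\bO\bV\|_\F^2 = \delta\,n^{-1}\|\bG'\|_\F^2 \to \delta K$ a.s. Combining the $W_2$-triangle inequality with $\cN(0,\Sigma+\delta\Id)\toWtwo\cN(0,\Sigma)$ as $\delta\to 0$ then lets me pass from the nonsingular case to the general one. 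Assuming henceforth $\Sigma \succ 0$, I would use the characterization that $W_2$-convergence is equivalent to weak convergence of the empirical law together with convergence of the second moment: the latter is immediate here, since $n^{-1}\sum_i(\|(\Pi\bO\bV)_{i\cdot}\|^2 + \|e_i\|^2) = n^{-1}\tr(\bV^\top\bO^\top\Pi^\top\Pi\bO\bV) + n^{-1}\|\bE\|_\F^2 = n^{-1}\tr(\bV^\top\bV) + n^{-1}\|\bE\|_\F^2 \to \tr\Sigma + \EE\|E\|^2 = \EE\|(Z,E)\|^2$. So it remains to prove weak convergence of the joint empirical distribution of $\big((\Pi\bO\bV)_{i\cdot}, e_i\big)_{i\le n}$ to $\mathrm{Law}(Z)\otimes\mathrm{Law}(E)$.

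For this I would pass to a Gaussian model. Writing the polar decomposition $\bV = \bU_V (\bV^\top\bV)^{1/2}$ with $\bU_V$ having orthonormal columns, the matrix $\bO\bU_V$ is Haar-distributed on the Stiefel manifold $V_K(\RR^{n-L})$, hence has the same law as $\bG(\bG^\top\bG)^{-1/2}$ for $\bG \in \RR^{(n-L)\times K}$ with i.i.d.\ $\cN(0,1)$ entries. This gives the distributional identity $\bO\bV \overset{d}{=} \bG M_n$ with $M_n := (n^{-1}\bG^\top\bG)^{-1/2}(n^{-1}\bV^\top\bV)^{1/2} \to \Sigma^{1/2}$ a.s.\ by the law of large numbers. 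I would realize this identity pathwise along the whole sequence in $n$ by constructing each $\bO$ as a (suitably randomized, measurable) orthogonal extension of the partial isometry $\bU_V \mapsto \bG(\bG^\top\bG)^{-1/2}$, so that the claimed a.s.\ statement reduces to an a.s.\ statement about $\Pi\bG M_n$. To remove $\Pi$ from the Gaussian factor, extend $\Pi$ to a full orthogonal $\tilde\Pi \in \RR^{n\times n}$ and take $\bG$ to be the top $n-L$ rows of $\tilde\bG \in \RR^{n\times K}$ with i.i.d.\ $\cN(0,1)$ entries; then $\Pi\bG = \tilde\Pi\tilde\bG - (\text{last }L\text{ columns of }\tilde\Pi)\tilde\bG_{\mathrm{bot}}$ with $\|\tilde\bG_{\mathrm{bot}}\|_\F^2 \sim \chi^2_{LK}$ of constant order, so $n^{-1}\|\Pi\bG M_n - \tilde\Pi\tilde\bG M_n\|_\F^2 \to 0$ a.s.

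Finally, $\tilde\Pi\tilde\bG$ has the same law as $\tilde\bG$ (rotational invariance of the Gaussian under left multiplication by an orthogonal matrix), hence i.i.d.\ $\cN(0,\Id_K)$ rows, independent of the deterministic $\bE$. By the standard $W_2$ law of large numbers for i.i.d.\ rows decorated with deterministic covariates whose own empirical law converges in $W_2$ (first-moment identity through $\bar\phi(e) = \EE[\phi(\cN(0,\Id_K)), e]$, plus an $O(1/n)$ variance bound and Borel--Cantelli after truncation), the empirical distribution of $\big((\tilde\Pi\tilde\bG)_{i\cdot}, e_i\big)$ converges a.s.\ in $W_2$ to $\cN(0,\Id_K)\otimes\mathrm{Law}(E)$. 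Pushing forward by the maps $(z,e)\mapsto(M_n^\top z, e)$, which are uniformly Lipschitz and converge to $(z,e)\mapsto(\Sigma^{1/2}z, e)$ uniformly on compacts, preserves $W_2$-convergence, so the empirical law of $\big((\tilde\Pi\tilde\bG M_n)_{i\cdot}, e_i\big)$ converges a.s.\ to $\cN(0,\Sigma)\otimes\mathrm{Law}(E) = \mathrm{Law}(Z,E)$ with $Z\perp E$; combining with the $o(1)$ Frobenius approximation above and the pathwise identity finishes the proof. The genuinely delicate points are purely bookkeeping: the factor $\Pi$ (so that $\Pi\Pi^\top \ne \Id_n$) forces the orthogonal-embedding trick, and turning the polar-decomposition distributional identity into an a.s.\ statement requires the measurable coupling of $\bO$ to $\bG$; neither changes the structure of the argument, and one could alternatively obtain the a.s.\ statement directly from concentration of Haar measure for Lipschitz test functions, at the cost of tracking the non-uniform row norms of $\Pi$.
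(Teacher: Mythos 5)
Your proof is correct and follows essentially the same approach as the paper's: both arguments Gaussianize the Haar--Stiefel element (the paper via $\tilde{\bO}\overset{L}{=}\Pi^\top \bZ(\bZ^\top \Pi\Pi^\top\bZ)^{-1/2}$, you via the polar decomposition and $\bO\bU_V \overset{L}{=}\bG(\bG^\top\bG)^{-1/2}$), then show the $L$-dimensional correction coming from $\Pi$ not being square vanishes in normalized Frobenius norm. The only cosmetic difference is in that last bookkeeping step — the paper splits $\Pi\Pi^\top = \Id - \Pi^\perp$ and bounds $\Pi^\perp\bZ$, while you extend $\Pi$ to an orthogonal $\tilde\Pi$ and subtract off the bottom $L$ Gaussian rows; these are the same $O(1)$ object, and your extra $\Sigma\succ 0$ reduction and the $W_2$-equals-weak-plus-second-moments step, while harmless, are not needed since the paper's representation works without full rank and it cites ready-made $W_2$ continuity propositions.
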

\begin{proof}
Let $\tilde{\bO}$ be the first $K$ columns of $\bO$. Writing the
singular value decomposition $\bV=\bQ\bD\bU^\top$ and applying the equality in law
$\bO\bQ\overset{L}{=}\tilde{\bO}\bU$, we have the equality in law
$\bO\bV \overset{L}{=} \tilde{\bO}(\bV^\top \bV)^{1/2}$,
where $(\bV^\top \bV)^{1/2}=\bU\bD\bU^\top$ is the positive-semidefinite matrix
square-root. Then
introducing $\bZ \in \RR^{n \times K}$ with i.i.d.\ $\cN(0,1)$ entries, 
and applying $\tilde{\bO}\overset{L}{=}\Pi^\top \bZ(\bZ^\top \Pi\Pi^\top
\bZ)^{-1/2}$, also
\[\bO\bV \overset{L}{=} \Pi^\top \bZ(n^{-1}\bZ^\top \Pi\Pi^\top \bZ)^{-1/2}(n^{-1}\bV^\top \bV)^{1/2}.\]
So
\begin{equation}\label{eq:PiOV}
\Pi \bO\bV \overset{L}{=} (\Id-\Pi^\perp)\bZ(n^{-1}\bZ^\top \Pi\Pi^\top
\bZ)^{-1/2}(n^{-1}\bV^\top \bV)^{1/2},
\end{equation}
where $\Pi^\perp=\Id-\Pi\Pi^\top \in \RR^{n \times n}$
is a projection onto a subspace of fixed dimension $L$.
Since $n^{-1}\bZ^\top \Pi\Pi^\top \bZ \to \Id_{K \times K}$ and $n^{-1} \bV^\top
\bV \to \Sigma$, and the matrix square-root is continuous, we obtain
\begin{equation}\label{eq:ZEconvergence}
\Big(\bZ(n^{-1}\bZ^\top \Pi\Pi^\top
\bZ)^{-1/2}(n^{-1}\bV^\top \bV)^{1/2},\;\bE\Big) \toWtwo (Z,E)
\end{equation}
by \cite[Propositions E.1 and E.4]{fan2020approximate}. Now write
$\Pi^\perp=\bU\bU^\top$ where $\bU \in \RR^{n \times L}$ has orthonormal
columns. Then $\Pi^\perp \bZ \overset{L}{=} \bU G$
where $G \in \RR^{L \times K}$ has i.i.d.\ $\cN(0,1)$ entries. Let $u_i$ be the
$i^\text{th}$ row of $\bU$, and $g_j$ be the $j^\text{th}$ column of $G$. Then
a.s.
\[\frac{1}{n}\sum_{i=1}^n (u_i^\top g_j)^2
\leq \frac{1}{n}\sum_{i=1}^n \|u_i\|^2 \cdot \|g_j\|^2
=\frac{L}{n} \cdot \|g_j\|^2 \to 0.\]
This holds for each column $j$ of $\Pi^\perp \bZ\overset{L}{=}\bU G$, so
$\Pi^\perp \bZ \toWtwo 0$. Then combining with (\ref{eq:ZEconvergence})
and applying this to (\ref{eq:PiOV}), we obtain
$(\Pi \bO\bV,\bE) \toWtwo (Z,E)$ by \cite[Proposition E.4]{fan2020approximate}.
\end{proof}

\section*{Acknowledgments}

We would like to thank Marco Mondelli for a helpful discussion about
\cite{mondelli2021pca}. This research was supported in part by NSF DMS-1916198
and DMS-2142476.

\bibliographystyle{alpha}
\bibliography{OAMP}

\end{document}